\newtheorem{definition}{Definition}[section]
\newtheorem{theorem}[definition]{Theorem}
\newtheorem{lemma}[definition]{Lemma}
\newtheorem{corollary}[definition]{Corollary}
\newtheorem{remark}[definition]{Remark}
\newtheorem{conjecture}[definition]{Conjecture}
\newtheorem{problem}[definition]{Problem}
\newtheorem{note}[definition]{Note}
\newtheorem{proposition}[definition]{Proposition}
\begin{document}
\title{\bf Distance-regular graphs, the subconstituent \\algebra, 
and the $Q$-polynomial property
}
\author{
Paul Terwilliger 
}
\date{}

\maketitle
\begin{abstract} 
This survey paper contains a tutorial introduction to distance-regular graphs, with an emphasis on the subconstituent algebra and the  $Q$-polynomial property.
\bigskip

\noindent
{\bf Keywords}. Distance-regular graph; subconstituent algebra; $Q$-polynomial; tridiagonal pair.
\hfil\break
\hfil\break
\noindent {\bf 2020 Mathematics Subject Classification}. 
Primary: 05E30. Secondary: 17B37.

 \end{abstract}
\bigskip 
\def\leaderfill{\leaders\hbox to 1em{\hss.\hss}\hfill}
{\narrower {\narrower {\narrower  \small
		   \centerline{CONTENTS}

\medskip\noindent
 1. Introduction \leaderfill 2 \\
 2. Distance-regular graphs                                  \leaderfill 3  \\
  3. Some polynomials                                 \leaderfill 6  \\
    4. The geometry of the eigenspaces                                \leaderfill 10  \\
 5.  The Krein parameters and the dual distance matrices                    \leaderfill  11 \\
  6. Reduction rules                       \leaderfill  15 \\
 7. The primary $T$-module                      \leaderfill  19 \\
 8. The Krein condition and the triple product relations                     \leaderfill  22 \\
 9. The function algebra and the Norton algebra                    \leaderfill  25 \\
 10. The function algebra and nondegenerate primitive idempotents                   \leaderfill  27 \\
 11. The $Q$-polynomial property and Askey-Wilson duality                  \leaderfill  29 \\
 12.  The function algebra characterization of the $Q$-polynomial property                \leaderfill  33 \\
 13. Irreducible $T$-modules and tridiagonal pairs \leaderfill 34 \\
 14. Recurrent sequences                      \leaderfill  36 \\
 15. The tridiagonal relations                  \leaderfill  38 \\
 16. The primary $T$-module and the Askey-Wilson relations                      \leaderfill  42 \\
17. The Pascasio characterization of the $Q$-polynomial property                     \leaderfill  45 \\
18. Distance-regular graphs with classical parameters                   \leaderfill  48 \\
19. The balanced set characterization of the $Q$-polynomial property                     \leaderfill  49 \\
  20. Directions for future research                   \leaderfill 53\\
References                       \leaderfill 54 \\

 \baselineskip=\normalbaselineskip \par}  \par} \par}

\section{Introduction}

This survey paper contains a tutorial introduction to distance-regular graphs, with an emphasis on the subconstituent algebra and the  $Q$-polynomial property.
Our treatment is roughly based on the unpublished lecture notes \cite{suz}, along with two more recent versions \cite{TerNotes}.
 The treatment is not comprehensive;
instead we restrict out attention to those topics that seem most important. A proof is given for every main result in the paper.
 We do not assume any prior knowledge about distance-regular graphs. We intend the present paper to complement the excellent recent works
 \cite{bbit, dkt}. 
 \medskip
 
\noindent A hypercube is an elementary example of a $Q$-polynomial distance-regular graph. 
The subconstituent algebra of a hypercube is described in \cite{go}.  We advise the
beginning reader to treat the hypercubes as a running example, using \cite{go} as a guide.
\medskip

\noindent As we go along, we will encounter a linear-algebraic object called a tridiagonal pair. As a warmup, we now
define a tridiagonal pair. 
\medskip

\noindent  Let $\mathbb F$ denote a field. Let $V$ denote a nonzero, finite-dimensional vector space over $\mathbb F$.
An $\mathbb F$-linear map $A:V\to V$ is called {\it diagonalizable} whenever
$V$ is spanned by the eigenspaces of $A$.

\begin{definition}  {\rm (See
\cite[Definition 1.1]{itt}.)}
\label{def:tdp}
\rm
A  {\it tridiagonal pair}  on $V$
is an ordered pair of $\mathbb F$-linear maps 
$A:V\to V$ and $A^*:V\to V$  that satisfy
the following four conditions.
\begin{enumerate}
\item[\rm (i)]  Each of $A,A^*$ is diagonalizable.
\item[\rm (ii)] There exists an ordering $\lbrace V_i \rbrace_{i=0}^d $ of the  
eigenspaces of $A$ such that 
\begin{equation}
A^* V_i \subseteq V_{i-1} + V_i+ V_{i+1} \qquad \qquad (0 \leq i \leq d),
\label{eq:t1}
\end{equation}
where $V_{-1} = 0$ and $V_{d+1}= 0$.
\item[\rm (iii)]There exists an ordering $\lbrace V^*_i \rbrace_{i=0}^\delta$ of
the  
eigenspaces of $A^*$ such that 
\begin{equation}
A V^*_i \subseteq V^*_{i-1} + V^*_i+ V^*_{i+1} \qquad \qquad (0 \leq i \leq \delta),
\label{eq:t2}
\end{equation}
where $V^*_{-1} = 0$ and $V^*_{\delta+1}= 0$.
\item[\rm (iv)]  There does not exist a subspace $W$ of $V$ such  that $AW\subseteq W$,
$A^*W\subseteq W$, $W\not=0$, $W\not=V$.
\end{enumerate}
\end{definition}

\begin{note} \rm \label{note:conv}
According to a common 
notational convention, $A^*$ denotes the conjugate transpose
of $A$.
 We are not using this convention.
In a tridiagonal pair $A,A^*$ the maps
$A$ and $A^*$ are arbitrary subject to (i)--(iv) above.
\end{note}

\noindent Referring to Definition \ref{def:tdp}, by \cite[Lemma~4.5]{itt} the integers $d$ and $\delta$ from \eqref{eq:t1}, \eqref{eq:t2} 
are equal; we call this common value the {\it diameter}
of the pair.
\medskip

\noindent  The concept of a tridiagonal pair was formally introduced in \cite{itt}. However, the concept shows up earlier \cite{terwSub1, terwSub2, terwSub3}
in connection with the subconstituent algebra $T$ of a $Q$-polynomial distance-regular graph. 
As we will see, for such a graph every irreducible $T$-module
gives a tridiagonal pair in a natural way.
\medskip

\noindent We refer the reader to   \cite{bbit, itoOq, augIto,IKT, nomTB} for background and historical remarks about tridiagonal pairs.
\medskip

\noindent We will encounter tridiagonal pairs of the following sort.

\begin{definition}\rm \label{def:lp} (See \cite[Definition~1.1]{LS99}.)
A {\it Leonard pair} on $V$ is a tridiagonal pair $A, A^*$ on $V$
such that the eigenspaces $\lbrace V_i\rbrace_{i=0}^d$ and $\lbrace V^*_i\rbrace_{i=0}^d$ all have dimension one.
\end{definition}

\noindent We refer the reader to \cite{bbit, LS99, aa, notesLS} for background and historical remarks about Leonard pairs.

\section{Distance-regular graphs}
We now turn our attention to distance-regular graphs.
After a brief review of the basic definitions,
we will describe the Bose-Mesner algebra, the dual Bose-Mesner algebra,  and the subconstituent
algebra.
For more information we refer the reader to 
\cite{bannai, bbit, bcn, dkt, go,terwSub1,terwSub2, terwSub3}. 

\medskip
\noindent 
Let $\mathbb R$ denote the field of real numbers. 
Let $X$ denote a nonempty  finite  set.
Let ${\rm Mat}_X(\mathbb R)$
denote the $\mathbb R$-algebra
consisting of the matrices with rows and columns  indexed by $X$
and all entries in $\mathbb R  $. Let $I \in{ \rm Mat}_X(\mathbb R)$ denote the identity matrix.
 Let
$V=\mathbb R^X$ denote the vector space over $\mathbb R$
consisting of the column vectors with
coordinates indexed by $X$ and all entries 
in $\mathbb R$.
The algebra ${\rm Mat}_X(\mathbb R)$
acts on $V$ by left multiplication.
We call $V$ the {\it standard module}.
We endow $V$ with a bilinear form $\langle \, , \, \rangle$ 
that satisfies
$\langle u,v \rangle = u^t v$ for 
$u,v \in V$,
where $t$ denotes transpose. This bilinear form is symmetric. For $u \in V$ we abbreviate $\Vert u \Vert^2 = \langle u, u \rangle $.  Note that $\Vert u \Vert^2 \geq 0$, with equality if and only if $u=0$.
For $u, v \in V$ and $B \in {\rm Mat}_X(\mathbb R)$ we have $\langle Bu, v \rangle = \langle u, B^tv\rangle$.
For all $y \in X,$ define a vector $\hat{y} \in V$ that has $y$-coordinate  $1$ and all other coordinates $0$.
The vectors $\lbrace \hat y \rbrace_{y \in X}$ form an orthonormal basis for $V$. 
For later use, define a matrix $J \in {\rm Mat}_X(\mathbb R)$ that has all entries $1$.
\medskip

\noindent
Let $\Gamma = (X, \mathcal R)$ denote a finite, undirected, connected graph,
without loops or multiple edges, with vertex set $X$ and 
edge set
$\mathcal R$.   Vertices $y,z$ are {\it adjacent} whenever $y,z$ form an edge.
Let $\partial $ denote the
path-length distance function for $\Gamma $,  and define
$D = \mbox{max}\lbrace \partial(y,z) \vert y,z \in X\rbrace $.  
We call $D$  the {\it diameter} of $\Gamma $. For $y \in X$ and an integer $i\geq 0$ define
$\Gamma_i(y) = \lbrace z \in X\vert \partial(y,z)=i \rbrace$. We abbreviate $\Gamma(y) = \Gamma_1(y)$.
For an integer $k\geq 0$ we say that $\Gamma$ is {\it regular with
valency $k$} whenever $\vert \Gamma(y)\vert = k$ for all $y \in X$.
 We say that $\Gamma$ is {\it distance-regular}
whenever for all integers $h,i,j\;(0 \le h,i,j \le D)$ 
and for all
vertices $y,z \in X$ with $\partial(y,z)=h,$ the number
$p^h_{i,j} = \vert \Gamma_i(y) \cap \Gamma_j(z) \vert$
is independent of $y$ and $z$. The $p^h_{i,j}$  are called
the {\it intersection numbers} of $\Gamma$. From now until the end of Section 19,  we assume that $\Gamma$ is distance-regular with $D\geq 3$.
By construction $p^h_{i,j} = p^h_{j,i} $ for $0 \leq h,i,j\leq D$.  
We abbreviate
\begin{align*}
c_i = p^i_{1,i-1} \;\; (1 \leq i \leq D), \qquad a_i = p^i_{1,i} \;\; (0 \leq i \leq D), \qquad  b_i = p^i_{1,i+1} \;\; (0 \leq i \leq D-1).
\end{align*}
\noindent  Note that $a_0=0$ and $c_1=1$. Moreover
\begin{align*}
c_i > 0 \quad (1 \leq i \leq D), \qquad \qquad b_i > 0 \quad (0 \leq i \leq D-1).
\end{align*}
The graph $\Gamma$ is regular with valency $k=b_0$. Moreover,
\begin{align*}
c_i + a_i + b_i = k \qquad \qquad (0 \leq i \leq D),
\end{align*}
where $c_0=0$ and $b_D=0$. For $0 \leq i \leq D$ define $k_i = p^0_{i,i}$ and note that $k_i = \vert \Gamma_i(y) \vert$ for all $y \in X$.
We have $k_0=1$ and $k_1=k$.
By a routine counting argument, $k_{i-1} b_{i-1}= k_i c_i $ for $1 \leq i \leq D$. Consequently
\begin{align} \label{eq:kiform}
k_i = \frac{ b_0 b_1 \cdots b_{i-1}}{c_1 c_2 \cdots c_i} \qquad \qquad (0 \leq i \leq D).
\end{align}
\noindent 
By the triangle inequality, the following hold for $0 \leq h,i,j\leq D$:
\begin{enumerate}
\item[\rm (i)] $p^h_{i,j}= 0$ if one of $h,i,j$ is greater than the sum of the other two;
\item[\rm (ii)] $p^h_{i,j}\not=0$ if one of $h,i,j$ is equal to the sum of the other two.
\end{enumerate}
The following results are verified by routine  counting arguments:
\begin{align*}
&p^h_{0,j} = \delta_{h,j} \qquad (0 \leq h,j\leq D); \qquad \qquad 
p^h_{i,0} = \delta_{h,i} \qquad (0 \leq h,i\leq D);\\
&p^0_{i,j} = \delta_{i,j}k_i \qquad (0 \leq i,j\leq D); \qquad \qquad
\sum_{i=0}^D p^h_{i,j} = k_j\qquad (0 \leq h,j\leq D).
\end{align*}
\noindent 
We recall the Bose-Mesner algebra of $\Gamma.$ 
For 
$0 \leq i \leq D$ define a matrix $A_i \in {\rm Mat}_X(\mathbb R)$ with
$(y,z)$-entry
\begin{align*}
(A_i)_{y,z} = \begin{cases}  
1, & {\mbox{\rm if $\partial(y,z)=i$}};\\
0, & {\mbox{\rm if $\partial(y,z) \ne i$}}
\end{cases}
 \qquad (y,z \in X).
\end{align*}
We call $A_i$ the $i^{th}$ {\it distance matrix} of $\Gamma$. 
For $y \in X$ 
we have
$A_i {\hat y} = \sum_{z \in \Gamma_i(y)} {\hat z}$.
We abbreviate $A=A_1$ and call this  the {\it adjacency
matrix} of $\Gamma.$ We observe
(i) $A_0 = I$;
 (ii)
$\sum_{i=0}^D A_i = J$;
(iii)  $A_i^t = A_i  \;(0 \leq i \leq D)$;
(iv) $A_iA_j = \sum_{h=0}^D p^h_{i,j} A_h \;( 0 \leq i,j \leq D) $.
Consequently the matrices
 $\lbrace A_i\rbrace_{i=0}^D$
form a basis for a commutative subalgebra $M$ of ${\rm Mat}_X(\mathbb R)$, called the 
{\it Bose-Mesner algebra} of $\Gamma$.
The distance matrices are symmetric and mutually commute. Therefore they can be simultaneously diagonalized. Consequently
$M$ has a second basis 
$\lbrace E_i\rbrace_{i=0}^D$ such that
(i) $E_0 = |X|^{-1}J$;
(ii) $\sum_{i=0}^D E_i = I$;
(iii) $E_i^t =E_i  \;(0 \le i \le D)$;
(iv) $E_iE_j =\delta_{i,j}E_i  \;(0 \leq i,j \leq D)$.
We call $\lbrace E_i\rbrace_{i=0}^D$  the {\it primitive idempotents}
of $\Gamma$.  The primitive idempotent $E_0$ is called {\it trivial}. We have
\begin{align*}
V = \sum_{i=0}^D E_iV \qquad \qquad (\mbox{\rm orthogonal direct sum}).
\end{align*}
For $0 \leq i \leq D$ the subspace $E_iV$ is a common eigenspace of $M$. 
Note that
\begin{align*}
E_0V = \mathbb R {\bf 1}, \qquad \qquad {\bf 1} = \sum_{y \in X} {\hat y}.
\end{align*}
\noindent
For $0 \leq i \leq D$ let $m_i$ denote the dimension of $E_iV$. We have $m_i = {\rm tr}(E_i)$, where tr denotes trace.
Note that $m_0=1$.
\medskip


\noindent
We  recall the dual Bose-Mesner algebras of $\Gamma$.
For the rest of this section, fix
a vertex $x \in X$. 
For 
$ 0 \leq i \leq D$ let $E_i^*=E_i^*(x)$ denote the diagonal
matrix in 
 ${\rm Mat}_X(\mathbb R)$
 with $(y,y)$-entry
\begin{equation}\label{DEFDEI}
(E_i^*)_{y,y} = \begin{cases} 1, & \mbox{\rm if $\partial(x,y)=i$};\\
0, & \mbox{\rm if $\partial(x,y) \ne i$}
\end{cases}
 \qquad (y \in X).
\end{equation}
We call $E_i^*$ the  $i^{th}$ {\it dual primitive idempotent of $\Gamma$
 with respect to $x$} \cite[p.~378]{terwSub1}. For $y \in X $ we have $E^*_i {\hat y} = {\hat y}$ if $\partial(x,y) =i$, and $E^*_i {\hat y} =0$ if
 $\partial(x,y) \not=i$.
We observe
(i) $\sum_{i=0}^D E_i^*=I$;
(ii) $E_i^{*t} = E_i^*$ $(0 \le i \le D)$;
(iii) $E_i^*E_j^* = \delta_{i,j}E_i^* $ $(0 \le i,j \le D)$.
By these facts 
$\lbrace E_i^*\rbrace_{i=0}^D$ form a 
basis for a commutative subalgebra
$M^*=M^*(x)$ of 
${\rm Mat}_X(\mathbb R)$.
We call 
$M^*$ the {\it dual Bose-Mesner algebra of
$\Gamma$ with respect to $x$} \cite[p.~378]{terwSub1}.

\medskip
\noindent 
We recall the subconstituents of $\Gamma $ with respect to $x$.
From
\eqref{DEFDEI} we find
\begin{equation}\label{DEIV}
E_i^*V = \mbox{\rm Span}\lbrace \hat{y} \vert y \in \Gamma_i(x)\rbrace
\qquad \qquad (0 \leq i \leq D).
\end{equation}
By 
\eqref{DEIV}  and since $\lbrace {\hat y} \rbrace_{y \in X}$
is an orthonormal basis for $V$,
 we find
\begin{align*}
V = \sum_{i=0}^D E^*_iV \qquad \qquad (\mbox{\rm orthogonal direct sum}).
\end{align*}
For $0 \leq i \leq D$ the subspace $E^*_iV$ is a common eigenspace for $M^*$. 
Observe that the dimension of $E^*_iV$ is equal to $k_i$. Also ${\rm tr}(E^*_i)=k_i$.
We call $E_i^*V$ the $i^{th}$ {\it subconstituent of $\Gamma$
with respect to $x$}.  Note that $E^*_0 V = \mathbb R {\hat x}$. Also note that
\begin{align*}
A E^*_iV \subseteq E^*_{i-1}V + E^*_iV + E^*_{i+1}V \qquad \qquad (0 \leq i \leq D),
\end{align*}
where $E^*_{-1}=0$ and $E^*_{D+1}=0$.
\medskip

\noindent
We recall the subconstituent algebra of $\Gamma $ with respect to $x$.
Let $T=T(x)$ denote the subalgebra of ${\rm Mat}_X(\mathbb R)$ generated by 
$M$ and $M^*$. 
Observe that $T$ has finite dimension. The algebra $T$ is closed under the
transpose map, because $M$ and $M^*$ are closed under the transpose map. We call $T$ the {\it subconstituent algebra}
(or {\it Terwilliger algebra}) {\it of $\Gamma$ 
 with respect to $x$} \cite[Definition 3.3]{terwSub1}.
See
\cite{curtin1,
curtin2,
curtin6,
egge1,
go,
hobart,
tanabe,
terwSub1,
terwSub2,
terwSub3}
for more information on the subconstituent
algebra.

\medskip
\noindent We recall the $T$-modules.
By a {\it T-module}
we mean a subspace $W \subseteq V$ such that $BW \subseteq W$
for all $B \in T$. A $T$-module $W$ is called {\it irreducible} whenever $W\not=0$ and $W$ does not contain a $T$-module besides 0 and $W$.
Let $W$ denote a $T$-module and let 
$W'$ denote a  
$T$-module contained in $W$.
Then the orthogonal complement of $W'$ in $W$ is a $T$-module.
It follows that each $T$-module
is an orthogonal direct sum of irreducible $T$-modules.
In particular, $V$ is an orthogonal direct sum of irreducible $T$-modules.
\medskip

\noindent The following relations are verified by matrix multiplication:
\begin{align*}
\vert X \vert E^*_0 E_0 E^*_0 = E^*_0, \qquad \qquad \vert X \vert E_0 E^*_0 E_0 = E_0.
\end{align*}

\section{Some polynomials}
\noindent Throughout this section $\Gamma=(X,\mathcal R)$ denotes a distance-regular graph
with diameter $D\geq 3$. We will discuss two sequences of polynomials that are associated with the distance matrices of $\Gamma$.

\begin{lemma} \label{lem:A3t} We have
\begin{align*}
A A_i &= b_{i-1} A_{i-1} + a_i A_i + c_{i+1} A_{i+1} \qquad \qquad (1 \leq i \leq D-1),\\
A A_D &= b_{D-1} A_{D-1} + a_D A_D.
\end{align*}
\end{lemma}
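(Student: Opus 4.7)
The plan is to expand $AA_i$ using the multiplication rule for the Bose-Mesner algebra recalled just before this section, namely
\[
A_1 A_i \;=\; \sum_{h=0}^{D} p^{h}_{1,i} A_h,
\]
and then show that only three (respectively two) of the intersection numbers $p^h_{1,i}$ are nonzero, with the surviving ones matching the stated coefficients.

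First I would invoke the triangle-inequality property (item (i) in the list preceding the lemma): $p^h_{1,i}=0$ whenever one of $h,1,i$ exceeds the sum of the other two. With the middle index equal to $1$, this forces $|h-i|\le 1$, so only $h\in\{i-1,i,i+1\}$ can contribute. For $i=D$ the value $h=D+1$ is out of range, which accounts for the missing term in the second displayed equation and explains why the two cases must be stated separately.

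Next I would identify the three surviving coefficients with $b_{i-1}$, $a_i$, and $c_{i+1}$. The middle one is immediate from the definition $a_i=p^i_{1,i}$. For the other two I would use the symmetry $p^h_{i,j}=p^h_{j,i}$ noted earlier, combined with the abbreviations
\[
c_i = p^i_{1,i-1}, \qquad b_i = p^i_{1,i+1}.
\]
Specifically, $p^{i-1}_{1,i}=p^{i-1}_{i,1}$; comparing the indices $(i-1,i,1)$ with the definition $b_{i-1}=p^{i-1}_{1,i}$ gives $p^{i-1}_{1,i}=b_{i-1}$. Similarly $p^{i+1}_{1,i}=c_{i+1}$ follows directly from the definition of $c_{i+1}$. (One should just be careful that in the boundary case $i=D-1$ the symbol $c_{i+1}=c_D$ is defined, and in $i=1$ the term $b_{i-1}=b_0=k$ is defined, so no auxiliary conventions are needed beyond $E^*_{-1}=0$ type boundary handling, which is not even required here.)

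There isn't really a hard step: the whole argument is a bookkeeping exercise combining the multiplication rule in $M$, the triangle inequality, and the definitions of $a_i,b_i,c_i$. The only subtlety worth double-checking is the index juggling via the symmetry $p^h_{i,j}=p^h_{j,i}$, to make sure that $b_{i-1}$ (not $b_i$) lands on the $A_{i-1}$ term and that $c_{i+1}$ (not $c_i$) lands on the $A_{i+1}$ term; writing the equation $A_1 A_i=\sum_h p^h_{1,i}A_h$ out in full and then substituting will make this automatic.
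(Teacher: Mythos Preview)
Your proposal is correct and is exactly the paper's approach: the paper's entire proof is the one-liner ``This is $A_iA_j=\sum_{h=0}^D p^h_{i,j}A_h$ with $j=1$,'' and you have simply spelled out the details. One minor simplification: you don't actually need the symmetry $p^h_{i,j}=p^h_{j,i}$, since the definitions $b_{i-1}=p^{i-1}_{1,i}$ and $c_{i+1}=p^{i+1}_{1,i}$ already match the required coefficients directly.
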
 \begin{proof}
This is $A_i A_j = \sum_{h=0}^D p^h_{i,j} A_h$ with $j=1$.
\end{proof}

\noindent Let $\lambda$ denote an indeterminate. Let $\mathbb R\lbrack \lambda \rbrack$ denote the $\mathbb R$-algebra of  polynomials in $\lambda$ that
have all coefficients in $\mathbb R$.

\begin{definition}\label{def:vi} \rm We define some polynomials $\lbrace v_i \rbrace_{i=0}^{D+1} $ in $\mathbb R\lbrack \lambda \rbrack$ such that
\begin{align*}
&v_0=1, \qquad \quad v_1 = \lambda, \\
\lambda v_i &= b_{i-1} v_{i-1} + a_i v_i + c_{i+1} v_{i+1} \qquad \qquad (1 \leq i \leq D),
\end{align*}
where $c_{D+1}=1$.
\end{definition}

\begin{lemma} \label{lem:vi} The following {\rm (i)--(iv)} hold:
\begin{enumerate}
\item[\rm (i)] ${\rm deg} \,v_i = i \quad (0 \leq i \leq D+1)$;
\item[\rm (ii)] the coefficient of $\lambda^i$ in $v_i$ is $(c_1 c_2 \cdots c_i)^{-1} \quad (0 \leq i \leq D+1)$;
\item[\rm (iii)] $v_i(A)=A_i \quad (0 \leq i \leq D)$;
\item[\rm (iv)] $v_{D+1}(A)=0$.
\end{enumerate}
\end{lemma}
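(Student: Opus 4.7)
The plan is to prove all four parts simultaneously by induction on $i$, using the recursion from Definition \ref{def:vi}, and then derive (iv) as a single extra step that feeds on (iii). Before starting I would rewrite the recursion in the solved form
\begin{equation*}
c_{i+1} v_{i+1} = (\lambda - a_i) v_i - b_{i-1} v_{i-1} \qquad (1 \leq i \leq D),
\end{equation*}
which is the form I will repeatedly apply.

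For (i) and (ii), I would run a joint induction on $i$. The base cases $i=0$ and $i=1$ are immediate from $v_0 = 1$ and $v_1 = \lambda$, using $c_1 = 1$ so the leading coefficient is $1 = c_1^{-1}$. For the inductive step, assume $\deg v_i = i$ with leading coefficient $(c_1 \cdots c_i)^{-1}$ and $\deg v_{i-1} = i-1$. The right-hand side of the solved recursion has a term $(\lambda - a_i) v_i$ of degree $i+1$ with leading coefficient $(c_1 \cdots c_i)^{-1}$, and a subtracted term of degree $i-1 < i+1$. Dividing by $c_{i+1} > 0$ (well-defined also at $i = D$ since $c_{D+1} := 1$) yields $\deg v_{i+1} = i+1$ with leading coefficient $(c_1 \cdots c_{i+1})^{-1}$, completing the induction.

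For (iii), I again induct on $i$. The cases $i=0,1$ are trivial: $v_0(A) = I = A_0$ and $v_1(A) = A = A_1$. For $1 \leq i \leq D-1$, apply the solved recursion with $\lambda \mapsto A$ and use the inductive hypothesis:
\begin{equation*}
c_{i+1} v_{i+1}(A) = A v_i(A) - a_i v_i(A) - b_{i-1} v_{i-1}(A) = A A_i - a_i A_i - b_{i-1} A_{i-1}.
\end{equation*}
By Lemma \ref{lem:A3t} (the case $1 \leq i \leq D-1$), the right-hand side equals $c_{i+1} A_{i+1}$. Dividing by $c_{i+1} > 0$ gives $v_{i+1}(A) = A_{i+1}$.

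For (iv), I specialize the same solved recursion at $i = D$ and substitute $\lambda \mapsto A$. Since $c_{D+1} = 1$ and by (iii) we have $v_D(A) = A_D$ and $v_{D-1}(A) = A_{D-1}$, this gives
\begin{equation*}
v_{D+1}(A) = A A_D - a_D A_D - b_{D-1} A_{D-1},
\end{equation*}
which is $0$ by the second identity of Lemma \ref{lem:A3t}. No step here is a real obstacle; the only thing to watch is the convention $c_{D+1} = 1$, which is exactly what makes the boundary case of (iv) drop out cleanly.
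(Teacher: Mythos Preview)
Your proof is correct and is essentially the same approach as the paper's: the paper's proof simply says ``(i), (ii) By Definition~\ref{def:vi}'' and ``(iii), (iv) Compare Lemma~\ref{lem:A3t} and Definition~\ref{def:vi}'', which is precisely the induction on the recursion that you have spelled out in full detail. You have just unpacked what the paper leaves implicit.
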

\begin{proof} (i), (ii) By Definition \ref{def:vi}. \\
\noindent (iii), (iv) Compare Lemma \ref{lem:A3t} and Definition \ref{def:vi}.
\end{proof}

\begin{corollary} \label{lem:AGen}  The following hold:
\begin{enumerate}
\item[\rm (i)]  the algebra $M$ is generated by $A$;
\item[\rm (ii)]  the minimal polynomial of $A$ is
$c_1 c_2 \cdots c_D v_{D+1}$.
\end{enumerate}
\end{corollary}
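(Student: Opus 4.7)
The plan is to deduce both claims directly from Lemma \ref{lem:vi}, which gives us explicit polynomials expressing each distance matrix $A_i$ as $v_i(A)$.

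For part (i), I would argue as follows. Since $A_1 = A$ lies in $M$ and $M$ is an algebra, every polynomial in $A$ lies in $M$, so $\mathbb R[A]\subseteq M$. For the reverse inclusion, Lemma \ref{lem:vi}(iii) says $A_i = v_i(A)\in\mathbb R[A]$ for $0\le i\le D$, and since $\{A_i\}_{i=0}^D$ is a basis of $M$, this gives $M\subseteq\mathbb R[A]$. Hence $M=\mathbb R[A]$.

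For part (ii), set $p(\lambda)=c_1c_2\cdots c_D\, v_{D+1}(\lambda)$. By Lemma \ref{lem:vi}(ii) the leading coefficient of $v_{D+1}$ is $(c_1c_2\cdots c_D c_{D+1})^{-1} = (c_1c_2\cdots c_D)^{-1}$ (using $c_{D+1}=1$), so $p$ is monic of degree $D+1$. By Lemma \ref{lem:vi}(iv), $p(A)=0$. It remains to show that no monic polynomial of degree $\le D$ annihilates $A$; equivalently, that $I,A,A^2,\dots,A^D$ are linearly independent. Suppose $\sum_{i=0}^D\alpha_i A^i = 0$. The key observation, coming from Lemma \ref{lem:vi}(i),(ii), is that the transition matrix expressing $(A_0,A_1,\ldots,A_D)$ in terms of $(I,A,\ldots,A^D)$ is upper triangular with nonzero diagonal entries $(c_1c_2\cdots c_i)^{-1}$, hence invertible. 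Therefore any linear dependence among the powers $A^i$ translates into a linear dependence among the $A_i$; but the latter form a basis of $M$ and are linearly independent. So $\alpha_i=0$ for all $i$, and the minimal polynomial has degree exactly $D+1$, forcing it to equal $p$.

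The only mildly delicate step is the linear-independence argument at the end of part (ii); everything else is a direct appeal to Lemma \ref{lem:vi}. That step is handled by the triangular change-of-basis observation, so no real obstacle arises.
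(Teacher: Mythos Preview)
Your proof is correct and follows exactly the approach the paper intends; the paper's own proof is the single line ``By Lemma \ref{lem:vi} and since $\lbrace A_i\rbrace_{i=0}^D$ is a basis for $M$,'' and you have simply unpacked that reference in full detail. One cosmetic remark: in the natural ordering your transition matrix is lower triangular rather than upper triangular, but this does not affect the invertibility argument.
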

\begin{proof} By Lemma \ref{lem:vi} and since $\lbrace A_i\rbrace_{i=0}^D$ is a basis for $M$.
\end{proof}

\noindent Next we consider the eigenvalues of $A$. Since 
 $\lbrace E_i\rbrace_{i=0}^D$ form a basis for  
$M$, there exist real numbers  
$\lbrace\theta_i\rbrace_{i=0}^D$
such that
\begin{align} \label{eq:th}
A = \sum_{i=0}^D \theta_iE_i.
\end{align}

\begin{lemma} \label{lem:thDist} The following {\rm (i)--(iii)} hold:
\begin{enumerate}
\item[\rm (i)] the polynomial $v_{D+1}$ has $D+1$ mutually distinct roots $\lbrace \theta_i \rbrace_{i=0}^D$;
\item[\rm (ii)] the eigenspaces of $A$ are $\lbrace E_iV \rbrace_{i=0}^D$;
\item[\rm (iii)] for $0 \leq i \leq D$, $\theta_i$ is the eigenvalue of $A$ for $E_iV$.
\end{enumerate}
\end{lemma}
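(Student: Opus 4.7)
The plan is to work from the spectral decomposition \eqref{eq:th} together with Corollary \ref{lem:AGen}(ii), using the minimal-polynomial degree to force the $\theta_i$ to be distinct.

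First I would exploit the orthogonality relations $E_iE_j=\delta_{i,j}E_i$. Multiplying \eqref{eq:th} on the right by $E_j$ gives $AE_j=\theta_j E_j$, so every vector in $E_jV$ is an eigenvector of $A$ with eigenvalue $\theta_j$. In particular each $\theta_j$ really is an eigenvalue of $A$, and $E_jV$ sits inside the $\theta_j$-eigenspace. More generally, for any polynomial $f\in\mathbb R[\lambda]$, iterating \eqref{eq:th} yields $f(A)=\sum_{i=0}^D f(\theta_i)E_i$. Since the $\lbrace E_i\rbrace_{i=0}^D$ are linearly independent (they form a basis of $M$) and each is nonzero, $f(A)=0$ if and only if $f(\theta_i)=0$ for every $i$.

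Next I would prove (i). By Lemma \ref{lem:vi}(iv), $v_{D+1}(A)=0$, so by the above criterion $v_{D+1}(\theta_i)=0$ for $0\le i\le D$. Thus each $\theta_i$ is a root of $v_{D+1}$, and $v_{D+1}$ has degree $D+1$ by Lemma \ref{lem:vi}(i). To see the $\theta_i$ are mutually distinct, let $\mu_0,\ldots,\mu_r$ be the distinct values among $\lbrace \theta_i\rbrace_{i=0}^D$ and put $g(\lambda)=\prod_{s=0}^r(\lambda-\mu_s)$. Then $g(\theta_i)=0$ for all $i$, so $g(A)=0$. Since the minimal polynomial of $A$ equals $c_1c_2\cdots c_D\,v_{D+1}$ by Corollary \ref{lem:AGen}(ii), which has degree $D+1$, we get $r+1\ge D+1$, forcing $r=D$ and the $\theta_i$ to be pairwise distinct. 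The $D+1$ distinct roots then account for all roots of $v_{D+1}$.

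Finally, (ii) and (iii) follow by combining the eigenvalue relation $AE_j=\theta_j E_j$ with the orthogonal decomposition $V=\sum_{i=0}^D E_iV$: every $v\in V$ decomposes uniquely as $v=\sum_i E_iv$ with $AE_iv=\theta_i E_iv$, so the $\theta_j$-eigenspace of $A$ equals $E_jV$ (using that the $\theta_i$ are distinct, so no other summand can contribute). The only subtle step is the distinctness argument in the previous paragraph; everything else is bookkeeping with the idempotent relations, and no serious obstacle is expected.
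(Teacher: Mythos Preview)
Your proof is correct and follows essentially the same approach as the paper: both hinge on Corollary~\ref{lem:AGen}(ii) (the minimal polynomial of $A$ has degree $D+1$) together with \eqref{eq:th}. The only cosmetic difference is that the paper invokes the diagonalizability of $A$ directly to conclude the minimal polynomial has simple roots, whereas you unpack this by explicitly building the polynomial $g$ and comparing degrees; these are the same argument in slightly different clothing.
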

\begin{proof} (i) The roots of $v_{D+1}$ are mutually distinct by Corollary \ref{lem:AGen}(ii) and since $A$ is diagonalizable.
These roots are  $\lbrace \theta_i \rbrace_{i=0}^D$ by \eqref{eq:th} .
\\
\noindent (ii), (iii) By  \eqref{eq:th}.
\end{proof}

\begin{definition}\rm 
\label{def:Eig} For $0 \leq i \leq D$ we call $\theta_i$ the {\it $i^{th}$ eigenvalue of $\Gamma$} (with respect to the given ordering of the primitive idempotents).
\end{definition}

\noindent For convenience we adjust the normalization of the polynomials $v_i$.
\begin{definition}\rm \label{def:ui}
Define the polynomial
\begin{align} \label{eq:ui}
u_i = \frac{v_i}{k_i} \qquad \qquad (0 \leq i \leq D).
\end{align}
\end{definition}

\begin{lemma} \label{lem:urec} We have
\begin{align*} 
& u_0 = 1, \qquad \quad u_1= k^{-1} \lambda, \\
 \lambda u_i &= c_i u_{i-1} + a_i u_i + b_i u_{i+1} \qquad \qquad (1 \leq i \leq D-1),\\
 & \lambda u_D -c_D u_{D-1} - a_D u_D = k^{-1}_D v_{D+1}.
\end{align*}
\end{lemma}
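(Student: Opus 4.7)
The proof is a direct normalization of the three-term recursion in Definition \ref{def:vi} using the identity $k_{i-1}b_{i-1}=k_i c_i$ for $1\leq i \leq D$, which was established by the counting argument leading to \eqref{eq:kiform}.

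First I would dispatch the initial conditions: $u_0 = v_0/k_0 = 1/1 = 1$ and $u_1 = v_1/k_1 = \lambda/k$, using $k_0=1$ and $k_1=k$. Then, for $1 \leq i \leq D-1$, I would start from
\[
\lambda v_i = b_{i-1} v_{i-1} + a_i v_i + c_{i+1} v_{i+1}
\]
(from Definition \ref{def:vi}) and divide both sides by $k_i$. Writing $v_{i-1} = k_{i-1} u_{i-1}$, $v_i = k_i u_i$, and $v_{i+1}=k_{i+1}u_{i+1}$ yields
\[
\lambda u_i = \frac{b_{i-1}k_{i-1}}{k_i}\, u_{i-1} + a_i u_i + \frac{c_{i+1}k_{i+1}}{k_i}\, u_{i+1}.
\]
The identity $k_{i-1}b_{i-1} = k_i c_i$ shows that the first coefficient equals $c_i$, and the same identity with $i$ replaced by $i+1$ shows that the third coefficient equals $b_i$. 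This gives the middle recurrence.

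For the boundary case $i=D$, the recursion in Definition \ref{def:vi} gives
\[
\lambda v_D = b_{D-1}v_{D-1} + a_D v_D + c_{D+1} v_{D+1},
\]
with $c_{D+1}=1$. Dividing by $k_D$ and applying $b_{D-1}k_{D-1}=k_D c_D$ as above transforms the first term into $c_D u_{D-1}$, while the last term remains $k_D^{-1} v_{D+1}$ (since there is no $k_{D+1}$ in the picture and $v_{D+1}$ is the ``leftover'' polynomial of degree $D+1$). Rearranging yields the stated formula $\lambda u_D - c_D u_{D-1} - a_D u_D = k_D^{-1} v_{D+1}$.

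There is no real obstacle; the only point requiring care is the bookkeeping that $k_{i-1}b_{i-1} = k_i c_i$ correctly interchanges with the normalization $v_i \mapsto v_i/k_i$ to swap the roles of $b_{i-1}$ and $c_i$ (and, at the upper index, of $c_{i+1}$ and $b_i$). This asymmetric appearance of the intersection numbers in the unnormalized versus normalized recursions is the whole content of the lemma.
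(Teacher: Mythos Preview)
Your proof is correct and takes essentially the same approach as the paper: substitute $v_i = k_i u_i$ into the recurrence of Definition~\ref{def:vi} and simplify the coefficients using $k_{i-1}b_{i-1}=k_i c_i$. The paper's proof is just a one-line pointer to this computation, and you have carried it out in full.
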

\begin{proof} Evaluate the recurrence in Definition \ref{def:vi}
using $v_i = k_i u_i$ $(0 \leq i \leq D)$ and \eqref{eq:kiform}.
\end{proof}

\noindent Recall that $\lbrace A_i \rbrace_{i=0}^D$ and $\lbrace E_i \rbrace_{i=0}^D$ are bases for the vector space $M$. Next we
describe how these bases are related.

\begin{lemma} \label{lem:transAE}
For $0 \leq j \leq D$ we have
\begin{enumerate}
\item[\rm (i)] 
$A_j =  \sum_{i=0}^D v_j(\theta_i) E_i$;
\item[\rm (ii)]  $E_j = \vert X \vert^{-1} m_j \sum_{i=0}^D u_i(\theta_j) A_i$.
\end{enumerate}
\end{lemma}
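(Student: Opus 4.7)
The plan is to prove (i) by a direct polynomial-in-$A$ calculation and then deduce (ii) by a trace-pairing argument, exploiting the fact that $\{A_i\}_{i=0}^D$ is orthogonal with respect to the bilinear form $(B,C)\mapsto \mathrm{tr}(BC)$.

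For part (i), I would invoke Lemma \ref{lem:vi}(iii) to rewrite $A_j=v_j(A)$. Substituting the spectral decomposition $A=\sum_{i=0}^D\theta_iE_i$ from \eqref{eq:th} and using the orthogonality relations $E_iE_h=\delta_{i,h}E_i$, any polynomial $p\in\mathbb R[\lambda]$ satisfies $p(A)=\sum_{i=0}^D p(\theta_i)E_i$. Applying this with $p=v_j$ gives (i) immediately.

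For part (ii), since $\{A_i\}_{i=0}^D$ is a basis of $M$ and $E_j\in M$, there are unique scalars $\alpha_{i,j}\in\mathbb R$ with $E_j=\sum_{i=0}^D\alpha_{i,j}A_i$. To pin down $\alpha_{i,j}$, I would compute $\mathrm{tr}(E_jA_i)$ in two ways. On the one hand, using part (i) of the lemma we have $A_i=\sum_{h=0}^D v_i(\theta_h)E_h$, so
\begin{align*}
\mathrm{tr}(E_jA_i)=\sum_{h=0}^D v_i(\theta_h)\mathrm{tr}(E_jE_h)=v_i(\theta_j)\mathrm{tr}(E_j)=v_i(\theta_j)m_j.
\end{align*}
On the other hand, the distance matrices have disjoint support, so $A_iA_h=0$ on the diagonal when $i\neq h$ and $\mathrm{tr}(A_iA_h)=0$; while $\mathrm{tr}(A_iA_i)$ counts ordered pairs $(y,z)$ with $\partial(y,z)=i$, namely $|X|k_i$. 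Expanding $E_j=\sum_i\alpha_{i,j}A_i$ therefore gives $\mathrm{tr}(E_jA_i)=\alpha_{i,j}|X|k_i$. Equating the two expressions yields $\alpha_{i,j}=|X|^{-1}m_j\,v_i(\theta_j)/k_i=|X|^{-1}m_j\,u_i(\theta_j)$ by Definition \ref{def:ui}, which is precisely (ii).

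There is no real obstacle: the main thing to be careful about is justifying $\mathrm{tr}(A_iA_h)=\delta_{i,h}|X|k_i$, but this follows at once from the definition of the distance matrices and the formula $k_i=|\Gamma_i(y)|$ for every $y\in X$. Both parts of the lemma are then immediate.
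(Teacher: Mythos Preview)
Your proof is correct. Part (i) is essentially identical to the paper's argument: both use $A_j=v_j(A)$ together with the idempotent decomposition $I=\sum_i E_i$ (equivalently $A=\sum_i\theta_iE_i$) to evaluate the polynomial $v_j$ on each eigenspace.

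For part (ii) you take a genuinely different route. The paper sets $S=|X|^{-1}m_j\sum_i u_i(\theta_j)A_i$, shows directly via the three-term recurrence $AA_i=b_{i-1}A_{i-1}+a_iA_i+c_{i+1}A_{i+1}$ that $AS=\theta_jS$, concludes $S=\alpha E_j$ for some scalar, and then fixes $\alpha=1$ by comparing traces. Your argument instead exploits the orthogonality $\mathrm{tr}(A_iA_h)=\delta_{i,h}|X|k_i$ together with part (i) to solve for the coefficients $\alpha_{i,j}$ by a trace pairing. Your approach is slightly more direct and anticipates the bilinear-form computations that the paper develops just afterward in Lemma~\ref{lem:bAE}; the paper's approach has the advantage of making visible that the $u_i(\theta_j)$ satisfy the three-term recurrence at eigenvalue $\theta_j$, which is the orthogonal-polynomial content. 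Either method works without circularity here, since the trace identities you use do not rely on Lemma~\ref{lem:transAE}.
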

\begin{proof} (i) We have
\begin{align*}
A_j = v_j(A) = v_j(A) \sum_{i=0}^D E_i = \sum_{i=0}^D v_j(\theta_i) E_i.
\end{align*}
\noindent (ii) Define $S=\vert X \vert^{-1} m_j \sum_{i=0}^D u_i(\theta_j) A_i$. We show that $E_j=S$. Expanding $AS$ using Lemma \ref{lem:A3t}, we routinely obtain
$AS=\theta_j S$. By this and since $S \in M$, we obtain $S=\alpha E_j$ for some $\alpha \in \mathbb R$. In the equation $S=\alpha E_j$,
 take the trace of each side. We have ${\rm tr}(S)=m_j$, because ${\rm tr} (A_\ell) = \delta_{0,\ell} \vert X \vert $ for $0 \leq \ell \leq D$.
 We have ${\rm tr} (E_j)=m_j$. By these comments $\alpha=1$, so $E_j=S$.
\end{proof}

\begin{lemma} \label{lem:AiEi} For $0 \leq i,j\leq D$,
\begin{align*}
E_i A_j  = v_j(\theta_i) E_i = A_j E_i.
\end{align*}
\end{lemma}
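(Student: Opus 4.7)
The plan is to prove this as a direct corollary of the results already established, using two small observations: that $A_j$ is a polynomial in $A$, and that $E_i$ is a projector onto the $\theta_i$-eigenspace of $A$.

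First I would invoke Lemma \ref{lem:vi}(iii) to write $A_j = v_j(A)$, and Lemma \ref{lem:thDist}(iii) together with \eqref{eq:th} to record that $AE_i = \theta_i E_i = E_i A$. From the latter it follows by induction on the degree that $p(A) E_i = p(\theta_i) E_i = E_i p(A)$ for any polynomial $p \in \mathbb R[\lambda]$. Applying this with $p = v_j$ yields both equalities in the claim simultaneously:
\begin{align*}
E_i A_j = E_i v_j(A) = v_j(\theta_i) E_i = v_j(A) E_i = A_j E_i.
\end{align*}

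An alternative route, which I would mention for completeness, uses Lemma \ref{lem:transAE}(i) directly: expanding $A_j = \sum_{h=0}^D v_j(\theta_h) E_h$ and multiplying on either side by $E_i$, the orthogonality relation $E_i E_h = \delta_{i,h} E_i$ collapses the sum to the single term $v_j(\theta_i) E_i$.

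There is no real obstacle here; the lemma is essentially a bookkeeping consequence of the spectral decomposition of $A$ combined with the fact that $A_j$ lies in the polynomial algebra generated by $A$ (Corollary \ref{lem:AGen}(i)). The only point that warrants care is to note that the two equalities $E_i A_j = v_j(\theta_i) E_i$ and $A_j E_i = v_j(\theta_i) E_i$ both follow from the same computation because $A$ and $E_i$ commute, which is immediate since both belong to the commutative algebra $M$.
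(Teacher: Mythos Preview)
Your proposal is correct. Your alternative route---expanding $A_j$ via Lemma~\ref{lem:transAE}(i) and using $E_i E_h = \delta_{i,h} E_i$---is exactly the paper's one-line proof; your primary route via $A_j = v_j(A)$ and $A E_i = \theta_i E_i$ is equally valid and arrives at the same place by essentially the same spectral reasoning.
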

\begin{proof} To verify this equation, eliminate $A_j$ using Lemma \ref{lem:transAE}(i) and simplify the result.
\end{proof}

\begin{lemma} \label{lem:norm} For $0 \leq i \leq D$ we have
\begin{enumerate}
\item[\rm (i)]  $v_i(\theta_0)=k_i$;
\item[\rm (ii)] $u_i(\theta_0)=1$.
\end{enumerate}
\end{lemma}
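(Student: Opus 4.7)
The plan is to combine Lemma~\ref{lem:AiEi} with the explicit description of $E_0$ as $|X|^{-1}J$. Applying Lemma~\ref{lem:AiEi} with the primitive idempotent $E_0$ gives
\begin{equation*}
A_i E_0 = v_i(\theta_0) E_0 \qquad (0 \leq i \leq D),
\end{equation*}
so part (i) will follow once I identify the left-hand side independently as $k_i E_0$.

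To do this, I would observe that since $A_i$ is the $i$th distance matrix, every row of $A_i$ contains exactly $|\Gamma_i(y)| = k_i$ entries equal to $1$ and all other entries $0$. Therefore the all-ones matrix $J$ satisfies $A_i J = k_i J$. Multiplying by $|X|^{-1}$ yields $A_i E_0 = k_i E_0$. Comparing with the previous display and using that $E_0 \neq 0$ gives $v_i(\theta_0) = k_i$, which is (i).

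For part (ii), I would simply divide by $k_i$ and use Definition~\ref{def:ui}:
\begin{equation*}
u_i(\theta_0) = \frac{v_i(\theta_0)}{k_i} = \frac{k_i}{k_i} = 1.
\end{equation*}

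There is really no obstacle here; the only subtlety is recognizing that the ordering convention makes $E_0 = |X|^{-1}J$ the "trivial" idempotent, and that this forces $\theta_0$ to be the valency-style eigenvalue at which the distance polynomials $v_i$ take the value $k_i$. Everything else is a one-line calculation from the ingredients already established.
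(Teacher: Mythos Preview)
Your proof is correct and essentially identical to the paper's: both compute $A_i E_0$ in two ways (via Lemma~\ref{lem:AiEi} and via the constant row sum $k_i$ of $A_i$ together with $E_0 = |X|^{-1}J$), then compare, and obtain (ii) from (i) using $u_i = v_i/k_i$.
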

\begin{proof} (i) The matrix $A_i$ has constant row sum $k_i$. Therefore $A_i J =k_i J$. We have $E_0=\vert X \vert^{-1}J$, so
$A_i E_0 =k_i E_0$. By Lemma \ref{lem:AiEi}, $A_i E_0 = v_i(\theta_0) E_0$. By these comments $v_i(\theta_0)=k_i$.
\\
\noindent (ii) By (i) and $v_i = k_i u_i$.
\end{proof}

\noindent It is often said that the polynomials $\lbrace u_i \rbrace_{i=0}^D$ and $\lbrace v_i \rbrace_{i=0}^D$ are orthogonal \cite[p.~201]{bannai}. Our next goal is to explain
what this means. We will bring in a bilinear form, and explain what is orthogonal to what.
\medskip

\noindent We endow the vector space ${\rm Mat}_X(\mathbb R)$ with a bilinear form $\langle\,,\,\rangle$ such that
\begin{align*} 
\langle B, C \rangle = {\rm tr} (BC^t) \qquad \qquad B, C \in {\rm Mat}_X(\mathbb R).
\end{align*}
\noindent This bilinear form is symmetric. For $B \in {\rm Mat}_X(\mathbb R)$ we abbreviate $\Vert B \Vert^2 = \langle B,B \rangle $.  Note that $\Vert B \Vert^2 \geq 0$, with equality if and only if $B=0$. 
\begin{lemma} \label{lem:BCR} For $G,H,K \in {\rm Mat}_X(\mathbb R)$ we have
\begin{align*}
\langle GH, K \rangle = \langle H, G^t K \rangle = \langle G, KH^t \rangle.
\end{align*}
\end{lemma}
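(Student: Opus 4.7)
The plan is to unravel both sides using the definition $\langle B,C\rangle = {\rm tr}(BC^t)$ together with two elementary facts: the cyclic invariance of the trace, ${\rm tr}(PQ)={\rm tr}(QP)$, and the anti-homomorphism property of the transpose, $(PQ)^t = Q^t P^t$ (with $(P^t)^t = P$). Both identities to be proved are then one-line manipulations.

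For the first equality I would compute
\begin{align*}
\langle GH, K\rangle = {\rm tr}\bigl((GH)K^t\bigr) = {\rm tr}\bigl(G(HK^t)\bigr) = {\rm tr}\bigl((HK^t)G\bigr) = {\rm tr}\bigl(H(K^t G)\bigr),
\end{align*}
and then rewrite $K^t G = (G^t K)^t$ so that the last expression equals ${\rm tr}\bigl(H(G^t K)^t\bigr) = \langle H, G^t K\rangle$. Here the second step is associativity of matrix multiplication, the third step is cyclicity of the trace, and the fourth step uses the transpose identity.

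For the second equality I would argue symmetrically: starting from $\langle GH, K\rangle = {\rm tr}(GHK^t)$, observe that $HK^t = (KH^t)^t$, so
\begin{align*}
\langle GH, K\rangle = {\rm tr}\bigl(G(KH^t)^t\bigr) = \langle G, KH^t\rangle.
\end{align*}

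There is no real obstacle; the only thing to be careful about is keeping the transposes in the right place when applying $(PQ)^t = Q^t P^t$, since it is easy to slip a sign of transposition. The argument uses nothing beyond the trace definition in the paragraph preceding the lemma, the stated symmetry of $\langle\,,\,\rangle$, and standard properties of ${\rm tr}$ and $(\cdot)^t$.
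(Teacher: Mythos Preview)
Your proof is correct and takes essentially the same approach as the paper, which simply says ``Use ${\rm tr}(BC)={\rm tr}(CB)$.'' You have just filled in the routine details of that hint.
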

\begin{proof} Use ${\rm tr} (BC) = {\rm tr} (CB)$.
\end{proof}

\begin{lemma} \label{lem:bAE}
For $0 \leq  i,j\leq D$ we have
\begin{enumerate}
\item[\rm (i)] $\langle E_i, E_j \rangle = \delta_{i,j} m_i$;
\item[\rm (ii)]  $\langle A_i, A_j \rangle = \delta_{i,j} k_i \vert X \vert$;
\item[\rm (iii)]  $\langle A_i, E_j \rangle = v_i(\theta_j)m_j$.
\end{enumerate}
\end{lemma}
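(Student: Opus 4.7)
The plan is to verify each of (i)--(iii) by a direct computation with the bilinear form $\langle B,C\rangle = \mathrm{tr}(BC^t)$, exploiting the symmetry of the matrices $A_i$ and $E_i$ (so that transposes may be dropped) together with the multiplication tables already catalogued for $\lbrace A_i\rbrace$ and $\lbrace E_i\rbrace$.

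For (i), since $E_j^t = E_j$, I would write $\langle E_i,E_j\rangle = \mathrm{tr}(E_iE_j) = \mathrm{tr}(\delta_{i,j}E_i) = \delta_{i,j}\,\mathrm{tr}(E_i) = \delta_{i,j}m_i$, using property (iv) of the primitive idempotents and the identity $m_i = \mathrm{tr}(E_i)$ recorded just after their introduction.

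For (ii), again using $A_j^t = A_j$ and the multiplication rule $A_iA_j = \sum_{h=0}^{D} p^h_{i,j}A_h$, I would compute $\langle A_i,A_j\rangle = \mathrm{tr}(A_iA_j) = \sum_{h=0}^{D} p^h_{i,j}\,\mathrm{tr}(A_h)$. The only nonzero trace among the distance matrices is $\mathrm{tr}(A_0) = |X|$, so the sum collapses to $p^0_{i,j}|X|$, and the earlier evaluation $p^0_{i,j} = \delta_{i,j}k_i$ finishes the claim.

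For (iii), I would invoke Lemma \ref{lem:AiEi} to rewrite $A_iE_j = v_i(\theta_j)E_j$, giving $\langle A_i,E_j\rangle = \mathrm{tr}(A_iE_j) = v_i(\theta_j)\,\mathrm{tr}(E_j) = v_i(\theta_j)m_j$. There is no real obstacle here; the only thing to be careful about is keeping track of which symmetry/transpose fact is invoked at each step, and remembering that $\mathrm{tr}(A_h) = \delta_{0,h}|X|$, which is immediate from the definition of $A_h$ since its diagonal is nonzero only when $h=0$.
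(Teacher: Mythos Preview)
Your proof is correct and essentially identical to the paper's: parts (i) and (ii) match verbatim, and for (iii) the paper expands $A_i$ via Lemma~\ref{lem:transAE}(i) and then applies part (i), whereas you invoke Lemma~\ref{lem:AiEi} directly---but since Lemma~\ref{lem:AiEi} is itself derived from Lemma~\ref{lem:transAE}(i), this is the same argument packaged one step further along.
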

\begin{proof} (i) We have
\begin{align*}
\langle E_i, E_j \rangle = 
{\rm tr}(E_i E^t_j) =
{\rm tr}(E_i E_j) =
\delta_{i,j} {\rm tr}(E_i) =
\delta_{i,j} m_i.
\end{align*}
\noindent (ii) We have
\begin{align*}
\langle A_i, A_j \rangle = {\rm tr}(A_i A^t_j) = {\rm tr}(A_iA_j) = \sum_{h=0}^D p^h_{i,j} {\rm tr}(A_h) = p^0_{i,j} \vert X \vert = \delta_{i,j} k_i \vert X \vert.
\end{align*}
\noindent (iii) Eliminate $A_i$ using Lemma \ref{lem:transAE}(i), and evaluate the result using (i) above.
\end{proof}

\begin{proposition} \label{prop:uorth} We have 
\begin{align*}
\sum_{\ell=0}^D u_\ell(\theta_i) u_\ell (\theta_j) k_\ell &= \delta_{i,j} m^{-1}_i \vert X \vert \qquad \qquad (0 \leq i,j\leq D),
\\
\sum_{\ell=0}^D u_i(\theta_\ell) u_j(\theta_\ell ) m_\ell &= \delta_{i,j} k^{-1}_i \vert X \vert \qquad \qquad (0 \leq i,j\leq D).
\end{align*}
\end{proposition}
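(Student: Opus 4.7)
The plan is to derive both orthogonality relations by computing the inner products $\langle A_i, A_j\rangle$ and $\langle E_i, E_j\rangle$ in two different ways, using the change-of-basis formulas of Lemma \ref{lem:transAE} together with the inner product evaluations in Lemma \ref{lem:bAE}.

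For the second relation, I would start from Lemma \ref{lem:transAE}(i), expand
\[
\langle A_i, A_j\rangle = \Bigl\langle \sum_{\ell=0}^D v_i(\theta_\ell) E_\ell, \; \sum_{m=0}^D v_j(\theta_m) E_m\Bigr\rangle,
\]
and use bilinearity together with $\langle E_\ell, E_m\rangle = \delta_{\ell,m} m_\ell$ from Lemma \ref{lem:bAE}(i) to collapse the double sum to $\sum_\ell v_i(\theta_\ell) v_j(\theta_\ell)\, m_\ell$. On the other hand, Lemma \ref{lem:bAE}(ii) gives $\langle A_i, A_j\rangle = \delta_{i,j} k_i |X|$. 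Equating the two expressions and substituting $v_i = k_i u_i$ from Definition \ref{def:ui} cancels a factor of $k_i k_j$ on both sides (when $i = j$; when $i \neq j$ the right side is zero so the factor is irrelevant), yielding exactly $\sum_\ell u_i(\theta_\ell) u_j(\theta_\ell)\, m_\ell = \delta_{i,j} k_i^{-1} |X|$.

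For the first relation, I would run the dual computation, expanding instead via Lemma \ref{lem:transAE}(ii):
\[
\langle E_i, E_j\rangle = |X|^{-2} m_i m_j \Bigl\langle \sum_{\ell=0}^D u_\ell(\theta_i) A_\ell,\; \sum_{m=0}^D u_m(\theta_j) A_m\Bigr\rangle.
\]
Now invoking Lemma \ref{lem:bAE}(ii) to evaluate $\langle A_\ell, A_m\rangle = \delta_{\ell,m} k_\ell |X|$ collapses the double sum to $|X|^{-1} m_i m_j \sum_\ell u_\ell(\theta_i) u_\ell(\theta_j)\, k_\ell$, while the left-hand side equals $\delta_{i,j} m_i$ by Lemma \ref{lem:bAE}(i). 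Solving for the sum gives $\sum_\ell u_\ell(\theta_i) u_\ell(\theta_j)\, k_\ell = \delta_{i,j} m_i^{-1} |X|$, as required.

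There is no real obstacle here: both identities follow immediately once one recognizes the symmetry between the $A$-basis and $E$-basis expansions. The only point requiring care is the bookkeeping of the factors $k_i$, $m_i$, and $|X|$, and the observation that the change-of-basis coefficients $v_j(\theta_i)$ and $|X|^{-1} m_j u_i(\theta_j)$ are essentially dual with respect to the trace form on $\mathrm{Mat}_X(\mathbb R)$, which is what forces the two orthogonality relations to be mutually inverse.
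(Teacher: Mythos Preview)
Your proof is correct and follows essentially the same approach as the paper: both derive the first relation by expanding $\langle E_i, E_j\rangle$ via Lemma~\ref{lem:transAE}(ii) and evaluating with Lemma~\ref{lem:bAE}(ii), and the second relation by expanding $\langle A_i, A_j\rangle$ via Lemma~\ref{lem:transAE}(i) and evaluating with Lemma~\ref{lem:bAE}(i). The only cosmetic difference is that you route the second computation through $v_i$ before converting to $u_i$, whereas the paper leaves that step implicit.
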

\begin{proof} The  first equation comes from $\langle E_i, E_j \rangle = \delta_{i,j} m_i$. In this equation, eliminate $E_i$ and $E_j$ using Lemma  \ref{lem:transAE}(ii), 
and evaluate the result using Lemma \ref{lem:bAE}(ii). The second equation comes from $\langle A_i, A_j \rangle = \delta_{i,j} k_i \vert X \vert$. In this equation,
eliminate $A_i$ and $A_j$ using  Lemma  \ref{lem:transAE}(i), and evaluate the result using  Lemma \ref{lem:bAE}(i).
\end{proof}

\begin{proposition} \label{prop:vorth}  We have 
\begin{align*}
\sum_{\ell=0}^D v_\ell(\theta_i) v_\ell(\theta_j) k^{-1}_\ell &= \delta_{i,j} m^{-1}_i \vert X \vert \qquad \qquad (0 \leq i,j\leq D),
\\
\sum_{\ell=0}^D v_i(\theta_\ell) v_j(\theta_\ell) m_\ell &= \delta_{i,j} k_i \vert X \vert \qquad \qquad (0 \leq i,j\leq D).
\end{align*}
\end{proposition}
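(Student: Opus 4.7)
The plan is to derive both identities as direct rescalings of Proposition \ref{prop:uorth} via the relation $v_\ell = k_\ell u_\ell$ of Definition \ref{def:ui}. An alternative is to mimic the proof of Proposition \ref{prop:uorth} using the bilinear form on $\mathrm{Mat}_X(\mathbb R)$: the second identity drops out of $\langle A_i, A_j\rangle = \delta_{i,j} k_i |X|$ upon expanding $A_i, A_j$ via Lemma \ref{lem:transAE}(i) and applying Lemma \ref{lem:bAE}(i); the first identity drops out of $\langle E_i, E_j\rangle = \delta_{i,j} m_i$ after rewriting Lemma \ref{lem:transAE}(ii) as $E_j = |X|^{-1} m_j \sum_i v_i(\theta_j) k_i^{-1} A_i$ and applying Lemma \ref{lem:bAE}(ii). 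I would present the substitution route since it is shorter.

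For the first identity, I would use $v_\ell(\theta_i) = k_\ell u_\ell(\theta_i)$ and $v_\ell(\theta_j) = k_\ell u_\ell(\theta_j)$ to rewrite $v_\ell(\theta_i) v_\ell(\theta_j) k_\ell^{-1} = k_\ell u_\ell(\theta_i) u_\ell(\theta_j)$. Summing over $\ell$ and invoking the first equation of Proposition \ref{prop:uorth} gives $\delta_{i,j} m_i^{-1} |X|$, as required.

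For the second identity, I would use $v_i(\theta_\ell) = k_i u_i(\theta_\ell)$ and $v_j(\theta_\ell) = k_j u_j(\theta_\ell)$ to factor
\[
\sum_{\ell=0}^D v_i(\theta_\ell) v_j(\theta_\ell) m_\ell = k_i k_j \sum_{\ell=0}^D u_i(\theta_\ell) u_j(\theta_\ell) m_\ell,
\]
then apply the second equation of Proposition \ref{prop:uorth} to obtain $k_i k_j \delta_{i,j} k_i^{-1} |X|$. The Kronecker delta forces $j=i$ in the nonzero case, so $k_j$ may be replaced by $k_i$, and the expression collapses to $\delta_{i,j} k_i |X|$.

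There is no real obstacle: the content of the statement already lies in Proposition \ref{prop:uorth}, and what remains is only to verify that the powers of $k_\ell$ balance in each direction of substitution. The only mild point to watch is that in the second identity the constants $k_i, k_j$ depend on the outer indices $i, j$ (and so can be pulled outside the sum over $\ell$), whereas in the first identity it is $k_\ell$ which depends on the summation index and must be combined with the factor $k_\ell^{-1}$ already present.
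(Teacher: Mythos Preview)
Your proof is correct and follows exactly the paper's approach: the paper's proof simply reads ``Combine Definition \ref{def:ui} and Proposition \ref{prop:uorth},'' which is precisely the substitution $v_\ell = k_\ell u_\ell$ you carry out.
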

\begin{proof} Combine Definition  \ref{def:ui}
and 
Proposition \ref{prop:uorth}.
\end{proof}
\begin{note} \rm The relations in Proposition \ref{prop:uorth}  and Proposition \ref{prop:vorth} are called the {\it orthogonality relations}
for the polynomials $\lbrace u_i \rbrace_{i=0}^D$ and $\lbrace v_i\rbrace_{i=0}^D$, respectively.
\end{note}

\noindent Our next goal is to give some formulas for the intersection numbers $p^h_{i,j}$.

\begin{lemma} \label{lem:phij}
For $0 \leq h,i,j\leq D$,
\begin{align*}
p^h_{i,j} = \vert X \vert^{-1} k^{-1}_h \langle A_i A_j, A_h \rangle 
             = \vert X \vert^{-1} k^{-1}_h \langle A_h, A_i A_j \rangle.
             \end{align*}
\end{lemma}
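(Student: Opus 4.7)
The plan is to expand $A_iA_j$ in the basis $\{A_\ell\}_{\ell=0}^D$ using the intersection numbers, and then extract $p^h_{i,j}$ via the orthogonality of the distance matrices with respect to the trace form.

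First I would invoke the defining relation $A_iA_j = \sum_{\ell=0}^D p^\ell_{i,j} A_\ell$, which is one of the four properties of the basis $\{A_\ell\}_{\ell=0}^D$ recorded just after the Bose–Mesner algebra was introduced. Taking the inner product of both sides with $A_h$ and using bilinearity gives
\begin{align*}
\langle A_iA_j, A_h\rangle = \sum_{\ell=0}^D p^\ell_{i,j}\,\langle A_\ell, A_h\rangle.
\end{align*}
Next I would apply Lemma \ref{lem:bAE}(ii), namely $\langle A_\ell, A_h\rangle = \delta_{\ell,h}\, k_h |X|$, so the sum on the right collapses to $p^h_{i,j}\, k_h |X|$. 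Dividing by $k_h |X|$ (allowed since $k_h \geq 1$) yields the first equality.

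For the second equality I would simply cite the fact, already established in the excerpt, that the bilinear form $\langle\,,\,\rangle$ on ${\rm Mat}_X(\mathbb R)$ is symmetric, so $\langle A_iA_j, A_h\rangle = \langle A_h, A_iA_j\rangle$.

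I do not anticipate any genuine obstacle: the statement is essentially a direct reading-off of the structure constants of the Bose–Mesner algebra against its dual basis under the trace form. The only thing to be careful about is making sure the normalization factors $|X|^{-1}$ and $k_h^{-1}$ are tracked correctly when quoting Lemma \ref{lem:bAE}(ii).
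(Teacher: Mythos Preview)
Your proof is correct and follows exactly the same approach as the paper: expand $A_iA_j$ via $A_iA_j=\sum_{\ell=0}^D p^\ell_{i,j}A_\ell$ and evaluate using Lemma~\ref{lem:bAE}(ii). The only addition you make explicit is the appeal to the symmetry of the form for the second equality, which the paper leaves implicit.
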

\begin{proof} To verify these equations, expand $A_i A_j$ using $ A_i A_j = \sum_{\ell=0}^D p^\ell_{i,j} A_\ell $, and
evaluate the results using Lemma  \ref{lem:bAE}(ii).
\end{proof}

\begin{lemma} \label{lem:kp}
For $0 \leq h,i,j\leq D$,
\begin{align*}
k_h p^h_{i,j} = k_i p^i_{j,h} = k_j p^j_{h,i} = \vert X \vert^{-1} {\rm tr}\bigl( A_h A_i A_j\bigr).
\end{align*}
\end{lemma}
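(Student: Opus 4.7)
The plan is to recognize that Lemma \ref{lem:phij} almost gives the result directly, and that the three equal expressions on the left are just cyclic rotations of a single trace.

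First I would rewrite $\langle A_i A_j, A_h\rangle$ as a trace. By the definition $\langle B, C\rangle = {\rm tr}(BC^t)$ and the fact that each distance matrix is symmetric ($A_h^t = A_h$), we have
\begin{align*}
\langle A_i A_j, A_h \rangle = {\rm tr}(A_i A_j A_h^t) = {\rm tr}(A_i A_j A_h).
\end{align*}
Combined with Lemma \ref{lem:phij}, this gives
\begin{align*}
k_h p^h_{i,j} = \vert X \vert^{-1} {\rm tr}(A_i A_j A_h).
\end{align*}

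Next I would apply the cyclic property of the trace, ${\rm tr}(BCD) = {\rm tr}(CDB) = {\rm tr}(DBC)$, to rearrange $A_i A_j A_h$ into the desired order $A_h A_i A_j$, yielding
\begin{align*}
k_h p^h_{i,j} = \vert X \vert^{-1} {\rm tr}(A_h A_i A_j).
\end{align*}

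Finally, the other two equalities follow by the same argument with the roles of $(h,i,j)$ cyclically permuted: applying Lemma \ref{lem:phij} with superscript $i$ gives $k_i p^i_{j,h} = |X|^{-1}{\rm tr}(A_j A_h A_i)$, and with superscript $j$ gives $k_j p^j_{h,i} = |X|^{-1}{\rm tr}(A_h A_i A_j)$; cyclicity of the trace shows all three expressions agree. There is no real obstacle here — the only subtlety is keeping the index order straight when invoking Lemma \ref{lem:phij}, so I would state that lemma's conclusion carefully before permuting.
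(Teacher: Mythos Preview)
Your proof is correct and is exactly the routine application of Lemma \ref{lem:phij} that the paper intends: expand the bilinear form as a trace using $A_h^t = A_h$, then use cyclicity of the trace to equate the three cyclic permutations. There is nothing to add.
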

\begin{proof} Routine application of Lemma \ref{lem:phij}.
\end{proof} 


\begin{proposition} \label{prop:inter}
For $0 \leq h,i,j\leq D$,
\begin{align*}
p^h_{i,j} = \vert X \vert^{-1} k_i k_j \sum_{\ell=0}^D u_i(\theta_\ell) u_j(\theta_\ell) u_h(\theta_\ell)m_\ell.
\end{align*}
\end{proposition}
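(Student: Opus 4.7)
The plan is to compute $\mathrm{tr}(A_h A_i A_j)$ in two different ways and equate them with the identity from Lemma \ref{lem:kp}.

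First I would invoke Lemma \ref{lem:kp} to write
\[
k_h p^h_{i,j} = |X|^{-1} \mathrm{tr}(A_h A_i A_j).
\]
Next, I would expand each distance matrix in the primitive idempotent basis using Lemma \ref{lem:transAE}(i), giving $A_r = \sum_{\ell=0}^D v_r(\theta_\ell) E_\ell$ for $r \in \{h,i,j\}$. Multiplying the three expressions and using the orthogonality $E_\ell E_m E_n = \delta_{\ell,m}\delta_{m,n} E_\ell$ collapses the triple sum to a single sum:
\[
A_h A_i A_j = \sum_{\ell=0}^D v_h(\theta_\ell) v_i(\theta_\ell) v_j(\theta_\ell) E_\ell.
\]
Taking traces and using $\mathrm{tr}(E_\ell) = m_\ell$ yields
\[
\mathrm{tr}(A_h A_i A_j) = \sum_{\ell=0}^D v_h(\theta_\ell) v_i(\theta_\ell) v_j(\theta_\ell) m_\ell.
\]

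Finally, I would divide by $k_h$ and convert from the $v$-normalization to the $u$-normalization via $v_r = k_r u_r$ from Definition \ref{def:ui}. The factor $k_h$ in the numerator cancels the $k_h^{-1}$, leaving a factor of $k_i k_j$ outside, which produces the stated formula. The argument is essentially bookkeeping; the only subtlety is ensuring the symmetric roles of $h, i, j$ work out (they do, since the trace is cyclic and one can check consistency against Lemma \ref{lem:kp}'s symmetry $k_h p^h_{i,j} = k_i p^i_{j,h} = k_j p^j_{h,i}$). No significant obstacle is expected.
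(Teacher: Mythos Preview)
Your proof is correct and essentially the same as the paper's: the paper starts from Lemma~\ref{lem:phij}, $p^h_{i,j} = |X|^{-1} k_h^{-1} \langle A_i A_j, A_h \rangle$, expands each $A_r$ via Lemma~\ref{lem:transAE}(i), and evaluates using $\langle E_r, E_s\rangle = \delta_{r,s} m_r$; since $\langle A_i A_j, A_h\rangle = \mathrm{tr}(A_h A_i A_j)$ this is precisely your trace computation, just phrased through the bilinear form rather than the idempotent multiplication $E_\ell E_m = \delta_{\ell,m} E_\ell$.
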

\begin{proof} In the equation $p^h_{i,j} =  \vert X \vert^{-1} k^{-1}_h \langle A_i A_j, A_h \rangle $, eliminate $A_h, A_i, A_j$ using Lemma  \ref{lem:transAE}(i),
 and evaluate the result using Lemma  \ref{lem:bAE}(i).
\end{proof}

\section{The geometry of the eigenspaces}
\noindent Throughout this section $\Gamma=(X,\mathcal R)$ denotes a distance-regular graph
with diameter $D\geq 3$. Recall the standard module $V$ and the adjacency matrix $A$. Recall that for $0 \leq j \leq D$ the subspace $E_jV$ 
is an eigenspace of $A$ with eigenvalue $\theta_j$. This eigenspace is spanned by the vectors $\lbrace E_j {\hat w} \vert w \in X\rbrace$. 
Note that for $y,z\in X$ the following scalars are equal:
\begin{align*}
&{\mbox{ $\langle E_j {\hat y}, E_j {\hat z}\rangle$}}, \qquad \qquad \qquad 
{\mbox{ $(y,z)$-entry of $E_j$}}, \\
&
{\mbox{ $\langle {\hat y}, E_j {\hat z}\rangle$}}, \qquad \qquad \qquad 
{\mbox{ $y$-coordinate of  $E_j {\hat z}$}}, \\
&
{\mbox{ $\langle E_j {\hat y}, {\hat z}\rangle$}},
\qquad \qquad \qquad 
{\mbox{ $z$-coordinate of $E_j {\hat y}$}}.
\end{align*}
\noindent Next, we have some comments of a geometric nature.
\begin{lemma} \label{lem:uij} For $0 \leq i,j\leq D$ and $y,z \in X$ with $\partial(y,z)=i$,
\begin{enumerate}
\item[\rm (i)] $\langle E_j {\hat y}, E_j {\hat z} \rangle = \vert X \vert^{-1} m_j u_i(\theta_j)$;
\item[\rm (ii)] $\Vert E_j {\hat y} \Vert^2 = \Vert E_j {\hat z} \Vert^2 = \vert X \vert^{-1} m_j$;
\item[\rm (iii)] $\displaystyle{u_i(\theta_j) = \frac{\langle E_j {\hat y}, E_j {\hat z}\rangle}{ \Vert E_j {\hat y} \Vert \Vert E_j {\hat z} \Vert}}$;
\item[\rm (iv)] $u_i(\theta_j)$ is the cosine of the angle between $E_j {\hat y}$ and $E_j {\hat z}$.
\end{enumerate}
\end{lemma}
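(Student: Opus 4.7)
The plan is to deduce all four parts from two basic facts about $E_j$: it is a symmetric idempotent, and by Lemma \ref{lem:transAE}(ii) it expands as $E_j = \vert X\vert^{-1} m_j \sum_{\ell=0}^D u_\ell(\theta_j) A_\ell$. Part (i) is the engine that drives the rest.

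For (i), I would first rewrite
\begin{align*}
\langle E_j \hat y, E_j \hat z\rangle = \hat y^{\,t} E_j^{\,t} E_j \hat z = \hat y^{\,t} E_j^2 \hat z = \hat y^{\,t} E_j \hat z,
\end{align*}
which is exactly the $(y,z)$-entry of $E_j$ (this is one of the equalities listed in the preamble to the lemma). Then substitute the expansion of $E_j$ from Lemma \ref{lem:transAE}(ii) and read off the $(y,z)$-entry termwise. Since $A_\ell$ has $(y,z)$-entry equal to $\delta_{i,\ell}$ when $\partial(y,z)=i$, only the $\ell=i$ term survives, giving $\vert X\vert^{-1} m_j u_i(\theta_j)$.

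For (ii), apply (i) with $z=y$, so $i=0$; since $u_0=1$ by Lemma \ref{lem:urec}, we obtain $\Vert E_j \hat y\Vert^2 = \vert X\vert^{-1} m_j$, and likewise for $\hat z$. Part (iii) is then immediate: divide the formula in (i) by the product of the norms from (ii) to cancel the factor $\vert X\vert^{-1} m_j$ and leave $u_i(\theta_j)$. Part (iv) is just the interpretation of (iii) via the standard formula $\cos\angle(u,v) = \langle u,v\rangle/(\Vert u\Vert \Vert v\Vert)$ in the Euclidean space $V$ (with the caveat that the quantity is well-defined provided $E_j \hat y$ and $E_j \hat z$ are nonzero, which holds whenever $m_j\neq 0$, i.e.\ always).

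There is no real obstacle; the only thing to be careful about is the order in which facts are cited: Lemma \ref{lem:transAE}(ii) supplies the expansion of $E_j$, the $(y,z)$-entry extraction uses the definition of $A_\ell$, and Lemma \ref{lem:urec} (or Lemma \ref{lem:norm}(ii) applied at $\theta_j$ rather than $\theta_0$—which is why we prefer the bare recurrence here, since $u_0\equiv 1$ as a polynomial) handles the normalization in (ii). Everything else is formal.
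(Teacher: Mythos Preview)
Your proof is correct and follows essentially the same approach as the paper: use the preamble's identification $\langle E_j\hat y, E_j\hat z\rangle = (E_j)_{y,z}$, read off that entry via Lemma~\ref{lem:transAE}(ii), then specialize and combine. Your version just fills in more detail (the idempotent/symmetric computation, the explicit $u_0=1$, and the nonvanishing caveat for part (iv)) than the paper's terse one-liners.
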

\begin{proof} (i) The $(y,z)$-entry of $E_j$ is found in Lemma  \ref{lem:transAE}(ii).
\\
\noindent (ii) Set $y=z$ and $i=0$ in part (i). \\
\noindent (iii) Combine (i), (ii). \\
\noindent (iv) By (iii) and trigonometry.
\end{proof}

\begin{corollary} \label{cor:Ubound}
We have 
\begin{align*}
  -1 \leq u_i(\theta_j) \leq 1\qquad \qquad  (0 \leq i,j\leq D).
  \end{align*}
  \end{corollary}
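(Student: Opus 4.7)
The plan is to deduce the bound directly from Lemma \ref{lem:uij}, since part (iv) already identifies $u_i(\theta_j)$ as the cosine of an angle between two vectors in a Euclidean space, and cosines always lie in $[-1,1]$.

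More concretely, fix integers $i,j$ with $0 \leq i,j \leq D$. Since $\Gamma$ has diameter exactly $D$ and $k_i>0$ for $0\leq i\leq D$ (from \eqref{eq:kiform}), there exist vertices $y,z \in X$ with $\partial(y,z)=i$. By Lemma \ref{lem:uij}(ii), both $E_j\hat y$ and $E_j\hat z$ have squared norm $\vert X\vert^{-1} m_j$, which is strictly positive because $m_j = \dim(E_jV) \geq 1$. Hence the ratio displayed in Lemma \ref{lem:uij}(iii) makes sense.

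Now I would invoke the Cauchy--Schwarz inequality for the bilinear form $\langle\,,\,\rangle$ on $V$. This is legitimate because the form is symmetric and positive definite, as noted in Section 2 where it is observed that $\Vert u \Vert^2 \geq 0$ with equality if and only if $u=0$. Applied to $E_j \hat y$ and $E_j \hat z$, Cauchy--Schwarz gives
\begin{align*}
\bigl|\langle E_j \hat y, E_j \hat z \rangle\bigr| \;\leq\; \Vert E_j \hat y \Vert \,\Vert E_j \hat z \Vert.
\end{align*}
Dividing by the (nonzero) product of norms and using Lemma \ref{lem:uij}(iii) yields $|u_i(\theta_j)| \leq 1$, which is the desired inequality.

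There is no real obstacle here; the main point is simply to verify that the setup of Lemma \ref{lem:uij} actually applies (i.e., that a pair of vertices at distance $i$ exists, and that $E_j\hat y, E_j\hat z$ are nonzero so the cosine interpretation is valid), after which the bound is automatic from Cauchy--Schwarz.
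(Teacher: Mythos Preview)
Your proof is correct and follows essentially the same approach as the paper, which simply cites Lemma~\ref{lem:uij}(iv) and ``trigonometry'' (i.e., that a cosine lies in $[-1,1]$). Your version is slightly more explicit in checking that vertices at distance $i$ exist and that the vectors $E_j\hat y$, $E_j\hat z$ are nonzero, but the underlying argument via Cauchy--Schwarz is exactly what the paper is invoking.
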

  \begin{proof} By Lemma \ref{lem:uij}(iv) and trigonometry.
  \end{proof}

\begin{corollary} \label{cor:dep1} For $0 \leq i,j\leq D$
the following are equivalent:
\begin{enumerate}
\item[\rm (i)] $u_i(\theta_j)=1$;
\item[\rm (ii)] $E_j {\hat y} = E_j{\hat z}$ for all $y,z \in X$ at $\partial(y,z)=i$;
\item[\rm (iii)] there exists $y,z \in X$ such that $\partial(y,z)=i$ and $E_j {\hat y} = E_j {\hat z}$.
\end{enumerate}
\end{corollary}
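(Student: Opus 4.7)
The plan is to prove the circular chain (i)$\Rightarrow$(ii)$\Rightarrow$(iii)$\Rightarrow$(i), with all three implications reducing to bookkeeping on the inner product and norm formulas already packaged in Lemma \ref{lem:uij}. The central identities I will use are $\langle E_j \hat y, E_j \hat z\rangle = \vert X \vert^{-1} m_j u_i(\theta_j)$ when $\partial(y,z)=i$, and $\Vert E_j \hat w\Vert^2 = \vert X \vert^{-1} m_j$ for every $w \in X$.

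For (i)$\Rightarrow$(ii), I would compute
\begin{align*}
\Vert E_j \hat y - E_j \hat z \Vert^2 = \Vert E_j \hat y\Vert^2 - 2\langle E_j \hat y, E_j \hat z\rangle + \Vert E_j \hat z\Vert^2
\end{align*}
for arbitrary $y,z \in X$ with $\partial(y,z)=i$, and simplify using Lemma \ref{lem:uij}(i),(ii) to $2\vert X\vert^{-1} m_j\bigl(1 - u_i(\theta_j)\bigr)$. Under (i) this quantity is zero, forcing $E_j \hat y = E_j \hat z$.

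The implication (ii)$\Rightarrow$(iii) is immediate once I observe that $k_i > 0$ for $0 \leq i \leq D$, which is guaranteed by \eqref{eq:kiform} and the fact that $\Gamma$ has diameter $D$; hence some pair at distance $i$ actually exists. For (iii)$\Rightarrow$(i), I would apply Lemma \ref{lem:uij}(iii) directly to the pair $y,z$ supplied by the hypothesis: since $E_j \hat y = E_j \hat z$ and the common norm squared $\vert X\vert^{-1} m_j$ is strictly positive (as $m_j = \dim E_j V \geq 1$), the cosine formula collapses to $\Vert E_j \hat y\Vert^2 / \Vert E_j \hat y\Vert^2 = 1$.

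There is no serious obstacle; the content of the corollary is contained in Lemma \ref{lem:uij} plus the observation that two vectors of equal positive norm and inner product equal to that squared norm must coincide. The only care point is ensuring the denominators in Lemma \ref{lem:uij}(iii) do not vanish, which holds because $m_j \geq 1$.
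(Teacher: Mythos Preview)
Your proof is correct and is essentially the same approach as the paper's. The paper's proof simply reads ``By Lemma \ref{lem:uij}(iv) and trigonometry,'' and your norm-squared expansion $\Vert E_j \hat y - E_j \hat z \Vert^2 = 2\vert X\vert^{-1} m_j\bigl(1 - u_i(\theta_j)\bigr)$ is precisely the trigonometric fact being invoked (two vectors of equal positive norm make angle zero iff they coincide), just written out explicitly; your care in noting $k_i>0$ for (ii)$\Rightarrow$(iii) and $m_j>0$ for the nonvanishing denominators is appropriate and implicit in the paper's one-line proof.
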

\begin{proof} By Lemma \ref{lem:uij}(iv) and trigonometry.
\end{proof}

\begin{corollary} \label{cor:dep2} For $0 \leq i,j\leq D$
the following are equivalent:
\begin{enumerate}
\item[\rm (i)] $u_i(\theta_j)=-1$;
\item[\rm (ii)] $E_j {\hat y} = -E_j{\hat z}$ for all $y,z \in X$ at $\partial(y,z)=i$;
\item[\rm (iii)] there exists $y,z \in X$ such that $\partial(y,z)=i$ and $E_j {\hat y} = -E_j {\hat z}$.
\end{enumerate}
\end{corollary}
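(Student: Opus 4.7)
My plan is to mirror the proof of Corollary \ref{cor:dep1} almost verbatim, substituting the condition ``two equal vectors'' with ``two antipodal vectors.'' The essential geometric fact is that if $u,v$ are vectors with $\|u\|=\|v\|$ and $\langle u,v\rangle = -\|u\|\|v\|$, then $u+v=0$. Since Lemma \ref{lem:uij}(ii) already tells us $\|E_j\hat y\|^2 = \|E_j\hat z\|^2 = |X|^{-1}m_j$, the only new input needed is that the inner product hits its minimum value.

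For the easy direction (iii) $\Rightarrow$ (i), I would take $y,z$ with $\partial(y,z)=i$ and $E_j\hat y = -E_j\hat z$, then compute
\begin{equation*}
\langle E_j\hat y, E_j\hat z\rangle = -\langle E_j\hat z, E_j\hat z\rangle = -|X|^{-1}m_j,
\end{equation*}
using Lemma \ref{lem:uij}(ii). Comparing with Lemma \ref{lem:uij}(i), which asserts $\langle E_j\hat y, E_j\hat z\rangle = |X|^{-1} m_j u_i(\theta_j)$, and noting $m_j \geq 1$, we obtain $u_i(\theta_j) = -1$.

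For (i) $\Rightarrow$ (ii), I would fix arbitrary $y,z \in X$ with $\partial(y,z)=i$ and compute the squared norm of $E_j\hat y + E_j\hat z$ using Lemma \ref{lem:uij}(i),(ii) and the hypothesis $u_i(\theta_j)=-1$:
\begin{equation*}
\|E_j\hat y + E_j\hat z\|^2 = \|E_j\hat y\|^2 + 2\langle E_j\hat y, E_j\hat z\rangle + \|E_j\hat z\|^2 = |X|^{-1}m_j\bigl(1 + 2(-1) + 1\bigr) = 0.
\end{equation*}
Since $\|\,\cdot\,\|^2 = 0$ forces the vector to be zero, we conclude $E_j\hat y = -E_j\hat z$. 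The implication (ii) $\Rightarrow$ (iii) is immediate once we note that some pair $y,z$ with $\partial(y,z)=i$ does exist, which follows from $0 \le i \le D$ and $k_i > 0$.

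There is no genuine obstacle here; the whole argument is essentially trigonometry on a pair of vectors of equal length, exactly parallel to Corollary \ref{cor:dep1}. The one technicality worth flagging in a one-line remark is that the vectors $E_j\hat y, E_j\hat z$ are nonzero (so that phrases like ``the angle between them'' and ``cosine equals $-1$'' from Lemma \ref{lem:uij}(iv) are meaningful), but this is automatic from $\|E_j\hat y\|^2 = |X|^{-1}m_j > 0$. Accordingly, the written proof can simply be ``By Lemma \ref{lem:uij}(iv) and trigonometry,'' just as in Corollary \ref{cor:dep1}, with the squared-norm computation above serving as the underlying justification.
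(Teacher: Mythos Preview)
Your proposal is correct and matches the paper's approach exactly: the paper's proof is the one-liner ``By Lemma \ref{lem:uij}(iv) and trigonometry,'' and you have simply unpacked what that trigonometry amounts to. Your closing remark that the written proof can be that one-liner is spot on.
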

\begin{proof} By Lemma \ref{lem:uij}(iv) and trigonometry.
\end{proof}

\noindent  The following reformulation of Corollary \ref{cor:dep1} will be useful.
\begin{lemma}\rm \label{def:nondeg} For $0 \leq j \leq D$ the following are equivalent:
\begin{enumerate}
\item[\rm (i)] $u_i(\theta_j) \not=1$ for $1 \leq i \leq D$;
\item[\rm (ii)] the vectors $\lbrace E_j {\hat y} \vert y \in X\rbrace$ are mutually distinct.
\end{enumerate}
\noindent Assume that {\rm (i), (ii)} hold. Then $j \not=0$.
\end{lemma}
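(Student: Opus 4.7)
The plan is to deduce each implication by contraposition, using Corollary \ref{cor:dep1} as the essential ingredient; the closing assertion that $j\neq 0$ will come from Lemma \ref{lem:norm}(ii). No new technology beyond these two results is required, so the proof should be brief and the main task is simply to match up the quantifiers correctly.

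For (i) $\Rightarrow$ (ii), I would argue contrapositively. Assume the vectors $\lbrace E_j \hat{y}\rbrace_{y\in X}$ are not mutually distinct, so there exist $y\neq z$ in $X$ with $E_j\hat{y}=E_j\hat{z}$. Set $i=\partial(y,z)$; since $y\neq z$ and $\partial$ takes values in $\lbrace 0,1,\dots,D\rbrace$, we have $1\leq i\leq D$. The implication (iii) $\Rightarrow$ (i) of Corollary \ref{cor:dep1} then forces $u_i(\theta_j)=1$, negating (i). For (ii) $\Rightarrow$ (i), again by contraposition, suppose $u_i(\theta_j)=1$ for some $i$ with $1\leq i\leq D$. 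The implication (i) $\Rightarrow$ (iii) of Corollary \ref{cor:dep1} produces $y,z\in X$ with $\partial(y,z)=i$ and $E_j\hat{y}=E_j\hat{z}$; since $i\geq 1$ we have $y\neq z$, so the family $\lbrace E_j\hat{y}\rbrace_{y\in X}$ fails to be mutually distinct, negating (ii).

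For the final sentence, Lemma \ref{lem:norm}(ii) gives $u_i(\theta_0)=1$ for every $0\leq i\leq D$. In particular, taking $i=1$ shows that condition (i) is violated when $j=0$. Hence if both (i) and (ii) hold, then necessarily $j\neq 0$. There is no real obstacle: the whole lemma is bookkeeping on top of Corollary \ref{cor:dep1}.
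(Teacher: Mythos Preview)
Your proof is correct and follows exactly the same approach as the paper: the equivalence is drawn from Corollary~\ref{cor:dep1}, and the final assertion from Lemma~\ref{lem:norm}(ii). The paper's proof is merely a two-line citation of these results, while you have carefully unpacked the quantifiers, but the content is identical.
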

\begin{proof} The equivalence of (i), (ii) follows from Corollary \ref{cor:dep1}. The last assertion is from Lemma \ref{lem:norm}(ii).
\end{proof}

\begin{definition}\rm For $0 \leq j \leq D$, we call $E_j$  {\it nondegenerate} whenever
the equivalent conditions (i), (ii) hold in Lemma \ref{def:nondeg}. In this case $j\not=0$.
\end{definition}

\noindent The following definition is motivated by Lemma \ref{lem:uij}(iv).
\begin{definition}\rm For $0 \leq j \leq D$, we call the sequence
$\lbrace u_i(\theta_j)\rbrace_{i=0}^D$ the {\it cosine sequence} of $E_j$ (or $\theta_j$).
\end{definition}

\begin{lemma} \label{lem:recognize} {\rm (See \cite[Section~4.1.B]{bcn}.)} For a real number $\theta$ and a sequence of real numbers $\lbrace \sigma_i \rbrace_{i=0}^D$, the following are equivalent:
\begin{enumerate}
\item[\rm (i)] $\theta$ is an eigenvalue of $\Gamma$ with cosine sequence $\lbrace \sigma_i \rbrace_{i=0}^D$;
\item[\rm (ii)] $\sigma_0=1$, $\sigma_1 = k^{-1} \theta$, and
\begin{align*}
c_i \sigma_{i-1} + a_i \sigma_i + b_i \sigma_{i+1} &= \theta \sigma_i \qquad \qquad (1 \leq i \leq D-1),\\
c_D \sigma_{D-1} + a_D \sigma_D &= \theta \sigma_D.
\end{align*}
\end{enumerate}
\end{lemma}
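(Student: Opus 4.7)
The plan is to prove the equivalence by comparing the recurrence in (ii) with the recurrence satisfied by the polynomials $\{u_i\}_{i=0}^D$ from Lemma \ref{lem:urec}.

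For (i) $\Rightarrow$ (ii), assume $\theta = \theta_j$ for some $0 \leq j \leq D$ and $\sigma_i = u_i(\theta_j)$. By Lemma \ref{lem:urec}, evaluating at $\lambda = \theta_j$ yields $\sigma_0 = u_0(\theta_j) = 1$ and $\sigma_1 = u_1(\theta_j) = k^{-1}\theta_j$, as well as the recurrence $\theta_j \sigma_i = c_i\sigma_{i-1} + a_i\sigma_i + b_i\sigma_{i+1}$ for $1 \leq i \leq D-1$. For the boundary case $i = D$, the last relation in Lemma \ref{lem:urec} gives $\theta_j u_D(\theta_j) - c_D u_{D-1}(\theta_j) - a_D u_D(\theta_j) = k_D^{-1} v_{D+1}(\theta_j)$, and the right-hand side vanishes since $\theta_j$ is a root of $v_{D+1}$ by Lemma \ref{lem:thDist}(i). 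This yields the required equation $c_D \sigma_{D-1} + a_D \sigma_D = \theta \sigma_D$.

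For the converse (ii) $\Rightarrow$ (i), I would argue by induction on $i$ that $\sigma_i = u_i(\theta)$ for $0 \leq i \leq D$. Indeed, $\sigma_0 = 1 = u_0(\theta)$ and $\sigma_1 = k^{-1}\theta = u_1(\theta)$ handle the base cases. For $1 \leq i \leq D-1$, since $b_i > 0$, the recurrence in (ii) can be solved for $\sigma_{i+1}$ in terms of $\sigma_{i-1}, \sigma_i$, and the same relation holds for $u_i(\theta)$ by Lemma \ref{lem:urec}; hence $\sigma_i = u_i(\theta)$ throughout $0 \leq i \leq D$. Now the boundary relation at $i = D$ in (ii), compared with the corresponding relation $\theta u_D(\theta) - c_D u_{D-1}(\theta) - a_D u_D(\theta) = k_D^{-1} v_{D+1}(\theta)$ from Lemma \ref{lem:urec}, forces $v_{D+1}(\theta) = 0$. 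By Lemma \ref{lem:thDist}(i), the roots of $v_{D+1}$ are exactly the eigenvalues $\{\theta_j\}_{j=0}^D$, so $\theta = \theta_j$ for some $j$, and then $\sigma_i = u_i(\theta_j)$ shows that $\{\sigma_i\}_{i=0}^D$ is the cosine sequence of $\theta$.

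There is no real obstacle here; the main task is just to match up the recurrences carefully and to remember to use the boundary equation at $i = D$ to force $v_{D+1}(\theta) = 0$, which is what promotes an arbitrary $\theta$ satisfying the recurrence into an actual eigenvalue of $\Gamma$. The positivity $b_i > 0$ for $0 \leq i \leq D-1$ is what makes the induction in the converse direction go through.
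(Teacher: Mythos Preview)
Your proof is correct and follows exactly the approach the paper indicates: the paper's own proof consists of the single line ``Use Lemma~\ref{lem:thDist}(i) and Lemma~\ref{lem:urec},'' and you have simply spelled out those details carefully. The use of the boundary relation at $i=D$ to force $v_{D+1}(\theta)=0$ and the induction via $b_i>0$ are precisely what those lemmas encode.
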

\begin{proof} Use Lemma \ref{lem:thDist}(i) and Lemma \ref{lem:urec}.
\end{proof}

\section{The Krein parameters and  the dual distance matrices}
Our next topic is the Krein parameters. Throughout this section $\Gamma=(X,\mathcal R)$ denotes a distance-regular graph
with diameter $D\geq 3$.
\medskip

\noindent For $B, C \in {\rm Mat}_X(\mathbb R)$ define the matrix $B \circ C \in {\rm Mat}_X(\mathbb R)$ with entries
\begin{align*}
(B \circ C)_{y,z} = B_{y,z} C_{y,z} \qquad \qquad (y,z \in X).
\end{align*}
The operation $\circ$ is called  entrywise multiplication, or Schur multiplication, or Hadamard multiplication.
We have $A_i \circ A_j = \delta_{i,j} A_i$ for $0 \leq i,j\leq D$. Recall that $\lbrace A_i \rbrace_{i=0}^D$ is a basis for the Bose-Mesner algebra $M$.
By these comments, $M$  is closed under $\circ$.
Recall that $\lbrace E_i \rbrace_{i=0}^D$ is a basis for $M$. Therefore, there exist real numbers $q^{h}_{i,j} $ $(0 \leq h,i,j\leq D)$ such that
\begin{align} 
\label{eq:EcE}
E_i \circ E_j = \vert X \vert^{-1} \sum_{h=0}^D q^h_{i,j} E_h \qquad \qquad (0 \leq i,j\leq D).
\end{align} 
The $q^{h}_{i,j}$ are called the {\it Krein parameters} of $\Gamma$. By construction $q^h_{i,j} = q^h_{j,i}$ for $0 \leq h,i,j\leq D$.
Shortly we will show that $q^h_{i,j}\geq 0$ for $0 \leq h,i,j\leq D$.
\medskip

\noindent In order to avoid dealing directly with entrywise multiplication, we bring in a certain map $p$.
For the rest of this section, fix a vertex $x \in X$. 

\begin{definition}\label{def:rho} \rm For $B \in {\rm Mat}_X(\mathbb R)$ let $B^p$ denote the diagonal matrix in ${\rm Mat}_X(\mathbb R)$ with $(y,y)$-entry
\begin{align*}
 (B^p)_{y,y} = B_{x,y} \qquad \qquad (y \in X).
 \end{align*}
 \end{definition}

\begin{lemma} \label{lem:rhoH} For $B, C \in {\rm Mat}_X(\mathbb R)$,
\begin{align*} 
(B \circ C)^p = B^p C^p.
\end{align*}
\end{lemma}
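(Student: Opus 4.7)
The plan is to verify the identity entrywise. Both sides are diagonal matrices (the right-hand side because a product of two diagonal matrices is diagonal), so it suffices to compare their $(y,y)$-entries for each $y \in X$.

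First I would unpack the left-hand side using Definition \ref{def:rho} applied to the matrix $B \circ C$: the $(y,y)$-entry of $(B\circ C)^p$ is $(B\circ C)_{x,y}$, which by the definition of Schur multiplication equals $B_{x,y} C_{x,y}$. Next I would unpack the right-hand side. Since $B^p$ and $C^p$ are diagonal, $(B^p C^p)_{y,y} = (B^p)_{y,y} (C^p)_{y,y}$, and by Definition \ref{def:rho} this equals $B_{x,y} C_{x,y}$. Off-diagonal entries of both sides are zero, so the two matrices coincide.

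There is really no obstacle here; the result is immediate from the definitions once one observes that the product of diagonal matrices is computed entrywise on the diagonal. The only thing worth being careful about is to note explicitly that $B^p C^p$ is diagonal, so that the comparison of $(y,y)$-entries suffices.
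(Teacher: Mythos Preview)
Your proof is correct and is exactly the entrywise verification that the paper's one-line proof (``By Definition \ref{def:rho}'') abbreviates. There is nothing to add.
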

\begin{proof} By Definition \ref{def:rho}.
\end{proof}

\begin{lemma} We have
\begin{enumerate}
\item[\rm (i)] $A_i^p = E^*_i \qquad (0 \leq i \leq D)$;
\item[\rm (ii)] $I^p = E^*_0$;
\item[\rm (iii)] $J^p=I$.
\end{enumerate}
\end{lemma}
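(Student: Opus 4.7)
The plan is to prove each of the three identities by a direct comparison of diagonal entries, since in every case both sides are diagonal matrices: the left sides by construction via Definition \ref{def:rho}, and the right sides by \eqref{DEFDEI} (or, for (iii), trivially).

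For part (i), I would fix $y \in X$ and compute $(A_i^p)_{y,y} = (A_i)_{x,y}$ straight from Definition \ref{def:rho}. By the definition of the $i^{th}$ distance matrix, this entry is $1$ when $\partial(x,y) = i$ and $0$ otherwise, which matches $(E^*_i)_{y,y}$ as given in \eqref{DEFDEI}. Since $A_i^p$ and $E^*_i$ are both diagonal, agreement on every diagonal entry yields $A_i^p = E^*_i$.

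Parts (ii) and (iii) then fall out immediately. For (ii), I would either specialize (i) to $i=0$ (noting $A_0 = I$) or check directly that $(I^p)_{y,y} = I_{x,y} = \delta_{x,y}$ agrees with $(E^*_0)_{y,y}$. For (iii), Definition \ref{def:rho} gives $(J^p)_{y,y} = J_{x,y} = 1$ for every $y \in X$, so $J^p$ is the diagonal matrix with all diagonal entries equal to $1$, namely $I$.

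There is no genuine obstacle: the lemma is a pure unpacking of definitions. The only point worth flagging explicitly in the write-up is that $B^p$ is diagonal by construction and each $E^*_i$ is diagonal by \eqref{DEFDEI}, so entrywise agreement on the diagonal suffices for matrix equality.
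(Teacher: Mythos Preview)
Your proposal is correct and follows exactly the approach the paper takes: the paper's proof simply reads ``This is routinely checked using Definition~\ref{def:rho},'' and your entry-by-entry verification is precisely that routine check spelled out in full.
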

\begin{proof} This is routinely checked using 
 Definition \ref{def:rho}.
\end{proof}

\noindent Recall the dual Bose-Mesner algebra $M^*= M^*(x)$.

\begin{lemma} \label{lem:ri} The restriction $p \vert_M : M \to M^*$ is an isomorphism of vector spaces.

\end{lemma}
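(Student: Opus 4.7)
The plan is to exploit the basis-to-basis behavior already encoded in the previous lemma. Recall that $\{A_i\}_{i=0}^D$ is a basis of $M$ and $\{E_i^*\}_{i=0}^D$ is a basis of $M^*$, and that the previous lemma gives $A_i^p = E_i^*$ for $0 \leq i \leq D$.

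First, I would verify that the map $p : {\rm Mat}_X(\mathbb R) \to {\rm Mat}_X(\mathbb R)$ defined in Definition \ref{def:rho} is $\mathbb R$-linear. This is immediate from the formula $(B^p)_{y,y} = B_{x,y}$, since the assignment $B \mapsto B_{x,y}$ is linear in $B$ for each fixed $y$.

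Next, since $p$ is linear and $p(A_i) = E_i^* \in M^*$ for $0 \leq i \leq D$, linearity and the fact that $\{A_i\}_{i=0}^D$ spans $M$ show that $p(M) \subseteq M^*$; so $p|_M$ really is a linear map $M \to M^*$. Moreover, $p|_M$ sends the basis $\{A_i\}_{i=0}^D$ of $M$ bijectively onto the basis $\{E_i^*\}_{i=0}^D$ of $M^*$. A linear map that carries a basis of the domain to a basis of the codomain is a vector space isomorphism, so $p|_M : M \to M^*$ is an isomorphism.

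There is essentially no obstacle here: once Definition \ref{def:rho} is unpacked and the identification $A_i^p = E_i^*$ is invoked, the result is a one-line basis-to-basis argument. The only thing worth being careful about is noting explicitly that $p$ itself is linear, which is used implicitly when one moves from the basis images to the conclusion about the whole subspace $M$.
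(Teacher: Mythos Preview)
Your proof is correct and follows essentially the same approach as the paper: noting that $p$ is $\mathbb R$-linear and sends the basis $\{A_i\}_{i=0}^D$ of $M$ to the basis $\{E_i^*\}_{i=0}^D$ of $M^*$. Your version is slightly more detailed in verifying $p(M) \subseteq M^*$, but the argument is the same.
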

\begin{proof} The map $p$ is $\mathbb R$-linear, and sends the basis $\lbrace A_i \rbrace_{i=0}^D$ of $M$ to the basis $\lbrace E^*_i\rbrace_{i=0}^D$ of $M^*$.
\end{proof}
\noindent We caution the reader that the map in Lemma \ref{lem:ri} is not an algebra isomorphism in general.

\begin{lemma} \label{lem:rip} For $B, C \in M$,
\begin{align} \label{eq:BCr}
\langle B^p, C^p\rangle = \vert X \vert^{-1} \langle B, C \rangle.
\end{align}
\end{lemma}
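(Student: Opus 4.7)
The plan is to verify the identity on the distance-matrix basis and then invoke bilinearity. Both sides of \eqref{eq:BCr} are $\mathbb R$-bilinear in $B,C$, and $\lbrace A_i\rbrace_{i=0}^D$ is a basis for $M$, so it suffices to prove
\begin{align*}
\langle A_i^p, A_j^p\rangle = \vert X \vert^{-1} \langle A_i, A_j \rangle \qquad (0 \leq i,j \leq D).
\end{align*}

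First I would rewrite the left-hand side using the lemma just above: $A_i^p = E_i^*$ and $A_j^p = E_j^*$. Then, using the definition of the bilinear form together with the facts $E_j^{*t}=E_j^*$ and $E_i^* E_j^* = \delta_{i,j} E_i^*$, compute
\begin{align*}
\langle A_i^p, A_j^p\rangle = \langle E_i^*, E_j^*\rangle = {\rm tr}(E_i^* E_j^{*t}) = {\rm tr}(E_i^* E_j^*) = \delta_{i,j}\,{\rm tr}(E_i^*) = \delta_{i,j} k_i,
\end{align*}
where the last equality uses ${\rm tr}(E_i^*)=k_i$ from the discussion of subconstituents.

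Next I would compute the right-hand side using Lemma \ref{lem:bAE}(ii):
\begin{align*}
\vert X \vert^{-1}\langle A_i, A_j\rangle = \vert X\vert^{-1}\cdot \delta_{i,j} k_i \vert X\vert = \delta_{i,j} k_i.
\end{align*}
The two sides agree, which establishes the identity on basis pairs and hence on all of $M$ by bilinearity.

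There is no real obstacle here; the only subtlety is remembering to use $E_i^{*t}=E_i^*$ and the orthogonality $E_i^*E_j^*=\delta_{i,j}E_i^*$ when evaluating the trace, and to cite ${\rm tr}(E_i^*)=k_i$ so that the factor of $\vert X\vert$ on the right matches the absence of such a factor on the left. The proof is essentially a bookkeeping exercise converting the $\circ$-friendly inner product on $M^*$ into the standard trace form on $M$.
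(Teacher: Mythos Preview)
Your proof is correct. It differs from the paper's approach, which is a single direct computation rather than a basis check: the paper observes that for $B,C\in M$ the product $BC^t$ lies in $M$ and hence has constant diagonal, so $\vert X\vert^{-1}\langle B,C\rangle=\vert X\vert^{-1}{\rm tr}(BC^t)=(BC^t)_{x,x}$, and then checks from Definition~\ref{def:rho} that $\langle B^p,C^p\rangle=\sum_{y\in X}B_{x,y}C_{x,y}=(BC^t)_{x,x}$ as well. Your basis-and-bilinearity argument is equally valid and has the virtue of reusing Lemma~\ref{lem:bAE}(ii), but it is slightly more bookkeeping; the paper's route is shorter because it avoids picking a basis and instead exploits the constant-diagonal property of elements of $M$.
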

\begin{proof} Using Definition \ref{def:rho} one finds  that each side of \eqref{eq:BCr}  is equal to the $(x,x)$-entry of $BC^t$.
\end{proof}

\noindent We mention some useful facts about the map $p$.

\begin{lemma} \label{lem:32} For $B \in M$ we have 
\begin{enumerate}
\item[\rm (i)] 
$E_0 E^*_0 B = E_0 B^p$;
\item[\rm (ii)] 
 $E^*_0 E_0 B^p=
\vert X \vert^{-1} E^*_0 B$; 
\item[\rm (iii)] 
 $ B E^*_0 E_0=  B^p E_0$;
 \item[\rm (iv)]
  $B^p E_0 E^*_0=
\vert X \vert^{-1} B E^*_0$.
\end{enumerate}
\end{lemma}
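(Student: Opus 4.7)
The plan is to reduce each identity to a short entry-wise matrix computation after unpacking the definitions of $E_0$, $E^*_0$, and $B^p$. Write $E_0 = |X|^{-1} J$ with $J = {\bf 1}{\bf 1}^t$ and $E^*_0 = {\hat x}{\hat x}^t$, so that
\begin{align*}
E_0 E^*_0 = |X|^{-1}\, {\bf 1}\, {\hat x}^t, \qquad E^*_0 E_0 = |X|^{-1}\, {\hat x}\, {\bf 1}^t.
\end{align*}
The essential feature of $B^p$ is that $(B^p)_{y,y} = B_{x,y} = ({\hat x}^t B)_y$, which yields the two auxiliary identities
\begin{align*}
{\bf 1}^t B^p = {\hat x}^t B, \qquad B^p {\bf 1} = ({\hat x}^t B)^t,
\end{align*}
the first by summing the columns of the diagonal matrix $B^p$, the second by reading off its diagonal.

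For (i), I would compute $E_0 E^*_0 B = |X|^{-1}\, {\bf 1}\, ({\hat x}^t B)$ directly, and $E_0 B^p = |X|^{-1}\, {\bf 1}\, {\bf 1}^t B^p = |X|^{-1}\, {\bf 1}\, ({\hat x}^t B)$ via the first auxiliary identity; the two sides agree. For (ii), $E^*_0 E_0 B^p = |X|^{-1}\, {\hat x}\, ({\bf 1}^t B^p) = |X|^{-1}\, {\hat x}\, ({\hat x}^t B) = |X|^{-1} E^*_0 B$ by the same trick.

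For (iii) and (iv) I would invoke symmetry: since $M$ is spanned by the symmetric distance matrices $\lbrace A_i \rbrace_{i=0}^D$, any $B \in M$ satisfies $B^t = B$, and $E_0$, $E^*_0$, $B^p$ are each symmetric. Taking the transpose of (i) gives $B^t E^*_0 E_0 = B^p E_0$, which becomes (iii) after $B^t = B$; taking the transpose of (ii) gives (iv) in the same way. There is no real obstacle; the verification is mechanical. The only conceptual point worth flagging is that (i) and (ii) in fact hold for every $B \in {\rm Mat}_X(\mathbb R)$, whereas (iii) and (iv) genuinely require the symmetry of $B$, which is precisely where the hypothesis $B \in M$ is used.
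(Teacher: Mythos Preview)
Your proof is correct and follows essentially the same approach as the paper: an entry-wise computation for (i) (the paper writes out the $(y,z)$-entry directly rather than via the rank-one factorization ${\bf 1}\,{\hat x}^t$, but this is cosmetic), and then transposes for (iii) and (iv), using $B^t=B$ for $B\in M$. The only minor difference is that the paper derives (ii) from (i) by left-multiplying by $E^*_0$ and invoking $|X|\,E^*_0 E_0 E^*_0 = E^*_0$, whereas you prove (ii) directly from the auxiliary identity ${\bf 1}^t B^p = {\hat x}^t B$; both are one-line computations.
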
 
\begin{proof} (i) Recall that $E_0 = \vert X \vert^{-1} J$. Recall that for $E^*_0$, the $(x,x)$-entry is 1 and all other entries are 0.
For $y, z \in X$ we compare the $(y,z)$-entry of each side of the given equation. We have
\begin{align*}
(E_0 E^*_0 B)_{y,z} = (E_0)_{y,x} B_{x,z} = \vert X \vert^{-1} B_{x,z}  = (E_0)_{y,z} (B^p)_{z,z} = (E_0 B^p)_{y,z}.
\end{align*}
\\
\noindent (ii) In the equation (i), multiply each side on the left by $E^*_0$ and evaluate the result using $\vert X \vert E^*_0 E_0 E^*_0= E^*_0$.
\\
\noindent (iii), (iv) Take the transpose of each side in (i), (ii) above.
\end{proof}

\begin{definition}\label{def:As} \rm For $0 \leq i \leq D$, define the matrix $A^*_i = A^*_i(x) $ by
\begin{align*}
A^*_i = \vert X \vert (E_i)^p.
\end{align*}
Thus $A^*_i$ is diagonal with $(y,y)$-entry
\begin{align}
 (A^*_i)_{y,y} = \vert X \vert (E_i)_{x,y} \qquad \quad (y \in X).
 \label{eq:As}
 \end{align}
 \noindent We call $A^*_i$ the {\it $i^{\rm th}$ dual distance matrix of $\Gamma$ with respect to $x$}.
 \end{definition}

\begin{lemma} \label{lem:Asbasis} With the above notation,
\begin{enumerate}
\item[\rm (i)] the matrices $\lbrace A^*_i \rbrace_{i=0}^D$ form a basis for $M^*$;
\item[\rm (ii)] for $0 \leq i \leq D$ and $y \in X$,
\begin{align*}
 A^*_i \hat y = m_i u_j(\theta_i) \hat y, \qquad \qquad j = \partial(x,y).
 \end{align*}
 \end{enumerate}
 \end{lemma}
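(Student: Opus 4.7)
For part (i), the plan is to invoke Lemma \ref{lem:ri}, which says that the restriction $p|_M : M \to M^*$ is a vector space isomorphism. Since $\{E_i\}_{i=0}^D$ is a basis for $M$, its image $\{(E_i)^p\}_{i=0}^D$ under this isomorphism is a basis for $M^*$. Scaling each basis element by the nonzero scalar $|X|$ preserves the basis property, so $\{A^*_i\}_{i=0}^D = \{|X|(E_i)^p\}_{i=0}^D$ is a basis for $M^*$.

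For part (ii), the plan is to identify the diagonal entries of $A^*_i$ explicitly using the change-of-basis formula from Lemma \ref{lem:transAE}(ii). Fix $y \in X$ and let $j = \partial(x,y)$. From Definition \ref{def:As}, the $(y,y)$-entry of $A^*_i$ equals $|X|(E_i)_{x,y}$. Applying Lemma \ref{lem:transAE}(ii),
\begin{equation*}
E_i = |X|^{-1} m_i \sum_{\ell=0}^D u_\ell(\theta_i) A_\ell,
\end{equation*}
so the $(x,y)$-entry of $E_i$ is $|X|^{-1} m_i u_\ell(\theta_i)$ where $\ell$ is the unique index with $(A_\ell)_{x,y}=1$, namely $\ell = \partial(x,y) = j$. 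This yields $(A^*_i)_{y,y} = m_i u_j(\theta_i)$.

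Finally, since $A^*_i$ is diagonal and $\hat y$ is the standard basis vector supported at $y$, we get $A^*_i \hat y = (A^*_i)_{y,y}\,\hat y = m_i u_j(\theta_i)\,\hat y$, as required.

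There is no real obstacle here; the only thing to be careful about is correctly matching the indices when reading off the $(x,y)$-entry of $E_i$ from the expansion in the $A_\ell$ basis, using the fact that the distance matrices $A_\ell$ are $01$-matrices supported on pairs at distance exactly $\ell$.
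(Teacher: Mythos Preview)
Your proof is correct and follows essentially the same approach as the paper: part (i) is exactly the paper's argument (Lemma~\ref{lem:ri} plus the fact that $\{E_i\}_{i=0}^D$ is a basis for $M$), and for part (ii) the paper simply asserts $|X|(E_i)_{x,y} = m_i u_j(\theta_i)$, which you justify via Lemma~\ref{lem:transAE}(ii) in the same way the paper implicitly intends.
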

 \begin{proof} (i) By Lemma \ref{lem:ri} and since $\lbrace E_i \rbrace_{i=0}^D$ form a basis for $M$.
 \\
 \noindent (ii) By \eqref{eq:As} and since $\vert X \vert (E_i)_{x,y} = m_i u_j(\theta_i)$.
 \end{proof}

\begin{lemma} \label{lem:list} The following {\rm (i)--(iv)} hold:
\begin{enumerate}
\item[\rm (i)] $A^*_0=I$;
\item[\rm (ii)] $\sum_{i=0}^D A^*_i = \vert X \vert E^*_0$;
\item[\rm (iii)] $(A^*_i)^t = A^*_i \qquad (0 \leq i \leq D)$;
\item[\rm (iv)] $A^*_i A^*_j = \sum_{h=0}^D q^h_{i,j} A^*_h \qquad (0 \leq i,j\leq D)$.
\end{enumerate}
\end{lemma}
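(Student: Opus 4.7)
The plan is to prove all four parts by routine manipulation of the map $p$ from Definition \ref{def:rho} combined with the defining equation $A^*_i = |X|(E_i)^p$, leveraging the facts already established: $E_0 = |X|^{-1}J$, $I^p = E^*_0$, $J^p = I$, the multiplicativity $(B\circ C)^p = B^p C^p$ from Lemma \ref{lem:rhoH}, and the Krein parameter definition \eqref{eq:EcE}.

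For (i), I would just unwind the definition: $A^*_0 = |X|(E_0)^p = |X|(|X|^{-1}J)^p = J^p = E^*_0$... wait, we want $A^*_0 = I$. Let me reconsider: since $E_0 = |X|^{-1}J$, we get $A^*_0 = |X| \cdot |X|^{-1} J^p = J^p = I$ by the preceding lemma. For (ii), linearity of $p$ gives $\sum_{i=0}^D A^*_i = |X|\bigl(\sum_{i=0}^D E_i\bigr)^p = |X|\, I^p = |X| E^*_0$. For (iii), $A^*_i$ is diagonal by construction in Definition \ref{def:As}, hence symmetric.

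The only part requiring actual content is (iv), and it is the bridge between entrywise multiplication and ordinary matrix multiplication through $p$. I would compute
\begin{align*}
A^*_i A^*_j = |X|^2 (E_i)^p (E_j)^p = |X|^2 (E_i \circ E_j)^p
\end{align*}
using Lemma \ref{lem:rhoH}. Now substitute the Krein parameter expansion \eqref{eq:EcE}:
\begin{align*}
(E_i \circ E_j)^p = |X|^{-1} \sum_{h=0}^D q^h_{i,j} (E_h)^p = |X|^{-2} \sum_{h=0}^D q^h_{i,j} A^*_h,
\end{align*}
where the last step uses Definition \ref{def:As}. Combining yields the desired identity.

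I expect no real obstacle here; everything follows formally from the fact that the map $p$ translates Schur multiplication on $M$ into ordinary matrix multiplication on the diagonal algebra $M^*$, and the $A^*_i$ are (up to the scalar $|X|$) simply the images of the primitive idempotents $E_i$ under $p$. The mild bookkeeping point worth double-checking is the consistent placement of the factors $|X|$ coming from $E_0 = |X|^{-1}J$ in part (i) and from \eqref{eq:EcE} in part (iv), but these cancel exactly as needed.
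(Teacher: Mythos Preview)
Your proof is correct and matches the paper's approach exactly: each part follows from applying the map $p$ to the corresponding identity among $E_0, I, \sum E_i,$ and $E_i \circ E_j$, using $J^p = I$, $I^p = E^*_0$, Lemma~\ref{lem:rhoH}, and Definition~\ref{def:As}. The brief false start in (i) is harmless since you immediately recover the correct computation $A^*_0 = |X|\cdot|X|^{-1}J^p = I$.
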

\begin{proof} (i) Use $A^*_0 = \vert X \vert E^p_0$ and $E_0 = \vert X \vert^{-1} J$. \\
\noindent (ii) Apply $p$ to each side of $\sum_{i=0}^D E_i = I$. \\
\noindent (iii) $A^*_i$ is diagonal. \\
\noindent (iv) Apply $p$ to each side of \eqref{eq:EcE}, and evaluate the result using Lemma  \ref{lem:rhoH} along with
Definition \ref{def:As}.
\end{proof}

\noindent  We have seen that  $\lbrace E^*_i \rbrace_{i=0}^D$ and $\lbrace A^*_i \rbrace_{i=0}^D$ are bases for the vector space $M^*$. Next we
describe how these bases are related.
\begin{lemma} \label{lem:transAsEs}
For $0 \leq j \leq D$ we have
\begin{enumerate}
\item[\rm (i)] 
$A^*_j = m_j \sum_{i=0}^D u_i(\theta_j) E^*_i$;
\item[\rm (ii)]  $E^*_j = \vert X \vert^{-1} \sum_{i=0}^D v_j(\theta_i) A^*_i$.
\end{enumerate}
\end{lemma}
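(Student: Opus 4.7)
The plan is to piece both formulas together by transporting the transition relations of Lemma \ref{lem:transAE} through the map $p$ of Definition \ref{def:rho}. Recall that $p$ is $\mathbb R$-linear, sends $A_i \mapsto E^*_i$ (by the earlier lemma identifying $A_i^p = E^*_i$), and satisfies $E_i^p = \vert X \vert^{-1} A^*_i$ directly from Definition \ref{def:As}. So the formulas in the present lemma should drop out of the corresponding formulas in Lemma \ref{lem:transAE} by simply applying $p$ to both sides and rescaling.

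For part (ii), I would start from Lemma \ref{lem:transAE}(i), namely $A_j = \sum_{i=0}^D v_j(\theta_i) E_i$, and apply $p$. The left-hand side becomes $A_j^p = E^*_j$, while the right-hand side becomes $\sum_{i=0}^D v_j(\theta_i) E_i^p = \vert X \vert^{-1} \sum_{i=0}^D v_j(\theta_i) A^*_i$. This gives the desired formula immediately. For part (i), I would start from Lemma \ref{lem:transAE}(ii), namely $E_j = \vert X \vert^{-1} m_j \sum_{i=0}^D u_i(\theta_j) A_i$, and apply $p$. The left-hand side becomes $E_j^p = \vert X \vert^{-1} A^*_j$, and the right-hand side becomes $\vert X \vert^{-1} m_j \sum_{i=0}^D u_i(\theta_j) E^*_i$. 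Clearing the common factor $\vert X \vert^{-1}$ yields the claim.

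As a consistency check, I would cross-verify (i) by evaluating both sides on a basis vector $\hat y \in V$. By Lemma \ref{lem:Asbasis}(ii), $A^*_j \hat y = m_j u_{\partial(x,y)}(\theta_j) \hat y$, while the right-hand side $m_j \sum_{i=0}^D u_i(\theta_j) E^*_i \hat y$ picks out only the $i = \partial(x,y)$ summand (since $E^*_i \hat y = \hat y$ exactly when $\partial(x,y) = i$ and is zero otherwise), giving $m_j u_{\partial(x,y)}(\theta_j) \hat y$ as well.

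There is no real obstacle here: the content of the lemma is that $p$ transports the Bose-Mesner transition matrices to the dual Bose-Mesner transition matrices, and all the algebraic machinery (linearity of $p$, the images of $A_i$ and $E_i$ under $p$, and the transition formulas on the $M$-side) has already been recorded above. The only thing to watch is the bookkeeping of the factors $\vert X \vert$ and $m_j$, which is handled automatically by the definitions of $A^*_j$ and by the $m_j$ that already appears in Lemma \ref{lem:transAE}(ii).
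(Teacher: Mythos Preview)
Your proposal is correct and takes essentially the same approach as the paper: the paper's proof simply says to apply $p$ to each side of the equations in Lemma~\ref{lem:transAE}, which is exactly what you do (including the correct handling of the $\vert X\vert^{-1}$ factor via $E_i^p = \vert X\vert^{-1} A^*_i$). Your additional consistency check via the action on $\hat y$ is a nice sanity check but not needed for the argument.
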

\begin{proof} In Lemma \ref{lem:transAE} we displayed some equations that show how  $\lbrace E_i\rbrace_{i=0}^D$ and $\lbrace A_i \rbrace_{i=0}^D$ are related.
Apply $p$ to each side of these equations.
\end{proof}
\begin{lemma} \label{lem:AisEis} For $0 \leq i,j\leq D$,
\begin{align*}
E^*_i A^*_j  = m_j u_i(\theta_j) E^*_i = A^*_j E^*_i.
\end{align*}
\end{lemma}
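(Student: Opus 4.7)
The plan is to compute both sides by expanding $A^*_j$ in the basis $\lbrace E^*_h\rbrace_{h=0}^D$ and then collapsing the sum using the orthogonality of the dual primitive idempotents.

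First, note that $A^*_j$ is diagonal by Definition \ref{def:As}, and $E^*_i$ is diagonal by construction, so the two matrices commute; this immediately yields the second equality $E^*_i A^*_j = A^*_j E^*_i$ without further work. Hence it suffices to establish the single equation $E^*_i A^*_j = m_j u_i(\theta_j) E^*_i$.

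To evaluate $E^*_i A^*_j$, I would substitute the expression from Lemma \ref{lem:transAsEs}(i), namely $A^*_j = m_j \sum_{h=0}^D u_h(\theta_j) E^*_h$, and then invoke the relation $E^*_i E^*_h = \delta_{i,h} E^*_i$ (recorded in Section 2 among the defining properties of the $E^*_h$). Only the $h=i$ term survives, producing $E^*_i A^*_j = m_j u_i(\theta_j) E^*_i$ as desired. Since every step is a one-line substitution from a previously established identity, there is no real obstacle; the lemma is essentially a direct analogue of Lemma \ref{lem:AiEi}, obtained by applying the map $p$ of Definition \ref{def:rho} (compare the proof strategy of Lemma \ref{lem:transAsEs}).
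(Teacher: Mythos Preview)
Your proof is correct and follows essentially the same approach as the paper: substitute the expansion $A^*_j = m_j \sum_{h=0}^D u_h(\theta_j) E^*_h$ from Lemma~\ref{lem:transAsEs}(i) and use $E^*_i E^*_h = \delta_{i,h} E^*_i$. Your additional observation that both matrices are diagonal (hence commute) is a fine shortcut for the second equality, though the same substitution handles both sides at once.
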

\begin{proof} To verify this equation, eliminate $A^*_j$ using Lemma \ref{lem:transAsEs}(i) and simplify the result.
\end{proof}

\begin{lemma} \label{lem:bAsEs}
For $0 \leq  i,j\leq D$ we have
\begin{enumerate}
\item[\rm (i)] $\langle E^*_i, E^*_j \rangle = \delta_{i,j} k_i$;
\item[\rm (ii)]  $\langle A^*_i, A^*_j \rangle = \delta_{i,j} m_i \vert X \vert$;
\item[\rm (iii)]  $\langle A^*_i, E^*_j \rangle = m_i v_j(\theta_i)$.
\end{enumerate}
\end{lemma}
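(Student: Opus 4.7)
The plan is to handle (i) directly from the defining properties of the dual primitive idempotents, and to reduce (ii) and (iii) to the already-proved Lemma \ref{lem:bAE} by transporting across the map $p$ via Lemma \ref{lem:rip}.

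For (i), I would mimic the proof of Lemma \ref{lem:bAE}(i): using $\langle B, C\rangle = {\rm tr}(BC^t)$, the identity $E^{*t}_j = E^*_j$, the orthogonal idempotent relation $E^*_iE^*_j = \delta_{i,j} E^*_i$, and the fact ${\rm tr}(E^*_i) = k_i$ (recorded right after \eqref{DEIV}), I compute
\begin{align*}
\langle E^*_i, E^*_j\rangle = {\rm tr}(E^*_i E^{*t}_j) = {\rm tr}(E^*_iE^*_j) = \delta_{i,j}\,{\rm tr}(E^*_i) = \delta_{i,j}k_i.
\end{align*}

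For (ii), I would apply Lemma \ref{lem:rip} to $E_i, E_j \in M$, giving $\langle E_i^p, E_j^p\rangle = |X|^{-1}\langle E_i, E_j\rangle = |X|^{-1}\delta_{i,j}m_i$ by Lemma \ref{lem:bAE}(i). Since Definition \ref{def:As} says $A^*_i = |X|E_i^p$, multiplying through by $|X|^2$ (using bilinearity) yields $\langle A^*_i, A^*_j\rangle = \delta_{i,j}m_i|X|$.

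For (iii), I would similarly use Lemma \ref{lem:rip}, this time with $B = E_i \in M$ and $C = A_j \in M$, noting from the lemma preceding Lemma \ref{lem:ri} that $A_j^p = E^*_j$. Thus
\begin{align*}
\langle A^*_i, E^*_j\rangle = \langle |X|E_i^p, A_j^p\rangle = |X|\cdot |X|^{-1}\langle E_i, A_j\rangle = \langle A_j, E_i\rangle = m_iv_j(\theta_i),
\end{align*}
where the last equality is Lemma \ref{lem:bAE}(iii) (with the symmetry of the bilinear form and the indices relabeled appropriately).

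There is no real obstacle here; the only thing to be careful about is tracking the factor of $|X|$ that appears in the definition $A^*_i = |X|E_i^p$, and matching the symmetry conventions when pulling out $m_i$ versus $m_j$ in part (iii). Everything follows mechanically from Lemma \ref{lem:bAE}, Lemma \ref{lem:rip}, and the identifications $A_i^p = E^*_i$, $A^*_i = |X|E_i^p$.
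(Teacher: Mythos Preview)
Your proof is correct. Parts (i) and (ii) match the paper's approach exactly: the paper says ``Routine'' for (i) and ``Use Lemma~\ref{lem:rip} and Definition~\ref{def:As}'' for (ii), which is precisely what you do.

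For (iii) there is a minor variation worth noting. The paper eliminates $E^*_j$ using Lemma~\ref{lem:transAsEs}(ii) and then simplifies via part (ii) of the present lemma, mirroring the structure of the proof of Lemma~\ref{lem:bAE}(iii). You instead apply Lemma~\ref{lem:rip} directly to the pair $E_i, A_j \in M$ and then invoke Lemma~\ref{lem:bAE}(iii). Your route is arguably more uniform, since it treats (ii) and (iii) by the same mechanism (transport across $p$), whereas the paper's route for (iii) is self-contained within the dual side once (ii) is established. Both are one-line computations and neither offers a real advantage over the other.
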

\begin{proof} (i) Routine.
\\
 (ii) Use Lemma \ref{lem:rip} and Definition \ref{def:As}.\\
\noindent (iii) In the given equation, eliminate $E^*_j$ using Lemma \ref{lem:transAsEs}(ii),
and simplify the result using (ii) above.
\end{proof}

\noindent In the next few lemmas, we describe the Krein parameters in various ways.

\begin{lemma} \label{lem:qhij}
For $0 \leq h,i,j\leq D$,
\begin{align*}
q^h_{i,j} = \vert X \vert^{-1} m^{-1}_h \langle A^*_i A^*_j, A^*_h \rangle 
             = \vert X \vert^{-1} m^{-1}_h \langle A^*_h, A^*_i A^*_j \rangle.
             \end{align*}
\end{lemma}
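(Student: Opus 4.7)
The plan is to mirror the proof of Lemma \ref{lem:phij} verbatim, substituting the dual objects for their primal counterparts. The two ingredients I need are already in place: the expansion $A^*_i A^*_j = \sum_{\ell=0}^D q^\ell_{i,j} A^*_\ell$ from Lemma \ref{lem:list}(iv), and the orthogonality relation $\langle A^*_i, A^*_j\rangle = \delta_{i,j} m_i \vert X \vert$ from Lemma \ref{lem:bAsEs}(ii). Observe that these play exactly the same structural role for $M^*$ as $A_i A_j = \sum_h p^h_{i,j} A_h$ and $\langle A_i, A_j\rangle = \delta_{i,j} k_i \vert X \vert$ do for $M$, with the substitutions $k_i \leftrightarrow m_i$, $A_i \leftrightarrow A^*_i$, and $p^h_{i,j} \leftrightarrow q^h_{i,j}$.

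To verify the first equation, I would expand the left-hand factor in $\langle A^*_i A^*_j, A^*_h\rangle$ using Lemma \ref{lem:list}(iv) to get $\sum_{\ell=0}^D q^\ell_{i,j} \langle A^*_\ell, A^*_h\rangle$, then apply Lemma \ref{lem:bAsEs}(ii) so that only the $\ell = h$ summand survives, producing $q^h_{i,j} m_h \vert X \vert$. Solving for $q^h_{i,j}$ yields the first claimed identity. The second identity is immediate from the symmetry of the bilinear form $\langle\,,\,\rangle$ on $\mathrm{Mat}_X(\mathbb R)$.

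There is no real obstacle here; the step is purely a bookkeeping exercise. The only conceptual content is the observation that the dual Bose-Mesner algebra $M^*$ inherits from Lemma \ref{lem:list}(iv) and Lemma \ref{lem:bAsEs}(ii) the same formal structure under $\langle\,,\,\rangle$ that allowed Lemma \ref{lem:phij} to extract $p^h_{i,j}$ from $\langle A_i A_j, A_h\rangle$, so the same manipulation recovers $q^h_{i,j}$ from $\langle A^*_i A^*_j, A^*_h\rangle$.
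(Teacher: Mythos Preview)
Your proposal is correct and matches the paper's proof essentially verbatim: expand $A^*_i A^*_j = \sum_{\ell=0}^D q^\ell_{i,j} A^*_\ell$ via Lemma~\ref{lem:list}(iv) and then apply the orthogonality in Lemma~\ref{lem:bAsEs}(ii). There is nothing to add.
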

\begin{proof} To verify these equations, expand $A^*_i A^*_j$ using $ A^*_i A^*_j = \sum_{\ell=0}^D q^\ell_{i,j} A^*_\ell $, and
evaluate the results using Lemma  \ref{lem:bAsEs}(ii).
\end{proof}

\begin{lemma} \label{lem:mq}
For $0 \leq h,i,j\leq D$,
\begin{align*}
m_h q^h_{i,j} = m_i q^i_{j,h} = m_j q^j_{h,i} = \vert X \vert^{-1} {\rm tr}\bigl(A^*_h A^*_i A^*_j\bigr).
\end{align*}
\end{lemma}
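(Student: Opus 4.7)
The plan is to mimic the proof of Lemma \ref{lem:kp}, dualizing $A_i \leftrightarrow A^*_i$ and $k_i \leftrightarrow m_i$, and to lean on the fact that $A^*_h$ is symmetric (Lemma \ref{lem:list}(iii)) together with the cyclic invariance of the trace.

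First I would start from Lemma \ref{lem:qhij}, which gives
\begin{align*}
m_h q^h_{i,j} = \vert X \vert^{-1} \langle A^*_i A^*_j, A^*_h \rangle.
\end{align*}
By definition of the bilinear form, $\langle A^*_i A^*_j, A^*_h \rangle = {\rm tr}\bigl(A^*_i A^*_j (A^*_h)^t\bigr)$, and since $(A^*_h)^t = A^*_h$ by Lemma \ref{lem:list}(iii), this simplifies to ${\rm tr}(A^*_i A^*_j A^*_h)$. Applying the cyclic property ${\rm tr}(BC) = {\rm tr}(CB)$ once, we obtain
\begin{align*}
m_h q^h_{i,j} = \vert X \vert^{-1} {\rm tr}\bigl(A^*_h A^*_i A^*_j\bigr),
\end{align*}
which yields the third equality claimed in the lemma.

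Next I would obtain the other two equalities by cyclically permuting the triple $(h,i,j)$. Applying Lemma \ref{lem:qhij} with $(h,i,j)$ replaced by $(i,j,h)$ gives $m_i q^i_{j,h} = \vert X \vert^{-1} {\rm tr}(A^*_j A^*_h A^*_i)$, and with $(h,i,j)$ replaced by $(j,h,i)$ gives $m_j q^j_{h,i} = \vert X \vert^{-1} {\rm tr}(A^*_h A^*_i A^*_j)$, after symmetrizing by the transpose trick as above. Two more applications of cyclicity of the trace then show that all three expressions are equal to $\vert X \vert^{-1} {\rm tr}(A^*_h A^*_i A^*_j)$, completing the proof.

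The main obstacle is merely bookkeeping: verifying that the formula $\langle A^*_i A^*_j, A^*_h\rangle$ from Lemma \ref{lem:qhij} agrees with ${\rm tr}(A^*_h A^*_i A^*_j)$ requires only that the $A^*_i$ are symmetric and that the trace is cyclic, both of which are already established. No new ideas are needed beyond those in the proof of Lemma \ref{lem:kp}.
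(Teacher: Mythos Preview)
Your proof is correct and is exactly the routine application of Lemma~\ref{lem:qhij} that the paper intends: multiply through by $m_h$, use symmetry of $A^*_h$ to drop the transpose, and invoke cyclicity of the trace to identify the three cyclic permutations. (In fact here the $A^*_i$ are diagonal and hence commute, so cyclicity is not even needed, but your argument works verbatim and parallels the proof of Lemma~\ref{lem:kp}.)
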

\begin{proof} Routine application of Lemma \ref{lem:qhij}.
\end{proof} 


\begin{lemma}
\label{lem:details}
The following {\rm (i)--(iv)} hold.
\begin{enumerate}
\item[\rm (i)] $q^h_{0,j} = \delta_{h,j} \qquad (0 \leq h,j\leq D)$;
\item[\rm (ii)] $q^h_{i,0} = \delta_{h,i} \qquad (0 \leq h,i\leq D)$;
\item[\rm (iii)] $q^0_{i,j} = \delta_{i,j}m_i \qquad (0 \leq i,j\leq D)$;
\item[\rm (iv)] $\sum_{i=0}^D q^h_{i,j} = m_j\qquad (0 \leq h,j\leq D)$.
\end{enumerate}
\end{lemma}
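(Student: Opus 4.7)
My plan is to mirror the proofs of the analogous identities for the intersection numbers $p^h_{i,j}$, exploiting the parallel structure provided by the dual distance matrices $\{A^*_i\}_{i=0}^D$ and their structure constants $q^h_{i,j}$ from Lemma~\ref{lem:list}(iv).

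For (i) and (ii), I would start from $A^*_0 = I$ (Lemma~\ref{lem:list}(i)). Multiplying $A^*_0 A^*_j = A^*_j$ and expanding the left side using $A^*_i A^*_j = \sum_{h=0}^D q^h_{i,j} A^*_h$ with $i=0$ gives $\sum_h q^h_{0,j} A^*_h = A^*_j$; since $\{A^*_h\}_{h=0}^D$ is a basis for $M^*$ (Lemma~\ref{lem:Asbasis}(i)), comparing coefficients yields $q^h_{0,j}=\delta_{h,j}$. Part (ii) follows identically from $A^*_i A^*_0 = A^*_i$.

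For (iii), I would invoke the symmetry established in Lemma~\ref{lem:mq}: $m_h q^h_{i,j} = m_i q^i_{j,h}$. Setting $h=0$ and using $m_0=1$ together with part (ii) (applied as $q^i_{j,0}=\delta_{i,j}$) gives $q^0_{i,j} = m_i \delta_{i,j}$.

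For (iv), the key identity is $\sum_{i=0}^D A^*_i = \vert X \vert E^*_0$ from Lemma~\ref{lem:list}(ii). Multiplying on the right by $A^*_j$ and expanding the left side with the structure constants gives
\[
\sum_{h=0}^D \Bigl(\sum_{i=0}^D q^h_{i,j}\Bigr) A^*_h \;=\; \vert X \vert\, E^*_0 A^*_j.
\]
By Lemma~\ref{lem:AisEis}, $E^*_0 A^*_j = m_j u_0(\theta_j) E^*_0 = m_j E^*_0$, since $u_0=1$ as a constant polynomial. Therefore the right-hand side equals $m_j \vert X \vert E^*_0 = m_j \sum_{h=0}^D A^*_h$, and comparing coefficients with respect to the basis $\{A^*_h\}_{h=0}^D$ yields $\sum_{i=0}^D q^h_{i,j} = m_j$ for every $h$. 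None of the four parts presents a real obstacle; the only subtlety is remembering to invoke $u_0(\theta_j)=1$ when reducing $E^*_0 A^*_j$ in part (iv), and to use the linear independence of the $A^*_h$ — guaranteed by Lemma~\ref{lem:Asbasis}(i) — in each coefficient comparison.
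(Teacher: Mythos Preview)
Your proof is correct and follows essentially the same approach as the paper: both exploit $A^*_0=I$ for (i)--(ii), the symmetry from Lemma~\ref{lem:mq} for (iii), and $\sum_i A^*_i=\vert X\vert E^*_0$ together with $E^*_0 A^*_j=m_j E^*_0$ for (iv). The only cosmetic difference is that the paper extracts coefficients via the bilinear form formula of Lemma~\ref{lem:qhij} (pairing against $A^*_h$), whereas you compare coefficients directly using the linear independence of $\{A^*_h\}$; these are equivalent devices.
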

\begin{proof} (i) By Lemmas \ref{lem:bAsEs}, \ref{lem:qhij} we obtain
\begin{align*}
q^h_{0,j} = \vert X \vert^{-1} m^{-1}_h \langle A^*_0 A^*_j , A^*_h\rangle = \vert X \vert^{-1} m^{-1}_h \langle A^*_j , A^*_h\rangle = \delta_{h,j}.
\end{align*}
\noindent (ii) By (i) and $q^h_{i,0}=q^h_{0,i}$. \\
(iii) By (i) and Lemma \ref{lem:mq}. \\
\noindent (iv) We have
\begin{align*}
\sum_{i=0}^D q^h_{i,j} = \vert X \vert^{-1} m^{-1}_h \sum_{i=0}^D \langle A^*_i A^*_j, A^*_h\rangle 
             =  m^{-1}_h  \langle E^*_0A^*_j, A^*_h\rangle 
             =  m^{-1}_h m_j  \langle E^*_0, A^*_h\rangle 
             =m_j.
             \end{align*}         
\end{proof}

\begin{proposition} \label{prop:krein}
For $0 \leq h,i,j\leq D$,
\begin{align*}
q^h_{i,j} = \vert X \vert^{-1} m_i m_j \sum_{\ell=0}^D u_\ell(\theta_i) u_\ell(\theta_j) u_\ell(\theta_h)k_\ell.
\end{align*}
\end{proposition}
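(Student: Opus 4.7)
The plan is to mirror the proof of Proposition \ref{prop:inter}, working with the ``dual'' ingredients throughout. In the proof of Proposition \ref{prop:inter}, the strategy was: (a) express the intersection number as an inner product via Lemma \ref{lem:phij}; (b) expand the distance matrices in terms of the primitive idempotents using Lemma \ref{lem:transAE}(i); (c) evaluate the resulting inner products via Lemma \ref{lem:bAE}(i). For the Krein parameters, the matching ingredients are Lemma \ref{lem:qhij}, Lemma \ref{lem:transAsEs}(i), and Lemma \ref{lem:bAsEs}(i), respectively.

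Concretely, I would start from
\begin{equation*}
q^h_{i,j} = \vert X \vert^{-1} m^{-1}_h \langle A^*_i A^*_j, A^*_h \rangle,
\end{equation*}
which is Lemma \ref{lem:qhij}. Next I would eliminate each of $A^*_i, A^*_j, A^*_h$ using the expansion $A^*_r = m_r \sum_{a=0}^D u_a(\theta_r) E^*_a$ from Lemma \ref{lem:transAsEs}(i). Because the $E^*_a$ are mutually orthogonal idempotents, the product $A^*_i A^*_j$ collapses to a single sum $m_i m_j \sum_{a} u_a(\theta_i) u_a(\theta_j) E^*_a$. Pairing this against the similar expansion of $A^*_h$ inside $\langle\,,\,\rangle$ and applying Lemma \ref{lem:bAsEs}(i), namely $\langle E^*_a, E^*_c\rangle = \delta_{a,c} k_a$, reduces the inner product to $m_i m_j m_h \sum_{\ell=0}^D u_\ell(\theta_i) u_\ell(\theta_j) u_\ell(\theta_h) k_\ell$. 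Substituting this back and cancelling the factor of $m_h$ yields the desired formula.

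There is no real obstacle; everything is set up so the calculation is entirely parallel to Proposition \ref{prop:inter}, with the roles of $A_i\leftrightarrow A^*_i$, $E_i \leftrightarrow E^*_i$, $k_i \leftrightarrow m_i$, and $v_i\leftrightarrow u_i$ (in the appropriate slots) interchanged. The only thing to be mildly careful about is tracking which variable each $u_\bullet(\theta_\bullet)$ depends on, since in the dual formula the index $\ell$ now sits in the subscript of $u$ rather than as the argument of $\theta$; this is automatic once one writes the expansion from Lemma \ref{lem:transAsEs}(i) correctly.
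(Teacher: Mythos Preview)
Your proposal is correct and follows essentially the same approach as the paper's proof: start from Lemma~\ref{lem:qhij}, expand each $A^*_r$ via Lemma~\ref{lem:transAsEs}(i), and collapse the resulting inner products using Lemma~\ref{lem:bAsEs}(i). The paper states exactly this plan in a single sentence; your extra detail about first simplifying the product $A^*_i A^*_j$ using $E^*_a E^*_b = \delta_{a,b} E^*_a$ is a natural way to carry it out.
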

\begin{proof} In the equation $q^h_{i,j} =  \vert X \vert^{-1} m^{-1}_h \langle A^*_i A^*_j, A^*_h \rangle $, eliminate $A^*_h, A^*_i, A^*_j$ using Lemma  \ref{lem:transAsEs}(i),
 and evaluate the result using Lemma  \ref{lem:bAsEs}(i).
\end{proof}

\section{Reduction rules}

\noindent Throughout this section $\Gamma=(X,\mathcal R)$ denotes a distance-regular graph with diameter $D\geq 3$. Fix $x \in X$ and write $T=T(x)$.
We will display a number of relations involving $E_0$ and $E^*_0$. These relations are informally known as reduction rules; see \cite[Section~7]{egge1},
\cite[Sections~9, 11, 13]{nomIS}.

\begin{lemma} For $0 \leq i \leq D$,
\begin{align*}
&E_0 E^*_0 A_i = E_0 E^*_i, \qquad \qquad E^*_0 E_0 E^*_i = \vert X \vert^{-1} E^*_0 A_i,
\\
& A_i E^*_0 E_0 = E^*_i E_0, \qquad \qquad E^*_i E_0 E^*_0 = \vert X \vert^{-1} A_i E^*_0.
\end{align*}
\end{lemma}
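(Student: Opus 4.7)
The plan is to derive all four reduction rules by direct specialization of Lemma 2.32 (parts (i)--(iv)) to the matrix $B = A_i$. This is possible because $A_i \in M$ (the Bose-Mesner algebra) and because of the identity $A_i^p = E^*_i$ for $0 \leq i \leq D$, which was recorded just after Definition 5.2 (the lemma computing $A_i^p$, $I^p$, and $J^p$).

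First I would verify that the hypothesis $B \in M$ applies, i.e. note that $A_i \in M$ so that Lemma 2.32 is available with $B = A_i$. Then I would substitute into each of the four parts of Lemma 2.32, in each case replacing $A_i^p$ by $E^*_i$ via the identity $A_i^p = E^*_i$. Explicitly: part (i) of Lemma 2.32 gives $E_0 E^*_0 A_i = E_0 A_i^p = E_0 E^*_i$, which is the first displayed identity. Part (ii) gives $E^*_0 E_0 A_i^p = |X|^{-1} E^*_0 A_i$, which upon substituting $A_i^p = E^*_i$ becomes $E^*_0 E_0 E^*_i = |X|^{-1} E^*_0 A_i$, the second displayed identity. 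Part (iii) gives $A_i E^*_0 E_0 = A_i^p E_0 = E^*_i E_0$, the third displayed identity. Part (iv) gives $A_i^p E_0 E^*_0 = |X|^{-1} A_i E^*_0$, which after substitution becomes $E^*_i E_0 E^*_0 = |X|^{-1} A_i E^*_0$, the fourth displayed identity.

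There is no real obstacle here: every step has already been done in Lemma 2.32, whose proof in turn is a direct entry-by-entry check using $E_0 = |X|^{-1} J$, the fact that $E^*_0$ is the diagonal matrix supported at $x$, and the normalization identity $|X|\, E^*_0 E_0 E^*_0 = E^*_0$. The only thing to be careful about is the bookkeeping: keeping track of which side each factor sits on, and remembering to apply $A_i^p = E^*_i$ consistently. Accordingly I would present the proof as a single short paragraph that invokes Lemma 2.32 (i)--(iv) with $B = A_i$ together with the identity $A_i^p = E^*_i$.
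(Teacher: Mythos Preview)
Your proposal is correct and matches the paper's proof exactly: the paper simply writes ``Apply Lemma~\ref{lem:32} with $B=A_i$,'' relying on $A_i \in M$ and $A_i^p = E^*_i$. Your elaboration of the four substitutions is accurate and the bookkeeping is right.
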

\begin{proof} Apply Lemma \ref{lem:32} with $B=A_i$.
\end{proof}

\begin{lemma} For $0 \leq i \leq D$,
\begin{align*}
&E^*_0 E_0 A^*_i = E^*_0 E_i, \qquad \qquad E_0 E^*_0 E_i = \vert X \vert^{-1} E_0 A^*_i,
\\
& A^*_i E_0 E^*_0 = E_i E^*_0, \qquad \qquad E_i E^*_0 E_0 = \vert X \vert^{-1} A^*_i E_0.
\end{align*}
\end{lemma}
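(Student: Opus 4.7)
The plan is to apply Lemma \ref{lem:32} with the choice $B = E_i$. Since $E_i$ lies in $M$, all four parts of Lemma \ref{lem:32} are available, and the key ingredient is the identity $E_i^p = \vert X \vert^{-1} A^*_i$, which is just a rearrangement of Definition \ref{def:As}. This is the exact mirror of how the previous lemma was obtained by taking $B = A_i$ and invoking $A_i^p = E^*_i$.

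First I would set $B = E_i$ in Lemma \ref{lem:32}(i), getting $E_0 E^*_0 E_i = E_0 E_i^p = \vert X \vert^{-1} E_0 A^*_i$, which is the second asserted relation. Next, substituting $B = E_i$ into Lemma \ref{lem:32}(ii) gives $E^*_0 E_0 E_i^p = \vert X \vert^{-1} E^*_0 E_i$; writing $E_i^p = \vert X \vert^{-1} A^*_i$ on the left side and clearing the common factor $\vert X \vert^{-1}$ yields the first asserted relation $E^*_0 E_0 A^*_i = E^*_0 E_i$.

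For the bottom row, I would apply Lemma \ref{lem:32}(iii) with $B = E_i$ to obtain $E_i E^*_0 E_0 = E_i^p E_0 = \vert X \vert^{-1} A^*_i E_0$, giving the fourth relation. Finally, Lemma \ref{lem:32}(iv) with $B = E_i$ becomes $E_i^p E_0 E^*_0 = \vert X \vert^{-1} E_i E^*_0$; substituting $E_i^p = \vert X \vert^{-1} A^*_i$ and cancelling the factor $\vert X \vert^{-1}$ produces $A^*_i E_0 E^*_0 = E_i E^*_0$, which is the third relation.

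There is really no obstacle here: the lemma is a mechanical dualization of the previous one, exchanging the roles of $M$ and $M^*$ via the map $p$. The only point worth being careful about is keeping track of the scalar $\vert X \vert$, since passing from $B = A_i$ (where $B^p = E^*_i$ with no normalizing constant) to $B = E_i$ (where $B^p = \vert X \vert^{-1} A^*_i$) introduces the factor $\vert X \vert^{-1}$ that appears on exactly two of the four identities.
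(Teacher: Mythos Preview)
Your proposal is correct and takes exactly the same approach as the paper, which simply says ``Apply Lemma \ref{lem:32} with $B=E_i$.'' You have just spelled out the details of that application, correctly tracking the factor $\vert X\vert^{-1}$ coming from $E_i^p = \vert X\vert^{-1} A^*_i$.
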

\begin{proof} Apply Lemma \ref{lem:32} with $B=E_i$.
\end{proof}

\begin{lemma} \label{lem:r1} 
For $0 \leq i,j\leq D$ we have
\begin{enumerate}
\item[\rm (i)] $E_0 A^*_i E_j = \delta_{i,j} E_0 A^*_i$;
\item[\rm (ii)] $E_0 E^*_i E_j = \vert X \vert^{-1} k_i u_i(\theta_j) E_0 A^*_j$;
\item[\rm (iii)] $E_0 A^*_i A_j = k_j u_j(\theta_i) E_0 A^*_i$;
\item[\rm (iv)] $E_0 E^*_i A_j = \sum_{h=0}^D p^h_{i,j} E_0 E^*_h$.
\end{enumerate}
\end{lemma}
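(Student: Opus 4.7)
The plan is to systematically apply the reduction rules established in the preceding two lemmas, which let us trade $A^*_i$ and $E^*_i$ for more convenient expressions once they appear next to $E_0$ or $E^*_0$. The two key moves are
\[
E_0 A^*_i = \vert X \vert E_0 E^*_0 E_i, \qquad E_0 E^*_i = E_0 E^*_0 A_i,
\]
both of which follow from the identities $E_0 E^*_0 E_i = \vert X \vert^{-1} E_0 A^*_i$ and $E_0 E^*_0 A_i = E_0 E^*_i$ already proved above. After performing these substitutions, the products become products of $A$'s and $E$'s, which can be handled via Lemma \ref{lem:AiEi} ($E_i A_j = A_j E_i = v_j(\theta_i) E_i$), the structure constants $A_i A_j = \sum_h p^h_{i,j} A_h$, or the orthogonality $E_i E_j = \delta_{i,j} E_i$.

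For (i), I would write $E_0 A^*_i E_j = \vert X \vert E_0 E^*_0 E_i E_j$, then use $E_i E_j = \delta_{i,j} E_i$ and convert back via $\vert X \vert E_0 E^*_0 E_i = E_0 A^*_i$. For (ii), write $E_0 E^*_i E_j = E_0 E^*_0 A_i E_j$; since $A_i E_j = v_i(\theta_j) E_j$, this becomes $v_i(\theta_j) E_0 E^*_0 E_j$, which by the reduction rule equals $\vert X \vert^{-1} v_i(\theta_j) E_0 A^*_j$, and then $v_i = k_i u_i$ gives the claim. For (iii), write $E_0 A^*_i A_j = \vert X \vert E_0 E^*_0 E_i A_j$; using $E_i A_j = v_j(\theta_i) E_i$ and converting back yields $v_j(\theta_i) E_0 A^*_i = k_j u_j(\theta_i) E_0 A^*_i$. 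For (iv), write $E_0 E^*_i A_j = E_0 E^*_0 A_i A_j = \sum_{h=0}^D p^h_{i,j} E_0 E^*_0 A_h = \sum_{h=0}^D p^h_{i,j} E_0 E^*_h$.

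No real obstacle is anticipated: each identity reduces, after a single rewriting via a reduction rule, to a product already computed in Section 3 or Section 5. The only point requiring any care is keeping track of the scalar $\vert X \vert$ when passing between $E^*_0 E_i$ and $E_0 A^*_i$, and remembering the normalization $v_i = k_i u_i$ so the final coefficients match exactly as stated.
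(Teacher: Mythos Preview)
Your proposal is correct and follows essentially the same approach as the paper's proof: each part is handled by converting $E_0 A^*_i$ to $\vert X\vert E_0 E^*_0 E_i$ or $E_0 E^*_i$ to $E_0 E^*_0 A_i$, simplifying the resulting product in $M$, and converting back. The only cosmetic difference is that the paper writes the scalar $k_i u_i(\theta_j)$ directly rather than passing through $v_i(\theta_j)$ first.
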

\begin{proof} (i) Observe
\begin{align*}
E_0 A^*_i E_j = \vert X \vert E_0 E^*_0 E_i E_j = \delta_{i,j} \vert X \vert E_0 E^*_0 E_i = \delta_{i,j} E_0 A^*_i.
\end{align*}
\noindent (ii) Observe
\begin{align*}
E_0 E^*_i E_j = E_0 E^*_0 A_i E_j = k_i u_i(\theta_j) E_0 E^*_0 E_j = \vert X \vert^{-1} k_i u_i(\theta_j) E_0 A^*_j.
\end{align*}
\noindent (iii) Observe
\begin{align*}
E_0 A^*_i A_j = \vert X \vert E_0 E^*_0 E_i A_j =  \vert X \vert k_j u_j(\theta_i) E_0 E^*_0 E_i   =    k_j u_j(\theta_i) E_0 A^*_i.
\end{align*}
\noindent (iv) Observe
\begin{align*}
E_0 E^*_i A_j =  E_0 E^*_0 A_i A_j =  \sum_{h=0}^D p^h_{i,j} E_0 E^*_0 A_h = \sum_{h=0}^D p^h_{i,j} E_0 E^*_h.
\end{align*}
\end{proof}

\begin{lemma} \label{lem:r2} 
For $0 \leq i,j\leq D$ we have
\begin{enumerate}
\item[\rm (i)] $E^*_0 A_i E^*_j = \delta_{i,j} E^*_0 A_i$;
\item[\rm (ii)] $E^*_0 E_i E^*_j = \vert X \vert^{-1} m_i u_j (\theta_i) E^*_0 A_j$;
\item[\rm (iii)] $E^*_0 A_i A^*_j = m_j u_i(\theta_j) E^*_0 A_i$;
\item[\rm (iv)] $E^*_0 E_i A^*_j = \sum_{h=0}^D q^h_{i,j} E^*_0 E_h$.
\end{enumerate}
\end{lemma}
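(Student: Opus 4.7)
The plan is to mirror the proof of Lemma \ref{lem:r1} by exploiting the duality between $M$ and $M^*$ encoded in the map $p$ of Section 5. The proof of Lemma \ref{lem:r1} hinged on the identity $E_0 A^*_i = \vert X \vert E_0 E^*_0 E_i$, which follows from Lemma \ref{lem:32}(i) with $B = E_i$ together with $A^*_i = \vert X \vert (E_i)^p$. For Lemma \ref{lem:r2}, I would first establish two analogous "bridge" identities by invoking Lemma \ref{lem:32}(ii) with $B \in M$: namely, setting $B = E_i$ gives
\begin{equation*}
E^*_0 E_i \;=\; E^*_0 E_0 A^*_i,
\end{equation*}
and setting $B = A_j$ (so that $B^p = E^*_j$) gives
\begin{equation*}
E^*_0 E_0 E^*_j \;=\; \vert X \vert^{-1} E^*_0 A_j.
\end{equation*}
These two identities are the engines that convert between mixed products and the single-factor form on the right-hand sides.

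For part (i), I would argue directly from the entrywise definitions: $(E^*_0 A_i E^*_j)_{y,z}$ is nonzero only when $y=x$, $\partial(y,z)=i$, and $\partial(x,z)=j$, which forces $i=j$ and reduces the expression to $E^*_0 A_i$. (Alternatively, one could expand $A_i$ using Lemma \ref{lem:transAE}(i) and use Lemma \ref{lem:AisEis}; either works.) For part (iii), I would expand $A^*_j = m_j \sum_{h=0}^D u_h(\theta_j) E^*_h$ using Lemma \ref{lem:transAsEs}(i) and then apply part (i) termwise, so that only the $h=i$ term survives.

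Parts (ii) and (iv) use the bridge identities above. For (ii), write
\begin{equation*}
E^*_0 E_i E^*_j \;=\; E^*_0 E_0 A^*_i E^*_j \;=\; E^*_0 E_0 \bigl(E^*_j A^*_i\bigr) \;=\; m_i u_j(\theta_i)\, E^*_0 E_0 E^*_j \;=\; \vert X \vert^{-1} m_i u_j(\theta_i)\, E^*_0 A_j,
\end{equation*}
where the middle step uses commutativity of $M^*$ together with Lemma \ref{lem:AisEis} (applied as $E^*_j A^*_i = m_i u_j(\theta_i) E^*_j$), and the last step uses the second bridge identity. For (iv), substitute the first bridge identity and use Lemma \ref{lem:list}(iv) to multiply out $A^*_i A^*_j$:
\begin{equation*}
E^*_0 E_i A^*_j \;=\; E^*_0 E_0 A^*_i A^*_j \;=\; \sum_{h=0}^D q^h_{i,j}\, E^*_0 E_0 A^*_h \;=\; \sum_{h=0}^D q^h_{i,j}\, E^*_0 E_h,
\end{equation*}
the final equality being the bridge identity run in reverse.

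I do not expect any serious obstacle; everything is a bookkeeping exercise once one recognizes that Lemma \ref{lem:r2} is the formal dual of Lemma \ref{lem:r1} under the swap $(M, A_i, E_i, k_i, p^h_{i,j}) \leftrightarrow (M^*, A^*_i, E^*_i, m_i, q^h_{i,j})$. The only subtlety worth flagging is the placement of the $\vert X\vert^{-1}$ factor in part (ii), which must be traced carefully through Lemma \ref{lem:32}(ii) and the normalization $A^*_i = \vert X \vert (E_i)^p$; a sign or factor slip here would invalidate the statement, so this is where I would check the arithmetic twice.
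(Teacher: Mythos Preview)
Your proposal is correct and matches the paper's approach, which simply says ``Similar to the proof of Lemma \ref{lem:r1}''---the intended argument uses exactly the two bridge identities $E^*_0 E_i = E^*_0 E_0 A^*_i$ and $E^*_0 E_0 E^*_j = \vert X\vert^{-1} E^*_0 A_j$ you isolated from Lemma \ref{lem:32}(ii). The only cosmetic difference is that the paper's parallel proof would handle (i) and (iii) via these same bridge identities (writing $E^*_0 A_i = \vert X\vert E^*_0 E_0 E^*_i$ and then using $E^*_i E^*_j = \delta_{i,j} E^*_i$ and $E^*_i A^*_j = m_j u_i(\theta_j) E^*_i$) rather than the entrywise computation and expansion you suggest, but your alternatives work just as well.
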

\begin{proof} Similar to the proof of Lemma \ref{lem:r1}.
\end{proof}

\begin{lemma} \label{lem:r3}
For $0 \leq i,j\leq D$ we have
\begin{enumerate}
\item[\rm (i)]  $E_j A^*_i E_0 = \delta_{i,j} A^*_i E_0$;         
\item[\rm (ii)]  $E_j E^*_i E_0 =  \vert X \vert^{-1} k_i u_i(\theta_j) A^*_jE_0$;
\item[\rm (iii)] $A_j A^*_iE_0 = k_j u_j(\theta_i) A^*_iE_0$;
\item[\rm (iv)] $A_j E^*_i E_0 = \sum_{h=0}^D p^h_{i,j} E^*_hE_0$.
\end{enumerate}
\end{lemma}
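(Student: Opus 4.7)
The plan is to obtain Lemma \ref{lem:r3} directly from Lemma \ref{lem:r1} by transposition. This mirrors the relationship between Lemma \ref{lem:r1} and Lemma \ref{lem:r2} in that each equation in Lemma \ref{lem:r3} has the same shape as the corresponding equation in Lemma \ref{lem:r1}, but with the roles of the left and right factors reversed.

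The key observation is that all of the matrices appearing in these identities are symmetric: we have $A_j^t = A_j$, $E_j^t = E_j$, and both $A^*_i$ and $E^*_i$ are diagonal and hence symmetric. Consequently, taking the transpose of each side of an identity in Lemma \ref{lem:r1} will reverse the order of the factors without otherwise altering them. For example, transposing the identity $E_0 A^*_i E_j = \delta_{i,j} E_0 A^*_i$ of Lemma \ref{lem:r1}(i) yields $E_j A^*_i E_0 = \delta_{i,j} A^*_i E_0$, which is precisely Lemma \ref{lem:r3}(i). The same move, applied term-by-term to Lemma \ref{lem:r1}(ii)--(iv), produces exactly Lemma \ref{lem:r3}(ii)--(iv), since the sum $\sum_{h=0}^D p^h_{i,j}$ in part (iv) is unaffected by the transpose operation.

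Concretely, first I would remind the reader that $A_i$, $A^*_i$, $E_i$, $E^*_i$ are all symmetric (the dual primitive idempotents and dual distance matrices because they are diagonal, the distance matrices and primitive idempotents because this was recorded in Section 2). Then I would verify each of (i)--(iv) by applying the transpose to the correspondingly labeled identity in Lemma \ref{lem:r1} and observing that $(XYZ)^t = Z^t Y^t X^t$ produces the desired right-hand expression.

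There is no real obstacle here: the only thing to be careful about is bookkeeping, making sure that the scalar coefficients (such as $\vert X\vert^{-1} k_i u_i(\theta_j)$ in (ii), $k_j u_j(\theta_i)$ in (iii), and $p^h_{i,j}$ in (iv)) carry over unchanged, which they do because they are real scalars commuting with the transpose. If one preferred not to invoke transposition, each part could alternatively be proved by the same direct manipulations used for Lemma \ref{lem:r1}, substituting $E_0 E^*_0 E_i = \vert X\vert^{-1} E_0 A^*_i$ (and its mirror $E_i E^*_0 E_0 = \vert X\vert^{-1} A^*_i E_0$) from Lemma \ref{lem:32} at the appropriate step, together with $A_j E^*_0 E_0 = E^*_j E_0$ and $A_i E_j = v_i(\theta_j) E_j = k_i u_i(\theta_j) E_j$ from Lemma \ref{lem:AiEi}; this direct route is slightly longer but entirely routine.
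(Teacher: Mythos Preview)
Your proposal is correct and matches the paper's proof exactly: the paper simply writes ``Take the transpose of everything in Lemma~\ref{lem:r1}.'' Your additional remarks about the symmetry of $A_i$, $E_i$, $A^*_i$, $E^*_i$ and the behavior of the scalar coefficients under transposition are precisely the routine checks that justify this one-line argument.
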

\begin{proof} Take the transpose of everything in Lemma \ref{lem:r1}.
\end{proof}

\begin{lemma} \label{lem:r4} 
For $0 \leq i,j\leq D$ we have
\begin{enumerate}
\item[\rm (i)] $E^*_j A_i E^*_0  = \delta_{i,j} A_iE^*_0$;
\item[\rm (ii)] $E^*_j E_i E^*_0 = \vert X \vert^{-1} m_i u_j (\theta_i) A_j E^*_0$;
\item[\rm (iii)] $A^*_j A_i E^*_0  = m_j u_i(\theta_j) A_i E^*_0 $;
\item[\rm (iv)] $A^*_j E_i E^*_0 = \sum_{h=0}^D q^h_{i,j} E_h E^*_0$.
\end{enumerate}
\end{lemma}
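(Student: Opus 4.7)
The plan is to obtain Lemma \ref{lem:r4} by taking the transpose of each equation in Lemma \ref{lem:r2}, exactly as Lemma \ref{lem:r3} was obtained from Lemma \ref{lem:r1}. The key fact enabling this is that the relevant matrices are symmetric: $E^*_i$ is diagonal (hence symmetric), $A_i$ is symmetric by definition, $E_i$ is symmetric as noted just before Lemma \ref{lem:AiEi}, and $A^*_i$ is diagonal, hence symmetric, as stated in Lemma \ref{lem:list}(iii).

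Concretely, I would apply the transpose map to each of the four identities in Lemma \ref{lem:r2}, using the reversal rule $(XYZ)^t = Z^t Y^t X^t$. For instance, transposing Lemma \ref{lem:r2}(i) yields $E^*_j A_i E^*_0 = \delta_{i,j} A_i E^*_0$, which is part (i) of the target. Transposing Lemma \ref{lem:r2}(ii) gives $E^*_j E_i E^*_0 = |X|^{-1} m_i u_j(\theta_i) A_j E^*_0$, matching part (ii). The same move on Lemma \ref{lem:r2}(iii) and Lemma \ref{lem:r2}(iv) yields parts (iii) and (iv), respectively, where in part (iv) the scalars $q^h_{i,j}$ are real and move freely across the transpose.

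There is essentially no obstacle here: the argument is the same two-line observation used to pass from Lemma \ref{lem:r1} to Lemma \ref{lem:r3}, and the only thing to verify is that every factor appearing on either side is symmetric so that the transpose reverses the product without introducing extra factors. Thus the proof can be written very tersely as ``take the transpose of everything in Lemma \ref{lem:r2}.''
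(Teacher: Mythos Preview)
Your proposal is correct and matches the paper's own proof, which simply reads ``Take the transpose of everything in Lemma \ref{lem:r2}.'' Your explicit verification that each matrix involved is symmetric is exactly the justification needed.
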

\begin{proof} Take the transpose of everything in Lemma \ref{lem:r2}.
\end{proof}

\begin{lemma}
\label{lem:r5}
For $0 \leq i \leq D$ we have
\begin{enumerate}
\item[\rm (i)] $E_0 E^*_i E_0 = \vert X \vert^{-1} k_i E_0$;
\item[\rm (ii)] $E^*_0 E_i E^*_0 = \vert X \vert^{-1} m_i E^*_0$.
\end{enumerate}
\end{lemma}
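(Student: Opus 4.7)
The plan is to derive both identities as immediate specializations of the reduction rules already proved in Lemmas \ref{lem:r1} and \ref{lem:r2}, specifically parts (ii) of each, by taking the index $j=0$.

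For part (i), I would start from Lemma \ref{lem:r1}(ii), which states
\begin{align*}
E_0 E^*_i E_j = \vert X \vert^{-1} k_i u_i(\theta_j) E_0 A^*_j \qquad (0 \leq i,j \leq D).
\end{align*}
Setting $j = 0$, the right-hand side becomes $\vert X \vert^{-1} k_i u_i(\theta_0) E_0 A^*_0$. By Lemma \ref{lem:norm}(ii) we have $u_i(\theta_0) = 1$, and by Lemma \ref{lem:list}(i) we have $A^*_0 = I$. Substituting these gives $E_0 E^*_i E_0 = \vert X \vert^{-1} k_i E_0$, which is (i).

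For part (ii), I would mirror this argument using Lemma \ref{lem:r2}(ii):
\begin{align*}
E^*_0 E_i E^*_j = \vert X \vert^{-1} m_i u_j(\theta_i) E^*_0 A_j \qquad (0 \leq i,j \leq D).
\end{align*}
Setting $j = 0$, the right-hand side becomes $\vert X \vert^{-1} m_i u_0(\theta_i) E^*_0 A_0$. Since $u_0$ is the constant polynomial $1$ (by Definition \ref{def:ui} together with $v_0 = 1$ and $k_0 = 1$), we have $u_0(\theta_i) = 1$, and trivially $A_0 = I$. This yields $E^*_0 E_i E^*_0 = \vert X \vert^{-1} m_i E^*_0$, proving (ii).

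There is essentially no obstacle here: once the earlier reduction rules are in place, the lemma is a two-line specialization, and it could alternatively be verified by direct entrywise computation using $E_0 = \vert X \vert^{-1} J$ and the diagonal form of $E^*_0$, which would give $k_i \vert X \vert^{-2}$ for every entry of $E_0 E^*_i E_0$ and $\delta_{y,x}\delta_{z,x}\, m_i \vert X \vert^{-1}$ for $E^*_0 E_i E^*_0$. I prefer the algebraic derivation above because it reuses structure already established and makes it clear that these identities are a symmetry-swapped pair of the same phenomenon.
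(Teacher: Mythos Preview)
Your proof is correct and is in the same spirit as the paper's: both derive the lemma by specializing earlier reduction rules. The only minor difference is that the paper works one step closer to the source, writing $E_0 E^*_i E_0 = E_0 E^*_0 A_i E_0 = k_i E_0 E^*_0 E_0 = |X|^{-1} k_i E_0$ (and dually for (ii)), rather than invoking Lemma~\ref{lem:r1}(ii) and then simplifying $u_i(\theta_0)$ and $A^*_0$; but this is a cosmetic distinction, not a different idea.
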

\begin{proof}(i) Observe
\begin{align*}
E_0 E^*_i E_0 = E_0 E^*_0 A_i E_0 = k_i E_0 E^*_0 E_0 = \vert X \vert^{-1} k_i E_0.
\end{align*}
\noindent (ii) Observe
\begin{align*}
E^*_0 E_i E^*_0 = E^*_0 E_0 A^*_i E^*_0 = m_i E^*_0 E_0 E^*_0 = \vert X \vert^{-1} m_i E^*_0.
\end{align*}
\end{proof}

\begin{lemma} \label{lem:0i0}
For $0 \leq i ,j\leq D$ we have
\begin{enumerate}
\item[\rm (i)] $A_i E^*_0 A_j = \vert X \vert E^*_i E_0 E^*_j$;
\item[\rm (ii)] $E_i E^*_0 A_j = A^*_i E_0 E^*_j$;
\item[\rm (iii)] $A_i E^*_0 E_j= E^*_i E_0 A^*_j$;
\item[\rm (iv)] $E_i E^*_0 E_j = \vert X \vert^{-1} A^*_i E_0 A^*_j$.
\end{enumerate}
\end{lemma}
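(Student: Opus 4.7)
The plan is to derive all four identities by repeatedly applying Lemma \ref{lem:32}, together with the two fundamental ``sandwich'' relations $\vert X \vert E^*_0 E_0 E^*_0 = E^*_0$ and $\vert X \vert E_0 E^*_0 E_0 = E_0$, and the translation formulas $A_i^p = E^*_i$ and $(E_i)^p = \vert X \vert^{-1} A^*_i$. No deeper input is needed; the work is purely bookkeeping.

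For (i), I would begin from the right-hand side and push the $E^*_i$ and $E^*_j$ past $E_0$. Applying Lemma \ref{lem:32}(iv) with $B=A_i$ gives $E^*_i E_0 E^*_0 = \vert X \vert^{-1} A_i E^*_0$, while Lemma \ref{lem:32}(i) with $B=A_j$ gives $E_0 E^*_j = E_0 E^*_0 A_j$. Inserting the idempotent $E^*_0$ between $E_0$ and $E^*_j$ (using $E^*_0 E_0 E^*_j = E^*_0 E_0 \cdot E^*_0 A_j$ with $(E^*_0)^2 = E^*_0$) and combining these yields $\vert X \vert E^*_i E_0 E^*_j = A_i E^*_0 A_j$.

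For (ii), I use Lemma \ref{lem:32}(iv) with $B=E_i$, which (after multiplying by $\vert X \vert$ and using $A^*_i = \vert X \vert(E_i)^p$) gives the auxiliary identity $A^*_i E_0 E^*_0 = E_i E^*_0$. Together with $E_0 E^*_j = E_0 E^*_0 A_j$ from Lemma \ref{lem:32}(i), this yields $A^*_i E_0 E^*_j = A^*_i E_0 E^*_0 A_j = E_i E^*_0 A_j$. Identity (iii) then follows from (ii) for free by transposition: each of $A_i,\,A_j,\,E_i,\,E_j,\,A^*_i,\,A^*_j,\,E^*_0$ is symmetric, so $(E_i E^*_0 A_j)^t = A_j E^*_0 E_i$ and $(A^*_i E_0 E^*_j)^t = E^*_j E_0 A^*_i$; swapping the labels $i \leftrightarrow j$ converts (ii) into (iii).

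Finally, (iv) is obtained by chaining the auxiliary identities from (ii) and its analogue on the other side. Using $E_i E^*_0 = A^*_i E_0 E^*_0$ and the symmetric companion $E^*_0 E_j = E^*_0 E_0 A^*_j$ (which comes from Lemma \ref{lem:32}(ii) with $B=E_j$), I compute
\[
E_i E^*_0 E_j \;=\; A^*_i E_0 E^*_0 \cdot E^*_0 E_0 A^*_j \;=\; A^*_i \bigl(E_0 E^*_0 E_0\bigr) A^*_j \;=\; \vert X \vert^{-1} A^*_i E_0 A^*_j,
\]
where the last step invokes $\vert X \vert E_0 E^*_0 E_0 = E_0$. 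There is no real obstacle here; the only care required is keeping track of which $B$ to feed into which clause of Lemma \ref{lem:32} and remembering that $(E^*_0)^2 = E^*_0$ allows us to freely insert or delete an $E^*_0$ between two adjacent factors that already have one on the appropriate side.
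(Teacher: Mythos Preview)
Your proof is correct and uses essentially the same approach as the paper: both arguments repeatedly invoke the reduction rules from Lemma~\ref{lem:32} (together with $\vert X\vert E_0 E^*_0 E_0 = E_0$) to move between the $A_i,E_i$ and $E^*_i,A^*_i$ worlds. The only cosmetic differences are that the paper proves (iii) directly rather than by transposing (ii), and that your parenthetical in (i) is phrased a bit confusingly---the relevant substitution is simply $E_0 E^*_j = E_0 E^*_0 A_j$ from Lemma~\ref{lem:32}(i), after which $\vert X\vert E^*_i E_0 E^*_0 = A_i E^*_0$ finishes the job; the $(E^*_0)^2=E^*_0$ remark is not actually needed there.
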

\begin{proof} (i) Observe
\begin{align*}
A_i E^*_0 A_j = \vert X \vert A_i E^*_0 E_0 E^*_j = \vert X \vert E^*_iE_0 E^*_j.
\end{align*}
\noindent (ii) Observe
\begin{align*}
E_i E^*_0 A_j = \vert X \vert E_i E^*_0 E_0 E^*_j = A^*_i E_0 E^*_j.
\end{align*}
\noindent (iii) Observe
\begin{align*}
A_i E^*_0 E_j = A_i E^*_0 E_0 A^*_j= E^*_i E_0 A^*_j.
\end{align*}
\noindent (iv) Observe
\begin{align*}
E_i E^*_0 E_j = E_i E^*_0 E_0 A^*_j = \vert X \vert^{-1} A^*_i E_0 A^*_j.
\end{align*}
\end{proof} 

\begin{corollary} $ME^*_0 M$ and $M^* E_0 M^*$ span the same subspace of ${\rm Mat}_X(\mathbb R)$.
\end{corollary}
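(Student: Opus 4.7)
The plan is to use part (i) of Lemma \ref{lem:0i0} directly, after rewriting each of $ME^*_0 M$ and $M^*E_0 M^*$ as the span of a convenient set of products of basis elements.

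First I would observe that since $\{A_i\}_{i=0}^D$ is a basis for $M$, bilinearity gives
\[
ME^*_0 M = \mathrm{Span}\bigl\{ A_i E^*_0 A_j : 0 \le i,j \le D\bigr\}.
\]
Similarly, since $\{E^*_i\}_{i=0}^D$ is a basis for $M^*$,
\[
M^* E_0 M^* = \mathrm{Span}\bigl\{ E^*_i E_0 E^*_j : 0 \le i,j \le D\bigr\}.
\]

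Second, I would invoke Lemma \ref{lem:0i0}(i), which asserts $A_i E^*_0 A_j = |X|\, E^*_i E_0 E^*_j$ for all $0 \le i,j \le D$. This equality identifies the spanning set of $ME^*_0 M$ with a nonzero scalar multiple of the spanning set of $M^*E_0 M^*$, so the two subspaces coincide. (Alternatively, one could use Lemma \ref{lem:0i0}(iv), which says $E_i E^*_0 E_j = |X|^{-1} A^*_i E_0 A^*_j$; this gives the same equality of subspaces after expressing $ME^*_0M$ in the $\{E_i\}$-basis and $M^*E_0M^*$ in the $\{A^*_i\}$-basis.)

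There is really no obstacle here: the corollary is an immediate consequence of Lemma \ref{lem:0i0}(i), and the entire proof is the two-line computation above. The only thing to verify is the trivial bilinearity observation that a product-space $\mathcal{A}\,X\,\mathcal{B}$ of two subalgebras with a fixed middle factor is spanned by basis-by-basis products, which is standard.
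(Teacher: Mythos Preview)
Your proof is correct and is exactly the approach the paper takes: the paper's proof is the single line ``By Lemma \ref{lem:0i0},'' and you have simply spelled out the bilinearity step and invoked part (i) of that lemma explicitly.
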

\begin{proof}  By Lemma \ref{lem:0i0}.
\end{proof}

\begin{lemma} {\rm (See \cite[Proposition~11.1]{egge1}.)} We have
\begin{align}
\label{eq:km}
\sum_{i=0}^D k^{-1}_i E^*_i E_0 E^*_i  = \sum_{j=0}^D m^{-1}_j E_j E^*_0 E_j.
\end{align}
\end{lemma}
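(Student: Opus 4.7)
The plan is to verify the identity entry by entry, using the explicit descriptions of $E_0$, $E^*_0$, $E^*_i$, and the entries of $E_j$ that have been worked out earlier, then close the argument with the second orthogonality relation from Proposition \ref{prop:uorth}. Since both sides are matrices in $\mathrm{Mat}_X(\mathbb R)$, it suffices to compare the $(y,z)$-entries for arbitrary $y,z \in X$, and write $a = \partial(x,y)$, $b=\partial(x,z)$.

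First I would handle the left-hand side. Each $E^*_i$ is diagonal with $(w,w)$-entry $1$ if $\partial(x,w)=i$ and $0$ otherwise, and $E_0 = |X|^{-1}J$. Hence
\begin{align*}
(E^*_i E_0 E^*_i)_{y,z} = (E^*_i)_{y,y}(E_0)_{y,z}(E^*_i)_{z,z}
\end{align*}
equals $|X|^{-1}$ when $a=b=i$ and $0$ otherwise. Summing with the weights $k_i^{-1}$ gives
\begin{align*}
\Bigl(\sum_{i=0}^D k_i^{-1} E^*_i E_0 E^*_i\Bigr)_{y,z} = |X|^{-1} k_a^{-1} \, \delta_{a,b}.
\end{align*}

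Next I would handle the right-hand side. Because $E^*_0$ is the diagonal matrix with a single nonzero entry $(E^*_0)_{x,x}=1$, only the term $w=x$ survives in the matrix product, so
\begin{align*}
(E_j E^*_0 E_j)_{y,z} = (E_j)_{y,x} (E_j)_{x,z}.
\end{align*}
By the observation at the start of Section~4 (or equivalently Lemma \ref{lem:uij}(i) together with Lemma \ref{lem:transAE}(ii)), the $(y',z')$-entry of $E_j$ is $|X|^{-1} m_j u_{\partial(y',z')}(\theta_j)$. Therefore
\begin{align*}
(E_j E^*_0 E_j)_{y,z} = |X|^{-2} m_j^2 \, u_a(\theta_j) u_b(\theta_j),
\end{align*}
and summing against $m_j^{-1}$ yields
\begin{align*}
\Bigl(\sum_{j=0}^D m_j^{-1} E_j E^*_0 E_j\Bigr)_{y,z} = |X|^{-2} \sum_{j=0}^D m_j\, u_a(\theta_j) u_b(\theta_j).
\end{align*}

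The final step is to recognize the sum over $j$ as the second orthogonality relation in Proposition \ref{prop:uorth}, which gives $\sum_j m_j u_a(\theta_j) u_b(\theta_j) = \delta_{a,b} k_a^{-1} |X|$. Substituting produces $|X|^{-1} k_a^{-1} \delta_{a,b}$, matching the left-hand side, and the equality \eqref{eq:km} follows. There is no real obstacle here: all the ingredients (the description of $(E_j)_{y,z}$, the diagonal structure of $E^*_i$, and the orthogonality of the cosine sequences) have been established. The only thing to be careful about is bookkeeping the factor of $|X|$ in the entry formula for $E_j$ so that the powers of $|X|$ on the two sides match.
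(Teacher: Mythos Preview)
Your proof is correct. It differs from the paper's in route: the paper stays at the matrix level, first using the reduction rule $E^*_i E_0 E^*_i = |X|^{-1} A_i E^*_0 A_i$ from Lemma~\ref{lem:0i0}(i), then expanding each $A_i$ in the $E_r$-basis via Lemma~\ref{lem:transAE}(i), and finally invoking the orthogonality relation in Proposition~\ref{prop:vorth} for the $v_\ell$ to collapse the double sum. Your argument instead computes the $(y,z)$-entry of each side directly and closes with the $u_\ell$-orthogonality from Proposition~\ref{prop:uorth}. The two are equivalent in substance (Propositions~\ref{prop:uorth} and~\ref{prop:vorth} are normalizations of one another), but your entrywise approach is slightly more self-contained in that it bypasses the reduction rules of Section~6, while the paper's approach illustrates how those rules streamline such calculations without ever unpacking matrix entries.
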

\begin{proof} Observe
\begin{align*}
\sum_{i=0}^D k^{-1}_i E^*_i E_0 E^*_i 
&= \vert X \vert^{-1} \sum_{i=0}^D k^{-1}_i A_i E^*_0 A_i \\
&= \vert X \vert^{-1} \sum_{i=0}^D k^{-1}_i \Biggl( \sum_{r=0}^D v_i(\theta_r) E_r\Biggr) E^*_0 \Biggl(\sum_{s=0}^D v_i(\theta_s)E_s\Biggr)
\\
&= \vert X \vert^{-1} \sum_{r=0}^D \sum_{s=0}^D E_r E^*_0 E_s \Biggl( \sum_{i=0}^D k^{-1}_i v_i(\theta_r) v_i(\theta_s)\Biggr)
\\
&= \vert X \vert^{-1} \sum_{r=0}^D \sum_{s=0}^D E_r E^*_0 E_s \Bigl( \delta_{r,s} m^{-1}_r \vert X\vert\Bigr)        
\\
&= \sum_{j=0}^D m^{-1}_j E_j E^*_0 E_j.
\end{align*}
\end{proof}

\begin{definition} \label{def:e0} \rm Define the matrix $e_0 = e_0(x) $ to be $\vert X \vert $ times the common value of the matrices \eqref{eq:km}.
\end{definition}
\noindent Referring to Definition \ref{def:e0}, the matrix $e_0$ is symmetric. Moreover $e_0$ is contained in the common span of $ME^*_0 M$ and $M^*E_0 M^*$.
In the next result we describe another property of $e_0$.
\medskip

\noindent Let $Z(T)$ denote the center of $T$.

\begin{proposition} {\rm (See \cite[Corollary~11.3, Proposition~11.4]{egge1}.)} For the matrix $e_0$ from Definition \ref{def:e0}, we have
\begin{enumerate}
\item[\rm (i)] $e_0 \in Z(T)$;
\item[\rm (ii)] $E_0 e_0  = E_0$;
\item[\rm (iii)] $E^*_0 e_0  = E^*_0$;
\item[\rm (iv)] $e_0^2 = e_0$.
\end{enumerate}
\end{proposition}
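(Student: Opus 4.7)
The plan is to dispatch parts (ii) and (iii) first, since they are immediate consequences of the reduction rules in Lemma~\ref{lem:r5} (the ``sandwich'' identities $E_0 E^*_i E_0 = |X|^{-1} k_i E_0$ and $E^*_0 E_j E^*_0 = |X|^{-1} m_j E^*_0$). For (ii), compute
\[
E_0 e_0 = |X| \sum_{i=0}^D k_i^{-1} (E_0 E^*_i E_0) E^*_i
= \sum_{i=0}^D E_0 E^*_i = E_0 \Bigl(\sum_{i=0}^D E^*_i\Bigr) = E_0,
\]
using $\sum_{i=0}^D E^*_i = I$. Part (iii) is the dual computation, using the second expression for $e_0$ from \eqref{eq:km} together with $\sum_{j=0}^D E_j = I$.

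For (i), I want to show $e_0$ commutes with every element of $M$ and of $M^*$, which suffices since $T$ is generated by $M \cup M^*$. It is enough to verify commutation with the bases $\{E^*_j\}_{j=0}^D$ and $\{E_i\}_{i=0}^D$. Using the first expression for $e_0$ and the orthogonality $E^*_j E^*_i = \delta_{i,j} E^*_i$, both $E^*_j e_0$ and $e_0 E^*_j$ collapse to the single term $|X| k_j^{-1} E^*_j E_0 E^*_j$. Symmetrically, using the second expression in \eqref{eq:km} and $E_i E_j = \delta_{i,j} E_i$, both $E_i e_0$ and $e_0 E_i$ collapse to $|X| m_i^{-1} E_i E^*_0 E_i$. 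Hence $e_0 \in Z(T)$.

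Finally, for (iv) I would leverage (i) and (ii) together, which gives the slickest route. Since $e_0$ is symmetric and $E_0 e_0 = E_0$, taking transposes yields $e_0 E_0 = E_0$. Then
\[
e_0^2 = e_0 \cdot |X| \sum_{i=0}^D k_i^{-1} E^*_i E_0 E^*_i
= |X| \sum_{i=0}^D k_i^{-1} E^*_i (e_0 E_0) E^*_i
= |X| \sum_{i=0}^D k_i^{-1} E^*_i E_0 E^*_i = e_0,
\]
where the middle equality uses that $e_0$ commutes with each $E^*_i$ by (i).

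The proof is essentially mechanical once the right identities are lined up; the only mild subtlety is the logical order. Parts (ii), (iii) must precede (iv), and although (i) is stated first, it is natural to prove it before (iv) so that the commutation $e_0 E^*_i = E^*_i e_0$ is available. There is no real obstacle here; the main thing to be careful about is using the \emph{correct} one of the two expressions for $e_0$ at each step---the $E^*_i$-expression to handle commutation with $M^*$ and with $E_0$ from the left, and the $E_i$-expression to handle commutation with $M$ and with $E^*_0$.
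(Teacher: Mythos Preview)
Your proof is correct. For part (i) your argument is identical to the paper's. For (ii) and (iii) you use the opposite expression for $e_0$ together with Lemma~\ref{lem:r5}, whereas the paper uses the ``matching'' expression (the $E_j$-sum for (ii), the $E^*_i$-sum for (iii)) so that orthogonality collapses the sum to a single term evaluated by $|X|E_0E^*_0E_0=E_0$ or $|X|E^*_0E_0E^*_0=E^*_0$; both routes are one-line computations.

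The only genuine difference is (iv). The paper argues more structurally: since $e_0$ lies in ${\rm Span}(ME^*_0M)$ and commutes with $M$, one has $(I-e_0)e_0 \in {\rm Span}\bigl(M(I-e_0)E^*_0M\bigr)$, which vanishes by (iii). Your argument instead pushes $e_0$ through the $E^*_i$ using centrality and then absorbs it into the middle $E_0$ via $e_0E_0=E_0$. Both approaches have the same logical dependencies (centrality plus one of (ii)/(iii) in transposed form), so neither buys anything over the other; yours is a concrete term-by-term computation, the paper's is a one-line span argument.
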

\begin{proof} (i) Using
$e_0 = \vert X \vert \sum_{i=0}^D k^{-1}_i E^*_i E_0 E^*_i $ we see that $e_0$ commutes with $E^*_\ell$ for $0 \leq \ell \leq D$.
Therefore $e_0$ commutes with everything in $M^*$.
Using $e_0 = \vert X\vert \sum_{j=0}^D m^{-1}_j E_j E^*_0 E_j$ we see that $e_0$ commutes with $E_\ell$ for $0 \leq \ell \leq D$.
Therefore $e_0$ commutes with everything in $M$. The result follows since $T$ is generated by $M,M^*$.
\\
\noindent (ii) Observe
\begin{align*} 
E_0 e_0 = E_0 \vert X \vert \sum_{j=0}^D m^{-1}_j E_j E^*_0 E_j = \vert X \vert E_0 E^*_0 E_0 = E_0.
\end{align*} 
\noindent (iii) Observe
\begin{align*}
E^*_0 e_0 = E^*_0 \vert X \vert \sum_{i=0}^D k^{-1}_i E^*_i E_0 E^*_i = \vert X \vert E^*_0 E_0 E^*_0 = E^*_0.
\end{align*}
\noindent  (iv) Observe
\begin{align*}
(I-e_0)e_0 \in (I-e_0) {\rm Span}\bigl(M E^*_0 M \bigr)= {\rm Span}\bigl(M (I-e_0) E^*_0 M\bigr) = {\rm Span}\bigl(M 0 M\bigr) = 0.
\end{align*}
\end{proof}
\noindent We will say more about $e_0$ in the next section.
\medskip

\noindent We finish this section with a comment.

\begin{lemma} \label{lem:need}
For $B \in M$ we have the logical implications
\begin{align*}
B=0\quad  \Leftrightarrow\quad BE^*_0=0 \quad \Leftrightarrow \quad E^*_0B=0.
\end{align*}
Moreover for $C \in M^*$ we have the logical implications
\begin{align*}
C=0 \quad \Leftrightarrow \quad CE_0=0 \quad \Leftrightarrow\quad  E_0C=0.
\end{align*}
\end{lemma}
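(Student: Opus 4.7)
Both statements have trivial forward directions ($B=0\Rightarrow BE^*_0=E^*_0B=0$, and likewise for $C$), so the work lies in the converses. My strategy is to exploit concrete entrywise descriptions: a matrix in $M$ is rigidly determined by its $x$-row (or $x$-column) because its entries are constant on distance classes from $x$, while a matrix in $M^*$ is diagonal and so is determined by its diagonal entries.

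For the first statement, I would write $B=\sum_{i=0}^D\beta_iA_i$ using the basis $\{A_i\}_{i=0}^D$ of $M$. Since $E^*_0$ has a single nonzero entry (a $1$ in position $(x,x)$), the product $BE^*_0$ has its $x$-column equal to the $x$-column of $B$ and all other columns zero. Hence $BE^*_0=0$ forces $B_{y,x}=0$ for every $y\in X$. Now $B_{y,x}=\beta_i$ whenever $\partial(y,x)=i$, and because $\Gamma$ has diameter $D$, every value $i\in\{0,1,\ldots,D\}$ is actually realized by some $y$. This forces $\beta_i=0$ for all $i$, so $B=0$. The implication $E^*_0B=0\Rightarrow B=0$ follows at once by taking transposes: $B^t\in M$ (since each $A_i$ is symmetric), and $(E^*_0B)^t=B^tE^*_0$, so the previous case applies.

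For the second statement, write $C=\sum_{i=0}^D\gamma_iE^*_i$, which is a diagonal matrix in ${\rm Mat}_X(\mathbb R)$. Since $E_0=|X|^{-1}J$, the $(y,z)$-entry of $CE_0$ equals $|X|^{-1}C_{y,y}$ for all $y,z\in X$. Therefore $CE_0=0$ forces $C_{y,y}=0$ for every $y\in X$, and because $C$ is diagonal this means $C=0$. The implication $E_0C=0\Rightarrow C=0$ follows by the transpose argument, using $C^t=C$ and $E_0^t=E_0$.

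There is no real obstacle here; the only point requiring genuine input from the hypotheses is the use of the diameter assumption (to ensure every distance class $\Gamma_i(x)$ is nonempty) in extracting $\beta_i=0$ for all $i$ in the first part. The remainder is direct entrywise bookkeeping.
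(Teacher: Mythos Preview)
Your proof is correct, and it takes a genuinely different route from the paper's. You work in the distance-matrix basis $\{A_i\}_{i=0}^D$ for $M$ and argue entrywise: $E^*_0$ isolates the $x$-column of $B$, and since each $\Gamma_i(x)$ is nonempty (from $k_i>0$ in the distance-regular setting, which is the real content of your ``diameter assumption'') you recover all coefficients $\beta_i$. The paper instead works in the primitive-idempotent basis $\{E_i\}_{i=0}^D$: writing $B=\sum_i \alpha_i E_i$ and multiplying $BE^*_0=0$ on the left by $E_j$ yields $\alpha_j E_jE^*_0=0$, and Lemma~\ref{lem:r5} (namely $E^*_0 E_j E^*_0 = |X|^{-1} m_j E^*_0$) shows $E_jE^*_0\neq 0$, forcing $\alpha_j=0$; the $M^*$ statement goes symmetrically via $E_0 E^*_j E_0 = |X|^{-1} k_j E_0$. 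Your argument is more elementary and self-contained, requiring no prior reduction rules; the paper's argument serves as an application of the machinery just developed in that section and handles the $M$ and $M^*$ cases in perfect parallel.
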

\begin{proof} By Lemma \ref{lem:r5} the following are nonzero for $0 \leq i \leq D$:
\begin{align*}
E_i E^*_0, \quad E^*_0 E_i, \quad E^*_i E_0, \quad E_0 E^*_i.
\end{align*}
\noindent The result is a routine consequence of this.
\end{proof}

\section{The primary $T$-module}
\noindent Throughout this section $\Gamma=(X, \mathcal R)$ denotes a distance-regular
graph with diameter $D\geq 3$. 
Fix $x \in X$ and write $T=T(x)$. Our next goal is to describe the primary $T$-module \cite[Section~5]{curtin1}, \cite[Sections~8, 9]{egge1}, \cite{nomIS},
\cite[Lemma~3.6]{terwSub1}.
Recall the vector ${\bf 1} = \sum_{y \in X} \hat y$.
For $0 \leq i \leq D$ define the vector ${\bf 1}_i = \sum_{y \in \Gamma_i(x)} \hat y$. Observe that
\begin{align*} 
A_i \hat x = {\bf 1}_i = E^*_i {\bf 1} \qquad \qquad (0 \leq i \leq D).
\end{align*}
\noindent Consequently
    \begin{align}
     M E^*_0V = M^* E_0V. 
\label{eq:onei}
\end{align}

\begin{lemma} \label{lem:prim} The vector space $M E^*_0V = M^* E_0V$ is an irreducible $T$-module.
\end{lemma}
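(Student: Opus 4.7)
The plan is to first show that $W := ME^*_0V = M^*E_0V$ equals the linear span of $\lbrace \mathbf{1}_i\rbrace_{i=0}^D$, then verify it is a $T$-module, then prove irreducibility by examining how $M^*$ decomposes any $T$-submodule.

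First I would note that $A_i\hat x = \mathbf{1}_i$ and $E^*_i\mathbf{1} = \mathbf{1}_i$ (the latter from \eqref{DEFDEI}), so both $ME^*_0V$ and $M^*E_0V$ are spanned by $\lbrace \mathbf{1}_i\rbrace_{i=0}^D$. These vectors have pairwise disjoint supports, so they are linearly independent and $\dim W = D+1$. For the $T$-module property, note $W$ is obviously $M$-stable from the presentation $W = ME^*_0V$, and $M^*$-stable from the presentation $W = M^*E_0V$; since $T$ is generated by $M$ and $M^*$, $W$ is a $T$-module.

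For irreducibility, let $W'$ be any nonzero $T$-submodule of $W$. Since $W'$ is $M^*$-invariant, $W' = \bigoplus_{i=0}^D E^*_iW'$. Each summand satisfies $E^*_iW' \subseteq E^*_iW = E^*_i\,\mathrm{Span}\lbrace \mathbf{1}_j\rbrace_{j=0}^D = \mathbb R\,\mathbf{1}_i$, because $E^*_i\mathbf{1}_j = \delta_{i,j}\mathbf{1}_j$. Hence if we set $S = \lbrace i : \mathbf{1}_i \in W'\rbrace$, then $W' = \mathrm{Span}\lbrace \mathbf{1}_i : i \in S\rbrace$, and $S$ is nonempty.

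The key step is showing $S = \lbrace 0,1,\ldots,D\rbrace$. Applying $A_1\hat x = A\hat x$ to Lemma \ref{lem:A3t} yields
\begin{equation*}
A\mathbf{1}_i = b_{i-1}\mathbf{1}_{i-1} + a_i\mathbf{1}_i + c_{i+1}\mathbf{1}_{i+1} \qquad (0 \leq i \leq D),
\end{equation*}
with the convention $\mathbf{1}_{-1} = \mathbf{1}_{D+1} = 0$. Suppose $i \in S$. Then $A\mathbf{1}_i \in W'$, and projecting by $E^*_{i+1}$ and $E^*_{i-1}$ gives $c_{i+1}\mathbf{1}_{i+1} \in W'$ and $b_{i-1}\mathbf{1}_{i-1} \in W'$. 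Since $c_{i+1} > 0$ for $i < D$ and $b_{i-1} > 0$ for $i > 0$, $S$ is closed under $i \mapsto i \pm 1$ within $\lbrace 0,\ldots,D\rbrace$. Because $S$ is nonempty, this forces $S = \lbrace 0,1,\ldots,D\rbrace$, so $W' = W$. The main thing to be careful about is the double use of the two presentations of $W$ to get closure under $T$, and the clean use of the $E^*_i$-decomposition to show each $\mathbf{1}_i$ is an $M^*$-eigenvector and hence isolated inside any $T$-submodule; the rest is the standard tridiagonal propagation argument using the positivity of $b_{i-1}$ and $c_{i+1}$.
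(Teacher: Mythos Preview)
Your proof is correct. The $T$-module part is identical to the paper's argument. For irreducibility, however, you take a genuinely different route: you exploit the explicit basis $\lbrace \mathbf{1}_i\rbrace_{i=0}^D$, note that each $E^*_iW$ is one-dimensional, and then use the tridiagonal action of $A$ together with the positivity of $b_{i-1}$ and $c_{i+1}$ to propagate membership in any nonzero submodule across all indices. The paper instead argues via semisimplicity: since $V$ is an orthogonal direct sum of irreducible $T$-modules, some irreducible summand is not orthogonal to $\hat x$, hence (since $E^*_0V=\mathbb R\hat x$ is one-dimensional) contains $\hat x$, hence contains $M\hat x=\mathcal V$, hence equals $\mathcal V$. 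Your argument is more elementary in that it avoids invoking the orthogonal decomposition of $V$ into irreducibles; the paper's argument is shorter and foreshadows the role of $E^*_0$ in singling out the primary module. Incidentally, your propagation technique is exactly the one the paper uses later (Lemma~\ref{lem:Wfacts}) to show that the nonzero $E^*_iW$ for a general irreducible $T$-module form a contiguous interval.
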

\begin{proof} Define $\mathcal V=M E^*_0V = M^* E_0V$. We have $M \mathcal V \subseteq \mathcal V$
since $\mathcal V=M E^*_0V$. We have $M^* \mathcal V  \subseteq \mathcal V$ since $\mathcal V=M^* E_0V$. Therefore $T\mathcal V\subseteq \mathcal V$, so $\mathcal V$ is a $T$-module.
We show that the $T$-module $\mathcal V$ is irreducible. The standard $T$-module $V$ is a direct sum of irreducible $T$-modules.
There exists an irreducible $T$-module that is not orthogonal to $\hat x$. 
This $T$-module contains $\hat x$, so it contains $M \hat x =\mathcal V$. This $T$-module must equal $\mathcal V$ by irreducibility. 
\end{proof}
\begin{definition}\rm \label{def:pm} Define $\mathcal V = M E^*_0V = M^* E_0V$. The $T$-module $\mathcal V$  is called {\it primary}. 
\end{definition}

\begin{lemma} \label{lem:pm2} For $0 \leq i \leq D$ we have
\begin{align} \label{eq:pm2}
 A^*_i {\bf 1} = \vert X \vert  E_i \hat x.
 \end{align}
 \end{lemma}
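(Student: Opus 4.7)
The plan is to verify the identity by comparing the $y$-coordinates of each side for an arbitrary $y \in X$. Since $A^*_i$ is diagonal (by Definition \ref{def:As}), the vector $A^*_i \mathbf{1}$ is simply $\sum_{y \in X}(A^*_i)_{y,y}\,\hat{y}$, and the diagonal entry $(A^*_i)_{y,y}$ is given explicitly by \eqref{eq:As} as $\vert X \vert (E_i)_{x,y}$.

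Next I would compute the $y$-coordinate of the right-hand side $\vert X \vert E_i \hat{x}$. The $y$-coordinate of $E_i \hat{x}$ equals $(E_i)_{y,x}$, which equals $(E_i)_{x,y}$ because $E_i$ is symmetric (i.e.\ $E_i^t = E_i$). Therefore the $y$-coordinate of $\vert X \vert E_i \hat{x}$ is $\vert X \vert (E_i)_{x,y}$, matching the $y$-coordinate of $A^*_i \mathbf{1}$ computed above.

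Since this agreement holds for every $y \in X$ and the vectors $\{\hat{y}\}_{y \in X}$ form a basis of $V$, the two sides of \eqref{eq:pm2} are equal. There is no real obstacle here; the statement is essentially a translation of the defining formula for $A^*_i$ (together with the symmetry of $E_i$) into a statement about its action on $\mathbf{1}$. One could alternatively package the argument via Lemma \ref{lem:Asbasis}(ii), using $A^*_i \hat{y} = m_i u_j(\theta_i) \hat{y}$ with $j = \partial(x,y)$ and the identity $\vert X \vert(E_i)_{x,y} = m_i u_j(\theta_i)$, but the direct coordinate comparison is the most transparent route.
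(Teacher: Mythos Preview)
Your proof is correct and takes essentially the same approach as the paper: the paper's proof simply states that both vectors in \eqref{eq:pm2} have $y$-coordinate $\vert X\vert (E_i)_{x,y}$ for $y\in X$, which is exactly the coordinate comparison you carry out in detail.
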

 \begin{proof} Both vectors in \eqref{eq:pm2} have $y$-coordinate $\vert X \vert (E_i)_{x,y}$ for $y \in X$.
\end{proof}
\begin{definition}\rm For $0 \leq i \leq D$ let ${\bf 1}_i^*$ denote the common  vector in
\eqref{eq:pm2}.
\end{definition}

\noindent We clarify the definitions.  Note that ${\bf 1}_0 = \hat x$ and ${\bf 1}^*_0 = {\bf 1}$. Moreover
\begin{align*}
{\bf 1}^*_0 = \sum_{i=0}^D {\bf 1}_i, \qquad \qquad {\bf 1}_0 = \vert X \vert^{-1} \sum_{i=0}^D {\bf 1}^*_i.
\end{align*}

\noindent The following result is routinely verified.
\begin{lemma} \label{lem:PB} For the primary $T$-module $\mathcal V$,
\begin{enumerate}
\item[\rm (i)] ${\bf 1}_i$ is a basis for $E^*_i\mathcal V$ $(0 \leq i \leq D)$;
\item[\rm (ii)] $\lbrace {\bf 1}_i \rbrace_{i=0}^D$ is a basis for $\mathcal V$;
\item[\rm (iii)] ${\bf 1}^*_i$ is a basis for $E_i\mathcal V$  $(0 \leq i \leq D)$;
\item[\rm (iv)] $\lbrace {\bf 1}^*_i \rbrace_{i=0}^D$ is a basis for $\mathcal V$.
\end{enumerate}
\end{lemma}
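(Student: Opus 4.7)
The plan is to establish (ii) and (iv) first by the definition $\mathcal V = M E_0^* V = M^* E_0 V$, then deduce (i) and (iii) by a dimension count against the orthogonal decompositions $\mathcal V = \bigoplus_{i=0}^D E_i^* \mathcal V$ and $\mathcal V = \bigoplus_{i=0}^D E_i \mathcal V$.

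For (ii): Since $E_0^* V = \mathbb R \hat x$, we have $\mathcal V = M \hat x$, which by the fact that $\{A_i\}_{i=0}^D$ is a basis of $M$ is spanned by $\{A_i \hat x\}_{i=0}^D = \{{\bf 1}_i\}_{i=0}^D$. Because $k_i = |\Gamma_i(x)| > 0$ for $0 \leq i \leq D$, each ${\bf 1}_i$ is nonzero, and distinct ${\bf 1}_i$ have disjoint supports (the sets $\Gamma_i(x)$ partition $X$), so $\{{\bf 1}_i\}_{i=0}^D$ is linearly independent. Hence it is a basis, and $\dim \mathcal V = D+1$.

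For (iv): Since $E_0 V = \mathbb R {\bf 1}$, we have $\mathcal V = M^* {\bf 1}$, which is spanned by $\{A_i^* {\bf 1}\}_{i=0}^D = \{{\bf 1}_i^*\}_{i=0}^D$ since $\{A_i^*\}_{i=0}^D$ is a basis of $M^*$. This spanning set has $D+1$ vectors in a space of dimension $D+1$, so it is automatically a basis.

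For (i) and (iii): The idempotents $\{E_i^*\}_{i=0}^D$ are pairwise orthogonal projections summing to $I$, so $\mathcal V = \bigoplus_{i=0}^D E_i^* \mathcal V$ and hence $\sum_{i=0}^D \dim E_i^* \mathcal V = D+1$. Since ${\bf 1}_i \in \mathcal V$ (from (ii)) and ${\bf 1}_i \in E_i^* V$ (as ${\bf 1}_i$ is supported on $\Gamma_i(x)$), we have ${\bf 1}_i \in E_i^* \mathcal V$, and ${\bf 1}_i \neq 0$ forces $\dim E_i^* \mathcal V \geq 1$ for each $i$. With $D+1$ summands summing to $D+1$, each dimension equals $1$, and ${\bf 1}_i$ is a basis of $E_i^* \mathcal V$. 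The argument for (iii) is identical: by Lemma \ref{lem:pm2}, ${\bf 1}_i^* = |X| E_i \hat x \in E_i V$, and ${\bf 1}_i^* \in \mathcal V$ by construction, so ${\bf 1}_i^* \in E_i \mathcal V$; combined with ${\bf 1}_i^* \neq 0$ (guaranteed by (iv)) and the decomposition $\mathcal V = \bigoplus_{i=0}^D E_i \mathcal V$, a dimension count forces each $E_i \mathcal V$ to be one-dimensional with basis ${\bf 1}_i^*$. No step presents a real obstacle; the only subtlety worth mentioning is to make sure the ${\bf 1}_i$ and ${\bf 1}_i^*$ are nonzero, and both follow without effort (the former from $k_i > 0$, the latter from (iv) or, equivalently, from the fact that $A_i^*$ is a nonzero diagonal matrix so $A_i^* {\bf 1}$ reproduces its diagonal).
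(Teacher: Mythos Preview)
Your proof is correct and is precisely the routine verification the paper alludes to (the paper merely writes ``routinely verified''). Your argument uses exactly the ingredients assembled just before the lemma: $\mathcal V = M\hat x = M^*{\bf 1}$, the bases $\{A_i\}$ and $\{A^*_i\}$ for $M$ and $M^*$, the identifications ${\bf 1}_i = A_i\hat x = E^*_i{\bf 1}$ and ${\bf 1}^*_i = A^*_i{\bf 1} = |X|E_i\hat x$, and the orthogonal idempotent decompositions.
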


\noindent Next we explain how the primary $T$-module $\mathcal V$ is related to the matrix $e_0$ from Definition  \ref{def:e0}.
Let $\mathcal V^\perp$ denote the orthogonal complement of $\mathcal V$ in $V$. We have an orthogonal direct sum $V = \mathcal V + \mathcal V^\perp$.
\begin{lemma} With the above notation,
\begin{enumerate}
\item[\rm (i)] $(e_0 - I)\mathcal V=0$;
\item[\rm (ii)] $e_0 \mathcal V^\perp = 0 $.
\end{enumerate}
\noindent In other words, $e_0$ acts on $V$ as the orthogonal projection $V \to \mathcal V$.
\end{lemma}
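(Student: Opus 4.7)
The plan is to combine one direct computation with centrality and symmetry of $e_0$; the needed tools are all at hand (the two expressions for $e_0$, $e_0 \in Z(T)$, $e_0^2=e_0$, $E^*_0V=\mathbb R\hat x$, $E_0V = \mathbb R {\bf 1}$, $\mathcal V = M\hat x = M^*{\bf 1}$, and the symmetry of $e_0$).

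First I would show that $e_0 \hat x = \hat x$. Using $e_0 = \vert X\vert \sum_{i=0}^{D} k_i^{-1} E^*_i E_0 E^*_i$ and the fact that $E^*_i \hat x = \delta_{i,0}\hat x$, only the $i=0$ term survives, and the computation collapses to
\[
e_0 \hat x = \vert X\vert\, E^*_0 E_0 \hat x = \vert X\vert\, E^*_0 \bigl(\vert X\vert^{-1}{\bf 1}\bigr) = E^*_0 {\bf 1} = \hat x.
\]
This is the only honest calculation in the proof.

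For (i), recall $\mathcal V = M E^*_0 V = M \hat x$, so every $v\in\mathcal V$ has the form $v = B\hat x$ with $B\in M \subseteq T$. Since $e_0\in Z(T)$,
\[
e_0 v = e_0 B\hat x = B e_0 \hat x = B\hat x = v,
\]
which gives $(e_0 - I)\mathcal V = 0$.

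For (ii), first observe $e_0 V \subseteq \mathcal V$: from $e_0 = \vert X\vert \sum_{i} k_i^{-1} E^*_i E_0 E^*_i$, for any $v\in V$ the vector $E_0 E^*_i v$ lies in $E_0 V = \mathbb R {\bf 1}$, hence $E^*_i E_0 E^*_i v$ is a scalar multiple of $E^*_i {\bf 1} = {\bf 1}_i \in \mathcal V$, so $e_0 v \in \mathcal V$. Now for $v\in\mathcal V^\perp$ and any $w\in\mathcal V$, symmetry of $e_0$ together with (i) gives
\[
\langle e_0 v, w\rangle = \langle v, e_0 w\rangle = \langle v, w\rangle = 0,
\]
so $e_0 v \in \mathcal V^\perp$. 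Combining $e_0 v \in \mathcal V$ with $e_0 v \in \mathcal V^\perp$ forces $e_0 v = 0$, which is (ii).

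There is no serious obstacle; the slight subtlety is choosing the right expression of $e_0$ for each part — the $ME^*_0 M$-form $\vert X\vert\sum_i k_i^{-1} E^*_i E_0 E^*_i$ is what makes both $e_0\hat x = \hat x$ and $e_0 V \subseteq \mathcal V$ transparent, while centrality and symmetry carry the rest.
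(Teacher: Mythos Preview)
Your proof is correct and follows essentially the same approach as the paper: for (i) show $e_0$ fixes $E^*_0V$ and then use centrality to push $e_0$ past $M$; for (ii) show $e_0V\subseteq\mathcal V$ and $e_0\mathcal V^\perp\subseteq\mathcal V^\perp$, then intersect. The only cosmetic differences are that the paper cites the already-proved identity $E^*_0 e_0 = E^*_0$ rather than recomputing $e_0\hat x=\hat x$, and for (ii) the paper obtains $e_0\mathcal V^\perp\subseteq\mathcal V^\perp$ from the fact that $\mathcal V^\perp$ is a $T$-module, whereas you use symmetry of $e_0$ together with (i); both routes are equally short.
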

\begin{proof} (i) Observe
\begin{align*}
(e_0-I) \mathcal V = (e_0-I)ME^*_0V = M (e_0-I)E^*_0V = 0.
\end{align*}
\noindent (ii) Note that $\mathcal V^\perp$ is a $T$-module, so $e_0\mathcal V^\perp \subseteq \mathcal V^\perp$.
Also,
\begin{align*}
e_0 \mathcal V^\perp \subseteq e_0 V \subseteq {\rm Span}\bigl(ME^*_0 M V\bigr)  \subseteq ME^*_0V = \mathcal V.
\end{align*}
Therefore,
\begin{align*}
e_0 \mathcal V^\perp \subseteq \mathcal V^\perp \cap \mathcal V = 0.
\end{align*}
\end{proof}

\noindent We saw in Lemma \ref{lem:PB} that  $\lbrace {\bf 1}_i\rbrace_{i=0}^D$ and  $\lbrace {\bf 1}^*_i\rbrace_{i=0}^D$ are bases for the primary $T$-module $\mathcal V$.
Next we describe how these bases are related.

\begin{lemma} \label{lem:primTrans}
For $0 \leq j \leq D$ we have
\begin{enumerate}
\item[\rm (i)] ${\bf 1}_j = \vert X \vert^{-1} k_j \sum_{i=0}^D u_j(\theta_i) {\bf 1}^*_i$;
\item[\rm (ii)] ${\bf 1}^*_j = m_j \sum_{i=0}^D u_i(\theta_j) {\bf 1}_i$.
\end{enumerate}
\end{lemma}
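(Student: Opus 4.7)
The plan is to derive each identity by rewriting the basis vectors in terms of the matrices $A_j, A^*_j, E_j, E^*_j$ already introduced, then invoking the change-of-basis formulas from Lemma~\ref{lem:transAE} and Lemma~\ref{lem:transAsEs}. Concretely, recall that ${\bf 1}_i = A_i \hat x = E^*_i {\bf 1}$ and ${\bf 1}^*_i = A^*_i {\bf 1} = |X| E_i \hat x$. These two descriptions of the same vector provide exactly the bridge we need, since part~(i) transforms an $A$-expression into an $E$-expression (acting on $\hat x$), while part~(ii) transforms an $A^*$-expression into an $E^*$-expression (acting on ${\bf 1}$).

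For part~(i), the first step is to write ${\bf 1}_j = A_j \hat x$, then apply Lemma~\ref{lem:transAE}(i) to expand $A_j = \sum_{i=0}^D v_j(\theta_i) E_i$, giving ${\bf 1}_j = \sum_{i=0}^D v_j(\theta_i)\, E_i \hat x$. Next, use $v_j = k_j u_j$ (Definition~\ref{def:ui}) and the identity $|X| E_i \hat x = {\bf 1}^*_i$ to rewrite each term, arriving at
\begin{align*}
{\bf 1}_j = \sum_{i=0}^D k_j u_j(\theta_i)\, E_i \hat x = |X|^{-1} k_j \sum_{i=0}^D u_j(\theta_i)\, {\bf 1}^*_i,
\end{align*}
which is the claim.

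For part~(ii), start with ${\bf 1}^*_j = A^*_j {\bf 1}$ and apply Lemma~\ref{lem:transAsEs}(i) to expand $A^*_j = m_j \sum_{i=0}^D u_i(\theta_j)\, E^*_i$. Then use $E^*_i {\bf 1} = {\bf 1}_i$ to conclude
\begin{align*}
{\bf 1}^*_j = m_j \sum_{i=0}^D u_i(\theta_j)\, E^*_i {\bf 1} = m_j \sum_{i=0}^D u_i(\theta_j)\, {\bf 1}_i.
\end{align*}

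There is no real obstacle: both identities are essentially Lemma~\ref{lem:transAE}(i) and Lemma~\ref{lem:transAsEs}(i) evaluated at $\hat x$ and ${\bf 1}$ respectively, combined with the dual descriptions of ${\bf 1}_i$ and ${\bf 1}^*_i$ recorded just before the statement. The only point that requires a moment of care is keeping the normalizations $v_j = k_j u_j$ and the factor of $|X|$ in ${\bf 1}^*_i = |X| E_i \hat x$ straight; once those are tracked correctly, the identities fall out in a single line each.
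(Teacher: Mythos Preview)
Your proof is correct and essentially the same as the paper's. For part~(ii) you use ${\bf 1}^*_j = A^*_j {\bf 1}$ together with Lemma~\ref{lem:transAsEs}(i), whereas the paper uses the equivalent description ${\bf 1}^*_j = |X| E_j \hat x$ together with Lemma~\ref{lem:transAE}(ii); since Lemma~\ref{lem:transAsEs} is obtained from Lemma~\ref{lem:transAE} by applying the map $p$, the two computations are the same in content.
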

\begin{proof} (i) Observe
\begin{align*}
{\bf 1}_j = A_j {\hat x} = k_j \sum_{i=0}^D u_j(\theta_i) E_i {\hat x} = \vert X \vert^{-1} k_j \sum_{i=0}^D u_j(\theta_i) {\bf 1}^*_i.
\end{align*}
\noindent (ii) Observe
\begin{align*}
{\bf 1}^*_j = \vert X \vert E_j {\hat x} =  m_j \sum_{i=0}^D u_i(\theta_j) A_i {\hat x} = m_j \sum_{i=0}^D u_i(\theta_j) {\bf 1}_i.
\end{align*}
\end{proof}

\noindent Next we describe how the algebra $T$ acts on the bases $\lbrace {\bf 1}_i\rbrace_{i=0}^D$ and  $\lbrace {\bf 1}^*_i\rbrace_{i=0}^D$.

\begin{lemma}\label{lem:Taction} 
For $0 \leq i,j\leq D$ we have
\begin{enumerate}
\item[\rm (i)] $E^*_i {\bf 1}_j = \delta_{i,j} {\bf 1}_j$;
\item[\rm (ii)] $A^*_i {\bf 1}_j = m_i u_j(\theta_i) {\bf 1}_j$;
\item[\rm (iii)] $E_i {\bf 1}_j = \vert X \vert^{-1} m_i k_j u_j(\theta_i) \sum_{h=0}^D u_h(\theta_i) {\bf 1}_h$;
\item[\rm (iv)] $A_i {\bf 1}_j = \sum_{h=0}^D p^h_{i,j} {\bf 1}_h$.
\end{enumerate}
\end{lemma}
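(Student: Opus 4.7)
The plan is to verify each of (i)--(iv) by a short direct computation, using the basic identities already established: ${\bf 1}_j = A_j\hat x = E^*_j{\bf 1}$, ${\bf 1}^*_j = \vert X\vert E_j\hat x$, together with the action formulas from Lemmas \ref{lem:AiEi}, \ref{lem:Asbasis}, and \ref{lem:primTrans}. The steps are organized so that the easy ones (i), (ii), (iv) come first and the only mildly nontrivial combination appears in (iii).

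For (i), I would write ${\bf 1}_j = E^*_j{\bf 1}$ (from the displayed equation preceding \eqref{eq:onei}) and then apply $E^*_i E^*_j = \delta_{i,j} E^*_j$ to get $E^*_i{\bf 1}_j = \delta_{i,j}{\bf 1}_j$. For (ii), since $A^*_i$ is diagonal and acts on $\hat y$ (with $\partial(x,y)=j$) as multiplication by $m_i u_j(\theta_i)$ by Lemma \ref{lem:Asbasis}(ii), applying $A^*_i$ termwise to ${\bf 1}_j = \sum_{y\in\Gamma_j(x)}\hat y$ yields the scalar $m_i u_j(\theta_i)$ out front. For (iv), write $A_i{\bf 1}_j = A_iA_j\hat x$, expand $A_iA_j = \sum_{h=0}^D p^h_{i,j}A_h$ (property (iv) of distance matrices), and use $A_h\hat x = {\bf 1}_h$.

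For (iii), I would combine two moves. First, Lemma \ref{lem:AiEi} gives $E_iA_j = v_j(\theta_i)E_i$, hence
\[
E_i{\bf 1}_j = E_i A_j \hat x = v_j(\theta_i)\, E_i \hat x = \vert X\vert^{-1} v_j(\theta_i)\, {\bf 1}^*_i.
\]
Next, substitute the expansion ${\bf 1}^*_i = m_i\sum_{h=0}^D u_h(\theta_i){\bf 1}_h$ from Lemma \ref{lem:primTrans}(ii), and replace $v_j(\theta_i) = k_j u_j(\theta_i)$ using the normalization \eqref{eq:ui}. This gives precisely the claimed formula.

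The main (and really the only) obstacle is assembling (iii) without confusing the two dualities; all four identities are essentially bookkeeping once the correct conversion rules between ${\bf 1}_i$, ${\bf 1}^*_i$, $v_j$, and $u_j$ are invoked. No subtlety or induction is needed, and the fact that the resulting vectors lie in the primary module $\mathcal V$ is automatic since each operator on the left sends $\mathcal V$ into $\mathcal V$.
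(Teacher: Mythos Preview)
Your proof is correct. The paper's one-line proof invokes the reduction rules of Lemmas~\ref{lem:r3} and~\ref{lem:r4} (applied to $\hat x$ or to ${\bf 1}$) together with Lemma~\ref{lem:primTrans}, whereas you bypass those packaged rules and work directly with the more primitive identities $E^*_iE^*_j=\delta_{i,j}E^*_j$, $A_iA_j=\sum_h p^h_{i,j}A_h$, Lemma~\ref{lem:AiEi}, and Lemma~\ref{lem:Asbasis}(ii); since the reduction rules are themselves derived from exactly these facts, the two arguments are essentially the same computation written at slightly different levels of abstraction.
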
 
\begin{proof} These are routinely checked using the reduction rules in Lemmas \ref{lem:r3}, \ref{lem:r4} along with Lemma \ref{lem:primTrans}.
\end{proof}

\begin{lemma}\label{lem:Taction2} 
For $0 \leq i,j\leq D$ we have
\begin{enumerate}
\item[\rm (i)] $E_i {\bf 1}^*_j = \delta_{i,j} {\bf 1}^*_j$;
\item[\rm (ii)] $A_i {\bf 1}^*_j = k_i u_i(\theta_j) {\bf 1}^*_j$;
\item[\rm (iii)] $E^*_i {\bf 1}^*_j = \vert X \vert^{-1} k_i m_j u_i(\theta_j) \sum_{h=0}^D u_i(\theta_h) {\bf 1}^*_h$;
\item[\rm (iv)] $A^*_i {\bf 1}^*_j = \sum_{h=0}^D q^h_{i,j} {\bf 1}^*_h$.
\end{enumerate}
\end{lemma}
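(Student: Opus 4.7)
The plan is to mirror the proof of Lemma \ref{lem:Taction}, exploiting the dual description ${\bf 1}^*_j = \vert X \vert E_j \hat x = A^*_j {\bf 1}$ together with the transition formulas of Lemma \ref{lem:primTrans} and the reduction rules of Lemmas \ref{lem:r1}, \ref{lem:r2}. Parts (i), (ii), (iv) should be essentially one-line computations once the right presentation of ${\bf 1}^*_j$ is chosen, while part (iii) will require combining both transition formulas.

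For part (i), I would write ${\bf 1}^*_j = \vert X \vert E_j \hat x$ and apply $E_i$, using $E_i E_j = \delta_{i,j} E_j$ to conclude $E_i {\bf 1}^*_j = \delta_{i,j} {\bf 1}^*_j$. For part (ii), starting again from ${\bf 1}^*_j = \vert X \vert E_j \hat x$, Lemma \ref{lem:AiEi} gives $A_i E_j = v_i(\theta_j) E_j = k_i u_i(\theta_j) E_j$, so $A_i {\bf 1}^*_j = k_i u_i(\theta_j) {\bf 1}^*_j$. For part (iv), I would instead use ${\bf 1}^*_j = A^*_j {\bf 1}$ and apply Lemma \ref{lem:list}(iv) to obtain $A^*_i A^*_j {\bf 1} = \sum_{h=0}^D q^h_{i,j} A^*_h {\bf 1} = \sum_{h=0}^D q^h_{i,j} {\bf 1}^*_h$.

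Part (iii) is the only step requiring any maneuvering, and I expect it to be the main (though modest) obstacle, since $E^*_i$ is naturally adapted to the basis $\{{\bf 1}_h\}$ rather than $\{{\bf 1}^*_h\}$. The plan is to first expand ${\bf 1}^*_j$ via Lemma \ref{lem:primTrans}(ii) as $m_j \sum_{h=0}^D u_h(\theta_j) {\bf 1}_h$, then apply $E^*_i$ using Lemma \ref{lem:Taction}(i), which collapses the sum to $E^*_i {\bf 1}^*_j = m_j u_i(\theta_j) {\bf 1}_i$. Finally, I would re-expand ${\bf 1}_i$ in terms of $\{{\bf 1}^*_h\}$ using Lemma \ref{lem:primTrans}(i), yielding
\begin{align*}
E^*_i {\bf 1}^*_j = m_j u_i(\theta_j) \cdot \vert X \vert^{-1} k_i \sum_{h=0}^D u_i(\theta_h) {\bf 1}^*_h,
\end{align*}
which matches the claimed formula. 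Throughout, I would note the formal duality with Lemma \ref{lem:Taction}: the two results are interchanged by the symmetry $A_i \leftrightarrow A^*_i$, $E_i \leftrightarrow E^*_i$, ${\bf 1}_i \leftrightarrow {\bf 1}^*_i$, $k_i \leftrightarrow m_i$, $p^h_{i,j} \leftrightarrow q^h_{i,j}$, so an alternative is to invoke this duality directly rather than redo the computations.
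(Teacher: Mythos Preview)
Your proposal is correct and follows essentially the same approach as the paper. The paper's proof just says ``routinely checked using the reduction rules in Lemmas~\ref{lem:r3}, \ref{lem:r4} along with Lemma~\ref{lem:primTrans},'' and your computations are a valid unpacking of that: where the paper would invoke the reduction rules (which encode identities like $E_i A^*_j E_0 = \delta_{i,j} A^*_j E_0$ and $A^*_i E_j E^*_0 = \sum_h q^h_{i,j} E_h E^*_0$), you instead appeal directly to the underlying facts $E_iE_j=\delta_{i,j}E_j$, Lemma~\ref{lem:AiEi}, and Lemma~\ref{lem:list}(iv), which is equivalent and arguably more transparent.
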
 
\begin{proof} These are routinely checked using the reduction rules in Lemmas \ref{lem:r3}, \ref{lem:r4} along with Lemma \ref{lem:primTrans}.
\end{proof}

\noindent Next we bring in the bilinear form.
\begin{lemma}\label{lem:bil}
For $0 \leq i,j\leq D$ we have
\begin{enumerate}
\item[\rm (i)] $\langle {\bf 1}_i, {\bf 1}_j \rangle = \delta_{i,j} k_i$;
\item[\rm (ii)] $\langle {\bf 1}^*_i, {\bf 1}^*_j \rangle = \delta_{i,j}\vert X \vert m_i$;
\item[\rm (iii)]  $\langle {\bf 1}_i, {\bf 1}^*_j \rangle =    k_i m_j u_i(\theta_j)$.
\end{enumerate}
\end{lemma}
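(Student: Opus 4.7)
The plan is to verify the three identities separately, reducing each inner product to a quantity that has already been computed. The two working formulas I will rely on are
${\bf 1}_i = \sum_{y \in \Gamma_i(x)} \hat y = A_i \hat x = E^*_i {\bf 1}$
and ${\bf 1}^*_j = \vert X \vert E_j \hat x$, together with the orthonormality of the standard basis $\lbrace \hat y \rbrace_{y \in X}$ and the identity $\langle Bu, v\rangle = \langle u, B^t v\rangle$.

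For (i), the supports $\Gamma_i(x)$ for $0 \leq i \leq D$ are pairwise disjoint and $\vert \Gamma_i(x) \vert = k_i$, so orthonormality of $\lbrace \hat y \rbrace_{y \in X}$ delivers $\langle {\bf 1}_i, {\bf 1}_j\rangle = \delta_{i,j} k_i$ at once. (Equivalently one could write ${\bf 1}_i = E^*_i {\bf 1}$ and collapse using $E^*_i E^*_j = \delta_{i,j} E^*_i$ together with $\langle {\bf 1}, E^*_i {\bf 1}\rangle = k_i$.)

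For (ii), I would expand $\langle {\bf 1}^*_i, {\bf 1}^*_j \rangle = \vert X \vert^2 \langle E_i \hat x, E_j \hat x\rangle$, move one $E_j$ across using $E_j^t = E_j$, and collapse via $E_i E_j = \delta_{i,j} E_i$ to reach $\delta_{i,j} \vert X \vert^2 (E_i)_{x,x}$. The remaining ingredient is the identity $(E_i)_{x,x} = \Vert E_i \hat x\Vert^2 = \vert X \vert^{-1} m_i$, which is exactly Lemma \ref{lem:uij}(ii); this yields $\delta_{i,j} \vert X \vert m_i$.

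For (iii), I plan to combine ${\bf 1}_i = A_i \hat x$ with ${\bf 1}^*_j = \vert X \vert E_j \hat x$ to obtain $\langle {\bf 1}_i, {\bf 1}^*_j\rangle = \vert X \vert \langle \hat x, A_i E_j \hat x\rangle$, then apply Lemma \ref{lem:AiEi} (which gives $A_i E_j = v_i(\theta_j) E_j$) followed by $(E_j)_{x,x} = \vert X \vert^{-1} m_j$ and finally $v_i = k_i u_i$ from Definition \ref{def:ui}. A clean alternative is to substitute the change-of-basis formula ${\bf 1}^*_j = m_j \sum_{h=0}^D u_h(\theta_j) {\bf 1}_h$ from Lemma \ref{lem:primTrans}(ii) and use part (i) to pick out a single term. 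Either route is routine; the only bookkeeping care is the normalization factor $\vert X \vert$ separating $E_j \hat x$ from ${\bf 1}^*_j$, and the appearance of $\vert X \vert^{-1} m_i$ rather than $m_i$ in the diagonal entries of $E_i$. I do not anticipate any real obstacle.
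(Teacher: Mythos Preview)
Your proposal is correct and follows essentially the same route as the paper: part (i) is declared routine, and for (ii) and (iii) the paper carries out exactly the computations you outline, namely $\langle {\bf 1}^*_i, {\bf 1}^*_j\rangle = \vert X\vert^2 \langle \hat x, E_iE_j\hat x\rangle$ and $\langle {\bf 1}_i, {\bf 1}^*_j\rangle = \vert X\vert \langle \hat x, A_iE_j\hat x\rangle$, finishing with $\langle \hat x, E_i\hat x\rangle = \vert X\vert^{-1}m_i$ and $A_iE_j = v_i(\theta_j)E_j$.
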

\begin{proof} (i) Routine. \\
\noindent (ii) Observe
\begin{align*}
\langle {\bf 1}^*_i, {\bf 1}^*_j \rangle=\vert X \vert^2 \langle E_i {\hat x}, E_j {\hat x} \rangle =   \vert X \vert^2 \langle {\hat x}, E_i E_j {\hat x} \rangle                
  = \delta_{i,j}\vert X \vert^2 \langle {\hat x}, E_i {\hat x} \rangle
 = \delta_{i,j}\vert X \vert m_i.
\end{align*}
\noindent (iii) Observe
\begin{align*}
\langle {\bf 1}_i, {\bf 1}^*_j \rangle =  
\vert X \vert \langle A_i {\hat x}, E_j {\hat x} \rangle =
\vert X \vert \langle  {\hat x}, A_i E_j {\hat x} \rangle =
\vert X \vert v_i(\theta_j) \langle  {\hat x},  E_j {\hat x} \rangle =
  k_i m_j u_i(\theta_j).
\end{align*}
\end{proof}

\section{The Krein condition and the triple product relations}
The Krein condition states that the Krein parameters are nonnegative. In this section we will prove the Krein condition, and 
 also derive some relations called the triple product relations.
\medskip

\noindent Throughout this section $\Gamma=(X,\mathcal R)$ denotes a distance-regular graph with diameter $D\geq 3$. Fix $x \in X$ and write $T=T(x)$.
\medskip

\noindent In the following result, the second item is a variation on \cite[Proposition~5.1]{norton}.
\begin{lemma} \label{lem:eaeB}  {\rm (See \cite[Proposition~5.1]{norton}, \cite[Lemmas~3.1, 4.1]{dickie2}.)}
For $0 \leq h,i,j,r,s,t\leq D$,
\begin{enumerate}
\item[\rm (i)] $\langle E^*_h A_i E^*_j, E^*_r A_s E^*_t\rangle = \delta_{h,r} \delta_{i,s} \delta_{j,t} k_h p^h_{i,j}$;
\item[\rm (ii)] $\langle E_h A^*_i E_j, E_r A^*_s E_t\rangle = \delta_{h,r} \delta_{i,s} \delta_{j,t} m_h q^h_{i,j}$.
\end{enumerate}
\end{lemma}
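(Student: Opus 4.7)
The plan is to handle (i) and (ii) in parallel, reducing each to a trace computation via the cyclic property and the idempotency of the $E_i^*$ (resp.\ $E_j$), and then evaluate the surviving trace by a combinatorial/entrywise argument.

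For part (i), I would first use $(E^*_r A_s E^*_t)^t = E^*_t A_s E^*_r$ (all three factors are symmetric) so that
\[
\langle E^*_h A_i E^*_j, E^*_r A_s E^*_t\rangle = {\rm tr}(E^*_h A_i E^*_j E^*_t A_s E^*_r).
\]
The relation $E^*_j E^*_t = \delta_{j,t} E^*_j$ and a cyclic shift then give $\delta_{j,t}\delta_{h,r}\,{\rm tr}(E^*_h A_i E^*_j A_s)$, so the whole problem reduces to evaluating this last trace. Next I would observe, directly from the definitions of $E^*_\ell$ and $A_\ell$, that the $(y,z)$-entry of $E^*_h A_i E^*_j$ is the indicator of the event ``$\partial(x,y)=h$, $\partial(x,z)=j$, $\partial(y,z)=i$''. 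Writing the trace as $\sum_{y,z} (E^*_h A_i E^*_j)_{y,z}(A_s)_{z,y}$, the second factor forces $\partial(y,z)=s$, which produces the $\delta_{i,s}$; and for fixed $y$ with $\partial(x,y)=h$ the number of admissible $z$ is $p^h_{j,i}=p^h_{i,j}$, giving $k_h p^h_{i,j}$. This completes~(i).

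For part (ii) the same opening move yields $\delta_{j,t}\delta_{h,r}\,{\rm tr}(E_h A^*_i E_j A^*_s)$, and the task is to show this trace equals $\delta_{i,s} m_h q^h_{i,j}$. Here $A^*_i$ and $A^*_s$ are diagonal with entries $\vert X\vert (E_i)_{x,\cdot}$ and $\vert X\vert (E_s)_{x,\cdot}$ (Definition \ref{def:As}), so writing ${\rm tr}(MN)=\sum_{y,z}M_{y,z}N_{z,y}$ gives
\[
{\rm tr}(E_h A^*_i E_j A^*_s) = \vert X\vert^2 \sum_{y,z}(E_h)_{y,z}(E_j)_{y,z}(E_i)_{x,z}(E_s)_{x,y},
\]
using $(E_j)_{z,y}=(E_j)_{y,z}$. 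Expanding the product $(E_h)_{y,z}(E_j)_{y,z}$ via the Krein expansion $E_h\circ E_j = \vert X\vert^{-1}\sum_\ell q^\ell_{h,j}E_\ell$ and summing over $z$ produces a factor $(E_\ell E_i)_{y,x}=\delta_{\ell,i}(E_i)_{y,x}$; then summing over $y$ produces $\sum_y (E_s)_{x,y}(E_i)_{x,y}$, which by the Krein expansion of $E_s\circ E_i$ together with $E_m{\bf 1}=\delta_{m,0}{\bf 1}$ equals $\vert X\vert^{-1}q^0_{s,i} = \vert X\vert^{-1}\delta_{s,i}m_s$ (Lemma \ref{lem:details}(iii)). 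Assembling these pieces yields $\delta_{i,s}\,m_i q^i_{h,j}$, and a final appeal to $m_i q^i_{h,j}=m_h q^h_{i,j}$ (Lemma \ref{lem:mq}, combined with symmetry of $q^i_{h,j}$ in its lower indices) delivers the claimed formula.

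The main obstacle is part (ii): because the $A^*_\ell$ are diagonal while the $E_\ell$ are not, the trace does not collapse to a purely combinatorial count as in (i), and one has to pass through the entrywise identity $E_h\circ E_j = \vert X\vert^{-1}\sum_\ell q^\ell_{h,j}E_\ell$ twice and keep careful track of which indices get contracted by $E_0{\bf 1}={\bf 1}$ before invoking the index-permutation identity in Lemma \ref{lem:mq}. Everything else is a routine matter of symmetry and cyclic trace manipulation.
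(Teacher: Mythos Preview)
Your proposal is correct and follows essentially the same approach as the paper. Part (i) is identical to the paper's argument. In part (ii) the paper also reduces to ${\rm tr}(E_h A^*_i E_j A^*_s)$ and expands it to the same double sum $\vert X\vert^{2}\sum_{y,z}(E_s)_{x,y}(E_h\circ E_j)_{y,z}(E_i)_{z,x}$; the only cosmetic difference is that the paper recognizes this sum as $\vert X\vert^{2}$ times the $(x,x)$-entry of $E_s(E_h\circ E_j)E_i\in M$, converts that to $\vert X\vert$ times a trace, and then uses $E_iE_s=\delta_{i,s}E_i$ and one Krein expansion, whereas you sum over $z$ and $y$ directly. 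Your second Krein expansion (of $E_s\circ E_i$) is actually unnecessary: once you have $\sum_y (E_s)_{x,y}(E_i)_{y,x}$ you can read this off as $(E_sE_i)_{x,x}=\delta_{s,i}(E_i)_{x,x}=\delta_{s,i}\vert X\vert^{-1}m_i$ without invoking $E_m{\bf 1}=\delta_{m,0}{\bf 1}$.
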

\begin{proof} (i) Using ${\rm tr}(BC) = {\rm tr}(CB)$,
\begin{align*}
\langle E^*_h A_i E^*_j, E^*_r A_s E^*_t\rangle 
&= {\rm tr} \bigl( E^*_h A_i E^*_j (E^*_r A_s E^*_t)^t \bigr) \\
&= {\rm tr} \bigl( E^*_h A_i E^*_j E^*_t A_s E^*_r \bigr) \\
&= \delta_{h,r}\delta_{j,t} {\rm tr} \bigl( E^*_h A_i E^*_j A_s \bigr) 
\end{align*}
and
\begin{align*}
 {\rm tr} \bigl( E^*_h A_i E^*_j A_s \bigr) &= \sum_{y \in X} \sum_{z \in X} (E^*_h)_{y,y} (A_i)_{y,z} (E^*_j)_{z,z} (A_s)_{z,y} \\
 &= \sum_{y \in X} \sum_{z \in X} (E^*_h)_{y,y} (A_i \circ A_s)_{y,z} (E^*_j)_{z,z}  \\
 &= \delta_{i,s} \sum_{y \in X} \sum_{z \in X} (E^*_h)_{y,y} (A_i)_{y,z} (E^*_j)_{z,z}  \\
&=\delta_{i,s} \sum_{\stackrel{ \stackrel{\scriptstyle y \in \Gamma_h(x), }{\scriptstyle z \in \Gamma_j(x),}}{ \scriptstyle \partial(y,z)=i}} 1\\
&= \delta_{i,s} k_h p^h_{i,j}.
\end{align*}
\noindent (ii) We have
\begin{align*}
\langle E_h A^*_i E_j, E_r A^*_s E_t\rangle 
&= {\rm tr} \bigl( E_h A^*_i E_j (E_r A^*_s E_t)^t \bigr) \\
&= {\rm tr} \bigl( E_h A^*_i E_j E_t A^*_s E_r \bigr) \\
&= \delta_{h,r}\delta_{j,t} {\rm tr} \bigl( E_h A^*_i E_j A^*_s \bigr) 
\end{align*}
and
\begin{align*}
{\rm tr} \bigl( E_h A^*_i E_j A^*_s \bigr)
&= 
 \sum_{y \in X} \sum_{z \in X} (E_h)_{y,z} (A^*_i)_{z,z} (E_j)_{z,y} (A^*_s)_{y,y} \\
&=\vert X \vert^2  \sum_{y \in X} \sum_{z \in X} (E_h)_{y,z} (E_i)_{x,z} (E_j)_{z,y} (E_s)_{x,y} \\ 
 &= \vert X \vert^2 \sum_{y \in X} \sum_{z \in X} (E_s)_{x,y} (E_h \circ E_j)_{y,z} (E_i)_{z,x}  \\
 &= \vert X \vert^2 \Bigl( {\mbox{{\rm $(x,x)$-entry of }}} E_s(E_h \circ E_j) E_i\Bigr) \\
& = \vert X \vert {\rm tr} \bigl( E_s (E_h \circ E_j) E_i \bigr) \\
& = \vert X \vert {\rm tr} \bigl((E_h \circ E_j) E_i E_s\bigr) \\
& = \delta_{i,s} \vert X \vert {\rm tr} \bigl((E_h \circ E_j) E_i \bigr) \\
& = \delta_{i,s} \sum_{\ell=0}^D q^\ell_{h,j} {\rm tr} (E_{\ell} E_i) \\
&= \delta_{i,s} q^i_{h,j} {\rm tr} (E_i) \\
&= \delta_{i,s} q^i_{h,j} m_i \\
& = \delta_{i,s} m_h q^h_{i,j}.
\end{align*}
\end{proof}

\begin{corollary} \label{cor:eae} For $0 \leq h,i,j\leq D$ we have
\begin{enumerate}
\item[{\rm (i)}]$\Vert E^*_h A_i E^*_j \Vert^2 = k_h p^h_{i,j}$;
\item[{\rm (ii)}] $\Vert E_h A^*_i E_j \Vert^2 = m_h q^h_{i,j}$.
\end{enumerate}
\end{corollary}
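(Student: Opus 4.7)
The plan is to obtain Corollary \ref{cor:eae} as an immediate specialization of Lemma \ref{lem:eaeB}. Recall the convention $\Vert B \Vert^2 = \langle B, B \rangle$ introduced just before Lemma \ref{lem:BCR}.

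For part (i), I would set $(r,s,t) = (h,i,j)$ in Lemma \ref{lem:eaeB}(i). The three Kronecker deltas $\delta_{h,r}\delta_{i,s}\delta_{j,t}$ all collapse to $1$, and what remains is $\Vert E^*_h A_i E^*_j\Vert^2 = k_h p^h_{i,j}$. Part (ii) is handled identically: set $(r,s,t)=(h,i,j)$ in Lemma \ref{lem:eaeB}(ii) to obtain $\Vert E_h A^*_i E_j\Vert^2 = m_h q^h_{i,j}$.

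There is essentially no obstacle here; the only thing worth noting is a sanity check that both quantities are automatically nonnegative (since $\Vert\cdot\Vert^2 \geq 0$ by the remarks preceding Lemma \ref{lem:BCR}). This observation is in fact the whole point of the corollary: part (i) reproves $p^h_{i,j}\geq 0$ (which is clear combinatorially), while part (ii) yields $q^h_{i,j}\geq 0$, i.e., the Krein condition, which is the nontrivial consequence foreshadowed at the start of the section. So I would write the proof as a one-line invocation of Lemma \ref{lem:eaeB} with $(r,s,t)=(h,i,j)$ in each part.
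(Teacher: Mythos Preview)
Your proposal is correct and matches the paper's proof exactly: the paper simply writes ``Set $r=h$, $s=i$, $t=j$ in Lemma \ref{lem:eaeB}.'' Your additional remark about nonnegativity and the Krein condition is accurate and anticipates Theorem \ref{thm:krein}.
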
 
\begin{proof} Set $r=h$, $s=i$, $t=j$ in Lemma \ref{lem:eaeB}.
\end{proof}

\noindent The following result is called the {\it Krein condition}. See \cite[p.~69]{bannai} for a discussion of the history.
\begin{theorem} \label{thm:krein}   
We have $q^h_{i,j}\geq 0$ for $0 \leq h,i,j\leq D$.
\end{theorem}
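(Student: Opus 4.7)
The plan is to read off the Krein condition as a direct corollary of the triple product identity in Corollary \ref{cor:eae}(ii), which was the entire purpose of developing those relations.

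First I would invoke Corollary \ref{cor:eae}(ii), which gives
\begin{align*}
\Vert E_h A^*_i E_j \Vert^2 = m_h q^h_{i,j} \qquad (0 \leq h,i,j \leq D).
\end{align*}
The left-hand side is nonnegative, since the bilinear form $\langle\,,\,\rangle$ on ${\rm Mat}_X(\mathbb R)$ satisfies $\Vert B \Vert^2 \geq 0$ for every matrix $B$ (as noted right after Lemma \ref{lem:BCR}). Therefore $m_h q^h_{i,j} \geq 0$.

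Next I would observe that $m_h$ is the dimension of the nonzero subspace $E_h V$, so $m_h \geq 1 > 0$. Dividing the inequality $m_h q^h_{i,j} \geq 0$ by $m_h$ then yields $q^h_{i,j} \geq 0$, as required.

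There is no real obstacle at this final step; the substantive work was already carried out in the computation of $\Vert E_h A^*_i E_j \Vert^2$ in Lemma \ref{lem:eaeB}(ii), where the Hadamard product $E_h \circ E_j$ and its expansion via the Krein parameters had to be woven through a trace manipulation. Once that identity is in hand, positivity of $q^h_{i,j}$ is automatic from the positivity of the squared norm.
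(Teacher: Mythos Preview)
Your proof is correct and follows exactly the paper's approach: invoke Corollary \ref{cor:eae}(ii) and use nonnegativity of the squared norm. You make explicit the division by $m_h>0$, which the paper leaves implicit, but otherwise the arguments are identical.
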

\begin{proof} By Corollary \ref{cor:eae}(ii) and since $\Vert B \Vert^2 \geq 0$ for all $B \in {\rm Mat}_X(\mathbb R)$.
\end{proof}

\begin{theorem} \label{lem:tpr} {\rm (See \cite[Lemma~3.2]{terwSub1}.)}
For $0 \leq h,i,j\leq D$ we have
\begin{enumerate}
\item[\rm (i)] $E^*_h A_i E^*_j = 0$ if and only if  $p^h_{i,j}=0$;
\item[\rm (ii)] $E_h A^*_i E_j = 0 $ if and only if  $q^h_{i,j}=0$.
\end{enumerate}
\end{theorem}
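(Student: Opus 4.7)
The plan is to derive both items as direct consequences of Corollary \ref{cor:eae}, combined with the positivity property of the bilinear form $\langle\,,\,\rangle$ on ${\rm Mat}_X(\mathbb R)$, namely that $\Vert B \Vert^2 \geq 0$ with equality if and only if $B = 0$ (this is the basic property of the form stated just before Lemma \ref{lem:BCR}).

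For part (i), I would invoke Corollary \ref{cor:eae}(i) to rewrite $\Vert E^*_h A_i E^*_j \Vert^2 = k_h p^h_{i,j}$. By the positivity property, $E^*_h A_i E^*_j = 0$ holds if and only if this norm-squared vanishes, i.e., if and only if $k_h p^h_{i,j} = 0$. Since $k_h = \vert \Gamma_h(x) \vert$ and the $k_h$ were shown to satisfy \eqref{eq:kiform} via positive quantities, we have $k_h > 0$ for $0 \leq h \leq D$. Dividing through, this is equivalent to $p^h_{i,j} = 0$.

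Part (ii) is entirely analogous: Corollary \ref{cor:eae}(ii) gives $\Vert E_h A^*_i E_j \Vert^2 = m_h q^h_{i,j}$, so $E_h A^*_i E_j = 0$ iff $m_h q^h_{i,j} = 0$, and since $m_h = \dim E_h V \geq 1$ (each primitive idempotent has a nonzero eigenspace, e.g.\ $m_0 = 1$ and $m_h = {\rm tr}(E_h) > 0$ in general), this is equivalent to $q^h_{i,j} = 0$.

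There is really no obstacle here; the heavy lifting was already done in Lemma \ref{lem:eaeB} (the computations involving ${\rm tr}(E^*_h A_i E^*_j A_i)$ and ${\rm tr}(E_h A^*_i E_j A^*_i)$), which was proved by reducing the traces to combinatorial counts of vertex pairs at prescribed distances and to Krein-parameter expressions via the Schur product $E_h \circ E_j$. Once those identities are in hand, the present theorem is simply the observation that a nonnegative real number vanishes if and only if the matrix whose squared norm equals it is zero.
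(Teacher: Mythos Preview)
Your proof is correct and takes essentially the same approach as the paper: invoke Corollary \ref{cor:eae} together with the positive-definiteness of the form $\langle\,,\,\rangle$ on ${\rm Mat}_X(\mathbb R)$. Your version is in fact slightly more careful than the paper's, since you make explicit the (trivial but necessary) observation that $k_h>0$ and $m_h>0$.
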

\begin{proof} By  Corollary \ref{cor:eae} and since $\Vert B \Vert^2 = 0$ implies $B=0$ for all $B \in {\rm Mat}_X(\mathbb R)$.
\end{proof}
\noindent The relations in Theorem \ref{lem:tpr} are called the {\it triple product relations}.
\medskip

\noindent We bring in some notation. For subspaces $R, S$ of ${\rm Mat}_X(\mathbb R)$, define
\begin{align*}
RS = {\rm Span} \lbrace rs \vert r \in R, \; s \in S\rbrace.
\end{align*}
\begin{theorem} {\rm (See \cite[Section~7]{sum2}.)} With the above notation,
\begin{enumerate}
\item[\rm (i)] the vector space $M^*MM^*$ has an orthogonal basis
\begin{align*} 
\lbrace E^*_hA_i E^*_j \vert 0 \leq h,i,j\leq D, \; p^h_{i,j}\not=0\rbrace;
\end{align*}
\item[\rm (ii)]  the vector space $MM^*M$ has an orthogonal basis
\begin{align*} 
\lbrace E_h A^*_i E_j \vert 0 \leq h,i,j\leq D, \; q^h_{i,j}\not=0\rbrace.
\end{align*}
\end{enumerate}
\end{theorem}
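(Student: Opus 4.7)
The plan is to derive both (i) and (ii) as direct consequences of the two lemmas immediately preceding the statement: Lemma \ref{lem:eaeB}, which computes the inner products of the candidate basis vectors, and Theorem \ref{lem:tpr}, which identifies exactly when such a triple product vanishes. Since the proofs for (i) and (ii) are structurally identical, I will focus on (i); part (ii) follows by the same argument after replacing $M^*MM^*$ with $MM^*M$, the primitive idempotents $E^*_i$ with $E_i$, the distance matrices $A_i$ with the dual distance matrices $A^*_i$, and the intersection numbers $p^h_{i,j}$ with the Krein parameters $q^h_{i,j}$.

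First I would observe that $M^*MM^*$ is spanned by the full collection $\{E^*_h A_i E^*_j : 0\leq h,i,j \leq D\}$. This is immediate from the facts that $\{E^*_i\}_{i=0}^D$ is a basis for $M^*$ and $\{A_i\}_{i=0}^D$ is a basis for $M$, combined with the definition of the product of subspaces. Then Theorem \ref{lem:tpr}(i) tells me that $E^*_h A_i E^*_j = 0$ whenever $p^h_{i,j} = 0$, so I may discard these indices without shrinking the span. Therefore
\[
\mathcal{S} := \{E^*_h A_i E^*_j : 0 \leq h,i,j \leq D,\ p^h_{i,j} \neq 0\}
\]
still spans $M^*MM^*$.

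Next, I would invoke Lemma \ref{lem:eaeB}(i), which gives
\[
\langle E^*_h A_i E^*_j,\ E^*_r A_s E^*_t\rangle = \delta_{h,r}\delta_{i,s}\delta_{j,t}\, k_h p^h_{i,j}.
\]
For any two \emph{distinct} elements of $\mathcal{S}$, at least one of $h=r$, $i=s$, $j=t$ fails, so the inner product is zero; this is precisely the orthogonality claim. Moreover, for each element of $\mathcal{S}$ itself, the same formula gives the squared norm $k_h p^h_{i,j}$, which is strictly positive by the definition of $\mathcal{S}$ together with the positivity of $k_h$. Thus $\mathcal{S}$ is a set of nonzero, pairwise orthogonal vectors, hence linearly independent, and combined with the spanning property from the previous paragraph this makes $\mathcal{S}$ an orthogonal basis of $M^*MM^*$.

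There is no real obstacle here: everything reduces to the interplay between the norm formula and the vanishing criterion. The only point that would deserve explicit mention is the step where one invokes Theorem \ref{lem:tpr}(i) to certify that restricting to indices with $p^h_{i,j}\neq 0$ preserves the span; without this, one might worry that some nonzero linear combinations of discarded triples contribute, but the theorem rules this out since each discarded triple is individually zero. The proof of (ii) is verbatim the same, replacing the roles of $M$ and $M^*$, and using Lemma \ref{lem:eaeB}(ii) and Theorem \ref{lem:tpr}(ii) in place of their (i)-counterparts.
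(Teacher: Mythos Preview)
Your proposal is correct and takes essentially the same approach as the paper, which simply cites Lemma~\ref{lem:eaeB} and Theorem~\ref{lem:tpr}. You have merely unpacked the argument in full detail: the spanning, the orthogonality, and the nonvanishing of the retained triples are exactly the ingredients the paper's one-line proof invokes.
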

\begin{proof} By Lemma \ref{lem:eaeB}
and Theorem \ref{lem:tpr}.
\end{proof}
\noindent We mention a consequence of Theorem \ref{lem:tpr}.
\begin{proposition}
\label{prop:tpr} For $ 0 \leq i,j\leq D$ we have
\begin{align} \label{eq:tpr}
A_i E^*_j V \subseteq    \sum_{\stackrel{ \scriptstyle 0 \leq h \leq D }{ \scriptstyle p^h_{i,j} \not=0}} E^*_hV,
\qquad \qquad 
A^*_i E_j V \subseteq    \sum_{\stackrel{ \scriptstyle 0 \leq h \leq D }{ \scriptstyle q^h_{i,j} \not=0}} E_hV.
\end{align}
\end{proposition}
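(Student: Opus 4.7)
The plan is to derive both containments directly from the triple product relations in Theorem \ref{lem:tpr}, using the resolutions of the identity $\sum_{h=0}^D E^*_h = I$ and $\sum_{h=0}^D E_h = I$.

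For the first containment, let $v \in A_i E^*_j V$, so that $v = A_i E^*_j w$ for some $w \in V$. Since $\sum_{h=0}^D E^*_h = I$, I would write
\begin{align*}
v = A_i E^*_j w = \sum_{h=0}^D E^*_h A_i E^*_j w.
\end{align*}
By Theorem \ref{lem:tpr}(i), the term $E^*_h A_i E^*_j w$ vanishes whenever $p^h_{i,j}=0$, so only the terms with $p^h_{i,j}\not=0$ can contribute. Each surviving term lies in $E^*_h V$, and hence $v$ lies in the indicated sum. This establishes the inclusion $A_i E^*_j V \subseteq \sum_{h:\,p^h_{i,j}\not=0} E^*_h V$.

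The second containment is proved by the parallel argument, replacing $A_i, E^*_j, E^*_h, p^h_{i,j}$ by $A^*_i, E_j, E_h, q^h_{i,j}$, and using Theorem \ref{lem:tpr}(ii) together with $\sum_{h=0}^D E_h = I$.

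There is no real obstacle here: the proposition is a clean packaging of Theorem \ref{lem:tpr} in terms of subspaces rather than matrices, and the only idea required is to insert the resolution of the identity on the left and discard the vanishing summands.
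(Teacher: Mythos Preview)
Your proof is correct and follows essentially the same approach as the paper: insert the resolution of the identity $I=\sum_h E^*_h$ (resp.\ $I=\sum_h E_h$) on the left, then use Theorem~\ref{lem:tpr} to discard the summands with $p^h_{i,j}=0$ (resp.\ $q^h_{i,j}=0$). The only cosmetic difference is that the paper phrases the computation at the level of subspaces rather than elementwise.
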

\begin{proof} Concerning the containment on the left in \eqref{eq:tpr},
\begin{align*}
A_i E^*_j V = I A_i E^*_j V = \sum_{h=0}^D E^*_h A_i E^*_j V =  \sum_{\stackrel{ \scriptstyle 0 \leq h \leq D }{ \scriptstyle p^h_{i,j} \not=0}} E^*_hA_i E^*_jV 
\subseteq  \sum_{\stackrel{ \scriptstyle 0 \leq h \leq D }{ \scriptstyle p^h_{i,j} \not=0}} E^*_hV.
\end{align*}
The containment on the right in \eqref{eq:tpr} is similarly obtained.
\end{proof}

\noindent We finish this section with some comments about the primary $T$-module.

\begin{lemma} \label{lem:tprP} For the primary $T$-module $\mathcal V$ and $0 \leq h,i,j\leq D$,
\begin{enumerate}
\item[\rm (i)] $E^*_hA_i E^*_j=0$ on $\mathcal V$ if and only if $p^h_{i,j}=0$;
\item[\rm (ii)] $E_hA^*_i E_j=0$ on $\mathcal V$ if and only if $q^h_{i,j}=0$.
\end{enumerate}
\end{lemma}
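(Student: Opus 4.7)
I plan to prove both parts of Lemma \ref{lem:tprP} by evaluating the relevant operators on carefully chosen vectors from the bases of $\mathcal V$ given in Lemma \ref{lem:PB}. The ``if'' directions are immediate: if $p^h_{i,j}=0$, then $E^*_h A_i E^*_j = 0$ as an endomorphism of the full standard module $V$ by Theorem \ref{lem:tpr}(i), hence it vanishes on $\mathcal V$; similarly for (ii) using Theorem \ref{lem:tpr}(ii). So the real content is the ``only if'' direction, where we must show that vanishing on the (possibly small) subspace $\mathcal V$ already forces the intersection/Krein parameter to be zero.

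For part (i), the plan is to test $E^*_h A_i E^*_j$ on the basis vector ${\bf 1}_j$ of $\mathcal V$. Using Lemma \ref{lem:Taction}(i) we get $E^*_j {\bf 1}_j = {\bf 1}_j$, and then Lemma \ref{lem:Taction}(iv) gives $A_i {\bf 1}_j = \sum_{r=0}^D p^r_{i,j} {\bf 1}_r$. Applying $E^*_h$ and using Lemma \ref{lem:Taction}(i) again to pick off the single nonzero summand, I obtain
\begin{align*}
E^*_h A_i E^*_j {\bf 1}_j = p^h_{i,j}\, {\bf 1}_h.
\end{align*}
Since ${\bf 1}_h \neq 0$ (it is the characteristic vector of the nonempty set $\Gamma_h(x)$, noting that $\Gamma$ has diameter $D$), the hypothesis that $E^*_h A_i E^*_j$ vanishes on $\mathcal V$ forces $p^h_{i,j}=0$.

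For part (ii), the argument is perfectly analogous using the dual basis $\{{\bf 1}^*_j\}_{j=0}^D$ of $\mathcal V$ from Lemma \ref{lem:PB}(iv). Lemma \ref{lem:Taction2}(i) gives $E_j {\bf 1}^*_j = {\bf 1}^*_j$, and Lemma \ref{lem:Taction2}(iv) gives $A^*_i {\bf 1}^*_j = \sum_{r=0}^D q^r_{i,j} {\bf 1}^*_r$. Applying $E_h$ and using Lemma \ref{lem:Taction2}(i) once more yields
\begin{align*}
E_h A^*_i E_j {\bf 1}^*_j = q^h_{i,j}\, {\bf 1}^*_h,
\end{align*}
and since ${\bf 1}^*_h = \vert X\vert E_h \hat x \neq 0$ (as $\hat x$ has nonzero component in every $E_h V$; equivalently, $m_h \neq 0$ and Lemma \ref{lem:bil}(ii) gives $\Vert {\bf 1}^*_h\Vert^2 = \vert X\vert m_h > 0$), vanishing on $\mathcal V$ forces $q^h_{i,j}=0$.

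There is no real obstacle here; the only point requiring a moment of care is the nonvanishing of ${\bf 1}_h$ and ${\bf 1}^*_h$, which is immediate from their definitions together with $k_h > 0$ and $m_h > 0$. The proof is essentially a one-line calculation in each case once one knows the $T$-action on the primary module from Lemmas \ref{lem:Taction} and \ref{lem:Taction2}.
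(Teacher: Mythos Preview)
Your proposal is correct and takes essentially the same approach as the paper: the paper's proof simply says to use parts (i) and (iv) of Lemmas \ref{lem:Taction} and \ref{lem:Taction2}, which is exactly the computation $E^*_h A_i E^*_j {\bf 1}_j = p^h_{i,j}{\bf 1}_h$ (and its dual) that you carry out. Your appeal to Theorem \ref{lem:tpr} for the ``if'' direction is a harmless shortcut, since the same basis computation already yields both directions at once.
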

\begin{proof} Use parts (i), (iv) of Lemmas  \ref{lem:Taction},  \ref{lem:Taction2}.
\end{proof}

\begin{lemma} \label{lem:eae} For $0 \leq i,j \leq D$ the following holds on the primary $T$-module $\mathcal V$:
\begin{align*}
E^*_i A_j E^*_i = p^i_{i,j} E^*_i, \qquad \qquad E_i A^*_j E_i = q^i_{i,j} E_i.
\end{align*}
\end{lemma}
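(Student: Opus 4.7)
The plan is to verify both identities by evaluating each operator on a convenient basis of the primary $T$-module $\mathcal{V}$ and comparing coefficients. For the first identity I will use the basis $\{{\bf 1}_k\}_{k=0}^D$ of $\mathcal{V}$ from Lemma \ref{lem:PB}(ii), since this basis diagonalizes $E^*_i$ by Lemma \ref{lem:Taction}(i). For the second identity I will analogously use the basis $\{{\bf 1}^*_k\}_{k=0}^D$ from Lemma \ref{lem:PB}(iv), which diagonalizes $E_i$ by Lemma \ref{lem:Taction2}(i).

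For the first equation, I would apply $E^*_i A_j E^*_i$ to ${\bf 1}_k$, working from the right. By Lemma \ref{lem:Taction}(i), the rightmost factor gives $E^*_i {\bf 1}_k = \delta_{i,k} {\bf 1}_i$, so the whole expression vanishes unless $k=i$. When $k=i$, Lemma \ref{lem:Taction}(iv) gives $A_j {\bf 1}_i = \sum_{h=0}^D p^h_{j,i} {\bf 1}_h$, and then Lemma \ref{lem:Taction}(i) collapses $E^*_i$ acting on this sum to the single term $p^i_{j,i} {\bf 1}_i$. Using $p^i_{j,i} = p^i_{i,j}$, this matches $p^i_{i,j} E^*_i {\bf 1}_k$ on the nose, which verifies $E^*_i A_j E^*_i = p^i_{i,j} E^*_i$ on $\mathcal{V}$.

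The second equation is entirely parallel, using Lemma \ref{lem:Taction2} in place of Lemma \ref{lem:Taction}. Applying $E_i A^*_j E_i$ to ${\bf 1}^*_k$: the rightmost $E_i$ gives $\delta_{i,k} {\bf 1}^*_i$ by Lemma \ref{lem:Taction2}(i); then $A^*_j {\bf 1}^*_i = \sum_{h=0}^D q^h_{j,i} {\bf 1}^*_h$ by Lemma \ref{lem:Taction2}(iv); then the leftmost $E_i$ selects the $h=i$ term, leaving $q^i_{j,i} {\bf 1}^*_i$. Using the symmetry $q^i_{j,i} = q^i_{i,j}$ (stated right after \eqref{eq:EcE}) finishes the argument.

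There is no real obstacle here: both identities are a direct two-line calculation on the appropriate basis. The only thing worth flagging explicitly is that the statement says the equation holds on $\mathcal{V}$, not as a matrix identity on all of $V$, so it is important to present the argument as an action on basis vectors of $\mathcal{V}$ rather than as a global matrix equation. This is also precisely what Lemma \ref{lem:tprP} anticipates, since $p^i_{i,j}$ (respectively $q^i_{i,j}$) being zero would, via Lemma \ref{lem:tprP}, force $E^*_i A_j E^*_i$ (respectively $E_i A^*_j E_i$) to vanish on $\mathcal{V}$, matching the formula.
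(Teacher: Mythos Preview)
Your proof is correct and follows essentially the same approach as the paper, which simply cites parts (i) and (iv) of Lemmas~\ref{lem:Taction} and~\ref{lem:Taction2}. You have spelled out exactly the computation those citations entail.
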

\begin{proof} Use parts (i), (iv) of Lemmas  \ref{lem:Taction},  \ref{lem:Taction2}.
\end{proof}

\section{The function algebra and the Norton algebra}

Throughout this section $\Gamma=(X,\mathcal R)$ denotes a distance-regular graph with diameter $D\geq 3$. 
In  Theorem \ref{lem:tpr} we saw that each vanishing Krein parameter gives a triple product relation. In this section
we consider the vanishing Krein parameters from another point of view, due to Cameron, Goethals, and Seidel \cite[Proposition~5.1]{norton}.
We will also briefly mention Norton algebras.
\medskip

\noindent Recall the basis $\lbrace {\hat y} \rbrace_{y \in X}$ for the standard module $V$.

\begin{definition}\label{def:circ} \
\rm We turn the vector space $V$  into a commutative, associative, $\mathbb R$-algebra with multiplication $\circ$ defined as follows:
\begin{align} \label{eq:circ}
{\hat y} \circ {\hat z} = \delta_{y,z} {\hat y} \qquad \qquad y, z \in X.
\end{align}
The algebra $V$ is isomorphic to the algebra of functions $X \to \mathbb R$. Motivated by this, we call the algebra $V$ the {\it function algebra}.
\end{definition}
\noindent In order to illustrate the multiplication $\circ$, let $v, w \in V$ and write
\begin{align*}
v = \sum_{y \in X} v_y {\hat y}, \qquad \qquad w = \sum_{y \in X} w_y {\hat y}     \qquad \qquad v_y, w_y \in \mathbb R.
\end{align*}
\noindent Then
\begin{align*}
v \circ w = \sum_{y \in X} v_y w_y {\hat y}.
\end{align*}

\begin{lemma} \label{lem:one} For the function algebra $V$, the multiplicative identity is ${\bf 1} = \sum_{y \in X}{\hat y}$.
\end{lemma}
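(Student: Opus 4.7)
The plan is to verify directly from Definition \ref{def:circ} that $\mathbf{1}$ acts as a multiplicative identity on the basis $\{\hat y\}_{y \in X}$, and then extend by bilinearity. Since $\circ$ has already been asserted to be commutative and associative, it suffices to check $\hat y \circ \mathbf{1} = \hat y$ for each $y \in X$.

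First I would compute, for fixed $y \in X$,
\begin{align*}
\hat y \circ \mathbf{1} = \hat y \circ \sum_{z \in X} \hat z = \sum_{z \in X} \hat y \circ \hat z = \sum_{z \in X} \delta_{y,z} \hat y = \hat y,
\end{align*}
where the second equality uses bilinearity of $\circ$ and the third uses \eqref{eq:circ}. Then for an arbitrary $v = \sum_{y \in X} v_y \hat y$, bilinearity gives
\begin{align*}
v \circ \mathbf{1} = \sum_{y \in X} v_y (\hat y \circ \mathbf{1}) = \sum_{y \in X} v_y \hat y = v.
\end{align*}
By the commutativity asserted in Definition \ref{def:circ}, also $\mathbf{1} \circ v = v$, so $\mathbf{1}$ is the (necessarily unique) multiplicative identity.

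There is essentially no obstacle here; the entire content of the lemma is unpacking the definition of $\circ$ on the orthonormal basis $\{\hat y\}_{y \in X}$ together with the observation that $\mathbf{1}$ has every basis coordinate equal to $1$. The only thing worth noting is that one could alternatively remark that the function-algebra isomorphism $V \to \mathbb{R}^X$ (sending $\hat y$ to the characteristic function of $\{y\}$) carries $\mathbf{1}$ to the constant function $1$, which is manifestly the identity of the pointwise-product algebra $\mathbb{R}^X$; but the direct basis computation above is shorter and self-contained.
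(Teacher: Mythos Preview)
Your proof is correct and is exactly the routine verification the paper has in mind; the paper's own proof reads simply ``Routine.'' There is nothing to add.
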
 
\begin{proof} Routine.
\end{proof}

\noindent For the rest of this section, fix $x \in X$ and write $T=T(x)$.
\begin{lemma} \label{lem:cv}
For $v \in V $ and $0 \leq i \leq D$,
\begin{align}
\label{eq:cv}
A^*_i v = \vert X \vert E_i {\hat x} \circ v.
\end{align}
\end{lemma}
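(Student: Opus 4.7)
The plan is to verify the identity on the standard basis $\{\hat y\}_{y\in X}$ of $V$ and then extend by $\mathbb R$-bilinearity. Both sides of \eqref{eq:cv} are $\mathbb R$-linear in $v$, so it suffices to check that
\begin{align*}
A^*_i \hat y = \vert X \vert\, E_i \hat x \circ \hat y \qquad (y \in X).
\end{align*}

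For the left-hand side, I would invoke Definition \ref{def:As}: the matrix $A^*_i$ is diagonal with $(y,y)$-entry $\vert X \vert (E_i)_{x,y}$, so $A^*_i \hat y = \vert X \vert (E_i)_{x,y} \hat y$. For the right-hand side, I would expand $E_i \hat x$ in the standard basis as $\sum_{z\in X} (E_i)_{z,x}\hat z$, and then use the defining relation \eqref{eq:circ} of the Schur-like multiplication on $V$: only the $z=y$ term survives, giving $E_i \hat x \circ \hat y = (E_i)_{y,x}\hat y$. Finally I would use the symmetry $E_i^t = E_i$ to identify $(E_i)_{y,x} = (E_i)_{x,y}$, which matches the left-hand side after multiplication by $\vert X \vert$.

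There is essentially no obstacle here; the lemma is a direct bookkeeping calculation that reconciles two different descriptions of the same scalar $\vert X \vert (E_i)_{x,y}$: one coming from the diagonal entry of the dual distance matrix, and the other coming from pointwise multiplication of the column vector $E_i \hat x$ by a standard basis vector. The only conceptual content is recognizing that the function algebra multiplication $\circ$ implements pointwise multiplication of coordinates, so convolving with a fixed vector $E_i \hat x$ is the same as acting by the diagonal matrix whose entries are the coordinates of $E_i \hat x$, which up to the factor $\vert X \vert$ is exactly $A^*_i$.
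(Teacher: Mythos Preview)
Your proof is correct and follows essentially the same approach as the paper: both verify the identity by comparing the $y$-coordinate on each side, using that $A^*_i$ is diagonal with $(y,y)$-entry $\vert X\vert(E_i)_{x,y}$ and that $(E_i\hat x)_y = (E_i)_{y,x} = (E_i)_{x,y}$. The only cosmetic difference is that you reduce to basis vectors $v=\hat y$ by linearity, whereas the paper keeps a general $v$ and compares coordinates directly.
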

\begin{proof} Write $v = \sum_{y \in X} v_y {\hat y}$. Pick $y \in X$. The $y$-coordinate of $A^*_iv$ is
\begin{align*}
(A^*_iv)_y = (A^*_i)_{y,y} v_y = \vert X \vert (E_i)_{x,y} v_y.
\end{align*}
The $y$-coordinate of $E_i {\hat x} \circ v$ is
\begin{align*}
\bigl( E_i {\hat x} \circ v \bigr)_y = (E_i {\hat x})_y v_y = (E_i)_{y,x} v_y  = (E_i)_{x,y} v_y.
\end{align*}
The result follows.
\end{proof}

\noindent We bring in some notation. For subspaces $R, S $ of $V$ define
\begin{align} \label{eq:RcS}
R \circ S = {\rm Span} \lbrace r \circ s \vert r \in R,\; s \in S\rbrace.
\end{align}

\begin{theorem}\label{thm:fa} {\rm (See \cite[Proposition~5.1]{norton}.)} For $0 \leq i,j\leq D$,
\begin{align} \label{lem:fa}
E_iV \circ E_j V = \sum_{\stackrel{ \scriptstyle 0 \leq h \leq D }{ \scriptstyle q^h_{i,j} \not=0}} E_hV.
\end{align}
\end{theorem}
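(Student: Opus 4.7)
The plan is to establish the two inclusions separately. Write $W = E_iV \circ E_jV$ and $U = \sum_{h: q^h_{i,j}\neq 0} E_hV$.

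For the inclusion $W \subseteq U$, I would fix $x \in X$ and combine Lemma \ref{lem:cv} with Proposition \ref{prop:tpr}. Lemma \ref{lem:cv} (applied with base vertex $x$) gives $E_i\hat{x} \circ v = |X|^{-1}A^*_i(x) v$ for all $v \in V$; specializing to $v \in E_jV$ and invoking Proposition \ref{prop:tpr} shows that $E_i\hat{x} \circ E_jV \subseteq U$. Since $\{E_i\hat{x}\}_{x \in X}$ spans $E_iV$, linearity gives $W \subseteq U$.

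For the reverse inclusion $U \subseteq W$, the key computation is the identity
\begin{align*}
E_i\hat{x} \circ E_j\hat{x} = |X|^{-1}\sum_{h=0}^D q^h_{i,j}\, E_h \hat{x} \qquad (x \in X),
\end{align*}
obtained by equating $z$-coordinates: both sides have $z$-entry $(E_i)_{z,x}(E_j)_{z,x}=(E_i\circ E_j)_{z,x}$, which expands via $E_i\circ E_j=|X|^{-1}\sum_h q^h_{i,j}E_h$ from \eqref{eq:EcE}. Now set $F = \sum_{h:\,q^h_{i,j}\neq 0} q^h_{i,j} E_h \in M$. The identity shows $F\hat{x} \in W$ for every $x \in X$. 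Since $F$ acts on $E_hV$ as the nonzero scalar $q^h_{i,j}$ whenever $q^h_{i,j}\neq 0$ and as $0$ otherwise, the range of $F:V\to V$ is exactly $U$. As $\{\hat{x}\}_{x\in X}$ spans $V$, the family $\{F\hat{x}\}_{x\in X}$ spans $U$, and hence $U \subseteq W$.

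The main conceptual obstacle is the reverse inclusion: the naive attempt of trying to produce each summand $E_hV$ of $U$ individually inside $W$ does not work, because the computation of $E_i\hat{x}\circ E_j\hat{x}$ mixes all relevant $h$ together. The resolution is to observe that this mixture is precisely $|X|^{-1}F\hat{x}$ for a fixed element $F\in M$ whose range coincides with $U$; a single family of vectors parametrized by $x\in X$ then handles every $h$ simultaneously.
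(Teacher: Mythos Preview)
Your proof is correct and follows essentially the same lines as the paper's: both directions use Lemma~\ref{lem:cv} and Proposition~\ref{prop:tpr} for $\subseteq$, and the identity $E_i\hat x\circ E_j\hat x=(E_i\circ E_j)\hat x$ together with the Krein expansion \eqref{eq:EcE} for $\supseteq$.

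One small correction to your closing commentary: contrary to what you say, the paper \emph{does} produce each summand $E_hV$ individually inside $W$. It observes that $W\supseteq (E_i\circ E_j)V\supseteq (E_i\circ E_j)E_hV=|X|^{-1}q^h_{i,j}E_hV=E_hV$ whenever $q^h_{i,j}\neq 0$. This is the same computation as yours---your operator $F$ is exactly $|X|(E_i\circ E_j)$---just with $E_h$ applied afterward to isolate the $h$-component. So the ``obstacle'' you describe is not really there; packaging all $h$ at once via $FV=U$ and extracting them one at a time are two phrasings of the same argument.
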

\begin{proof} We first establish the inclusion $\subseteq$. By construction $E_i V = {\rm Span} \lbrace E_i {\hat y} \vert y \in X\rbrace$.
We show that for $ y \in X$,
\begin{align*} 
E_i {\hat y}  \circ E_j V \subseteq \sum_{\stackrel{ \scriptstyle 0 \leq h \leq D }{ \scriptstyle q^h_{i,j} \not=0}} E_hV.
\end{align*}
Since our base vertex $x$ is arbitrary, we may assume without loss of generality that $x=y$. By Proposition \ref{prop:tpr} and Lemma \ref{lem:cv},
\begin{align*} 
E_i {\hat x}  \circ E_j V = A^*_i E_j V 
\subseteq \sum_{\stackrel{ \scriptstyle 0 \leq h \leq D }{ \scriptstyle q^h_{i,j} \not=0}} E_hV.
\end{align*}
\\
\noindent Next we establish the inclusion $\supseteq$. For $0 \leq h \leq D$ such that $q^h_{i,j} \not=0$, we show that
$E_i V \circ E_j V \supseteq E_hV$. We have
\begin{align*}
E_i V \circ E_j V &= {\rm Span} \lbrace E_i {\hat y} \circ E_j {\hat z} \vert y,z \in X \rbrace \\
&\supseteq {\rm Span} \lbrace E_i {\hat y} \circ E_j {\hat y}\vert y  \in X \rbrace \\
&= {\rm Span} \lbrace (E_i  \circ E_j ){\hat y}\vert y \in X \rbrace \\
&= (E_i \circ E_j) V\\
&\supseteq (E_i \circ E_j) E_h V\\
&= \Biggl( \vert X \vert^{-1} \sum_{\ell=0}^D q^\ell_{i,j} E_\ell \Biggr) E_hV\\
&= \vert X \vert^{-1} q^h_{i,j} E_hV \\
&= E_hV.
\end{align*}
\end{proof}


\noindent Next we briefly review the Norton algebra.

\begin{lemma} \label{lem:norton} {\rm (See \cite[Proposition~5.2]{norton}.)}
For $0 \leq j \leq D$ we endow $E_j V$ with a binary operation $\star$ as follows:
\begin{align*} 
u \star v = E_j(u \circ v) \qquad \qquad u, v \in E_jV.
\end{align*}
Then for $u,v,w \in E_jV$ and $\alpha \in \mathbb R$,
\begin{enumerate}
\item[\rm (i)] $u \star v = v \star u$;
\item[\rm (ii)] $u \star (v + w) = u \star v + u \star w$;
\item[\rm (iii)] $(\alpha u)\star v = \alpha (u \star v) $.
\end{enumerate}
\end{lemma}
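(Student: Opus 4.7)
The plan is to observe that each of the three claims is an immediate consequence of two facts already in hand: the function algebra $(V,\circ)$ from Definition \ref{def:circ} is a commutative, associative $\mathbb{R}$-algebra (in particular, $\circ$ is commutative and $\mathbb{R}$-bilinear), and the matrix $E_j$ acts on $V$ as an $\mathbb{R}$-linear map (it is an idempotent in $\mathrm{Mat}_X(\mathbb R)$). Since $E_j$ is idempotent with image $E_jV$, the right-hand side $E_j(u\circ v)$ automatically lies in $E_jV$, so $\star$ is a well-defined binary operation on $E_jV$; this is worth stating explicitly before turning to (i)--(iii).

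For (i), I would chain $u\star v=E_j(u\circ v)=E_j(v\circ u)=v\star u$, where the middle equality uses commutativity of $\circ$ (immediate from \eqref{eq:circ}, which is symmetric in $y,z$). For (ii), I would write $u\star(v+w)=E_j\bigl(u\circ(v+w)\bigr)=E_j(u\circ v+u\circ w)=E_j(u\circ v)+E_j(u\circ w)=u\star v+u\star w$, invoking distributivity of $\circ$ over $+$ (a consequence of $\circ$ being $\mathbb{R}$-bilinear on the basis $\lbrace \hat y\rbrace_{y\in X}$, extended by linearity) and then $\mathbb{R}$-linearity of $E_j$. For (iii), the same template gives $(\alpha u)\star v=E_j\bigl((\alpha u)\circ v\bigr)=E_j\bigl(\alpha(u\circ v)\bigr)=\alpha E_j(u\circ v)=\alpha(u\star v)$.

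There is really no obstacle here; the lemma is a formal verification. The only thing that even resembles a subtlety is noting that $\star$ genuinely closes within $E_jV$ (handled by $E_j$ being the projector with image $E_jV$), and pointing out precisely where commutativity and bilinearity of the underlying $\circ$ come from — both are already present in the paragraph following Definition \ref{def:circ}. The proof therefore occupies just a few lines, with no estimates, no reduction rules, and no appeal to the Krein parameters.
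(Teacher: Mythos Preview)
Your proposal is correct and matches the paper's approach: the paper simply says ``This is routinely checked,'' and what you have written is exactly that routine check, made explicit.
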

\begin{proof} This is routinely checked.
\end{proof}

\noindent  Referring to Lemma \ref{lem:norton}, the vector space $E_jV$ together with the opertion $\star$ is
called the $j^{\rm th}$ {\it Norton algebra} for $\Gamma$; see \cite{norton}. This algebra is commutative and nonassociative. It has no multiplicative 
identity in general. See \cite{jia, levNorton, Norton, nortonPT} for recent results on the Norton algebra.

\section{The function algebra and nondegenerate primitive idempotents}

Throughout this section $\Gamma=(X,\mathcal R)$ denotes a distance-regular graph with diameter $D\geq 3$. 
Recall the function
algebra $V$ from Definition \ref{def:circ}. We will show that a primitive idempotent $E$ of $\Gamma$ is nondegenerate if and only if
the eigenspace $EV$ generates  $V$ in the function algebra.
\medskip

\noindent 
For a subspace $U \subseteq V$ we describe the subalgebra of  $V$ generated by $U$. This subalgebra contains $\bf 1$ by Lemma \ref{lem:one}.
To see what else is in the subalgebra, we define a binary relation on $X$ called $U$-equivalence.
\begin{definition} \label{def:WE}
\rm Vertices $y, z$ in $ X$ are said to be {\it $U$-equivalent} whenever 
for all $u \in U$, the $y$-coordinate of $u$ is equal to the $z$-coordinate of $u$.
Observe that $U$-equivalence  is an equivalence relation.
\end{definition}

\begin{definition}\label{def:hat} \rm
For a subset $Y \subseteq X$ define $ \hat Y = \sum_{y \in Y} {\hat y}$.
\end{definition}

\begin{lemma} \label{lem:W} For a subspace $U \subseteq V$ the following are equal:
\begin{enumerate}
\item[\rm (i)] the subalgebra of the function algebra $V$ generated by $U$;
\item[\rm (ii)] ${\rm Span} \lbrace {\hat Y} | \mbox{ $Y$ is a $U$-equivalence class} \rbrace$.
\end{enumerate}
\end{lemma}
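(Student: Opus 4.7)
The plan is to prove both inclusions between the generated subalgebra, call it $\mathcal{A}$, and the span $\mathcal{B}$ described in (ii). The underlying picture is that $V$ is isomorphic to $\mathbb{R}^X$ (functions on $X$), and the $U$-equivalence classes are precisely the level sets of the common algebra generated by $U$, so $\mathcal{A}$ should consist of functions that are constant on each $U$-equivalence class.

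For the inclusion $\mathcal{A}\subseteq\mathcal{B}$, I would check that $\mathcal{B}$ is a subalgebra of $V$ containing $U$ (and $\mathbf{1}$). Closure under $\circ$ is immediate from $\hat Y\circ\hat{Y'}=\delta_{Y,Y'}\hat Y$ whenever $Y,Y'$ are distinct equivalence classes (disjoint supports). The identity $\mathbf{1}=\sum_{Y}\hat Y$ is visibly in $\mathcal{B}$ since the equivalence classes partition $X$. Finally, any $u\in U$ is by definition constant on each $U$-equivalence class $Y$, with some value $c_Y\in\mathbb{R}$; hence $u=\sum_Y c_Y\hat Y\in\mathcal{B}$. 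Since $\mathcal{A}$ is the smallest such subalgebra, $\mathcal{A}\subseteq\mathcal{B}$.

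For the reverse inclusion $\mathcal{B}\subseteq\mathcal{A}$, it suffices to show each $\hat Y\in\mathcal{A}$ for a $U$-equivalence class $Y$. The key construction is a Lagrange-interpolation style product. For each $U$-equivalence class $Y'\ne Y$, the definition of $U$-equivalence yields some $u_{Y'}\in U$ whose value $a$ on $Y$ differs from its value $b$ on $Y'$; then
\[
v_{Y'}=\frac{u_{Y'}-b\,\mathbf{1}}{a-b}\in\mathcal{A}
\]
has value $1$ on $Y$ and value $0$ on $Y'$. Taking the $\circ$-product $w=\prod_{Y'\ne Y}v_{Y'}$ (finite since $X$ is finite) gives an element of $\mathcal{A}$ whose value at every vertex in $Y$ is $1$ and whose value at every vertex in any other class $Y'$ is $0$ (the factor $v_{Y'}$ annihilates $Y'$). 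Therefore $w=\hat Y$, and $\hat Y\in\mathcal{A}$.

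I expect no real obstacle here; the only subtlety is remembering that the generated subalgebra contains $\mathbf{1}$ (needed to form the affine shift $u_{Y'}-b\mathbf{1}$) and handling the edge case where $Y$ is the only equivalence class, in which case the empty product is $\mathbf{1}=\hat X=\hat Y$, so the argument degenerates gracefully.
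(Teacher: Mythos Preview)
Your proof is correct and follows essentially the same approach as the paper: show the span is a subalgebra containing $U$ for one inclusion, and for the other, build $\hat Y$ as a Lagrange-style $\circ$-product of affine shifts $(u_{Y'}-b\,\mathbf{1})/(a-b)$, one for each class $Y'\ne Y$. Your handling of the edge case and the explicit verification that $\mathcal{B}$ is closed under $\circ$ are slightly more detailed than the paper's, but the argument is the same.
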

\begin{proof} ${\rm (i)} \subseteq {\rm (ii)}$: Note that ${\rm Span} \lbrace {\hat Y} | \mbox{ $Y$ is a $U$-equivalence class} \rbrace$
is a subalgebra of $V$ that contains $U$.
\\
\noindent ${\rm (i)} \supseteq {\rm (ii)}$: Let $Y$ denote a $U$-equivalence class. We show that $\hat Y$ is contained in the subalgebra of $V$
generated by $U$. List the $U$-equivalence classes $Y=Y_0, Y_1, \ldots, Y_n$. For $u\in U$ write
\begin{align*} 
u = \sum_{i=0}^n \alpha_i (u) {\hat Y}_i \qquad \qquad \alpha_i(u) \in \mathbb R.
\end{align*}
For $1 \leq i \leq n$ there exists $u_i \in U$ such that $\alpha_0 (u_i) \not=\alpha_i(u_i)$.
We have
\begin{align*}
{\hat Y} = \prod_{i=1}^n \frac{ u_i - \alpha_i(u_i) {\bf 1} }{ \alpha_0(u_i) - \alpha_i(u_i)},
\end{align*}
where the product is with respect to $\circ$. Therefore $\hat Y$ is contained in the subalgebra of $V$ generated by $U$.
\end{proof}

\begin{corollary}
\label{cor:genV}
 For a subspace $U \subseteq V$ the following are equivalent:
\begin{enumerate}
\item[\rm (i)] $U$ generates the function algebra $V$;
\item[\rm (ii)] each $U$-equivalence class has cardinality one.
\end{enumerate}
\end{corollary}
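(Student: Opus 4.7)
The plan is to derive this directly from Lemma \ref{lem:W}. By that lemma, the subalgebra of the function algebra $V$ generated by $U$ equals
\begin{align*}
\mathcal{A}(U) = {\rm Span}\{\hat{Y} \mid Y \text{ is a } U\text{-equivalence class}\}.
\end{align*}
So the condition (i) is equivalent to $\mathcal{A}(U) = V$, and I just need to show that the latter equality holds if and only if every $U$-equivalence class is a singleton.

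For the direction (ii) $\Rightarrow$ (i), assume each $U$-equivalence class has cardinality one. Then for each $y \in X$ the singleton $\{y\}$ is a $U$-equivalence class, so $\hat{y} = \widehat{\{y\}} \in \mathcal{A}(U)$. Since $\{\hat{y}\}_{y \in X}$ is a basis for $V$, this gives $\mathcal{A}(U) = V$.

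For the direction (i) $\Rightarrow$ (ii), I would argue by contrapositive. Suppose some $U$-equivalence class $Y$ has cardinality at least two, and pick distinct $y, z \in Y$. Every generator $\hat{Y'}$ of $\mathcal{A}(U)$ (with $Y'$ a $U$-equivalence class) either contains both $y$ and $z$ or neither, so every element $v \in \mathcal{A}(U)$ satisfies $v_y = v_z$. The vector $\hat{y} \in V$ does not satisfy this, so $\hat{y} \notin \mathcal{A}(U)$ and hence $\mathcal{A}(U) \neq V$.

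There is no real obstacle here; the corollary is a one-line consequence of Lemma \ref{lem:W} once one observes that the vectors $\hat{Y}$ for distinct equivalence classes have disjoint supports (and are thus linearly independent), so $\mathcal{A}(U)$ has dimension equal to the number of $U$-equivalence classes, which equals $\dim V = |X|$ precisely when every class is a singleton.
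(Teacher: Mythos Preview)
Your proof is correct and follows the same approach as the paper, which simply cites Lemma~\ref{lem:W}; you have merely spelled out the routine verification that the span of the vectors $\hat Y$ (over the $U$-equivalence classes) equals $V$ precisely when every class is a singleton.
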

\begin{proof} By Lemma \ref{lem:W}.
\end{proof}

\noindent We have been discussing a subspace $U$ of $V$. Next we consider the special case in which
$U=EV$, where $E$ is a primitive idempotent of $\Gamma$. 
Let us compute the $EV$-equivalence classes. We have
$EV = {\rm Span} \lbrace E{\hat w} \vert w \in X\rbrace$. Above Lemma \ref{lem:uij} we saw that for $y, z \in X$,
\begin{align}
{\mbox{ \rm $y$-coordinate of $E{\hat z}$}}
={\mbox{ \rm $z$-coordinate of $E{\hat y}$}}.
\label{eq:chain}
\end{align}

\begin{lemma} \label{lem:primP} Let $E$ denote a primitive idempotent of $\Gamma$. Then for $y, z \in X$
the following are equivalent:
\begin{enumerate}
\item[\rm (i)] $y, z$ are in the same $EV$-equivalence class;
\item[\rm (ii)]  $E{\hat y} = E {\hat z}$.
\end{enumerate}
\end{lemma}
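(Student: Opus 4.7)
The plan is to unwind the definition of $EV$-equivalence by testing it on a convenient spanning set for $EV$ and then invoking the symmetry relation \eqref{eq:chain}, which expresses the coordinates of $E\hat{w}$ in two different ways.

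First I would reduce the $EV$-equivalence condition to a spanning set. By Definition \ref{def:WE}, $y$ and $z$ are $EV$-equivalent precisely when the $y$- and $z$-coordinates of $u$ agree for every $u \in EV$. By linearity this only needs to be checked on a spanning set, and as noted just above the statement, $EV = \mathrm{Span}\{E\hat{w} \mid w \in X\}$. Hence $y$ and $z$ are $EV$-equivalent if and only if
\[
\langle \hat{y}, E\hat{w}\rangle = \langle \hat{z}, E\hat{w}\rangle \qquad \text{for all } w \in X.
\]

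Next I would apply \eqref{eq:chain}. Because $E$ is symmetric (it is a primitive idempotent, hence $E^t = E$), the $y$-coordinate of $E\hat{w}$ equals the $w$-coordinate of $E\hat{y}$, and similarly with $z$ in place of $y$. Substituting, the condition above becomes
\[
\langle \hat{w}, E\hat{y}\rangle = \langle \hat{w}, E\hat{z}\rangle \qquad \text{for all } w \in X,
\]
which is exactly the statement that $E\hat{y}$ and $E\hat{z}$ have identical coordinates in the orthonormal basis $\{\hat{w}\}_{w\in X}$, i.e.\ $E\hat{y} = E\hat{z}$. This chain of equivalences gives (i) $\Leftrightarrow$ (ii).

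There is really no obstacle here — the one place subtlety might hide is in justifying the reduction to the spanning set $\{E\hat{w}\}$, but this is immediate from bilinearity of coordinate extraction. The proof is essentially the observation that the bilinear form $\langle E\hat{y}, \hat{w}\rangle$, viewed as a function of the pair $(y,w)$, is symmetric in its two arguments, so constancy in $y$ on a set of vertices (with $w$ ranging over $X$) is equivalent to the vectors $E\hat{y}$ being equal on that set.
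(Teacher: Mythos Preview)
Your proof is correct and follows essentially the same approach as the paper: reduce condition (i) to the spanning set $\{E\hat w\}_{w\in X}$ of $EV$, then invoke the symmetry relation \eqref{eq:chain} to identify the resulting condition with (ii). The paper phrases things in terms of ``the $y$-coordinate of $E\hat w$'' rather than $\langle \hat y, E\hat w\rangle$, but the argument is the same.
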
 
\begin{proof} Condition (i) holds, if and only  if the $y$-coordinate of $E{\hat w}$ is equal to the $z$-coordinate of $E{\hat w}$ for all $w \in X$.
Condition (ii) holds, if and only if the $w$-coordinate of $E{\hat y}$ is equal to the $w$-coordinate of $E{\hat z}$ for all $w \in X$.
By these comments and \eqref{eq:chain}, the conditions (i), (ii) are equivalent.
\end{proof}

\begin{theorem} \label{thm:Egen} Let $E$ denote a primitive idempotent of $\Gamma$.
Then the following are equivalent:
\begin{enumerate}
\item[\rm (i)] $EV$ generates the function algebra $V$;
\item[\rm (ii)] $E$ is nondegenerate.
\end{enumerate}
\end{theorem}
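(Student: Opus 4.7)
The plan is to chain together the three preceding results that have done all the real work. Specifically, Corollary \ref{cor:genV} converts the generation condition into a statement about equivalence classes, Lemma \ref{lem:primP} converts the equivalence-class condition for the special case $U = EV$ into a statement about the vectors $E\hat y$, and Lemma \ref{def:nondeg} together with the definition of nondegeneracy converts the latter into the nondegeneracy condition.

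First I would set $E = E_j$ for some $0 \leq j \leq D$, and apply Corollary \ref{cor:genV} with $U = EV$: the eigenspace $EV$ generates the function algebra $V$ if and only if every $EV$-equivalence class has cardinality one. Next I would invoke Lemma \ref{lem:primP}, which characterizes the $EV$-equivalence classes: $y$ and $z$ lie in the same class if and only if $E\hat y = E\hat z$. Combining these two observations, $EV$ generates $V$ if and only if the vectors $\lbrace E\hat y \mid y \in X \rbrace$ are mutually distinct.

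Finally I would apply Lemma \ref{def:nondeg}, which says that the vectors $\lbrace E_j \hat y \mid y \in X \rbrace$ are mutually distinct if and only if $u_i(\theta_j) \neq 1$ for $1 \leq i \leq D$, and this is exactly the definition of $E_j$ being nondegenerate (as stated just after Lemma \ref{def:nondeg}). Chaining the equivalences yields the theorem.

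There is no real obstacle here; the work has already been done in the preceding lemmas, and the proof amounts to a three-step cross-reference. The only thing to be slightly careful about is to phrase the argument so that both directions are immediate from the equivalences already in hand (so one does not have to revisit the trigonometric argument behind Corollary \ref{cor:dep1}, or re-derive that the $y$- and $z$-coordinates of $E\hat w$ match the $w$-coordinates of $E\hat z$ and $E\hat y$ respectively, since that symmetry \eqref{eq:chain} was already noted in the lead-up to Lemma \ref{lem:primP}).
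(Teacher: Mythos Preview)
Your proof is correct and follows essentially the same route as the paper: chain Corollary~\ref{cor:genV}, Lemma~\ref{lem:primP}, and the definition of nondegeneracy (via Lemma~\ref{def:nondeg}) to obtain the equivalence. The paper presents exactly this chain of implications, just slightly more tersely.
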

\begin{proof} By Corollary \ref{cor:genV} and
Lemma \ref{lem:primP} we 
have the logical implications
\begin{align*}
&{\mbox{\rm $EV$ generates the function algebra $V$} }
\\
& \Leftrightarrow {\mbox{\rm each $EV$-equivalence class has cardinality one}}
\\
& \Leftrightarrow {\mbox {$ \lbrace E {\hat y}\rbrace_{y \in X}$ are mutually distinct}}
\\
& \Leftrightarrow {\mbox{ \rm $E$ is nondegenerate}}.
\end{align*}
\end{proof}

\section{The $Q$-polynomial property and Askey-Wilson duality}
\noindent In this section we discuss the $Q$-polynomial property and its connection to Askey-Wilson duality.
\medskip

\noindent
Throughout this section $\Gamma=(X,\mathcal R)$ denotes a distance-regular graph with diameter $D\geq 3$. Recall the primitive idempotents
$\lbrace E_i \rbrace_{i=0}^D$ of $\Gamma$.

\begin{definition} \label{def:qpoly} \rm The ordering $\lbrace E_i \rbrace_{i=0}^D$ is called {\it $Q$-polynomial} whenever the following hold
for $0 \leq h,i,j\leq D$:
\begin{enumerate}
\item[\rm (i)] $q^h_{i,j}=0$ if one of $h,i,j$ is greater than the sum of the other two;
\item[\rm (ii)]  $q^h_{i,j}\not=0$ if one of $h,i,j$ is equal to the sum of the other two.
\end{enumerate}
\end{definition}

\begin{definition}\rm We say that $\Gamma$ is {\it $Q$-polynomial} whenever there exists at least one $Q$-polynomial ordering
of the primitive idempotents.
\end{definition}

\noindent For the rest of this section, we  assume that the ordering $\lbrace E_i \rbrace_{i=0}^D$ is $Q$-polynomial. Define
\begin{align*}
c^*_i = q^i_{1,i-1} \;\; (1 \leq i \leq D), \qquad a^*_i = q^i_{1,i} \;\; (0 \leq i \leq D), \qquad  b^*_i = q^i_{1,i+1} \;\; (0 \leq i \leq D-1).
\end{align*}
\noindent  Note that $a^*_0=0$ and $c^*_1=1$. Moreover
\begin{align*}
c^*_i > 0 \quad (1 \leq i \leq D), \qquad \qquad b^*_i > 0 \quad (0 \leq i \leq D-1).
\end{align*}  From Lemma \ref{lem:details}(iv) we obtain
\begin{align*}
c^*_i + a^*_i + b^*_i = m_1 \qquad \qquad (0 \leq i \leq D),
\end{align*}
where $c^*_0=0$ and $b^*_D=0$. By Lemma \ref{lem:mq} we have $m_i c^*_i = m_{i-1} b^*_{i-1} $ for $1 \leq i \leq D$. Consequently
\begin{align} \label{eq:miform}
m_i = \frac{ b^*_0 b^*_1 \cdots b^*_{i-1}}{c^*_1 c^*_2 \cdots c^*_i} \qquad \qquad (0 \leq i \leq D).
\end{align}
\noindent For the rest of this section, fix $x \in X$ and write $T=T(x)$. Recall the bases $\lbrace A^*_i \rbrace_{i=0}^D$ and $\lbrace E^*_i\rbrace_{i=0}^D$
for $M^*$. We abbreviate $A^*=A^*_1$ and call this the {\it dual adjacency matrix} (with respect to $x$ and the given $Q$-polynomial structure).
\medskip

\noindent
Our next goal is to show that $A^*_i$ is a polynomial of degree $i$  in $A^*$ for $0 \leq i \leq D$.
\begin{lemma} \label{lem:As3t} We have
\begin{align*}
A^* A^*_i &= b^*_{i-1} A^*_{i-1} + a^*_i A^*_i + c^*_{i+1} A^*_{i+1} \qquad \qquad (1 \leq i \leq D-1),\\
A^* A^*_D &= b^*_{D-1} A^*_{D-1} + a^*_D A^*_D.
\end{align*}
\end{lemma}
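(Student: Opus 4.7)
The plan is to mimic the proof of Lemma \ref{lem:A3t}, using the dual distance matrix multiplication rule in place of the primal one, and then invoke the $Q$-polynomial hypothesis on the ordering $\{E_i\}_{i=0}^D$ to collapse the resulting sum to (at most) three terms.

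First I would set $j=1$ in Lemma \ref{lem:list}(iv), obtaining
\begin{align*}
A^* A^*_i \;=\; A^*_1 A^*_i \;=\; \sum_{h=0}^{D} q^h_{1,i}\, A^*_h \qquad (0 \leq i \leq D).
\end{align*}
By Definition \ref{def:qpoly}(i) applied to the triple $(h,1,i)$, the Krein parameter $q^h_{1,i}$ vanishes unless each of $h,1,i$ is at most the sum of the other two, i.e., unless $|i-1|\leq h\leq i+1$. Consequently for $1\leq i\leq D$ the sum reduces to
\begin{align*}
A^* A^*_i \;=\; q^{i-1}_{1,i}\,A^*_{i-1} \;+\; q^{i}_{1,i}\,A^*_i \;+\; q^{i+1}_{1,i}\,A^*_{i+1},
\end{align*}
with the convention that the last term is absent when $i=D$ (where $A^*_{D+1}$ does not exist, since $q^{D+1}_{1,D}$ would be a Krein parameter with out-of-range index, and indeed vanishes by the triangle-inequality clause).

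Finally I would match the surviving coefficients to the names $b^*_{i-1}$, $a^*_i$, $c^*_{i+1}$ using the definitions at the start of the section: $a^*_i=q^i_{1,i}$ is immediate; $c^*_{i+1}=q^{i+1}_{1,i}$ holds by the definition $c^*_j=q^j_{1,j-1}$ taken at $j=i+1$; and $b^*_{i-1}=q^{i-1}_{1,i}$ holds by the definition $b^*_j=q^j_{1,j+1}$ taken at $j=i-1$. For $i=D$ the same identifications give the second displayed formula.

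There is essentially no obstacle: the lemma is a formal transcription of Lemma \ref{lem:A3t} to the dual side, and all the work has been done in advance by installing the $Q$-polynomial ordering and naming the three-term parameters $a^*_i,b^*_i,c^*_i$ exactly parallel to $a_i,b_i,c_i$. The only mild subtlety worth flagging is the boundary case $i=D$, which should be handled by noting that $D+1$ lies outside the index range and that the corresponding Krein parameter would be forced to vanish by Definition \ref{def:qpoly}(i) in any case.
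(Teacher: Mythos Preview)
Your proof is correct and follows exactly the paper's approach: the paper's proof is the single line ``This is $A^*_i A^*_j = \sum_{h=0}^D q^h_{i,j} A^*_h$ with $j=1$,'' and you have simply unpacked that sentence. The only remark is that at $i=D$ there is no need to invoke Definition~\ref{def:qpoly}(i) for an out-of-range index: the sum in Lemma~\ref{lem:list}(iv) runs only over $0\le h\le D$, so no term $A^*_{D+1}$ ever appears.
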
 \begin{proof}
This is $A^*_i A^*_j = \sum_{h=0}^D q^h_{i,j} A^*_h$ with $j=1$.
\end{proof}

\begin{definition}\label{def:vis}\rm We define some polynomials $\lbrace v^*_i \rbrace_{i=0}^{D+1} $ in $\mathbb R\lbrack \lambda \rbrack$ such that
\begin{align*}
&v^*_0=1, \qquad \quad v^*_1 = \lambda, \\
\lambda v^*_i &= b^*_{i-1} v^*_{i-1} + a^*_i v^*_i + c^*_{i+1} v^*_{i+1} \qquad \qquad (1 \leq i \leq D),
\end{align*}
where $c^*_{D+1}=1$.
\end{definition}

\begin{lemma} \label{lem:vis} The following {\rm (i)--(iv)} hold:
\begin{enumerate}
\item[\rm (i)] ${\rm deg} \,v^*_i = i \quad (0 \leq i \leq D+1)$;
\item[\rm (ii)] the coefficient of $\lambda^i$ in $v^*_i$ is $(c^*_1 c^*_2 \cdots c^*_i)^{-1} \quad (0 \leq i \leq D+1)$;
\item[\rm (iii)] $v^*_i(A^*)=A^*_i \quad (0 \leq i \leq D)$;
\item[\rm (iv)] $v^*_{D+1}(A^*)=0$.
\end{enumerate}
\end{lemma}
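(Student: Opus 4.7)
The plan is to prove this lemma in direct analogy with Lemma \ref{lem:vi}, using Lemma \ref{lem:As3t} and Definition \ref{def:vis} as the dual counterparts of Lemma \ref{lem:A3t} and Definition \ref{def:vi}. Indeed, the formal setup is identical: Definition \ref{def:vis} is obtained from Definition \ref{def:vi} by starring all the intersection numbers, and Lemma \ref{lem:As3t} is the exact structural analogue of Lemma \ref{lem:A3t}. So the same four-step scheme should work verbatim.

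For parts (i) and (ii), I would proceed by induction on $i$, reading off the leading term from the three-term recurrence rewritten as $c^*_{i+1} v^*_{i+1} = \lambda v^*_i - a^*_i v^*_i - b^*_{i-1} v^*_{i-1}$. The base cases $v^*_0 = 1$ and $v^*_1 = \lambda$ give degrees $0$ and $1$ with leading coefficients $1$ and $1 = (c^*_1)^{-1}$. The inductive step increases the degree by exactly one (since the first term dominates on the right) and divides the leading coefficient by $c^*_{i+1}$, yielding $(c^*_1 c^*_2 \cdots c^*_{i+1})^{-1}$. Note that all $c^*_\ell$ appearing are nonzero (for $1 \leq \ell \leq D$ by the positivity stated after Definition \ref{def:qpoly}, and $c^*_{D+1} = 1$ by convention).

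For part (iii), I would again induct on $i$. The cases $i = 0, 1$ are immediate from $A^*_0 = I = v^*_0(A^*)$ and $v^*_1(A^*) = A^* = A^*_1$. For the inductive step on $1 \leq i \leq D-1$, evaluating the recurrence in Definition \ref{def:vis} at $\lambda = A^*$ gives
\begin{equation*}
c^*_{i+1} v^*_{i+1}(A^*) = A^* v^*_i(A^*) - a^*_i v^*_i(A^*) - b^*_{i-1} v^*_{i-1}(A^*).
\end{equation*}
Applying the induction hypothesis $v^*_{i-1}(A^*) = A^*_{i-1}$ and $v^*_i(A^*) = A^*_i$, and then invoking the first formula of Lemma \ref{lem:As3t}, the right-hand side equals $c^*_{i+1} A^*_{i+1}$. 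Dividing by $c^*_{i+1} > 0$ gives $v^*_{i+1}(A^*) = A^*_{i+1}$, completing the induction.

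For part (iv), I would take the $i = D$ case of Definition \ref{def:vis} (with $c^*_{D+1} = 1$) and evaluate at $A^*$, using (iii) already established:
\begin{equation*}
A^* A^*_D = b^*_{D-1} A^*_{D-1} + a^*_D A^*_D + v^*_{D+1}(A^*).
\end{equation*}
Comparing with the second formula in Lemma \ref{lem:As3t} forces $v^*_{D+1}(A^*) = 0$. There is no real obstacle here; the only thing to watch is that the bookkeeping with $c^*_{D+1} = 1$ is consistent so that the recursion at the endpoint $i = D$ matches the truncated recursion of Lemma \ref{lem:As3t}, which it does by construction.
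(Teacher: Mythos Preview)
Your proposal is correct and takes essentially the same approach as the paper: the paper's proof simply cites Definition \ref{def:vis} for (i), (ii) and says to compare Lemma \ref{lem:As3t} with Definition \ref{def:vis} for (iii), (iv), which is exactly the induction you have spelled out in detail.
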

\begin{proof} (i), (ii) By Definition \ref{def:vis}. \\
\noindent (iii), (iv) Compare Lemma \ref{lem:As3t} and Definition \ref{def:vis}.
\end{proof}

\begin{corollary} \label{lem:AsGen}  The following hold:
\begin{enumerate}
\item[\rm (i)]  the algebra $M^*$ is generated by $A^*$;
\item[\rm (ii)]  the minimal polynomial of $A^*$ is
$c^*_1 c^*_2 \cdots c^*_D v^*_{D+1}$.
\end{enumerate}
\end{corollary}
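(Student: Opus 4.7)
The plan is to dualize the proof of Corollary \ref{lem:AGen} step by step, using the dual analogues of the polynomials $v_i$ that we have at hand.

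For part (i), I would start from Lemma \ref{lem:vis}(iii), which says $A^*_i = v^*_i(A^*)$ for $0 \leq i \leq D$. Thus every $A^*_i$ lies in the subalgebra of ${\rm Mat}_X(\mathbb R)$ generated by $A^*$ (together with $I = A^*_0$). Since $\{A^*_i\}_{i=0}^D$ is a basis for $M^*$ by Lemma \ref{lem:Asbasis}(i), the full algebra $M^*$ is generated by $A^*$.

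For part (ii), let $p$ denote the monic minimal polynomial of $A^*$. By Lemma \ref{lem:vis}(iv) we have $v^*_{D+1}(A^*) = 0$, so $p$ divides $v^*_{D+1}$; in particular $\deg p \leq D+1$. For the reverse inequality, note that by Lemma \ref{lem:vis}(i),(ii) each polynomial $v^*_i$ has degree exactly $i$ with nonzero leading coefficient, so the list $\{v^*_0, v^*_1, \ldots, v^*_D\}$ is a basis for the space of polynomials in $\lambda$ of degree at most $D$. Evaluating at $A^*$ and using Lemma \ref{lem:vis}(iii), the matrices $\{I, A^*, (A^*)^2, \ldots, (A^*)^D\}$ and $\{A^*_0, A^*_1, \ldots, A^*_D\}$ span the same subspace of ${\rm Mat}_X(\mathbb R)$. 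Since the latter set is linearly independent (it is a basis of $M^*$), so is the former. Therefore $A^*$ satisfies no nonzero polynomial of degree at most $D$, and $\deg p = D+1$.

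Combining these bounds, $p$ is a scalar multiple of $v^*_{D+1}$. By Lemma \ref{lem:vis}(ii) with $i = D+1$ and $c^*_{D+1} = 1$, the leading coefficient of $v^*_{D+1}$ is $(c^*_1 c^*_2 \cdots c^*_D)^{-1}$, so the monic normalization is $p = c^*_1 c^*_2 \cdots c^*_D \, v^*_{D+1}$, as claimed. No step here is a real obstacle; the whole argument is a mechanical dualization of the proof of Corollary \ref{lem:AGen}, with Lemma \ref{lem:Asbasis}(i) and Lemma \ref{lem:vis} playing the roles that $\{A_i\}_{i=0}^D$ being a basis for $M$ and Lemma \ref{lem:vi} played in the original.
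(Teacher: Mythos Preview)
Your proof is correct and follows essentially the same approach as the paper, which simply cites Lemma~\ref{lem:vis} together with the fact that $\{A^*_i\}_{i=0}^D$ is a basis for $M^*$. You have spelled out the details that the paper leaves implicit, but the underlying argument is the same dualization of Corollary~\ref{lem:AGen}.
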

\begin{proof} By Lemma \ref{lem:vis} and since $\lbrace A^*_i\rbrace_{i=0}^D$ is a basis for $M^*$.
\end{proof}

\noindent Next we consider the eigenvalues of $A^*$. By Lemma \ref{lem:transAsEs}(i),
\begin{align*}
A^* = m_1 \sum_{i=0}^D u_i(\theta_1) E^*_i.
\end{align*}
Abbreviate
\begin{align}
\theta^*_i = m_1 u_i(\theta_1) \qquad \qquad (0 \leq i \leq D),
 \label{eq: dualE}
\end{align}
\noindent so that
\begin{align} \label{eq:ths}
 A^*= \sum_{i=0}^D \theta^*_i E^*_i.
 \end{align}

\begin{lemma} \label{lem:thsDist} The following {\rm (i)--(iii)} hold:
\begin{enumerate}
\item[\rm (i)] the polynomial $v^*_{D+1}$ has $D+1$ mutually distinct roots $\lbrace \theta^*_i \rbrace_{i=0}^D$;
\item[\rm (ii)] the eigenspaces of $A^*$ are $\lbrace E^*_iV \rbrace_{i=0}^D$;
\item[\rm (iii)] for $0 \leq i \leq D$, $\theta^*_i$ is the eigenvalue of $A^*$ for $E^*_iV$.
\end{enumerate}
\end{lemma}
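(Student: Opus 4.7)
The plan is to mirror the proof of Lemma \ref{lem:thDist} almost verbatim, with the roles of $A, v_{D+1}, \theta_i, E_i$ replaced by $A^*, v^*_{D+1}, \theta^*_i, E^*_i$. All the required machinery is already in place: Corollary \ref{lem:AsGen}(ii) identifies the minimal polynomial of $A^*$ as a scalar multiple of $v^*_{D+1}$; equation \eqref{eq:ths} expresses $A^*$ as a linear combination of the mutually orthogonal idempotents $\lbrace E^*_i\rbrace_{i=0}^D$; and $A^*$ is diagonalizable because it is literally a diagonal matrix (by \eqref{eq:As}).

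For part (i), I would argue as follows. Since $A^*$ is diagonalizable, its minimal polynomial splits into distinct linear factors. By Corollary \ref{lem:AsGen}(ii), this minimal polynomial is $c^*_1 c^*_2 \cdots c^*_D v^*_{D+1}$, which is a nonzero scalar multiple of $v^*_{D+1}$. Since $\deg v^*_{D+1} = D+1$, it follows that $v^*_{D+1}$ has exactly $D+1$ mutually distinct roots. To identify these roots with $\lbrace \theta^*_i\rbrace_{i=0}^D$, I would apply the relations $E^*_iE^*_j = \delta_{i,j}E^*_i$ and $\sum_i E^*_i = I$ to \eqref{eq:ths} to get
\begin{equation*}
v^*_{D+1}(A^*) = \sum_{i=0}^D v^*_{D+1}(\theta^*_i)\, E^*_i.
\end{equation*}
Since $v^*_{D+1}(A^*) = 0$ by Lemma \ref{lem:vis}(iv) and the $E^*_i$ are linearly independent, each $v^*_{D+1}(\theta^*_i) = 0$. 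Thus the $D+1$ scalars $\lbrace \theta^*_i\rbrace_{i=0}^D$ all lie among the $D+1$ distinct roots of $v^*_{D+1}$, forcing them to be exactly those roots (and in particular mutually distinct).

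For parts (ii) and (iii), the same equation \eqref{eq:ths} does the work: for any $v \in E^*_j V$, we have $A^*v = \sum_{i=0}^D \theta^*_i E^*_i v = \theta^*_j v$, so $E^*_jV$ is contained in the $\theta^*_j$-eigenspace of $A^*$. Because the scalars $\lbrace \theta^*_i\rbrace_{i=0}^D$ are mutually distinct by (i) and $V = \bigoplus_{i=0}^D E^*_iV$, a dimension count shows each containment is an equality, which proves (ii) and (iii) simultaneously.

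I do not expect any significant obstacle; the statement is the dual analogue of Lemma \ref{lem:thDist}, and every ingredient required has already been developed. The only point that needs a moment's care is verifying the mutual distinctness of $\lbrace \theta^*_i\rbrace_{i=0}^D$, which is handled by the minimal-polynomial argument in part (i) rather than being immediate from the definition \eqref{eq: dualE}.
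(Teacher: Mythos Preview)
Your proposal is correct and follows essentially the same approach as the paper's proof, which invokes Corollary \ref{lem:AsGen}(ii) together with the fact that $A^*$ is diagonal for part (i), and \eqref{eq:ths} for parts (ii) and (iii). You have simply spelled out in more detail what the paper leaves implicit.
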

\begin{proof} (i) The roots of $v^*_{D+1}$ are mutually distinct by Corollary \ref{lem:AsGen}(ii) and since $A^*$ is diagonal.
These roots are  $\lbrace \theta^*_i \rbrace_{i=0}^D$ by \eqref{eq:ths} .
\\
\noindent (ii), (iii) By  \eqref{eq:ths}.
\end{proof}

\begin{definition}\rm 
\label{def:dualEig} For $0 \leq i \leq D$ we call $\theta^*_i$ the {\it $i^{th}$ dual eigenvalue of $\Gamma$} (with respect to the given $Q$-polynomial
structure). 
\end{definition}

\noindent For convenience we adjust the normalization of the polynomials $v^*_i$.
\begin{definition}\rm \label{def:uis}
Define the polynomial
\begin{align} \label{eq:uis}
u^*_i = \frac{v^*_i}{m_i} \qquad \qquad (0 \leq i \leq D).
\end{align}
\end{definition}

\begin{lemma} We have
\begin{align*} 
& u^*_0 = 1, \qquad \quad u^*_1= m^{-1}_1 \lambda, \\
 \lambda u^*_i &= c^*_i u^*_{i-1} + a^*_i u^*_i + b^*_i u^*_{i+1} \qquad \qquad (1 \leq i \leq D-1),\\
 & \lambda u^*_D -c^*_D u^*_{D-1} - a^*_D u^*_D = m^{-1}_D v^*_{D+1}.
\end{align*}
\end{lemma}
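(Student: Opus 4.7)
The plan is to imitate the proof of Lemma \ref{lem:urec} verbatim in the dual setting, since all the ingredients are already in place: Definition \ref{def:vis} supplies the three-term recurrence for the $v^*_i$, Definition \ref{def:uis} gives $v^*_i = m_i u^*_i$, and the relation $m_i c^*_i = m_{i-1} b^*_{i-1}$ (equivalently \eqref{eq:miform}) derived just above in Section 11 allows the coefficients to be rewritten.

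More precisely, I would first check the initial conditions: $u^*_0 = v^*_0/m_0 = 1$ since $m_0 = 1$, and $u^*_1 = v^*_1/m_1 = m_1^{-1}\lambda$ directly from Definition \ref{def:vis}. For the middle recurrence, I would take the identity
\begin{align*}
\lambda v^*_i = b^*_{i-1} v^*_{i-1} + a^*_i v^*_i + c^*_{i+1} v^*_{i+1} \qquad (1 \leq i \leq D-1)
\end{align*}
from Definition \ref{def:vis}, substitute $v^*_i = m_i u^*_i$ throughout, and divide by $m_i$. This produces coefficients $b^*_{i-1} m_{i-1}/m_i$ on $u^*_{i-1}$ and $c^*_{i+1} m_{i+1}/m_i$ on $u^*_{i+1}$, which simplify to $c^*_i$ and $b^*_i$ respectively via $m_i c^*_i = m_{i-1} b^*_{i-1}$.

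For the boundary case $i = D$, I would start from the $i=D$ instance of the recurrence in Definition \ref{def:vis}, namely $\lambda v^*_D = b^*_{D-1} v^*_{D-1} + a^*_D v^*_D + c^*_{D+1} v^*_{D+1}$ with $c^*_{D+1} = 1$. Substituting $v^*_i = m_i u^*_i$ for $i = D-1, D$, dividing by $m_D$, applying $m_D c^*_D = m_{D-1} b^*_{D-1}$, and rearranging isolates $m_D^{-1} v^*_{D+1}$ on the right.

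There is no real obstacle; the argument is routine algebraic manipulation once one notes the dual analog of \eqref{eq:kiform} is already established as \eqref{eq:miform}. The only thing to be careful about is keeping the starred and unstarred objects straight.
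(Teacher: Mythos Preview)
Your proposal is correct and is exactly the approach the paper takes: evaluate the recurrence in Definition~\ref{def:vis} using $v^*_i = m_i u^*_i$ and \eqref{eq:miform}. The paper states this in a single line, and your expanded version fills in precisely the details that line summarizes.
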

\begin{proof} Evaluate the recurrence in Definition \ref{def:vis}
using $v^*_i = m_i u^*_i$ $(0 \leq i \leq D)$ and \eqref{eq:miform}.
\end{proof}

\noindent We just defined the polynomials  $\lbrace u^*_i\rbrace_{i=0}^D$. Next we explain how the polynomials $\lbrace u_i\rbrace_{i=0}^D$ and $\lbrace u^*_i\rbrace_{i=0}^D$
are related.

\begin{theorem}\label{thm:AWD} {\rm (See \cite[p.~14]{delsarte}.)} We have
\begin{align} \label{eq:AWD}
u_i(\theta_j) = u^*_j(\theta^*_i) \qquad \qquad (0 \leq i,j\leq D).
\end{align}
\end{theorem}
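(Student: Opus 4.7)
The plan is to compute the expansion of $A^*_j$ in the basis $\lbrace E^*_i\rbrace_{i=0}^D$ of $M^*$ in two different ways and compare coefficients.

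First I would recall from Lemma \ref{lem:transAsEs}(i) that
\begin{align*}
A^*_j = m_j \sum_{i=0}^D u_i(\theta_j)\, E^*_i,
\end{align*}
so the coefficient of $E^*_i$ in $A^*_j$ equals $m_j u_i(\theta_j)$. This identity holds with no $Q$-polynomial assumption.

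Next I would use the $Q$-polynomial hypothesis to compute the same coefficient a second way. By Definition \ref{def:uis} we have $v^*_j = m_j u^*_j$, so Lemma \ref{lem:vis}(iii) gives $A^*_j = v^*_j(A^*) = m_j u^*_j(A^*)$. Since $\lbrace E^*_i\rbrace_{i=0}^D$ are mutually orthogonal idempotents summing to $I$, the spectral decomposition \eqref{eq:ths} implies that for any polynomial $p \in \mathbb R\lbrack \lambda\rbrack$,
\begin{align*}
p(A^*) = \sum_{i=0}^D p(\theta^*_i)\, E^*_i.
\end{align*}
Applying this with $p = m_j u^*_j$ yields
\begin{align*}
A^*_j = m_j \sum_{i=0}^D u^*_j(\theta^*_i)\, E^*_i.
\end{align*}

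Finally, I would invoke the linear independence of $\lbrace E^*_i\rbrace_{i=0}^D$ in $M^*$ (Lemma \ref{lem:Asbasis}(i) and the basis status of the dual primitive idempotents) to match coefficients in the two expansions of $A^*_j$, obtaining $m_j u_i(\theta_j) = m_j u^*_j(\theta^*_i)$, and dividing by $m_j > 0$ gives \eqref{eq:AWD}. There is no real obstacle here: the work has already been done in Lemma \ref{lem:transAsEs}(i) (which encodes the action of $A^*_j$ on the subconstituents via cosines) and in Lemma \ref{lem:vis}(iii) (which recognizes $A^*_j$ as a polynomial in $A^*$); the Askey-Wilson duality is simply the statement that these two descriptions of $A^*_j$ must agree on each eigenspace of $A^*$.
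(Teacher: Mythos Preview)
Your proof is correct and takes essentially the same approach as the paper: both compute $A^*_j$ in two ways, once via Lemma~\ref{lem:transAsEs}(i) (equivalently Lemma~\ref{lem:AisEis}) and once via $A^*_j = v^*_j(A^*) = m_j u^*_j(A^*)$ from Lemma~\ref{lem:vis}(iii), then read off the $E^*_i$-coefficient. The paper does this by multiplying through by $E^*_i$ rather than invoking linear independence of the full basis, but the content is identical.
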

\begin{proof} Using Lemma  \ref{lem:AisEis} and Lemma \ref{lem:vis}(iii),
\begin{align*}
u_i(\theta_j)E^*_i = m^{-1}_j A^*_j E^*_i = m^{-1}_j v^*_j(A^*) E^*_i = u^*_j(A^*) E^*_i = u^*_j(\theta^*_i) E^*_i.
\end{align*}
The result follows.
\end{proof}

\noindent We will comment on Theorem \ref{thm:AWD} shortly.

\begin{lemma} For $0 \leq i \leq D$ we have
\begin{enumerate}
\item[\rm (i)] $v^*_i(\theta^*_0)=m_i$;
\item[\rm (ii)] $u^*_i(\theta^*_0)= 1$.
\end{enumerate}
\end{lemma}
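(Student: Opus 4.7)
The plan is to obtain this lemma as the formal dual of Lemma \ref{lem:norm}, using Askey-Wilson duality (Theorem \ref{thm:AWD}), which was just established. Since (i) and (ii) are related by the normalization $u^*_i = v^*_i/m_i$ from Definition \ref{def:uis}, it suffices to prove one of them; I will handle (ii) first and deduce (i).

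For (ii), I would apply the identity $u_i(\theta_j) = u^*_j(\theta^*_i)$ from Theorem \ref{thm:AWD} with the indices $i \leftrightarrow 0$ and $j \leftrightarrow i$, yielding
\begin{align*}
u^*_i(\theta^*_0) = u_0(\theta_i).
\end{align*}
From Definition \ref{def:ui} together with $v_0 = 1$ and $k_0 = 1$, the polynomial $u_0$ is identically $1$, so $u_0(\theta_i) = 1$. Thus $u^*_i(\theta^*_0) = 1$, proving (ii). Then (i) follows immediately from $v^*_i = m_i u^*_i$.

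As a sanity check, I would also mention the direct route: by Lemma \ref{lem:AisEis} we have $E^*_0 A^*_j = m_j u_0(\theta_j) E^*_0 = m_j E^*_0$ since $u_0 \equiv 1$. On the other hand, $A^*_j = v^*_j(A^*)$ by Lemma \ref{lem:vis}(iii), and $A^*$ acts on $E^*_0V$ as $\theta^*_0$ by Lemma \ref{lem:thsDist}(iii), so $A^*_j E^*_0 = v^*_j(\theta^*_0) E^*_0$. Comparing gives $v^*_j(\theta^*_0) = m_j$. There is no real obstacle here; the only thing to be careful about is that the index substitution in Askey-Wilson duality is performed correctly (the asymmetric roles of $u_i$ versus $u^*_j$ in the identity are easy to mix up), which is why I would verify it by also giving the direct argument.
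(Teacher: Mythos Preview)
Your proof is correct and follows essentially the same approach as the paper: prove (ii) via Askey-Wilson duality $u^*_i(\theta^*_0)=u_0(\theta_i)=1$, then deduce (i) from $v^*_i=m_i u^*_i$. The extra direct-route sanity check is a nice addition but not needed.
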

\begin{proof} By Theorem \ref{thm:AWD} we obtain $u^*_i(\theta^*_0)= u_0 (\theta_i) = 1$, giving (ii). By Definition  \ref{def:uis}
we get (i).
\end{proof}

\noindent In Propositions \ref{prop:uorth} and \ref{prop:vorth}
we obtained some orthogonality relations for the polynomials $\lbrace u_i\rbrace_{i=0}^D$ and $\lbrace v_i \rbrace_{i=0}^D$.
Next we give the analogous orthogonality relations for the polynomials $\lbrace u^*_i\rbrace_{i=0}^D$ and $\lbrace v^*_i \rbrace_{i=0}^D$.
\begin{proposition} \label{prop:usdual} We have 
\begin{align*}
\sum_{\ell=0}^D u^*_i(\theta^*_\ell) u^*_j(\theta^*_\ell) k_\ell &= \delta_{i,j} m^{-1}_i \vert X \vert \qquad \qquad (0 \leq i,j\leq D),
\\
\sum_{\ell=0}^D u^*_\ell(\theta^*_i) u^*_\ell(\theta^*_j) m_\ell &= \delta_{i,j} k^{-1}_i \vert X \vert \qquad \qquad (0 \leq i,j\leq D).
\end{align*}
\end{proposition}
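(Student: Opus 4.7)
The plan is to derive Proposition \ref{prop:usdual} from Proposition \ref{prop:uorth} by a direct application of Askey--Wilson duality (Theorem \ref{thm:AWD}). This is attractive because the dual orthogonality relations have the same shape as the primal ones, and the duality $u_i(\theta_j)=u^*_j(\theta^*_i)$ is precisely the tool to swap the roles of indices and evaluation points.

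First I would take the first relation of Proposition \ref{prop:uorth}, namely
\begin{align*}
\sum_{\ell=0}^D u_\ell(\theta_i)\,u_\ell(\theta_j)\,k_\ell = \delta_{i,j}\,m_i^{-1}\,|X| \qquad (0\le i,j\le D),
\end{align*}
and rewrite each factor using Theorem \ref{thm:AWD}: $u_\ell(\theta_i) = u^*_i(\theta^*_\ell)$ and $u_\ell(\theta_j) = u^*_j(\theta^*_\ell)$. Substituting these into the sum produces
\begin{align*}
\sum_{\ell=0}^D u^*_i(\theta^*_\ell)\,u^*_j(\theta^*_\ell)\,k_\ell = \delta_{i,j}\,m_i^{-1}\,|X|,
\end{align*}
which is exactly the first identity of Proposition \ref{prop:usdual}.

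For the second identity I would start from the second relation of Proposition \ref{prop:uorth},
\begin{align*}
\sum_{\ell=0}^D u_i(\theta_\ell)\,u_j(\theta_\ell)\,m_\ell = \delta_{i,j}\,k_i^{-1}\,|X|,
\end{align*}
and apply Theorem \ref{thm:AWD} in the form $u_i(\theta_\ell) = u^*_\ell(\theta^*_i)$ and $u_j(\theta_\ell) = u^*_\ell(\theta^*_j)$. The substitution yields
\begin{align*}
\sum_{\ell=0}^D u^*_\ell(\theta^*_i)\,u^*_\ell(\theta^*_j)\,m_\ell = \delta_{i,j}\,k_i^{-1}\,|X|,
\end{align*}
which is the desired second identity.

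Because everything reduces to a mechanical substitution, there is no real obstacle here; the only thing to check is that Theorem \ref{thm:AWD} is available for every index appearing in the sums, which it is since it holds uniformly for $0\le i,j\le D$. The whole argument is essentially: \emph{Proposition \ref{prop:uorth} $+$ Askey--Wilson duality $=$ Proposition \ref{prop:usdual}}.
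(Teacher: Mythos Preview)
Your proof is correct and matches the paper's approach exactly: the paper's proof reads ``Combine Proposition \ref{prop:uorth} and Theorem \ref{thm:AWD},'' which is precisely the substitution you carry out.
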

\begin{proof} 
Combine Proposition \ref{prop:uorth} and Theorem \ref{thm:AWD}.
\end{proof}

\begin{proposition} We have 
\begin{align*}
\sum_{\ell=0}^D v^*_i(\theta^*_\ell) v^*_j(\theta^*_\ell) k_\ell &= \delta_{i,j} m_i \vert X \vert \qquad \qquad (0 \leq i,j\leq D),
\\
\sum_{\ell=0}^D v^*_\ell(\theta^*_i) v^*_\ell(\theta^*_j) m^{-1}_\ell &= \delta_{i,j} k^{-1}_i \vert X \vert \qquad \qquad (0 \leq i,j\leq D).
\end{align*}
\end{proposition}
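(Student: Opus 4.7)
The plan is to derive these two orthogonality relations for $\lbrace v^*_i\rbrace_{i=0}^D$ from the orthogonality relations for $\lbrace u^*_i\rbrace_{i=0}^D$ given in Proposition \ref{prop:usdual}, exactly in parallel with how Proposition \ref{prop:vorth} was obtained from Proposition \ref{prop:uorth}. The bridge is Definition \ref{def:uis}, which gives $v^*_i = m_i u^*_i$ for $0 \leq i \leq D$.

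First I would handle the top identity. Substituting $v^*_i(\theta^*_\ell) = m_i u^*_i(\theta^*_\ell)$ and $v^*_j(\theta^*_\ell) = m_j u^*_j(\theta^*_\ell)$ pulls the factor $m_i m_j$ outside the sum, reducing the left-hand side to
\begin{equation*}
m_i m_j \sum_{\ell=0}^D u^*_i(\theta^*_\ell) u^*_j(\theta^*_\ell) k_\ell = m_i m_j \cdot \delta_{i,j} m^{-1}_i \vert X \vert = \delta_{i,j} m_i \vert X \vert,
\end{equation*}
where the middle step uses the first equation of Proposition \ref{prop:usdual}, and the final step collapses $m_j$ to $m_i$ using the Kronecker delta.

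Next I would do the bottom identity. Here $v^*_\ell(\theta^*_i) = m_\ell u^*_\ell(\theta^*_i)$ and $v^*_\ell(\theta^*_j) = m_\ell u^*_\ell(\theta^*_j)$, so the factor $m_\ell^2$ multiplies $m_\ell^{-1}$ and leaves just $m_\ell$ inside the sum, giving
\begin{equation*}
\sum_{\ell=0}^D v^*_\ell(\theta^*_i) v^*_\ell(\theta^*_j) m^{-1}_\ell = \sum_{\ell=0}^D u^*_\ell(\theta^*_i) u^*_\ell(\theta^*_j) m_\ell = \delta_{i,j} k^{-1}_i \vert X \vert,
\end{equation*}
by the second equation of Proposition \ref{prop:usdual}.

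There is no real obstacle here; the proof is a one-line substitution plus a bookkeeping check that the normalizing factors $m_i$, $m_j$, $m_\ell$ rearrange correctly. The only thing to watch is keeping track of which index is contracted against which normalization, but the symmetry between $u^*_i = v^*_i / m_i$ and $u_i = v_i / k_i$ (compare with the derivation of Proposition \ref{prop:vorth}) makes the calculation purely mechanical.
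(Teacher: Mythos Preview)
Your proposal is correct and matches the paper's approach exactly: the paper's proof simply says to combine Definition \ref{def:uis} (which gives $u^*_i = v^*_i/m_i$) with Proposition \ref{prop:usdual}. Your substitution and bookkeeping are precisely what that one-line instruction unpacks to.
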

\begin{proof} Combine Definition  \ref{def:uis} and Proposition \ref{prop:usdual}.
\end{proof}

\noindent Equation \eqref{eq:AWD}   is called {\it Askey-Wilson duality} \cite{qrac}
or
{\it Delsarte duality} \cite{leonard}. 
In \cite{leonard} D. Leonard classifies the orthogonal polynomial sequences that satisfy Askey-Wilson duality.
See  \cite[p.~260]{bannai} for a more comprehensive treatment. The classification shows that the orthogonal 
polynomial sequences that satisfy Askey-Wilson duality belong to the terminating branch of the Askey scheme;
this branch consists of the $q$-Racah polynomials \cite{aw} along with their limiting cases \cite{koekoek}. 
The theory of Leonard pairs \cite{hanson2,LS99,qrac,TLT:array,aa,notesLS} provides a modern approach to Askey-Wilson duality.

\section{The function algebra characterization of the $Q$-polynomial property}

\noindent In this section we characterize the $Q$-polynomial property in terms of the function algebra.
\medskip

\noindent 
Throughout this section $\Gamma=(X,\mathcal R)$ denotes a distance-regular graph with diameter $D\geq 3$. Recall the function algebra
$V$ and the primitive idempotents
$\lbrace E_i \rbrace_{i=0}^D$.  Recall that
\begin{align*}
E_0V = \mathbb R {\bf 1}, \qquad \qquad {\bf 1} = \sum_{y \in X} {\hat y}.
\end{align*}
\noindent We remind the reader that for subspaces $R, S$ of $V$,
 \begin{align*}
 R \circ S = {\rm Span}\lbrace r \circ s | r \in R, \; s \in S\rbrace.
 \end{align*}
 \noindent For an integer $n\geq 0$ define
 \begin{align}\label{eq:copies}
 R^{\circ n} = R \circ R \circ \cdots \circ R \qquad \quad ({\mbox{\rm $n$ copies}}).
 \end{align}
 We interpret $R^{\circ 0} = \mathbb R {\bf 1}$.
\medskip

\noindent The following result appears in \cite[Lecture~23]{suz}. It is also mentioned in \cite[p.~30]{dkt}.

\begin{theorem}\label{thm:faq}
The following are equivalent:
\begin{enumerate}
\item[\rm (i)] the ordering $\lbrace E_i \rbrace_{i=0}^D$ is $Q$-polynomial;
\item[\rm (ii)] $E_1$ is nondegenerate and 
\begin{align} \label{eq:1im}
E_1 V \circ E_i V \subseteq E_{i-1} V + E_iV+ E_{i+1}V \qquad \qquad (0 \leq i \leq D),
\end{align}
where $E_{-1}=0$ and $E_{D+1}=0$;
\item[\rm (iii)] for $0 \leq i \leq D$,
\begin{align} \label{eq:expand}
\sum_{\ell = 0}^i E_\ell V = \sum_{\ell=0}^i (E_1V)^{\circ \ell}.
\end{align}
\end{enumerate}
\end{theorem}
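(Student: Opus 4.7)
The plan is to establish the cyclic chain of implications (i) $\Rightarrow$ (ii) $\Rightarrow$ (iii) $\Rightarrow$ (i). The implication (i) $\Rightarrow$ (ii) is the quickest: the containment \eqref{eq:1im} is a direct consequence of Theorem~\ref{thm:fa} together with the triangle inequality built into Definition~\ref{def:qpoly}, which forces $\{h : q^h_{1,i} \neq 0\} \subseteq \{i-1, i, i+1\}$. For nondegeneracy of $E_1$, I would invoke Askey-Wilson duality (Theorem~\ref{thm:AWD}) to write $u_i(\theta_1) = u^*_1(\theta^*_i) = m_1^{-1}\theta^*_i$. Since $\theta^*_0 = m_1 u_0(\theta_1) = m_1$ and the dual eigenvalues $\theta^*_0, \dots, \theta^*_D$ are distinct by Lemma~\ref{lem:thsDist}(i), we get $u_i(\theta_1) \neq 1$ for $1 \leq i \leq D$, which is the defining condition in Lemma~\ref{def:nondeg}.

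For (ii) $\Rightarrow$ (iii), I would induct on $i$, writing $W_i = \sum_{\ell=0}^i (E_1V)^{\circ \ell}$. The inclusion $W_i \subseteq \sum_{\ell=0}^i E_\ell V$ follows by a routine induction using \eqref{eq:1im}. For the reverse inclusion the nondegeneracy of $E_1$ must be put to work: a collapse $W_{i-1} = W_i$ with $i \leq D$ propagates forward (since $(E_1V)^{\circ(i+1)} \subseteq E_1V \circ W_{i-1} \subseteq W_i$), forcing $V = \bigcup_j W_j = W_{i-1} \subseteq \sum_{\ell=0}^{i-1}E_\ell V$, which is absurd because $E_1V$ generates the function algebra $V$ by Theorem~\ref{thm:Egen} while the last sum is a proper subspace. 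Hence $W_{i-1} \subsetneq W_i$ for $1 \leq i \leq D$. Now assuming $W_{i-1} = \sum_{\ell=0}^{i-1}E_\ell V$, I apply the projection $E_i$ to $W_i = W_{i-1} + E_1V \circ W_{i-1}$: the first summand vanishes, and using Theorem~\ref{thm:fa} with \eqref{eq:1im} only the $\ell = i-1$ term of $W_{i-1}$ can project nontrivially onto $E_iV$, giving $E_i W_i = E_iV$ if $q^i_{1,i-1} \neq 0$ and zero otherwise; the latter would contradict $W_{i-1} \subsetneq W_i$, so $E_iV \subseteq W_i$ and the induction closes.

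For (iii) $\Rightarrow$ (i), note first that taking $i = D$ in \eqref{eq:expand} and invoking Theorem~\ref{thm:Egen} makes $E_1$ nondegenerate. The ``greater-than-the-sum'' vanishing $q^h_{i,j} = 0$ for $h > i+j$ follows by combining (iii) for indices $i$ and $j$: $E_iV \circ E_jV \subseteq \sum_{\ell=0}^i \sum_{m=0}^j (E_1V)^{\circ(\ell+m)} \subseteq \sum_{\ell=0}^{i+j} E_\ell V$, and Theorem~\ref{thm:fa} then forces $q^h_{i,j} = 0$ for $h > i+j$; the symmetries of Lemma~\ref{lem:mq} dispatch the cases $i > h+j$ and $j > h+i$. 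The ``equal-to-the-sum'' non-vanishing $q^{i+j}_{i,j} \neq 0$ (for $i+j \leq D$) is more delicate: expand $(E_1V)^{\circ(i+j)} = (E_1V)^{\circ i} \circ (E_1V)^{\circ j}$, decompose each factor according to the eigenspace direct sum, and observe that the containment $E_aV \circ E_bV \subseteq \sum_{\ell \leq a+b} E_\ell V$ makes the $E_{i+j}$-projection of the product depend only on the top-degree pieces $E_iV$ and $E_jV$. An auxiliary projection argument from (iii) shows $E_{i+j} \cdot (E_1V)^{\circ(i+j)} = E_{i+j}V$, so $E_iV \circ E_jV$ has a full $E_{i+j}V$-component, and Theorem~\ref{thm:fa} yields $q^{i+j}_{i,j} \neq 0$; Lemma~\ref{lem:mq} again propagates the conclusion to all permutations.

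The main obstacle I anticipate is the last step in (iii) $\Rightarrow$ (i): producing $q^{i+j}_{i,j} \neq 0$ requires careful eigenspace bookkeeping to isolate the top-degree contribution of $(E_1V)^{\circ i} \circ (E_1V)^{\circ j}$, and relies on the nontrivial surjectivity identity $E_{i+j} \cdot (E_1V)^{\circ(i+j)} = E_{i+j}V$, which itself has to be extracted from (iii) via a separate projection argument inside the direct sum $V = \oplus_\ell E_\ell V$.
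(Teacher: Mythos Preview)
Your argument is correct and follows essentially the same route as the paper: the same cyclic chain of implications, the same use of Theorem~\ref{thm:fa} and Theorem~\ref{thm:Egen}, and the same filtration $W_i=\sum_{\ell\le i}(E_1V)^{\circ\ell}$ in (ii)$\Rightarrow$(iii). The only notable difference is in the nonvanishing step of (iii)$\Rightarrow$(i), where you anticipate delicate bookkeeping to isolate the top-degree piece; the paper dispatches this more cleanly by observing that the \emph{equality} $\bigl(\sum_{\ell\le i}E_\ell V\bigr)\circ\bigl(\sum_{\ell\le j}E_\ell V\bigr)=\sum_{\ell\le i+j}E_\ell V$ (obtained from \eqref{eq:expand}) forces the existence of some $r\le i$, $s\le j$ with $q^{i+j}_{r,s}\neq 0$, and then the already-established vanishing $q^{i+j}_{r,s}=0$ for $r+s<i+j$ pins down $r=i$, $s=j$ immediately---no separate surjectivity lemma is needed.
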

\begin{proof} ${\rm (i)} \Rightarrow {\rm (ii)}$ For the given $Q$-polynomial structure, the dual eigenvalues are  $\theta^*_i = m_1 u_i(\theta_1)$ $(0 \leq i \leq D)$.
The scalars $\lbrace \theta^*_i \rbrace_{i=0}^D$ are mutually distinct, so $\theta^*_i \not=\theta^*_0$ for $1 \leq i \leq D$.
Therefore $E_1$ is nondegenerate. By Theorem \ref{thm:fa} we obtain
\begin{align*}
E_1V \circ E_i V = \sum_{\stackrel{ \scriptstyle 0 \leq h \leq D }{ \scriptstyle q^h_{1,i} \not=0}} E_hV \qquad \qquad (0 \leq i \leq D).
\end{align*}
The ordering $\lbrace E_i \rbrace_{i=0}^D$ is $Q$-polynomial, so 
$q^h_{1,i} = 0$ if $\vert h-i\vert >1$ $(0 \leq h,i\leq D)$. By these comments we obtain \eqref{eq:1im}.
\\
\noindent ${\rm (ii)} \Rightarrow {\rm (iii)}$ For $0 \leq i \leq D$ define  $P_i = \sum_{\ell=0}^i (E_1V)^{\circ \ell}$.
By Theorem \ref{thm:fa}, there exists a subset $S_i \subseteq \lbrace 0,1,\ldots, D\rbrace$ such that $P_i = \sum_{h \in S_i} E_hV$.
We show that $S_i = \lbrace 0,1,\ldots, i\rbrace$ for $0 \leq i \leq D$. By construction $S_0 = \lbrace 0\rbrace$ and $S_1 = \lbrace 0,1\rbrace$.
Using \eqref{eq:1im} we obtain $S_i \subseteq \lbrace 0,1,\ldots, i\rbrace$ for $0 \leq i \leq D$. By construction $S_{i-1} \subseteq S_i$
for $1 \leq i \leq D$. For $1 \leq i \leq D$ we have $S_{i-1}\not=S_i$; otherwise $P_{i-1} = P_i$, which forces $i\geq 2$ and $E_1 V \circ P_{i-1} \subseteq P_{i-1}$, which forces
$P_{i-1}=V$ by Theorem \ref{thm:Egen}, which forces  $S_{i-1}= \lbrace 0,1,\ldots, D\rbrace$, which contradicts $S_{i-1} \subseteq 
\lbrace 0,1,\ldots, i-1\rbrace\subseteq \lbrace 0,1,\ldots, D-1\rbrace$. By these comments $S_i = \lbrace 0,1,\ldots, i\rbrace$ for $0 \leq i \leq D$.
\\
\noindent ${\rm (iii)} \Rightarrow {\rm (i)}$ Let $0 \leq i \leq D$ and $0 \leq j \leq D-i$. We show that $q^{i+j}_{i,j} \not=0$ and
$q^h_{i,j}=0$ for $i+j<h\leq D$.
By \eqref{eq:expand},
\begin{align*}
(E_0V + E_1V+\cdots + E_iV)\circ (E_0V+E_1V+ \cdots + E_jV) = E_0V+E_1V+ \cdots +E_{i+j}V.
\end{align*}
By this and Theorem \ref{thm:fa}, we obtain $q^h_{i,j}=0$ for $i+j <h\leq D$. Also by Theorem \ref{thm:fa}, there exists
$0 \leq r \leq i$ and $0 \leq s \leq j$ such that $q^{i+j}_{r,s} \not=0$. By our above comments $i+j \leq r+s$. By construction
$0 \leq r \leq i$ and $0 \leq s \leq j$, so 
 $r=i$ and $s=j$. Therefore
$q^{i+j}_{i,j}\not=0$.
\end{proof}

\section{Irreducible $T$-modules and tridiagonal pairs}

Throughout this section, $\Gamma=(X, \mathcal R)$ denotes a distance-regular graph with diameter $D\geq 3$. We assume that
$\Gamma$ is $Q$-polynomial with respect to the ordering $\lbrace E_i \rbrace_{i=0}^D$ of the primitive idempotents. Fix $x \in X$
and write $T=T(x)$. We will show that $A$, $A^*$ act on each irreducible $T$-module as a tridiagonal pair.

\begin{lemma}\label{lem:TAA} The algebra $T$ is generated by $A, A^*$.
\end{lemma}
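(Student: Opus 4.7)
The plan is to assemble the statement from three facts already established in the excerpt. By definition, $T = T(x)$ is the subalgebra of $\mathrm{Mat}_X(\mathbb{R})$ generated by $M \cup M^*$, so it suffices to show that both $M$ and $M^*$ lie in the subalgebra of $\mathrm{Mat}_X(\mathbb{R})$ generated by $A$ and $A^*$.

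First I would invoke Corollary \ref{lem:AGen}(i), which says that $M$ is generated by $A$ alone; hence $M$ is contained in $\langle A, A^*\rangle$, the subalgebra generated by $A$ and $A^*$. Next I would invoke Corollary \ref{lem:AsGen}(i), which (under the $Q$-polynomial assumption in force throughout this section) asserts that $M^*$ is generated by $A^*$ alone; hence $M^* \subseteq \langle A, A^*\rangle$ as well. Combining the two inclusions, $M \cup M^* \subseteq \langle A, A^*\rangle$, so the algebra generated by $M \cup M^*$, which is $T$ by its very definition, is contained in $\langle A, A^*\rangle$. The reverse containment $\langle A, A^*\rangle \subseteq T$ is immediate since $A = A_1 \in M \subseteq T$ and $A^* = A^*_1 \in M^* \subseteq T$.

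There is no real obstacle here; the lemma is essentially a bookkeeping corollary of the earlier generation results, with the role of the $Q$-polynomial hypothesis being precisely to ensure that the dual adjacency matrix $A^*$ (which depends on the chosen $Q$-polynomial ordering) generates all of $M^*$. Without the $Q$-polynomial assumption one would still have $M$ generated by $A$, but one could not single out a distinguished $A^* \in M^*$ playing the analogous role.
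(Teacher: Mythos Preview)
Your proof is correct and follows exactly the same approach as the paper: $T$ is generated by $M$ and $M^*$, and by Corollaries~\ref{lem:AGen}(i) and~\ref{lem:AsGen}(i) these are generated by $A$ and $A^*$ respectively. The paper's own proof is just a one-line version of what you wrote.
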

\begin{proof} The algebra $T$ is generated by  $M$ and $M^*$. Moreover $M$ is generated by $A$, and $M^*$ is generated by $A^*$.
\end{proof}

\noindent We review a few points:
\begin{enumerate}
\item[$\bullet$] the eigenspaces of $A$ are $\lbrace E_iV \rbrace_{i=0}^D$;
\item[$\bullet$] for $0 \leq i \leq D$, $\theta_i$ is the eigenvalue of $A$ for $E_iV$;
\item[$\bullet$] the eigenspaces of $A^*$ are $\lbrace E^*_iV \rbrace_{i=0}^D$;
\item[$\bullet$] for $0 \leq i \leq D$, $\theta^*_i$ is the eigenvalue of $A^*$ for $E^*_iV$;
\item[$\bullet$] for $0 \leq i \leq D$ we have
\begin{align*}
 A E^*_iV \subseteq E^*_{i-1}V + E^*_iV+E^*_{i+1}V,
 \end{align*}
 where $E^*_{-1}=0$ and $E^*_{D+1}=0$.
\end{enumerate}

 \begin{lemma} \label{lem:3TT} For $0 \leq i \leq D$ we have
\begin{align*}
 A^* E_iV \subseteq E_{i-1}V + E_iV+E_{i+1}V,
 \end{align*}
 where $E_{-1}=0$ and $E_{D+1}=0$.
\end{lemma}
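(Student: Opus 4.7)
The plan is to deduce this directly from the triple product relations already established, combined with the $Q$-polynomial assumption on the ordering $\lbrace E_i\rbrace_{i=0}^D$. Recall that $A^* = A^*_1$ by definition, so the containment we want is exactly the right-hand containment in Proposition \ref{prop:tpr} specialized to $i=1$.

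First I would invoke Proposition \ref{prop:tpr} with $i = 1$, which gives, for each $0 \leq j \leq D$,
\begin{align*}
A^* E_j V = A^*_1 E_j V \subseteq \sum_{\stackrel{\scriptstyle 0 \leq h \leq D}{\scriptstyle q^h_{1,j} \neq 0}} E_h V.
\end{align*}
Then I would use the $Q$-polynomial hypothesis (Definition \ref{def:qpoly}(i)) applied to the triple $(h,1,j)$: if $q^h_{1,j} \neq 0$, then no one of $h,1,j$ is greater than the sum of the other two, which forces $|h - j| \leq 1$, i.e.\ $h \in \{j-1, j, j+1\}$. Hence the sum on the right collapses into $E_{j-1} V + E_j V + E_{j+1} V$, with the convention $E_{-1} = 0$ and $E_{D+1} = 0$, giving the desired inclusion.

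This argument is essentially one line once Proposition \ref{prop:tpr} is in hand, and there is no real obstacle: the combinatorial content is entirely packaged into the vanishing pattern of the Krein parameters $q^h_{1,j}$ provided by the $Q$-polynomial structure, and the $T$-module/geometric content is packaged into the triple product relations. The only thing to be careful about is the boundary cases $j = 0$ and $j = D$, which are handled by the conventions $E_{-1} = 0$ and $E_{D+1} = 0$ just as on the $A$/$E^*_i V$ side listed in the bullet points preceding the lemma.
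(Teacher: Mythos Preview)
Your proof is correct and is essentially the same as the paper's: both invoke the right-hand containment of Proposition~\ref{prop:tpr} (equation~\eqref{eq:tpr}) with $i=1$ and then use Definition~\ref{def:qpoly} to restrict the surviving terms to $h\in\{j-1,j,j+1\}$.
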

\begin{proof} By the containment on the right in  \eqref{eq:tpr},
together with Definition \ref{def:qpoly}.
\end{proof}

\noindent  In Section 2 we mentioned that the standard module $V$ is an orthogonal direct sum of irreducible $T$-modules.
Let $W$ denote an irreducible $T$-module.
Observe that $W$ is an orthogonal direct sum of the nonzero subpaces among $\lbrace E^*_iW\rbrace_{i=0}^D$. Similarly
$W$ is an orthogonal direct sum 
 of the nonzero subpaces among $\lbrace E_iW\rbrace_{i=0}^D$. 
 
 \begin{lemma} \label{lem:WTT} Let $W$ denote an irreducible $T$-module. Then for $0 \leq i \leq D$ we have
\begin{align*}
&A E^*_iW \subseteq E^*_{i-1}W + E^*_iW + E^*_{i+1}W,
\\
&A^* E_i W \subseteq E_{i-1}W + E_iW+E_{i+1}W.
\end{align*}
\end{lemma}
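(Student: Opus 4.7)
The plan is to reduce the containments for $W$ to the already-known containments on the full standard module $V$, using the resolutions of the identity $I=\sum_j E^*_j = \sum_j E_j$ together with the fact that $W$ is $T$-invariant.

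First I would record the preliminary observation that, because $E^*_j\in T$ and $E_j\in T$ for $0\le j\le D$ and $W$ is a $T$-module, we have $E^*_jW\subseteq W$ and $E_jW\subseteq W$. Next I would extract from the displayed containment $AE^*_iV\subseteq E^*_{i-1}V+E^*_iV+E^*_{i+1}V$ (stated in Section 2) the stronger matrix-level fact that
\begin{equation*}
E^*_j A E^*_i = 0 \qquad \text{whenever } |j-i|>1.
\end{equation*}
Indeed, applying $E^*_j$ to the ambient containment gives $E^*_j A E^*_i V\subseteq E^*_j(E^*_{i-1}V+E^*_iV+E^*_{i+1}V)$, which is $0$ whenever $|j-i|>1$ by the orthogonality $E^*_jE^*_\ell=\delta_{j,\ell}E^*_j$; since $V$ is the whole standard module, the matrix itself vanishes. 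The analogous claim for the dual, namely $E_jA^*E_i=0$ when $|j-i|>1$, follows in exactly the same way from Lemma \ref{lem:3TT}.

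With these vanishings in hand, the two containments for $W$ fall out immediately. Inserting $\sum_{j=0}^D E^*_j=I$ on the left of $AE^*_iW$,
\begin{equation*}
AE^*_iW = \sum_{j=0}^D E^*_j A E^*_i W = \sum_{|j-i|\le 1} E^*_j A E^*_i W \subseteq \sum_{|j-i|\le 1} E^*_j W,
\end{equation*}
where in the last step I used that $E^*_jAE^*_i\in T$ so $E^*_jAE^*_iW\subseteq W$, and then that $E^*_jW\subseteq E^*_jW$ trivially. This gives the first containment. The second is obtained in the mirror way by inserting $\sum_j E_j=I$ and invoking the dual vanishing together with $E_jW\subseteq W$.

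There is really no obstacle here: irreducibility of $W$ is not used, only that $W$ is a $T$-module, and every step is a direct consequence of results already in the paper. The only point requiring a little care is the passage from the set-theoretic containment $AE^*_iV\subseteq E^*_{i-1}V+E^*_iV+E^*_{i+1}V$ to the matrix identity $E^*_jAE^*_i=0$ for $|j-i|>1$; once that is noted, the rest is a one-line projection argument.
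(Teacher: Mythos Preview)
Your proof is correct and follows essentially the same approach as the paper's (terse) proof, which simply cites Lemma~\ref{lem:3TT} and the analogous containment $AE^*_iV\subseteq E^*_{i-1}V+E^*_iV+E^*_{i+1}V$; you have just made the projection argument explicit. Your observation that irreducibility is not actually needed is also correct.
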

\begin{proof} By Lemma \ref{lem:3TT}
and the comment above it.
\end{proof}

 \noindent Let $W$ denote an irreducible $T$-module.
By the {\it endpoint} of $W$ we mean
$\mbox{min}\lbrace i \vert 0\leq i \leq D, \; E^*_iW\not=0\rbrace $.
By the {\it diameter} of $W$ we mean
$ |\lbrace i \vert 0 \leq i \leq D,\; E^*_iW\not=0 \rbrace |-1 $.
By the {\it dual endpoint} of $W$ we mean
$\mbox{min}\lbrace i \vert 0\leq i \leq D, \; E_iW\not=0\rbrace $.
By
the {\it dual diameter} of $W$ we mean
$ |\lbrace i \vert 0 \leq i \leq D,\; E_iW\not=0 \rbrace |-1 $.

\begin{lemma} \label{lem:Wfacts}
{\rm \cite[Lemma 3.4, Lemma 3.9]{terwSub1}}
\label{lem:basic}
Let $W$ denote an irreducible $T$-module with endpoint $\rho$ and diameter $d$.
Then $\rho,d$ are nonnegative integers such that $\rho+d\leq D$.
Moreover the following {\rm (i), (ii)}  hold:
\begin{enumerate}
\item[\rm (i)] 
$E^*_iW \not=0$ if and only if $\rho \leq i \leq \rho+d$
$ \quad (0 \leq i \leq D)$;
\item[\rm (ii)]
$W = \sum_{i=\rho}^{\rho+d} E^*_{i}W \qquad (\mbox{orthogonal direct sum}). $
\end{enumerate}
\end{lemma}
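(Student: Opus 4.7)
The plan is to reduce everything to a single contiguity claim about the set $S = \{i \in \{0,1,\ldots,D\} : E^*_i W \ne 0\}$. That $\rho$ and $d$ are nonnegative integers is immediate from their definitions, and $\rho+d \le D$ follows because $S$ is a subset of $\{0,1,\ldots,D\}$ of cardinality $d+1$ with minimum element $\rho$, forcing $d+1 \le D - \rho + 1$. Moreover, part (ii) is a direct consequence of (i) combined with the observation (recorded just above the lemma) that $W$ decomposes as the orthogonal direct sum of the nonzero subspaces among $\{E^*_i W\}_{i=0}^D$. So the entire lemma reduces to part (i), namely the claim $S = \{\rho,\rho+1,\ldots,\rho+d\}$, or equivalently, that $S$ is an interval.

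To establish this I would argue by contradiction. Suppose $S$ has a gap, i.e., there exists $a \in S$ with $a+1 \notin S$ and some $c \in S$ with $c > a+1$. Set
\[
W' = \sum_{i=0}^{a} E^*_i W.
\]
The goal is to exhibit $W'$ as a nonzero proper $T$-submodule of $W$, contradicting the irreducibility of $W$. Nonzeroness and properness are easy: since $\rho \le a$ we have $0 \ne E^*_\rho W \subseteq W'$, while $E^*_c W$ is a nonzero subspace of $W$ that is orthogonal to $W'$ (as the $E^*_i W$'s lie in distinct eigenspaces of $A^*$), so $W' \subsetneq W$.

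The main (small) obstacle is verifying $T$-invariance of $W'$. Since $W'$ is a sum of eigenspaces of $A^*$ it is automatically $M^*$-invariant. For $A$-invariance I would invoke Lemma \ref{lem:WTT}, which gives
\[
A E^*_i W \subseteq E^*_{i-1}W + E^*_i W + E^*_{i+1}W;
\]
summing over $0 \le i \le a$ yields $AW' \subseteq W' + E^*_{a+1}W$, and the second summand vanishes by the choice of $a$, so $AW' \subseteq W'$. Combined with the fact that $T$ is generated by $A$ and $A^*$ (Lemma \ref{lem:TAA}), this makes $W'$ a $T$-submodule, giving the desired contradiction. Once $S$ is known to be an interval, the identifications $\min S = \rho$ and $|S| = d+1$ force $S = \{\rho,\rho+1,\ldots,\rho+d\}$, and (ii) then drops out from the orthogonal decomposition recalled above.
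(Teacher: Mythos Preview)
Your proposal is correct and follows essentially the same argument as the paper: both assume a gap in the support set $S$, form the partial sum $W'$ of the $E^*_iW$'s below the gap, and use Lemma~\ref{lem:WTT} together with the fact that $T$ is generated by $A,A^*$ to see that $W'$ is a nonzero $T$-submodule, contradicting irreducibility. The only cosmetic difference is that the paper phrases the contradiction as ``$W'=W$, contradicting the definition of $d$,'' whereas you exhibit $E^*_cW$ to show properness directly; these are equivalent.
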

\begin{proof} (i) By construction $E^*_\rho W\not=0$ and $E^*_iW=0$ for $0 \leq i < \rho$.
Suppose there exists an integer $i$ $(\rho < i \leq \rho+d)$ such that $E^*_iW=0$. Define $W'=E^*_\rho W + E^*_{\rho+1} W + \cdots + E^*_{i-1}W$.
By construction $0 \not= W' \subseteq  W$. Also by construction, $A^* W'\subseteq W'$. By Lemma \ref{lem:WTT} and $E^*_iW=0$ we obtain
$A W' \subseteq W'$. By these comments $W'$ is a $T$-module. We have $W=W'$ since the $T$-module $W$ is irreducible. This contradicts the
fact that $d$ is the diameter of $W$. We conclude that $E^*_iW\not=0$ for $\rho \leq i \leq \rho+d$. By the definition of the diameter $d$ we have
 $E^*_iW=0$ for $\rho+d < i \leq D$.
 \\
 \noindent (ii) By (i) and the comments above Lemma  \ref{lem:WTT}.
 \end{proof}

\begin{lemma} \label{lem:Wfacts2}
{\rm \cite[Lemma 3.4, Lemma 3.12]{terwSub1}}
\label{lem:basic2}
Let $W$ denote an irreducible $T$-module with 
dual endpoint $\tau$ and dual diameter $\delta$.
Then $\tau, \delta $ are nonnegative integers such that $\tau+\delta \leq D$. Moreover the following {\rm (i), (ii)} hold:
\begin{enumerate}
\item[\rm (i)] 
$E_iW \not=0$ if and only if $\tau \leq i \leq \tau+\delta$
$ \quad (0 \leq i \leq D)$;
\item[\rm (ii)]
$W = \sum_{i=\tau}^{\tau+\delta} E_{i}W \qquad (\mbox{orthogonal direct sum}). $
\end{enumerate}
\end{lemma}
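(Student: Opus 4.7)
The plan is to mirror the proof of Lemma~\ref{lem:Wfacts}, swapping the roles of $A$ with $A^*$ and $E^*_i$ with $E_i$ throughout, and invoking the second containment in Lemma~\ref{lem:WTT} in place of the first. Specifically, the strategy rests on the fact that $W$ decomposes as the orthogonal direct sum of the nonzero subspaces among $\{E_i W\}_{i=0}^{D}$ (because $A^*$ is symmetric and its eigenspaces are $\{E^*_i V\}$ — wait, actually $A$ is symmetric and its eigenspaces are $\{E_i V\}$, so $E_i W$ are pairwise orthogonal eigenspaces of $A$ on $W$). This gives the orthogonal direct sum decomposition immediately once (i) is known, so the whole content is to prove (i).

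First I would record the easy direction and bookkeeping: by the definition of dual endpoint, $E_\tau W \neq 0$ and $E_i W = 0$ for $0 \leq i < \tau$, so in particular $\tau$ is a well-defined nonnegative integer with $\tau \leq D$. By the definition of dual diameter, $\delta$ is a nonnegative integer, and the index $\tau + \delta$ is the largest $i \in \{0, \ldots, D\}$ with $E_i W \neq 0$, so $\tau + \delta \leq D$. It remains only to show that $E_i W \neq 0$ for every $i$ in the intermediate range $\tau < i < \tau + \delta$.

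Second, for the main step, I would argue by contradiction: suppose there exists an integer $i$ with $\tau < i \leq \tau + \delta$ such that $E_i W = 0$. Define
\begin{equation*}
W' = E_\tau W + E_{\tau+1} W + \cdots + E_{i-1} W.
\end{equation*}
By construction $W' \neq 0$ (since $E_\tau W \neq 0$) and $W' \subseteq W$. Since $W'$ is a sum of eigenspaces of $A$ (restricted from $V$ to $W$), we have $A W' \subseteq W'$. The key point is to check that $A^* W' \subseteq W'$: by Lemma~\ref{lem:WTT}, $A^* E_j W \subseteq E_{j-1} W + E_j W + E_{j+1} W$ for each $j$, and for $j \leq i-1$ the only possibly problematic summand is $E_i W$, which is $0$ by assumption. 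Hence $A^* W' \subseteq W'$, so $W'$ is a $T$-module by Lemma~\ref{lem:TAA}. By irreducibility of $W$, we get $W' = W$, which contradicts the existence of $E_{\tau+\delta} W \neq 0$ with $\tau + \delta \geq i$. This establishes (i).

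Finally, (ii) follows from (i) together with the fact that $\{E_i V\}_{i=0}^D$ are pairwise orthogonal (as commuting symmetric idempotents with $E_i E_j = \delta_{i,j} E_i$), so their restrictions $\{E_i W\}$ are pairwise orthogonal subspaces of $W$; the sum equals $W$ because $\sum_{i=0}^D E_i = I$. I do not anticipate any real obstacle; the only thing that requires thought is verifying $A^* W' \subseteq W'$, and that is handled cleanly by the vanishing $E_i W = 0$ cutting off the upward leak from Lemma~\ref{lem:WTT}.
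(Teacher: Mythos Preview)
Your proposal is correct and follows exactly the approach the paper intends: its proof reads simply ``Similar to the proof of Lemma~\ref{lem:Wfacts},'' and you have carried out precisely that dualization, swapping $A\leftrightarrow A^*$, $E^*_i\leftrightarrow E_i$, and invoking the second containment of Lemma~\ref{lem:WTT}. One small wording point: your early claim that ``$\tau+\delta$ is the largest $i$ with $E_iW\neq 0$'' presupposes what you are proving, so in the contradiction step phrase it instead as ``$W'=W$ forces at most $i-\tau\leq\delta$ nonzero pieces, contradicting the definition of $\delta$''---the argument itself is unaffected.
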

\begin{proof} Similar to the proof of Lemma \ref{lem:Wfacts}.
\end{proof}

\begin{theorem} {\rm (See  \cite[Lemmas~3.9, 3.12]{terwSub1}.)} The pair $A, A^*$ acts on each irreducible $T$-module as a tridiagonal pair.
\end{theorem}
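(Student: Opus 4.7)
The plan is to verify the four conditions (i)--(iv) of Definition \ref{def:tdp} for the restrictions of $A, A^*$ to an arbitrary irreducible $T$-module $W$. Everything we need is essentially already packaged in the preceding lemmas; the theorem is really an assembly of facts.

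First I would address conditions (i)--(iii) simultaneously. By Lemma \ref{lem:Wfacts2}, $W$ decomposes as the orthogonal direct sum $W = \sum_{i=\tau}^{\tau+\delta} E_iW$ over the nonzero summands, and each $E_iW$ is a subspace of the $\theta_i$-eigenspace $E_iV$ of $A$. Therefore $A\vert_W$ is diagonalizable with eigenspaces $\lbrace E_iW \rbrace_{i=\tau}^{\tau+\delta}$ (in this natural ordering, with pairwise distinct eigenvalues $\lbrace \theta_i\rbrace_{i=\tau}^{\tau+\delta}$ by Lemma \ref{lem:thDist}). Symmetrically, Lemma \ref{lem:Wfacts} gives $W = \sum_{i=\rho}^{\rho+d} E^*_iW$ with $A^*\vert_W$ diagonalizable and eigenspaces $\lbrace E^*_iW\rbrace_{i=\rho}^{\rho+d}$ with distinct eigenvalues $\lbrace \theta^*_i\rbrace_{i=\rho}^{\rho+d}$ from Lemma \ref{lem:thsDist}. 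This handles condition (i). For conditions (ii) and (iii), Lemma \ref{lem:WTT} states exactly that $A E^*_iW \subseteq E^*_{i-1}W + E^*_iW + E^*_{i+1}W$ and $A^* E_iW \subseteq E_{i-1}W + E_iW + E_{i+1}W$, and combined with $E^*_iW=0$ for $i\notin[\rho,\rho+d]$ and $E_iW=0$ for $i\notin[\tau,\tau+\delta]$ this yields the required tridiagonal containments, with the convention that the out-of-range summands are zero.

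For condition (iv), suppose $W' \subseteq W$ is a subspace with $AW' \subseteq W'$ and $A^*W' \subseteq W'$. Since $T$ is generated by $A$ and $A^*$ (Lemma \ref{lem:TAA}), $W'$ is a $T$-module. Because $W$ is irreducible as a $T$-module, $W'=0$ or $W'=W$. So no proper nonzero common invariant subspace exists, giving (iv).

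The only step requiring a bit of care is checking that the eigenspaces of $A\vert_W$ really are exactly $\lbrace E_iW : \tau\leq i\leq \tau+\delta\rbrace$ (and similarly for $A^*\vert_W$), rather than something coarser. This follows because the $E_iW$ for $\tau \le i\le \tau+\delta$ are nonzero (Lemma \ref{lem:Wfacts2}(i)), their sum is $W$ (Lemma \ref{lem:Wfacts2}(ii)), they are contained in distinct eigenspaces of $A$, and distinct eigenspaces intersect trivially. I do not anticipate any genuine obstacle: all four tridiagonal-pair axioms are immediate consequences of results already proved, so the proof is essentially a one-line citation to Lemmas \ref{lem:TAA}, \ref{lem:WTT}, \ref{lem:Wfacts}, and \ref{lem:Wfacts2}.
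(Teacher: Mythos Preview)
Your proposal is correct and matches the paper's approach exactly: the paper's proof is the single line ``By Definition \ref{def:tdp} and Lemmas \ref{lem:TAA}, \ref{lem:WTT}, \ref{lem:Wfacts}, \ref{lem:Wfacts2},'' which is precisely the citation you anticipated in your final sentence. Your more detailed unpacking of how each lemma handles each axiom is accurate and would make a fine expanded version of the same argument.
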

\begin{proof} By Definition \ref{def:tdp} and Lemmas \ref{lem:TAA}, \ref{lem:WTT}, \ref{lem:Wfacts}, \ref{lem:Wfacts2}.
\end{proof}

\begin{corollary} Let $W$ denote an irreducible $T$-module. The the diameter of $W$ is equal to the dual diameter of $W$.
\end{corollary}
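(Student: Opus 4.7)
The plan is to reduce the claim to the corresponding equality of the two integers $d$ and $\delta$ that are built into any tridiagonal pair. By the theorem just proved, the pair $A, A^*$ acts on $W$ as a tridiagonal pair. The paragraph immediately after Definition \ref{def:tdp} records the general fact (from \cite[Lemma~4.5]{itt}) that in any tridiagonal pair the integers $d$ and $\delta$ from conditions (ii), (iii) of Definition \ref{def:tdp} are equal. So the whole task is to identify these two integers with the quantities in the statement.

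For the tridiagonal pair obtained by restricting $A, A^*$ to $W$, the ordering $\{V_i\}_{i=0}^d$ of Definition \ref{def:tdp}(ii) is an ordering of the eigenspaces of $A$ acting on $W$. These eigenspaces are exactly the nonzero subspaces among $\{E_i W\}_{i=0}^D$, which by Lemma \ref{lem:Wfacts2} form a consecutive run indexed by $\tau \leq i \leq \tau + \delta_W$, where $\delta_W$ denotes the dual diameter of $W$. Hence the tridiagonal-pair integer $d$ equals $\delta_W$. Symmetrically, Definition \ref{def:tdp}(iii) provides an ordering of the eigenspaces of $A^*$ on $W$ of length $\delta + 1$, and Lemma \ref{lem:Wfacts} asserts there are exactly $d_W + 1$ of them, where $d_W$ is the diameter of $W$. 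Thus the tridiagonal-pair integer $\delta$ equals $d_W$. Combining $d = \delta$ with these two identifications yields $d_W = \delta_W$.

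No genuine obstacle arises; the entire argument is bookkeeping. The only point one might verify is that the orderings of the eigenspaces chosen to realize the tridiagonal-pair structure on $W$ really are the natural ones inherited from $\{E_i V\}_{i=0}^D$ and $\{E^*_i V\}_{i=0}^D$. But the proof of the preceding theorem used precisely these orderings (they are what the three-term containments of Lemma \ref{lem:WTT} express), and irreducibility of $W$ as a $T$-module supplies Definition \ref{def:tdp}(iv) because $T$ is generated by $A, A^*$ (Lemma \ref{lem:TAA}). So no further verification is required, and the corollary follows at once.
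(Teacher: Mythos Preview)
Your proof is correct and takes essentially the same approach as the paper: invoke the preceding theorem to obtain a tridiagonal pair on $W$, then appeal to the fact recorded after Definition~\ref{def:tdp} (from \cite[Lemma~4.5]{itt}) that $d=\delta$ for any tridiagonal pair. Your careful identification of the tridiagonal-pair integer $d$ with the dual diameter of $W$ and $\delta$ with the diameter of $W$ makes explicit what the paper leaves implicit in its one-line proof.
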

\begin{proof} By the comment below Note  \ref{note:conv}.
\end{proof}

\noindent It is an ongoing project to describe the irreducible $T$-modules for a $Q$-polynomial distance-regular graph $\Gamma$. Comprehensive treatments can be found in
 \cite{bbit, itoOq, augIto, IKT, terwSub1, terwSub2, terwSub3, notesLS}.
In addition, there are papers about the thin condition \cite{cerzo, dickie1, dickie2, twistedG, terwSub3, zit1};
irreducible $T$-modules with endpoint one \cite{hobart};
$\Gamma$ being bipartite 
\cite{caugh2,curtin1, curtin2, lang3};
$\Gamma$ being almost-bipartite \cite{caugh5, lang4};
$\Gamma$ being dual bipartite 		
\cite{dickie4};
$\Gamma$ being almost dual bipartite \cite{dickieThesis};
$\Gamma$ being 2-homogeneous \cite{curtin3, curtin5, curtin6, nomTB};
$\Gamma$ being tight  \cite{aap12};
$\Gamma$ being a hypercube \cite{go};
$\Gamma$ being a Doob graph \cite{tanabe};
$\Gamma$ being a Johnson graph \cite{liang2, tan};
$\Gamma$ being a Grassmann graph \cite{liang1};
$\Gamma$ being a dual polar graph 
 \cite{boyd};
$\Gamma$ having a spin model in the Bose-Mesner algebra
\cite{curtin4, nomSM}.
Some miscellaneous topics about irreducible $T$-modules can be found in  \cite{qtetanddrg, itoTer3, TerPoly, lang2,jhl1,jhl2,aap1, sum1,sum2,ds, zit2}.

\section{Recurrent sequences}

 \noindent  In this section we have some comments about finite sequences that satisfy a 3-term recurrence.
 In later sections we will apply these comments to $Q$-polynomial distance-regular graphs.
 \medskip
 
 \noindent
 Throughout this section  fix an integer $D\geq 3$, and
let $\lbrace \theta_i\rbrace_{i=0}^D$ denote
 scalars in $\mathbb R$.

\begin{definition} \label{def:3term1}
\rm
Let $\beta, \gamma, \varrho$ denote scalars
in $\mathbb R$.
\begin{enumerate} 
\item[\rm (i)] The sequence 
  $\lbrace \theta_i\rbrace_{i=0}^D$ 
is said to be {\it recurrent} whenever $\theta_{i-1}\not=\theta_i$ for
$2 \leq i \leq D-1$, and 
\begin{equation}
\frac{\theta_{i-2}-\theta_{i+1}}{\theta_{i-1}-\theta_i} 
\label{eq:bp1}
\end{equation}
is independent of
$i$ for $2 \leq i \leq  D-1$.
\item[\rm (ii)] The sequence 
  $\lbrace \theta_i\rbrace_{i=0}^D$ 
is said to be {\it $\beta$-recurrent} whenever 
\begin{equation}
\theta_{i-2}-(\beta+1)\theta_{i-1}+(\beta +1)\theta_i -\theta_{i+1}
\label{eq:beta}
\end{equation}
is zero for 
$2 \leq i \leq D-1$.
\item[\rm (iii)] The sequence 
  $\lbrace \theta_i\rbrace_{i=0}^D$ 
is said to be {\it $(\beta,\gamma)$-recurrent} whenever 
\begin{equation}
\theta_{i-1}-\beta \theta_i+\theta_{i+1}=\gamma 
\label{eq:bg}
\end{equation}
 for 
$1 \leq i \leq D-1$.
\item[\rm (iv)] The sequence 
  $\lbrace \theta_i\rbrace_{i=0}^D$ 
is said to be {\it $(\beta,\gamma,\varrho)$-recurrent} whenever 
\begin{equation}
\theta^2_{i-1}-\beta \theta_{i-1}\theta_i+\theta^2_i 
-\gamma (\theta_{i-1} +\theta_i)=\varrho
\label{eq:vrt}
\end{equation}
 for 
$1 \leq i \leq D$.
\end{enumerate}
\end{definition}

\begin{lemma} 
\label{lem:recvsbrecS99}
The following are equivalent:
\begin{enumerate}
\item[\rm (i)] the sequence 
  $\lbrace \theta_i\rbrace_{i=0}^D$
is  recurrent;
\item[\rm (ii)]
the scalars $\theta_{i-1}\not=\theta_i$ for
$2 \leq i \leq D-1$, and 
there exists $\beta \in \mathbb R$ such that
  $\lbrace \theta_i\rbrace_{i=0}^D$ 
  is $\beta$-recurrent.
\end{enumerate}
\noindent Suppose {\rm (i), (ii)} hold. Then
the common value of \eqref{eq:bp1}
is equal to $\beta +1$.

\end{lemma}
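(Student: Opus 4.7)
The plan is to show that both conditions are just algebraic rewordings of the same 3-term recurrence relation, with $\beta+1$ playing the role of the common ratio.

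First I would prove (i)$\Rightarrow$(ii). Assuming recurrence, by definition $\theta_{i-1}\not=\theta_i$ for $2\leq i\leq D-1$, so the first part of (ii) is immediate. Let $c$ denote the common value of the ratio $(\theta_{i-2}-\theta_{i+1})/(\theta_{i-1}-\theta_i)$ for $2\leq i\leq D-1$. Cross-multiplying gives
\[
\theta_{i-2}-\theta_{i+1}=c(\theta_{i-1}-\theta_i),
\]
which, after moving everything to one side, is precisely the statement that
\[
\theta_{i-2}-c\theta_{i-1}+c\theta_i-\theta_{i+1}=0.
\]
Taking $\beta=c-1$ makes this coincide with \eqref{eq:beta}, so $\{\theta_i\}_{i=0}^D$ is $\beta$-recurrent.

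For (ii)$\Rightarrow$(i), suppose $\theta_{i-1}\not=\theta_i$ for $2\leq i\leq D-1$ and that $\{\theta_i\}_{i=0}^D$ is $\beta$-recurrent for some $\beta\in\mathbb R$. Rewriting \eqref{eq:beta} as
\[
\theta_{i-2}-\theta_{i+1}=(\beta+1)(\theta_{i-1}-\theta_i)
\]
and dividing by the nonzero quantity $\theta_{i-1}-\theta_i$, we get
\[
\frac{\theta_{i-2}-\theta_{i+1}}{\theta_{i-1}-\theta_i}=\beta+1 \qquad (2\leq i\leq D-1),
\]
so the ratio is independent of $i$ and equals $\beta+1$. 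This establishes recurrence and simultaneously identifies the common value of \eqref{eq:bp1} as $\beta+1$, giving the final assertion.

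Since the argument is just a one-line rearrangement in each direction, there is no real obstacle; the only point to be careful about is noting that the nonvanishing condition $\theta_{i-1}\not=\theta_i$ is needed to divide in the reverse direction, and this is why it is built into the definition of recurrence rather than being deducible from $\beta$-recurrence alone.
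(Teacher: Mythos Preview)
Your proof is correct and is exactly the routine algebraic verification the paper has in mind; the paper's own proof reads simply ``Routine.'' Your care in noting that the nonvanishing hypothesis $\theta_{i-1}\neq\theta_i$ is what permits the division in the reverse direction is precisely the point.
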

\begin{proof} Routine.

\end{proof}

\begin{lemma}
\label{lem:brecvsbgrecS99}
For $\beta \in \mathbb R$ the following are equivalent:
\begin{enumerate}
\item[\rm (i)] the sequence 
  $\lbrace \theta_i\rbrace_{i=0}^D$
is  $\beta$-recurrent;
\item[\rm (ii)] there exists $\gamma \in \mathbb R$ such that
  $\lbrace \theta_i\rbrace_{i=0}^D$
  is $(\beta,\gamma)$-recurrent.
\end{enumerate}
\end{lemma}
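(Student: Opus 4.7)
The plan is to observe that the $\beta$-recurrence displayed in \eqref{eq:beta} is precisely the first difference of the expression appearing in \eqref{eq:bg}. Concretely, for $1 \leq i \leq D-1$ define
\begin{align*}
f(i) = \theta_{i-1} - \beta \theta_i + \theta_{i+1}.
\end{align*}
A direct algebraic computation shows that, for $2 \leq i \leq D-1$,
\begin{align*}
f(i) - f(i-1) = -\bigl(\theta_{i-2} - (\beta+1)\theta_{i-1} + (\beta+1)\theta_i - \theta_{i+1}\bigr).
\end{align*}
Thus the $\beta$-recurrence in Definition \ref{def:3term1}(ii) holds if and only if $f$ is constant on $\{1, 2, \ldots, D-1\}$.

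For the implication (i)$\Rightarrow$(ii), I would assume $\{\theta_i\}_{i=0}^D$ is $\beta$-recurrent. By the identity above, $f(i) = f(i-1)$ for $2 \leq i \leq D-1$, so $f$ is constant. Setting $\gamma = f(1) = \theta_0 - \beta\theta_1 + \theta_2$ gives \eqref{eq:bg} for all $1 \leq i \leq D-1$, which is exactly $(\beta,\gamma)$-recurrence.

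Conversely, for (ii)$\Rightarrow$(i), assume there is a $\gamma \in \mathbb{R}$ with $f(i) = \gamma$ for $1 \leq i \leq D-1$. Then $f(i) - f(i-1) = 0$ for $2 \leq i \leq D-1$, and by the telescoping identity above this is exactly the $\beta$-recurrence \eqref{eq:beta}.

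There is no real obstacle here; the entire argument is the one-line telescoping observation, and the proof is essentially a verification. The main thing to get right is the sign convention in the telescoping identity and the fact that the index range $2 \leq i \leq D-1$ in \eqref{eq:beta} matches exactly the range needed to conclude that $f$ is constant on the full range $1 \leq i \leq D-1$ where \eqref{eq:bg} is imposed.
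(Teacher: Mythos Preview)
Your proposal is correct and is precisely the routine telescoping argument the paper has in mind; the paper's own proof simply reads ``Routine.''
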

\begin{proof}  Routine.
\end{proof}

\begin{lemma}
\label{lem:bgrecvsbgdrecS99}
The following {\rm (i), (ii)} hold
for all $\beta, \gamma \in \mathbb R$.
\begin{enumerate}
\item[\rm (i)]  Suppose 
  $\lbrace \theta_i\rbrace_{i=0}^D$
is  $(\beta,\gamma)$-recurrent. Then 
there exists $\varrho \in \mathbb R$ such that
  $\lbrace \theta_i\rbrace_{i=0}^D$
 is $(\beta,\gamma,\varrho)$-recurrent.
\item[\rm (ii)]  Suppose 
  $\lbrace \theta_i\rbrace_{i=0}^D$
is  $(\beta,\gamma,\varrho)$-recurrent, and that $\theta_{i-1}\not=\theta_{i+1}$
for $1 \leq i\leq D-1$. Then
  $\lbrace \theta_i\rbrace_{i=0}^D$
  is $(\beta,\gamma)$-recurrent.
\end{enumerate}
\end{lemma}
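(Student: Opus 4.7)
The plan is to introduce, for $1 \leq i \leq D$, the auxiliary quantity
\begin{align*}
f_i = \theta^2_{i-1} - \beta \theta_{i-1} \theta_i + \theta^2_i - \gamma(\theta_{i-1} + \theta_i),
\end{align*}
so that by Definition \ref{def:3term1}(iv) the sequence $\{\theta_i\}_{i=0}^D$ is $(\beta,\gamma,\varrho)$-recurrent precisely when $f_i = \varrho$ for all $1 \leq i \leq D$. The heart of the argument is the factorization identity
\begin{align*}
f_{i+1} - f_i = (\theta_{i+1} - \theta_{i-1})\bigl(\theta_{i-1} - \beta \theta_i + \theta_{i+1} - \gamma\bigr) \qquad (1 \leq i \leq D-1).
\end{align*}

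First I would verify this identity by direct expansion. In $f_{i+1} - f_i$ the $\theta_i^2$ terms cancel, and the remaining terms regroup as $\theta_{i+1}^2 - \theta_{i-1}^2$ (a difference of squares), $-\beta \theta_i(\theta_{i+1} - \theta_{i-1})$, and $-\gamma(\theta_{i+1} - \theta_{i-1})$. Each summand carries a common factor of $\theta_{i+1} - \theta_{i-1}$, and factoring it out yields the claimed identity.

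For part (i), the $(\beta,\gamma)$-recurrence \eqref{eq:bg} says that $\theta_{i-1} - \beta\theta_i + \theta_{i+1} - \gamma = 0$ for $1 \leq i \leq D-1$, so the identity gives $f_{i+1} - f_i = 0$ throughout this range. Hence $f_1 = f_2 = \cdots = f_D$, and setting $\varrho$ equal to this common value shows that $\{\theta_i\}_{i=0}^D$ is $(\beta,\gamma,\varrho)$-recurrent. For part (ii), the $(\beta,\gamma,\varrho)$-recurrence gives $f_{i+1} - f_i = 0$ for $1 \leq i \leq D-1$, and the factorization identity forces
\begin{align*}
(\theta_{i+1} - \theta_{i-1})\bigl(\theta_{i-1} - \beta \theta_i + \theta_{i+1} - \gamma\bigr) = 0.
\end{align*}
By the additional hypothesis $\theta_{i-1} \neq \theta_{i+1}$ for $1 \leq i \leq D-1$, the first factor is nonzero, so the second must vanish, recovering \eqref{eq:bg} and establishing $(\beta,\gamma)$-recurrence.

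There is no real obstacle beyond verifying the factorization identity; once that is in hand, both implications follow in a single line each. The only thing to watch is the index range: $f_i$ is defined for $1 \leq i \leq D$, the telescoping difference $f_{i+1} - f_i$ makes sense for $1 \leq i \leq D-1$, and this matches exactly the index range in which the $(\beta,\gamma)$-recurrence is required.
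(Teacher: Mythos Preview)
Your proof is correct and takes essentially the same approach as the paper: the paper introduces the same auxiliary quantity (calling it $p_i$ rather than $f_i$) and records the same factorization identity (with the opposite sign, computing $p_i - p_{i+1}$ instead of $f_{i+1} - f_i$), then declares both assertions routine consequences. Your write-up is in fact more detailed than the paper's, which omits the verification of the identity and the spelled-out deductions for (i) and (ii).
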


\begin{proof} 
Let  $p_i$ denote the expression on the left in \eqref{eq:vrt},
and observe
 \begin{align*}
p_i-p_{i+1} &= 
(\theta_{i-1}-\theta_{i+1})(\theta_{i-1}-\beta \theta_i +\theta_{i+1} - \gamma)
\end{align*}
for $1 \leq i \leq D-1$. 
Assertions (i), (ii) are both routine consequences of this.
\end{proof}

\noindent The following result is handy.

\begin{lemma} \label{lem:handy} Let $\beta, \gamma, \varrho \in \mathbb R$ and assume that $\lbrace \theta_i \rbrace_{i=0}^D$ is $(\beta, \gamma, \varrho)$-recurrent.
Then
\begin{align}
\label{eq:PP}
(2-\beta)\theta^2_i - 2 \gamma \theta_i - \varrho
=(\theta_i - \theta_{i-1})(\theta_i - \theta_{i+1})
 \qquad \qquad (0 \leq i \leq D),
\end{align}
where $\theta_{-1}$ and $\theta_{D+1}$ are defined by \eqref{eq:bg}  at $i=0$ and $i=D$.
\end{lemma}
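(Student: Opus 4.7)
My plan is to encode the $(\beta,\gamma,\varrho)$-recurrence as the vanishing of a symmetric quadratic form. Define
\begin{align*}
F(x,y) = x^2 - \beta xy + y^2 - \gamma(x+y) - \varrho,
\end{align*}
which is symmetric in $x,y$, so the hypothesis reads $F(\theta_{i-1},\theta_i)=0$ for $1 \le i \le D$. Setting $x=y=\theta_i$ one computes $F(\theta_i,\theta_i) = (2-\beta)\theta_i^2 - 2\gamma\theta_i - \varrho$, which is the left-hand side of \eqref{eq:PP}. Hence the task reduces to proving $F(\theta_i,\theta_i) = (\theta_i-\theta_{i-1})(\theta_i-\theta_{i+1})$ for $0 \le i \le D$.

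For fixed $i$, regard $F(x,\theta_i)$ as the monic quadratic $x^2 - (\beta\theta_i+\gamma)x + (\theta_i^2-\gamma\theta_i-\varrho)$ in $x$, whose two roots have sum $\beta\theta_i+\gamma$ and product $\theta_i^2-\gamma\theta_i-\varrho$ by Vieta. For $1 \le i \le D-1$, the recurrence at index $i$ identifies $\theta_{i-1}$ as a root, while the recurrence at index $i+1$ together with the symmetry of $F$ identifies $\theta_{i+1}$ as a root; the factorization $p_{i+1}-p_i = (\theta_{i-1}-\theta_{i+1})(\theta_{i-1}+\theta_{i+1}-\beta\theta_i-\gamma)$ displayed in the proof of Lemma~\ref{lem:bgrecvsbgdrecS99} then pins down $\theta_{i-1}+\theta_{i+1}=\beta\theta_i+\gamma$ and $\theta_{i-1}\theta_{i+1}=\theta_i^2-\gamma\theta_i-\varrho$. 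Expanding
\begin{align*}
(\theta_i-\theta_{i-1})(\theta_i-\theta_{i+1}) = \theta_i^2 - (\theta_{i-1}+\theta_{i+1})\theta_i + \theta_{i-1}\theta_{i+1}
\end{align*}
and substituting yields $(2-\beta)\theta_i^2-2\gamma\theta_i-\varrho$, as required.

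For the boundary cases $i=0$ and $i=D$, the defining equations for $\theta_{-1}$ and $\theta_{D+1}$ coming from \eqref{eq:bg} at $i=0$ and $i=D$ are precisely the sum-of-roots relations $\theta_{-1}+\theta_1=\beta\theta_0+\gamma$ and $\theta_{D-1}+\theta_{D+1}=\beta\theta_D+\gamma$. Combined with the recurrence equations $F(\theta_0,\theta_1)=0$ and $F(\theta_{D-1},\theta_D)=0$, a direct substitution (e.g.\ plugging $\theta_{-1}=\beta\theta_0+\gamma-\theta_1$ into the LHS of the recurrence at $i=1$) recovers the product-of-roots relations, and the same expansion as above finishes the argument at the boundary. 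The step I expect to be most delicate is verifying that $\theta_{i-1}$ and $\theta_{i+1}$ really play the role of the \emph{two} roots of $F(\cdot,\theta_i)$ for interior $i$, rather than a single root realized twice; this is exactly where the factorization from Lemma~\ref{lem:bgrecvsbgdrecS99} and, in practice, the distinctness of $\theta_{i-1}$ and $\theta_{i+1}$ (which is automatic in the $Q$-polynomial applications to follow) come into play.
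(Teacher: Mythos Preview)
Your Vieta-based approach and the paper's direct substitution are really the same computation in different clothing. The paper's two-line proof says: substitute $\theta_{i+1}=\beta\theta_i+\gamma-\theta_{i-1}$ from \eqref{eq:bg} into the right side of \eqref{eq:PP} and watch it collapse to \eqref{eq:vrt} at index~$i$ (and symmetrically, eliminate $\theta_{i-1}$ and reduce to \eqref{eq:vrt} at $i+1$). Your version phrases this as: $\theta_{i-1}$ and $\theta_{i+1}$ are the two roots of the monic quadratic $F(x,\theta_i)$, so Vieta gives their sum and product and the identity drops out. Either way the crux is the relation $\theta_{i-1}+\theta_{i+1}=\beta\theta_i+\gamma$ for interior~$i$.

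You are right that this is the delicate step, and in fact your caution is essential rather than merely stylistic: as stated (assuming only the $(\beta,\gamma,\varrho)$-recurrence \eqref{eq:vrt}), the lemma is false. Take $D=3$, $(\beta,\gamma,\varrho)=(0,0,2)$, and $(\theta_0,\theta_1,\theta_2,\theta_3)=(1,1,-1,1)$; this sequence is $(0,0,2)$-recurrent, but at $i=2$ the left side of \eqref{eq:PP} equals $0$ while the right side equals $4$. Here $\theta_1=\theta_3$ are the same root of $F(x,\theta_2)=x^2-1$, so your Vieta step fails and the paper's substitution step fails for the same reason: \eqref{eq:bg} is being used at interior~$i$ without being part of the hypothesis. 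In every downstream application (Lemma~\ref{lem:extend}, the proof of Theorem~\ref{thm:Pasc}) the sequence in question is already known to be $(\beta,\gamma)$-recurrent as well, so nothing later in the paper breaks; but your flagged assumption is exactly what is missing from the stated hypotheses.
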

\begin{proof} To verify \eqref{eq:PP} for $1 \leq i \leq D$, eliminate $\theta_{i+1}$ using   \eqref{eq:bg}, and evaluate the result
using \eqref{eq:vrt}.
To verify \eqref{eq:PP} for $0 \leq i \leq D-1$, eliminate $\theta_{i-1}$ using  \eqref{eq:bg}, and evaluate the result
using \eqref{eq:vrt}.
\end{proof}

\section{The tridiagonal relations}
Throughout this section $\Gamma=(X,\mathcal R)$ denotes a distance-regular graph with diameter $D\geq 3$. 
Let $\lbrace E_i \rbrace_{i=0}^D$ denote a $Q$-polynomial ordering of the primitive idempotents of $\Gamma$.
 Fix $x \in X$ and write $T=T(x)$. 
We will show that $A, A^*$ satisfy a pair of relations called the tridiagonal relations. We will also obtain
a recurrence satisfied by the eigenvalues $\lbrace \theta_i \rbrace_{i=0}^D$ and the dual eigenvalues $\lbrace \theta^*_i \rbrace_{i=0}^D$. We now state our two main results.
\begin{theorem} \label{thm:thmOne} {\rm (See \cite[Lemma~5.4]{terwSub3}.)} 
With the above notation, there exist real numbers $\beta$, $\gamma$, $\gamma^*$, $\varrho$, $\varrho^*$
such that
\begin{align}
\label{eq:TDone}
0 &= \lbrack A, A^2 A^* - \beta A A^* A + A^* A^2 - \gamma(AA^*+ A^*A) - \varrho A^* \rbrack,
\\
\label{eq:TDtwo}
0 &= \lbrack A^*, A^{*2} A - \beta A^* A A^* + A A^{*2} - \gamma^*(A^*A+ AA^*) - \varrho^* A \rbrack.
\end{align}
\end{theorem}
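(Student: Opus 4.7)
My approach is to reduce the first relation \eqref{eq:TDone} to a polynomial condition on the eigenvalues of $A$, invoke the recurrence machinery of Section 14 to produce the parameters $\beta,\gamma,\varrho$, and then apply the dual argument (with $A \leftrightarrow A^*$, $E_i \leftrightarrow E^*_i$, $\theta_i \leftrightarrow \theta^*_i$) to obtain \eqref{eq:TDtwo} with parameters $\beta, \gamma^*, \varrho^*$. A feature to verify at the end is that the \emph{same} $\beta$ appears in both relations; I expect this to fall out of the symmetry of the recurrence conditions.

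Writing $L := A^2A^* - \beta AA^*A + A^*A^2 - \gamma(AA^* + A^*A) - \varrho A^*$, and using $\sum_i E_i = I$ together with $AE_i = \theta_i E_i$, a direct expansion yields
\[
E_i L E_j \;=\; \bigl(\theta_i^2 - \beta\theta_i\theta_j + \theta_j^2 - \gamma(\theta_i+\theta_j) - \varrho\bigr)\, E_i A^* E_j,
\]
so $E_i [A,L] E_j = (\theta_i - \theta_j)\, E_i L E_j$, and the identity $[A,L] = 0$ is equivalent to the vanishing of every block. For $|i-j|\ge 2$, Definition \ref{def:qpoly}(i) gives $q^i_{1,j} = 0$, so by Theorem \ref{lem:tpr}(ii) the factor $E_i A^* E_j$ vanishes; for $i = j$ the prefactor $\theta_i - \theta_j$ is zero. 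Only the bands $|i-j| = 1$ remain, and there Lemma \ref{lem:thDist}(i) guarantees $\theta_i \ne \theta_j$ while Definition \ref{def:qpoly}(ii) with Theorem \ref{lem:tpr}(ii) guarantees $E_i A^* E_j \ne 0$, so the constraint is exactly
\[
\theta_{i-1}^2 - \beta\theta_{i-1}\theta_i + \theta_i^2 - \gamma(\theta_{i-1}+\theta_i) = \varrho \qquad (1 \le i \le D),
\]
i.e., that $\{\theta_i\}_{i=0}^D$ be $(\beta,\gamma,\varrho)$-recurrent in the sense of Definition \ref{def:3term1}(iv).

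The remaining task---the main obstacle---is to produce $\beta,\gamma,\varrho \in \mathbb R$ for which this recurrence holds. Distinctness of consecutive $\theta_i$ is Lemma \ref{lem:thDist}(i), so by Lemmas \ref{lem:bgrecvsbgdrecS99}, \ref{lem:brecvsbgrecS99}, and \ref{lem:recvsbrecS99} it suffices to show that $\{\theta_i\}_{i=0}^D$ is recurrent in the sense of Definition \ref{def:3term1}(i). This is nontrivial only for $D \ge 4$, where the ratio $(\theta_{i-2} - \theta_{i+1})/(\theta_{i-1} - \theta_i)$ must be independent of $i$; this is the genuine combinatorial content of the theorem and is where the structural rigidity of a $Q$-polynomial distance-regular graph enters. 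The cleanest route I see is to work on the primary $T$-module and use Lemma \ref{lem:Taction2}: the basis $\{\mathbf 1^*_j\}$ simultaneously diagonalizes $A$ (with eigenvalues $\theta_j$) and, by the $Q$-polynomial vanishing of $q^h_{1,j}$ for $|h-j|>1$, makes $A^*$ tridiagonal with sub/superdiagonal entries $b^*_{j-1}, c^*_{j+1}$; then requiring the block expression for $E_i[A,L]E_j$ at $|i-j|=1$ to vanish across all $i$ forces the desired ratio condition, with the compatibility across consecutive $i$ supplied by Askey-Wilson duality (Theorem \ref{thm:AWD}) between the three-term recurrences of $\{u_i\}$ and $\{u^*_i\}$. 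Once $(\beta,\gamma,\varrho)$ exist, the dual argument applied to $A^*$ and $\{\theta^*_i\}$ produces $(\beta,\gamma^*,\varrho^*)$ satisfying \eqref{eq:TDtwo}; the shared $\beta$ is visible from the fact that the recurrence ratio is preserved under Askey-Wilson duality.
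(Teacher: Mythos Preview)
Your reduction of \eqref{eq:TDone} to the $(\beta,\gamma,\varrho)$-recurrence of $\{\theta_i\}_{i=0}^D$ is correct and matches the paper's Lemma \ref{lem:step5}. The gap is the next step: you have not actually proved that such $\beta,\gamma,\varrho$ exist. You correctly identify this as ``the genuine combinatorial content of the theorem,'' but the sentence that is supposed to deliver it is not an argument. Saying that ``requiring the block expression to vanish across all $i$ forces the desired ratio condition'' restates the goal rather than proving it, and the appeal to Askey-Wilson duality is a gesture, not a derivation: the identity $u_i(\theta_j)=u^*_j(\theta^*_i)$ relates values of the polynomials but does not by itself force $(\theta_{i-2}-\theta_{i+1})/(\theta_{i-1}-\theta_i)$ to be constant. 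That constancy is essentially Leonard's theorem for the primary module, a result of the same depth as what you are trying to prove. Your claim that the shared $\beta$ ``is visible from the fact that the recurrence ratio is preserved under Askey-Wilson duality'' has the same defect.

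The paper's route is genuinely different and avoids this circularity. The key structural fact is Lemma \ref{lem:step4}: because the $Q$-polynomial condition makes $E_iA^*E_j$ vanish for $|i-j|>1$, the span of all $RA^*S - SA^*R$ with $R,S\in M$ collapses to $\{YA^* - A^*Y \mid Y\in M\}$. Applying this with $R=A^2$, $S=A$ gives $A^2A^*A - AA^*A^2 = ZA^* - A^*Z$ for some $Z = f(A)\in M$. A degree count using Lemmas \ref{lem:step1} and \ref{lem:step2} (sandwiching by $E^*_d,\,E^*_0$ and by $E^*_3,\,E^*_0$) forces $\deg f = 3$, and unpacking the coefficients produces $\beta,\gamma,\varrho$ satisfying \eqref{eq:TDone} directly. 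Only \emph{after} \eqref{eq:TDone} is established does the paper sandwich it between $E^*_{i-2}$ and $E^*_{i+1}$ to show $\{\theta^*_i\}$ is $\beta$-recurrent; the recurrence lemmas then upgrade this to $(\beta,\gamma^*,\varrho^*)$, and Lemma \ref{lem:step5} gives \eqref{eq:TDtwo}. The shared $\beta$ thus emerges as a consequence of \eqref{eq:TDone}, not from a separate duality argument.
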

\begin{definition}\rm 
The relations \eqref{eq:TDone}, \eqref{eq:TDtwo} are called the {\it  tridiagonal relations}; see \cite{qSerre}.
\end{definition}

\begin{theorem}\label{thm:thmTwo} {\rm (See \cite[Proposition~3]{leonard} and \cite[Theorem~5.1]{bannai}.)} 
With the above notation, the scalars
\begin{align*}
\frac{\theta_{i-2}-\theta_{i+1}}{\theta_{i-1}-\theta_i}, \qquad \quad \frac{\theta^*_{i-2}-\theta^*_{i+1}}{\theta^*_{i-1}-\theta^*_i}
\end{align*}
are equal and independent of $i$ for $2 \leq i \leq D-1$.
\end{theorem}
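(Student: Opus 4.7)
The plan is to derive Theorem \ref{thm:thmTwo} as a direct consequence of the tridiagonal relations in Theorem \ref{thm:thmOne}, together with the triple product relations and the recurrence lemmas from Section~14. The key observation is that the parameter $\beta$ appearing in both tridiagonal relations is the \emph{same}, and this common $\beta$ will yield the common ratio $\beta + 1$ for both eigenvalue sequences.

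First I would sandwich the first tridiagonal relation \eqref{eq:TDone} between $E_i$ on the left and $E_j$ on the right. Using $E_i A = \theta_i E_i$ and $A E_j = \theta_j E_j$, the outer commutator collapses to
\begin{align*}
0 = (\theta_i - \theta_j)\bigl(\theta_i^2 - \beta \theta_i \theta_j + \theta_j^2 - \gamma(\theta_i + \theta_j) - \varrho\bigr)\, E_i A^* E_j.
\end{align*}
Now specialize to $j = i-1$ with $1 \leq i \leq D$. By Lemma \ref{lem:thDist}(i) the eigenvalues $\{\theta_i\}_{i=0}^D$ are mutually distinct, so $\theta_i - \theta_{i-1} \neq 0$. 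Moreover by the $Q$-polynomial property and Lemma \ref{lem:details}, $q^i_{1,i-1} = c^*_i > 0$, so by Theorem \ref{lem:tpr}(ii) the matrix $E_i A^* E_{i-1}$ is nonzero. Therefore the middle scalar factor must vanish, yielding
\begin{align*}
\theta_{i-1}^2 - \beta \theta_{i-1}\theta_i + \theta_i^2 - \gamma(\theta_{i-1}+\theta_i) - \varrho = 0 \qquad (1 \leq i \leq D).
\end{align*}
This is precisely the $(\beta,\gamma,\varrho)$-recurrent condition \eqref{eq:vrt} for $\{\theta_i\}_{i=0}^D$.

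Next I would feed this through the chain of lemmas in Section~14. Since the $\theta_i$ are mutually distinct, in particular $\theta_{i-1} \neq \theta_{i+1}$ for $1 \leq i \leq D-1$, so Lemma \ref{lem:bgrecvsbgdrecS99}(ii) promotes $(\beta,\gamma,\varrho)$-recurrence to $(\beta,\gamma)$-recurrence, then Lemma \ref{lem:brecvsbgrecS99} gives $\beta$-recurrence, and finally Lemma \ref{lem:recvsbrecS99} (applicable because again $\theta_{i-1} \neq \theta_i$) shows $\{\theta_i\}_{i=0}^D$ is recurrent with common ratio value $\beta + 1$ for $2 \leq i \leq D-1$. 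Running the identical argument on the second tridiagonal relation \eqref{eq:TDtwo}, sandwiched between $E^*_i$ and $E^*_j$ and using the nonvanishing of $E^*_i A E^*_{i-1}$ (which follows from $p^i_{1,i-1} = c_i > 0$ and Theorem \ref{lem:tpr}(i)) together with the distinctness of $\{\theta^*_i\}$ from Lemma \ref{lem:thsDist}(i), shows that $\{\theta^*_i\}_{i=0}^D$ is recurrent with common ratio $\beta + 1$ --- crucially the \emph{same} $\beta$, because this is the shared coefficient in the two tridiagonal relations. Both ratios therefore equal $\beta + 1$, completing the proof.

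The main obstacle is really just the bookkeeping of step one: expanding $E_i X E_j$ for $X = A^2 A^* - \beta A A^* A + A^* A^2 - \gamma(AA^* + A^*A) - \varrho A^*$ and confirming that every term factors through $E_i A^* E_j$ with scalar coefficient $\theta_i^2 - \beta\theta_i\theta_j + \theta_j^2 - \gamma(\theta_i+\theta_j) - \varrho$. Everything else is a direct appeal to results already established in the excerpt; in particular, the fact that Theorem \ref{thm:thmOne} is assumed available removes what would otherwise be the genuinely substantive work.
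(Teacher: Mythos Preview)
Your proof is correct and follows essentially the same route as the paper. The paper packages your sandwich computation as Lemma~\ref{lem:step5} (equation \eqref{eq:Cexpand} is exactly your factorization $(\theta_i-\theta_j)P(\theta_i,\theta_j)\,E_iA^*E_j$), then records the resulting $(\beta,\gamma,\varrho)$- and $(\beta,\gamma^*,\varrho^*)$-recurrences in Proposition~\ref{prop:five} and deduces Theorem~\ref{thm:thmTwo} from part (i) of that proposition; the chain through Lemmas~\ref{lem:bgrecvsbgdrecS99}, \ref{lem:brecvsbgrecS99}, \ref{lem:recvsbrecS99} is identical.
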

\noindent We will prove Theorems \ref{thm:thmOne}, \ref{thm:thmTwo} shortly.

\begin{lemma}\label{lem:step1}
For $0 \leq i,j,r\leq D$ we have
\begin{enumerate}
\item[\rm (i)] 
$E^*_i A^{r} E^*_j =
\begin{cases}
0 & {\mbox{\rm if $r<|i-j|$}};\\
\not=0 &  {\mbox{\rm if $r=|i-j|$,
}}
\end{cases}$
\item[\rm (ii)] 
$E_i A^{*r} E_j =
\begin{cases}
0 & {\mbox{\rm if $r<|i-j|$}};\\
\not=0 &  {\mbox{\rm if $r=|i-j|$.
}}
\end{cases}$

\end{enumerate}
\end{lemma}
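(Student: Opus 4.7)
The plan is to expand $A^r$ (respectively $A^{*r}$) in the distance-matrix basis $\lbrace A_s \rbrace_{s=0}^D$ (respectively the dual distance matrix basis $\lbrace A^*_s\rbrace_{s=0}^D$) in a triangular fashion, and then invoke the triple product relations of Theorem~\ref{lem:tpr} together with the triangle-inequality behavior of the intersection numbers. Part (ii) will come from the mirror-image argument, and it is precisely there that the $Q$-polynomial hypothesis enters, since it supplies the analogous triangle-like behavior of the Krein parameters.

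For part (i), Lemma~\ref{lem:vi} tells me that $v_0, v_1, \ldots, v_r$ form a basis for the polynomials in $\mathbb R\lbrack\lambda\rbrack$ of degree at most $r$, and that $v_s$ has leading coefficient $(c_1 c_2 \cdots c_s)^{-1}$. Inverting this triangular change of basis, I can write
\[
\lambda^r = c_1 c_2 \cdots c_r\, v_r + \sum_{s=0}^{r-1} \alpha_s v_s
\]
for some $\alpha_s \in \mathbb R$. Evaluating at $A$ and applying Lemma~\ref{lem:vi}(iii) gives $A^r = c_1 c_2 \cdots c_r A_r + \sum_{s<r} \alpha_s A_s$. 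Sandwiching with $E^*_i$ and $E^*_j$ reduces each summand to a scalar multiple of $E^*_i A_s E^*_j$, which by Theorem~\ref{lem:tpr}(i) vanishes if and only if $p^i_{s,j}=0$. The triangle-inequality properties of the intersection numbers recorded in Section~2 then imply $p^i_{s,j}=0$ whenever $s<|i-j|$, and $p^i_{r,j}\not=0$ when $r=|i-j|$ (by picking, say, $i\geq j$, so that $s<i-j$ forces $i>s+j$, while $r=i-j$ forces $i=r+j$). If $r<|i-j|$ every summand vanishes; if $r=|i-j|$, only the top term $c_1\cdots c_r E^*_i A_r E^*_j$ survives, and it is nonzero since $c_1c_2\cdots c_r>0$.

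For part (ii), I run the same script with stars throughout. By Lemma~\ref{lem:vis} the dual expansion gives $A^{*r} = c^*_1 c^*_2 \cdots c^*_r A^*_r + \sum_{s<r} \alpha^*_s A^*_s$. Sandwiching with $E_i$ and $E_j$ and applying Theorem~\ref{lem:tpr}(ii) reduces everything to the vanishing pattern of the Krein parameters $q^i_{s,j}$, and Definition~\ref{def:qpoly} is exactly what supplies the needed analog of the triangle inequality: $q^i_{s,j}=0$ when $s<|i-j|$ and $q^i_{r,j}\not=0$ when $r=|i-j|$. The rest of the argument is verbatim the same, with $c^*_1c^*_2\cdots c^*_r>0$ guaranteeing nonvanishing in the boundary case. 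The main conceptual point worth emphasizing (rather than a genuine obstacle) is the asymmetry: part (i) holds in any distance-regular graph, while part (ii) uses the $Q$-polynomial hypothesis in an essential way to secure the triangle-like behavior of the $q^h_{i,j}$.
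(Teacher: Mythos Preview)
Your argument is correct and is essentially the same approach as the paper's: both expand in the distance-matrix basis (resp.\ dual distance-matrix basis), invoke the triple product relations of Theorem~\ref{lem:tpr}, and use the triangle-inequality behavior of the intersection numbers (resp.\ the $Q$-polynomial behavior of the Krein parameters from Definition~\ref{def:qpoly}). The paper phrases it slightly differently---writing $A_r$ as a polynomial of degree exactly $r$ in $A$ rather than your inverse formulation $A^r = c_1\cdots c_r A_r + (\text{lower})$---but these are the same triangular change of basis, and your version is if anything a bit more explicit.
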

\begin{proof} (i) By Theorem \ref{lem:tpr}(i), $E^*_i A_r E^*_j=0$ if and only if $p^i_{r,j}=0$. The matrix $A_r$ is a polynomial in $A$ with degree exactly $r$.
The scalar $p^i_{r,j}$ is zero if $r< \vert i-j\vert$ and nonzero if $r=\vert i-j\vert$. The result follows.
\\
\noindent (ii) Similar to the proof of (i).
\end{proof}
\noindent We are going to prove a sequence of results. Each result has a `dual' obtained by interchanging $A$, $A^*$ and $E_i, E^*_i$ for $0 \leq i \leq D$.
We will not state each dual explicitly.

\begin{lemma}
\label{lem:step2}
For $0 \leq i,j,r,s\leq D$ we have
\begin{align*}
E^*_i A^r A^* A^s E^*_j = 
\begin{cases}
\theta^*_{j+s} E^*_i A^{r+s} E^*_j & \mbox{\rm if $r+s=i-j$};  \\
\theta^*_{j-s} E^*_i A^{r+s} E^*_j & \mbox{\rm if $r+s=j-i$};  \\
0  & \mbox{\rm if $r+s<\vert i-j\vert $}.
\end{cases}
\end{align*}
\end{lemma}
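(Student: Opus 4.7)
The plan is to reduce everything to Lemma \ref{lem:step1}(i) by inserting the spectral decomposition $A^* = \sum_{h=0}^D \theta^*_h E^*_h$, which comes from \eqref{eq:ths}. First I would write
\begin{align*}
E^*_i A^r A^* A^s E^*_j = \sum_{h=0}^D \theta^*_h \, E^*_i A^r E^*_h A^s E^*_j,
\end{align*}
and alongside it, insert the resolution $I = \sum_{h} E^*_h$ to get the companion identity
\begin{align*}
E^*_i A^{r+s} E^*_j = \sum_{h=0}^D E^*_i A^r E^*_h A^s E^*_j.
\end{align*}
By Lemma \ref{lem:step1}(i), the summand $E^*_i A^r E^*_h A^s E^*_j$ vanishes unless both $|i-h|\leq r$ and $|h-j|\leq s$.

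Next I would carry out the case analysis. In the third case $r+s < |i-j|$, the triangle inequality $|i-h|+|h-j|\geq |i-j|$ forces either $|i-h|>r$ or $|h-j|>s$ for every $h$, so every term in the first sum vanishes and the result is $0$. In the first case $r+s = i-j$ (so $i\geq j$), the constraints $i-r\leq h \leq i+r$ and $j-s \leq h \leq j+s$ collapse to the single value $h = j+s = i-r$; similarly in the second case $r+s=j-i$ the only surviving index is $h = j-s = i+r$. In each surviving case, exactly the same single index contributes to the companion sum for $E^*_i A^{r+s} E^*_j$, so comparing the two sums yields $E^*_i A^r A^* A^s E^*_j = \theta^*_{h_0} E^*_i A^{r+s} E^*_j$ with $h_0 = j+s$ or $h_0 = j-s$ respectively.

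The only step requiring any care is the bookkeeping that pins down the unique surviving $h$ in the boundary cases $r+s = \pm(i-j)$; once the two triangle-type inequalities are combined with the equality $r+s = |i-j|$, they squeeze $h$ to a single value, and no obstacle remains. The identification of the surviving term with $\theta^*_{j\pm s}E^*_i A^{r+s} E^*_j$ is then immediate from the parallel expansion of $A^{r+s}$. Thus no separate argument is needed for the nonvanishing of $E^*_i A^{r+s} E^*_j$ itself; Lemma \ref{lem:step1}(i) already guarantees it in the boundary cases since $r+s=|i-j|$ there.
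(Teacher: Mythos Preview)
Your proposal is correct and follows exactly the same approach as the paper: expand $A^*=\sum_h \theta^*_h E^*_h$ and $I=\sum_h E^*_h$ between $A^r$ and $A^s$, then apply Lemma~\ref{lem:step1}(i) to each summand. The paper's proof is terser (it simply says ``evaluate each summand using Lemma~\ref{lem:step1}(i); the result follows''), while you have spelled out the triangle-inequality bookkeeping that isolates the surviving index $h$; your added detail is accurate and the argument is complete.
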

\begin{proof} Using $I= \sum_{h=0}^D  E^*_h$ and $A^*= \sum_{h=0}^D \theta^*_h E^*_h$ we find
\begin{align*}
E^*_i A^r I A^s E^*_j = \sum_{h=0}^D  E^*_i A^r E^*_h A^s E^*_j, \qquad 
E^*_i A^r A^* A^s E^*_j = \sum_{h=0}^D \theta^*_h E^*_i A^r E^*_h A^s E^*_j.
\end{align*}
In the above sums, evaluate each summand using Lemma \ref{lem:step1}(i). The result follows.
\end{proof}

\begin{lemma} \label{lem:step3}
For $0 \leq i \leq D-1$,
\begin{align*}
E_i A^* E_{i+1} - E_{i+1} A^* E_i = (E_0+E_1+ \cdots + E_i) A^*-A^* (E_0+E_1+\cdots + E_i).
\end{align*}
\end{lemma}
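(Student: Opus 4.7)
The plan is to write both sides in terms of the basis decomposition $I = \sum_{h=0}^D E_h$ and exploit the tridiagonal action of $A^*$ on the eigenspaces of $A$.

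First I would recall from Lemma \ref{lem:3TT} that $A^* E_j V \subseteq E_{j-1}V + E_jV + E_{j+1}V$ for $0 \leq j \leq D$. Multiplying on the left by $E_h$, this yields the key vanishing property
\begin{equation*}
E_h A^* E_k = 0 \qquad \text{whenever } |h-k|>1, \quad 0 \leq h,k \leq D.
\end{equation*}
Set $F_i = E_0 + E_1 + \cdots + E_i$ for brevity, so the right-hand side of the claimed identity is $F_i A^* - A^* F_i$.

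Next I would insert $I = \sum_{k=0}^D E_k$ on the right of $F_i A^*$ and $I = \sum_{h=0}^D E_h$ on the left of $A^* F_i$, obtaining
\begin{align*}
F_i A^* - A^* F_i &= \sum_{h=0}^{i} \sum_{k=0}^{D} E_h A^* E_k - \sum_{h=0}^{D} \sum_{k=0}^{i} E_h A^* E_k \\
&= \sum_{h=0}^{i} \sum_{k=i+1}^{D} E_h A^* E_k \;-\; \sum_{h=i+1}^{D} \sum_{k=0}^{i} E_h A^* E_k,
\end{align*}
after the diagonal block $0 \leq h,k \leq i$ cancels.

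Finally I would apply the vanishing property above to each remaining double sum. In the first sum we need $h \leq i$, $k \geq i+1$, and $|h-k|\leq 1$, which forces $h=i$ and $k=i+1$; in the second we need $h\geq i+1$, $k \leq i$, and $|h-k|\leq 1$, forcing $h=i+1$ and $k=i$. This collapses the expression to $E_i A^* E_{i+1} - E_{i+1} A^* E_i$, as required. There is no real obstacle: the whole argument is bookkeeping on the banded structure of $A^*$ in the basis of primitive idempotents of $A$, once one has the containment $A^* E_j V \subseteq E_{j-1}V+E_jV+E_{j+1}V$ in hand.
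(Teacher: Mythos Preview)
Your proof is correct and uses essentially the same idea as the paper: both arguments exploit the vanishing $E_h A^* E_k = 0$ for $|h-k|>1$ (equivalently, Lemma~\ref{lem:3TT}) together with the idempotent decomposition $I=\sum_h E_h$. The paper organizes the computation as a telescoping sum over $j=0,\ldots,i$ of the identities $E_jA^*=E_jA^*(E_{j-1}+E_j+E_{j+1})$ and $A^*E_j=(E_{j-1}+E_j+E_{j+1})A^*E_j$, whereas you cancel the common $0\le h,k\le i$ block directly; these are two ways of writing the same bookkeeping.
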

\begin{proof} Recall the convention $E_{-1}=0$. For $0 \leq j \leq D-1$ we have
\begin{align}
E_j A^* = E_j A^* (E_0+E_1+ \cdots + E_D)= E_j A^* (E_{j-1} + E_j + E_{j+1}),
\label{eq:sum1}
\end{align}
and similarly
\begin{align}
 A^*E_j = (E_{j-1} + E_j + E_{j+1}) A^* E_j.
\label{eq:sum2}
\end{align}
Sum \eqref{eq:sum1}, \eqref{eq:sum2} over $j=0,1,\ldots, i$. Take the difference between the two sums.
\end{proof}

\noindent Recall the Bose-Mesner algebra $M$ of $\Gamma$.

\begin{lemma} \label{lem:step4} 
We have
\begin{align*}
{\rm Span}\lbrace R A^* S - S A^* R \vert R, S \in M\rbrace = \lbrace YA^*-A^*Y \vert Y \in M\rbrace.
\end{align*}
\end{lemma}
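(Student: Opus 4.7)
The plan is to prove the two inclusions separately, with the reverse inclusion being essentially trivial and the forward inclusion reducing, via the spectral expansion in $M$, to the identity in Lemma \ref{lem:step3}.

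For $\supseteq$, observe that $I \in M$ (since $I = A_0$), so taking $R = Y$ and $S = I$ gives $RA^*S - SA^*R = YA^* - A^*Y$. Hence every element of the right-hand side lies in the left-hand side.

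For $\subseteq$, I would expand $R = \sum_{i=0}^D r_i E_i$ and $S = \sum_{j=0}^D s_j E_j$ with $r_i, s_j \in \mathbb{R}$, and use the orthogonality $E_i E_j = \delta_{i,j} E_i$ to compute
\begin{align*}
R A^* S - S A^* R = \sum_{i,j=0}^D (r_i s_j - s_i r_j)\, E_i A^* E_j = \sum_{0 \leq i < j \leq D} (r_i s_j - s_i r_j)\bigl(E_i A^* E_j - E_j A^* E_i\bigr).
\end{align*}
By Lemma \ref{lem:step1}(ii), $E_i A^* E_j = 0$ whenever $|i-j| \geq 2$, so only the terms with $j = i+1$ survive. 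Thus it suffices to show that $E_i A^* E_{i+1} - E_{i+1} A^* E_i$ lies in $\{Y A^* - A^* Y \mid Y \in M\}$ for $0 \leq i \leq D-1$.

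This is exactly the content of Lemma \ref{lem:step3}: it identifies $E_i A^* E_{i+1} - E_{i+1} A^* E_i = Y_i A^* - A^* Y_i$ where $Y_i = E_0 + E_1 + \cdots + E_i \in M$. Collecting terms, $RA^*S - SA^*R = Z A^* - A^* Z$ with $Z = \sum_{i=0}^{D-1}(r_i s_{i+1} - s_i r_{i+1}) Y_i \in M$, which completes the forward inclusion. There is no real obstacle here beyond keeping the bookkeeping straight; the argument is essentially a telescoping consequence of the previous two lemmas, with Lemma \ref{lem:step1}(ii) doing the work of reducing the sum to nearest-neighbor indices and Lemma \ref{lem:step3} doing the work of rewriting the surviving differences as commutators with $A^*$.
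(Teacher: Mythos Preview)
Your proof is correct and takes essentially the same approach as the paper: reduce to nearest-neighbor terms via Lemma~\ref{lem:step1}(ii), then invoke Lemma~\ref{lem:step3} to rewrite each $E_i A^* E_{i+1} - E_{i+1} A^* E_i$ as a commutator $Y_i A^* - A^* Y_i$ with $Y_i = E_0 + \cdots + E_i \in M$. The paper packages the same argument as a single chain of span equalities rather than two separate inclusions, but the content is identical.
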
 
\begin{proof} Recall that $\lbrace E_i \rbrace_{i=0}^D$ is a basis for $M$. Note that $\lbrace E_0+\cdots +E_i\rbrace_{i=0}^D$ is a basis for $M$.
Observe that
\begin{align*}
&{\rm Span}\lbrace R A^* S - S A^* R \vert R, S \in M\rbrace 
\\
&={\rm Span}\lbrace E_i A^* E_j - E_j A^* E_i \vert 0 \leq i,j\leq D\rbrace 
\\
&={\rm Span}\lbrace E_i A^* E_{i+1} - E_{i+1} A^* E_i \vert 0 \leq i \leq D-1\rbrace 
\\
&={\rm Span}\lbrace (E_0+\cdots + E_i) A^* -  A^* (E_0 + \cdots +E_i) \vert 0 \leq i \leq D-1\rbrace 
\\
&={\rm Span}\lbrace (E_0+\cdots + E_i) A^* -  A^* (E_0 + \cdots +E_i) \vert 0 \leq i \leq D\rbrace 
\\
&= \lbrace YA^*-A^*Y \vert Y \in M\rbrace.
\end{align*}
\end{proof}
\noindent Let $\lambda, \mu$ denote commuting indeterminates. We define some polynomials $P(\lambda, \mu)$ and $P^*(\lambda, \mu)$ as follows.
Given real numbers $\beta, \gamma, \varrho$ define
\begin{align}
P(\lambda, \mu) = \lambda^2 - \beta \lambda \mu + \mu^2 - \gamma(\lambda + \mu) - \varrho.
\label{eq:P}
\end{align}
Given real numbers $\beta, \gamma^*, \varrho^*$ define
\begin{align}
P^*(\lambda, \mu) = \lambda^2 - \beta \lambda \mu + \mu^2 - \gamma^*(\lambda + \mu) - \varrho^*.
\label{eq:Ps}
\end{align}

\begin{lemma} \label{lem:step5} For real numbers $\beta, \gamma, \varrho$ the following are equivalent:
\begin{enumerate}
\item[\rm (i)] $0 = \lbrack A, A^2 A^* - \beta A A^* A + A^* A^2 - \gamma(AA^*+ A^*A) - \varrho A^* \rbrack$;
\item[\rm (ii)] $P(\theta_{i-1}, \theta_i)=0 $ for $1 \leq i \leq D$.
\end{enumerate}
\end{lemma}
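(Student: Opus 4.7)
Set $Z = A^2 A^* - \beta A A^* A + A^* A^2 - \gamma(AA^* + A^*A) - \varrho A^*$, so that condition (i) becomes $AZ = ZA$, equivalently $E_i(AZ - ZA)E_j = 0$ for all $0 \leq i,j \leq D$. The plan is to compute these sandwiched expressions using the spectral decomposition $A = \sum_i \theta_i E_i$ and reduce everything to a statement about $P(\theta_i,\theta_j)$.

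First I would use $E_i A = \theta_i E_i$ and $A E_j = \theta_j E_j$ to evaluate each summand of $E_i Z E_j$. A direct calculation gives
\begin{align*}
E_i Z E_j &= \bigl(\theta_i^2 - \beta \theta_i \theta_j + \theta_j^2 - \gamma(\theta_i + \theta_j) - \varrho\bigr) E_i A^* E_j \\
&= P(\theta_i,\theta_j)\, E_i A^* E_j.
\end{align*}
Then
\[
E_i [A,Z] E_j = (\theta_i - \theta_j)\, P(\theta_i,\theta_j)\, E_i A^* E_j.
\]
Since the $E_i$ form a resolution of the identity, $[A,Z]=0$ is equivalent to the vanishing of all these blocks.

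Next I would split into cases according to $|i-j|$. For $i = j$ the prefactor $\theta_i - \theta_j$ is zero, so the block vanishes automatically. For $|i - j| \geq 2$, Lemma~\ref{lem:3TT} (a consequence of the $Q$-polynomial property via the triple product relations) gives $A^* E_j V \subseteq E_{j-1}V + E_jV + E_{j+1}V$, hence $E_i A^* E_j = 0$ and again the block vanishes. For $|i-j| = 1$, the $Q$-polynomial property forces $q^i_{1,j} \neq 0$, so by Theorem~\ref{lem:tpr}(ii) we have $E_i A^* E_j \neq 0$; combined with $\theta_i \neq \theta_j$ (Lemma~\ref{lem:thDist}(i)), the vanishing of the block is equivalent to $P(\theta_i,\theta_j) = 0$.

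Finally, since $P(\lambda,\mu)$ is symmetric in its arguments, the collection of conditions $P(\theta_i,\theta_j) = 0$ for $|i-j|=1$ collapses to $P(\theta_{i-1},\theta_i) = 0$ for $1 \leq i \leq D$. This gives the equivalence (i) $\Leftrightarrow$ (ii). The main things to verify carefully are the spectral computation of $E_i Z E_j$ (routine but bookkeeping-heavy) and the nonvanishing of $E_i A^* E_j$ in the $|i-j|=1$ case; the latter is the only step that genuinely uses the $Q$-polynomial hypothesis and is the pivot of the argument.
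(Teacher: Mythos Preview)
Your proposal is correct and follows essentially the same approach as the paper: compute $E_i[A,Z]E_j = (\theta_i-\theta_j)P(\theta_i,\theta_j)E_iA^*E_j$, then split into the cases $i=j$, $|i-j|\geq 2$, and $|i-j|=1$, using the $Q$-polynomial structure to control $E_iA^*E_j$ in the latter two cases. The paper cites Lemma~\ref{lem:step1}(ii) rather than Lemma~\ref{lem:3TT} and Theorem~\ref{lem:tpr}(ii) for the (non)vanishing of $E_iA^*E_j$, but these are equivalent here.
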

\begin{proof} Let $C$ denote the expression on the right in (i). We have
\begin{align*}
C = (E_0 + \cdots + E_D)C (E_0 +\cdots + E_D) = \sum_{i=0}^D \sum_{j=0}^D E_i C E_j.
\end{align*}
For $0 \leq i,j\leq D$ use $E_iA=\theta_i E_i$ and $A E_j = \theta_j E_j$ to get
\begin{align} \label{eq:Cexpand}
E_i C E_j = E_i A^* E_j (\theta_i - \theta_j)P(\theta_i, \theta_j).
\end{align}
\noindent ${\rm (i)} \Rightarrow {\rm (ii)}$: For $1 \leq i \leq D$ we show $P(\theta_{i-1}, \theta_i)=0$.
We have $C=0$, so
\begin{align*}
0 = E_{i-1} C E_i = E_{i-1} A^* E_i (\theta_{i-1} - \theta_i)P(\theta_{i-1}, \theta_i).
\end{align*}
We have $E_{i-1}A^* E_i\not=0$ and $\theta_{i-1}-\theta_i\not=0$, so $P(\theta_{i-1}, \theta_i)=0$.
\\
\noindent ${\rm (ii)} \Rightarrow {\rm (i)}$: We show that $C=0$.
The polynomial $P(\lambda, \mu)$ is symmetric, so $P(\theta_i,\theta_{i-1})=0$ for $1 \leq i \leq D$.
To show that $C=0$, it suffices to show that $E_iCE_j=0$ for $0 \leq i,j\leq D$.
Let $i,j$ be given. We evaluate $E_iCE_j$ using \eqref{eq:Cexpand}.
 If $\vert i-j\vert > 1 $ then $E_iA^*E_j=0$, so $E_iCE_j=0$.
If $\vert i-j\vert =1$ then $P(\theta_i, \theta_j)=0$, so $E_iCE_j=0$.
If $i=j$ then $\theta_i-\theta_j=0$, so $E_iCE_j=0$.
In all cases $E_i C E_j=0$, so $C=0$.
\end{proof}

\noindent We are now ready to prove Theorem \ref{thm:thmOne}.
\medskip

\noindent {\it Proof of Theorem \ref{thm:thmOne}}. 
By Lemma  \ref{lem:step4} (with $R=A^2$ and $S=A$), there exists $Z \in M$ such that
\begin{align}
A^2 A^* A - A A^* A^2 = Z A^*-A^*Z.
\label{eq:AAZ}
\end{align}
The matrices $\lbrace A^i \rbrace_{i=0}^D$ form a basis for $M$, so there exists a polynomial $f$ with degree at most $D$ such that $Z=f(A)$.
Let $d$ denote the degree of $f$. We show that $d=3$. First assume that $d>3$. Multiply each term in \eqref{eq:AAZ} on the left by $E^*_d$
and the right by $E^*_0$. Evaluate the result using Lemmas \ref{lem:step1}, \ref{lem:step2} to get
\begin{align}\label{eq:3factors}
0 = c (\theta^*_0 - \theta^*_d) E^*_d A^d E^*_0,
\end{align}
where $c$ is the leading coefficient of $f$. By construction $c \not=0$. We have $\theta^*_0-\theta^*_d\not=0$  since $d \not=0$. Also
$E^*_d A^d E^*_0\not=0$ by Lemma \ref{lem:step1}. Therefore \eqref{eq:3factors} gives a contradiction. Next assume that $d<3$.
Multiply each term in \eqref{eq:AAZ} on the left by $E^*_3$ and on the right by $E^*_0$. Evaluate the result using Lemmas \ref{lem:step1}, \ref{lem:step2} to get
\begin{align}
(\theta^*_1-\theta^*_2) E^*_3 A^3 A^*_0=0.
\label{eq:3factorsL}
\end{align}
We have $\theta^*_1-\theta^*_2\not=0$. Also $E^*_3 A^3 E^*_0\not=0$ by Lemma \ref{lem:step1}. Therefore \eqref{eq:3factorsL} gives a contradiction.
We have shown that $d=3$. Define $\beta = c^{-1}-1$. Divide both sides of \eqref{eq:AAZ} by $c$, and evaluate the result using $d=3$ and
$c^{-1}=\beta+1$. We find that there exist $\gamma, \varrho \in \mathbb R$ such that
\begin{align*} 
(\beta+1)(A^2A^*A-AA^*A^2) = A^3A^*-A^*A^3 - \gamma(A^2 A^* - A^* A^2) -\varrho (A A^*-A^*A).
\end{align*} 
In this equation, we rearrange the terms to get
\begin{align}
0 &= \lbrack A, A^2 A^* - \beta A A^* A + A^* A^2 - \gamma(AA^*+ A^*A) - \varrho A^* \rbrack.
\label{eq:1TD}
\end{align}
This is
the first tridiagonal relation. To get the second tridiagonal relation, pick an integer $i$ $(2 \leq i \leq D-1)$. Multiply each term in
\eqref{eq:1TD} on the left by $E^*_{i-2}$ and on the right by $E^*_{i+1}$. Simplify the result using Lemma \ref{lem:step2} to get
\begin{align} \label{eq:twoterms}
0 = E^*_{i-2} A^3 E^*_{i+1} \bigl( \theta^*_{i-2} - (\beta+1) \theta^*_{i-1} + (\beta+1) \theta^*_i - \theta^*_{i+1}\bigr).
\end{align}
We have $E^*_{i-2}A^3 E^*_{i+1}\not=0$ by Lemma \ref{lem:step1}, so the coefficient in \eqref{eq:twoterms} must be zero. Therefore the sequence
$\lbrace \theta^*_i \rbrace_{i=0}^D$ is $\beta$-recurrent. By Lemma \ref{lem:brecvsbgrecS99} 
there exists $\gamma^* \in \mathbb R$ such that $\lbrace \theta^*_i \rbrace_{i=0}^D$ is $(\beta, \gamma^*)$-recurrent. By 
Lemma \ref{lem:bgrecvsbgdrecS99}, there exists $\varrho^* \in \mathbb R$ such that $\lbrace \theta^*_i \rbrace_{i=0}^D$ is $(\beta, \gamma^*, \varrho^*)$-recurrent. 
Consequently
$P^*(\theta^*_{i-1},\theta^*_i)=0$ for $1 \leq i \leq D$. Now $\beta$, $\gamma^*$, $\varrho^*$ satisfy \eqref{eq:TDtwo}  by Lemma  \ref{lem:step5}.
\hfill {$\Box$}

\begin{proposition} \label{prop:five} {\rm (See \cite[Theorem~4.3]{qSerre}.)} We refer to the scalars 
 $\beta, \gamma, \gamma^*,
\varrho, \varrho^*$ from Theorem \ref{thm:thmOne}.
\begin{enumerate}
\item[\rm (i)] 
The expressions
\begin{align*}
\frac{\theta_{i-2}-\theta_{i+1}}{\theta_{i-1}-\theta_i},\qquad \qquad  
 \frac{\theta^*_{i-2}-\theta^*_{i+1}}{\theta^*_{i-1}-\theta^*_i} 
\end{align*} 
 are both equal to $\beta +1$ for $2\leq i \leq D-1$;
 \item[\rm (ii)]   
$\gamma = \theta_{i-1}-\beta \theta_i + \theta_{i+1} $ $(1 \leq i \leq D-1)$;
\item[\rm (iii)] 
$\gamma^* = \theta^*_{i-1}-\beta \theta^*_i + \theta^*_{i+1}$ $(1 \leq i \leq D-1)$;
\item[\rm (iv)] 
$\varrho = \theta^2_{i-1}-\beta \theta_{i-1}\theta_i+\theta_i^2-\gamma (\theta_{i-1}+\theta_i)$ $(1 \leq i \leq D)$;
\item[\rm (v)] 
$\varrho^*= \theta^{*2}_{i-1}-\beta \theta^*_{i-1}\theta^*_i+\theta_i^{*2}-
\gamma^* (\theta^*_{i-1}+\theta^*_i)$ $(1 \leq i \leq D)$.
\end{enumerate}
\end{proposition}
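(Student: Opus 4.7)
\medskip

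\noindent The plan is to reduce everything to Lemma \ref{lem:step5}, which says that the first tridiagonal relation \eqref{eq:TDone} is equivalent to $P(\theta_{i-1},\theta_i)=0$ for $1\leq i\leq D$, where $P(\lambda,\mu)$ is the polynomial in \eqref{eq:P}. Its dual version (proved by interchanging the roles of $A,A^*$ and of $E_i,E^*_i$) says that the second tridiagonal relation \eqref{eq:TDtwo} is equivalent to $P^*(\theta^*_{i-1},\theta^*_i)=0$ for $1\leq i\leq D$, where $P^*$ is given by \eqref{eq:Ps}. Since $\beta,\gamma,\varrho$ (resp.\ $\beta,\gamma^*,\varrho^*$) satisfy \eqref{eq:TDone} (resp.\ \eqref{eq:TDtwo}) by Theorem \ref{thm:thmOne}, these polynomial identities are at our disposal.

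\medskip

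\noindent Given this, (iv) is immediate: rewrite $P(\theta_{i-1},\theta_i)=0$ as
\begin{align*}
\varrho = \theta^2_{i-1} - \beta\theta_{i-1}\theta_i + \theta_i^2 - \gamma(\theta_{i-1}+\theta_i) \qquad (1\leq i\leq D),
\end{align*}
and (v) follows in the same way from $P^*(\theta^*_{i-1},\theta^*_i)=0$. For (ii), I would subtract the identities $P(\theta_{i-1},\theta_i)=0$ and $P(\theta_i,\theta_{i+1})=0$, valid for $1\leq i\leq D-1$. A short computation factors the difference as
\begin{align*}
(\theta_{i-1}-\theta_{i+1})\bigl(\theta_{i-1}+\theta_{i+1}-\beta\theta_i-\gamma\bigr) = 0.
\end{align*}
By Lemma \ref{lem:thDist}(i) the eigenvalues $\{\theta_i\}_{i=0}^D$ are mutually distinct, so $\theta_{i-1}\neq\theta_{i+1}$, and (ii) follows. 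Assertion (iii) is obtained identically from $P^*$, using Lemma \ref{lem:thsDist}(i) to know $\theta^*_{i-1}\neq\theta^*_{i+1}$.

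\medskip

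\noindent Finally, for (i), I would invoke (ii) at two consecutive indices $i-1$ and $i$, for $2\leq i\leq D-1$. Subtracting
\begin{align*}
\theta_{i-2}-\beta\theta_{i-1}+\theta_i = \gamma = \theta_{i-1}-\beta\theta_i+\theta_{i+1}
\end{align*}
and rearranging gives $\theta_{i-2}-\theta_{i+1} = (\beta+1)(\theta_{i-1}-\theta_i)$; since $\theta_{i-1}\neq\theta_i$, this yields the claimed ratio $\beta+1$. The dual version of the ratio formula for $\{\theta^*_i\}_{i=0}^D$ comes out of (iii) by the same manipulation.

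\medskip

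\noindent There is no real obstacle here; the content is entirely contained in Lemma \ref{lem:step5} and its dual, together with the distinctness of the eigenvalues and dual eigenvalues. The only point requiring a small amount of care is the factorization used to pass from $P(\theta_{i-1},\theta_i)=P(\theta_i,\theta_{i+1})=0$ to the linear relation defining $\gamma$, and the invocation of the distinctness hypothesis to cancel the factor $\theta_{i-1}-\theta_{i+1}$.
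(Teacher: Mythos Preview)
Your proof is correct and follows essentially the same path as the paper: both establish (iv), (v) directly from Lemma~\ref{lem:step5} and its dual, then derive (ii), (iii) from the factorization $(\theta_{i-1}-\theta_{i+1})(\theta_{i-1}-\beta\theta_i+\theta_{i+1}-\gamma)$, and finally obtain (i) by subtracting (ii) at consecutive indices. The paper merely packages the last two steps as citations to Lemmas~\ref{lem:bgrecvsbgdrecS99} and~\ref{lem:recvsbrecS99} from Section~14, whereas you carry out the same computations inline.
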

\begin{proof} We start with item (iv).\\
\noindent (iv) By \eqref{eq:TDone} and Lemma \ref{lem:step5}.
\\
\noindent (v) By \eqref{eq:TDtwo} and Lemma \ref{lem:step5}.
\\
\noindent (ii) By Lemma \ref{lem:bgrecvsbgdrecS99} and (iv) above.
\\
\noindent (iii) By Lemma \ref{lem:bgrecvsbgdrecS99} and (v) above.
\\
\noindent (i) The sequence $\lbrace \theta_i \rbrace_{i=0}^D$ is $(\beta, \gamma)$-recurrent by (ii), so 
$\lbrace \theta_i \rbrace_{i=0}^D$ is $\beta$-recurrent. Similarly
$\lbrace \theta^*_i \rbrace_{i=0}^D$ is $\beta$-recurrent. The result follows.
\end{proof}

\noindent Theorem \ref{thm:thmTwo} is immediate from Proposition \ref{prop:five}(i).

\begin{remark} \rm The tridiagonal relations \eqref{eq:TDone}, \eqref{eq:TDtwo} are the defining relations
for the  tridiagonal algebra \cite[Definition~3.9]{qSerre}. 
Special cases of the tridiagonal algebra
include the universal enveloping algebra of the Onsager Lie algebra $O$ \cite[Remark~34.5]{aa}, the $q$-Onsager algebra $O_q$ \cite{pospart}, and the positive part  $U^+_q$ \cite[Remark~34.7]{aa} of the $q$-deformed enveloping algebra $U_q({\widehat{\mathfrak{sl}}}_2)$ \cite{charp}.
\end{remark}

\section{The primary $T$-module and the Askey-Wilson relations} 

\noindent Throughout this section $\Gamma=(X, \mathcal R)$ denotes a distance-regular
graph with diameter $D\geq 3$. 
Fix $x \in X$ and write $T=T(x)$. In Section 7 we described the primary $T$-module. In this section, we give more information 
under the assumption that $\Gamma$ is $Q$-polynomial. 
\medskip

\noindent
Throughout this section, we assume that $\Gamma$ is $Q$-polynomial with respect to the ordering $\lbrace E_i \rbrace_{i=0}^D$ of the
primitive idempotents.

\begin{lemma} \label{lem:LP} 
For the primary $T$-module $\mathcal V$ the following hold.
\begin{enumerate}
\item[\rm (i)] With respect to the basis $\lbrace {\bf 1}_i \rbrace_{i=0}^D$ the matrices representing $A$ and $A^*$ are
\begin{align*}
A : \quad  \begin{pmatrix} a_0 & b_0 &&&&{\bf 0}  \\ c_1 & a_1 & b_1 & && \\  &c_2 & \cdot  & \cdot && \\
&& \cdot & \cdot & \cdot & \\
&&& \cdot &\cdot & b_{D-1} \\
{\bf 0}&&&& c_D & a_D
 \end{pmatrix},
 \qquad \qquad A^*: \quad {\rm diag}( \theta^*_0, \theta^*_1, \ldots, \theta^*_D).
\end{align*}
\item[\rm (ii)] 
With respect to the basis $\lbrace {\bf 1}^*_i \rbrace_{i=0}^D$ the matrices representing $A$ and $A^*$ are
\begin{align*}
A: \quad {\rm diag}( \theta_0, \theta_1, \ldots, \theta_D), \qquad \qquad
A^* : \quad  \begin{pmatrix} a^*_0 & b^*_0 &&&&{\bf 0}  \\ c^*_1 & a^*_1 & b^*_1 & && \\  &c^*_2 & \cdot  & \cdot && \\
&& \cdot & \cdot & \cdot & \\
&&& \cdot &\cdot & b^*_{D-1} \\
{\bf 0}&&&& c^*_D & a^*_D
 \end{pmatrix}.
\end{align*}
\end{enumerate}
\end{lemma}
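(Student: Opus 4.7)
\textbf{Proof plan for Lemma~\ref{lem:LP}.} The plan is to read off both matrix representations directly from Lemmas~\ref{lem:Taction} and~\ref{lem:Taction2}, using the intersection/Krein number identities from Lemmas~\ref{lem:kp} and~\ref{lem:mq} to translate $p^h_{1,j}$ and $q^h_{1,j}$ into the parameters $a_i,b_i,c_i$ and $a^*_i,b^*_i,c^*_i$.

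For part (i), I would first observe that Lemma~\ref{lem:Taction}(i) gives $E^*_j {\bf 1}_j = {\bf 1}_j$, so ${\bf 1}_j \in E^*_jV$. By Lemma~\ref{lem:thsDist}(iii), $A^*$ acts on $E^*_j V$ as the scalar $\theta^*_j$. Hence $A^*{\bf 1}_j = \theta^*_j{\bf 1}_j$, giving the diagonal matrix for $A^*$. (Equivalently, specialize Lemma~\ref{lem:Taction}(ii) to $i=1$ and invoke the definition $\theta^*_j = m_1 u_j(\theta_1)$.) For the action of $A=A_1$, Lemma~\ref{lem:Taction}(iv) yields $A{\bf 1}_j = \sum_{h=0}^D p^h_{1,j}{\bf 1}_h$. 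By the triangle inequality, only the terms with $h \in \{j-1,j,j+1\}$ survive. The middle coefficient is $p^j_{1,j}=a_j$ by definition. For the other two, Lemma~\ref{lem:kp} gives $k_{j-1}p^{j-1}_{1,j} = k_j p^j_{j-1,1} = k_j c_j$, which combined with the relation $k_{j-1}b_{j-1}=k_jc_j$ (from the formula \eqref{eq:kiform} or the paragraph just before it) yields $p^{j-1}_{1,j}=b_{j-1}$; symmetrically $p^{j+1}_{1,j}=c_{j+1}$. Hence $A{\bf 1}_j = b_{j-1}{\bf 1}_{j-1} + a_j{\bf 1}_j + c_{j+1}{\bf 1}_{j+1}$, which reading column-by-column is exactly the claimed tridiagonal matrix.

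Part (ii) is the formal dual. From Lemma~\ref{lem:Taction2}(i), ${\bf 1}^*_j \in E_jV$, and Lemma~\ref{lem:thDist}(iii) then gives $A{\bf 1}^*_j = \theta_j {\bf 1}^*_j$, yielding the diagonal matrix for $A$. Lemma~\ref{lem:Taction2}(iv) gives $A^*{\bf 1}^*_j = \sum_{h=0}^D q^h_{1,j} {\bf 1}^*_h$; the $Q$-polynomial property (Definition~\ref{def:qpoly}) forces the only nonzero terms to have $h \in \{j-1,j,j+1\}$, with central coefficient $a^*_j$. For the off-diagonal terms I would apply Lemma~\ref{lem:mq} in place of Lemma~\ref{lem:kp}, combined with the relation $m_{j-1}b^*_{j-1} = m_j c^*_j$ from the paragraph containing \eqref{eq:miform}, to get $q^{j-1}_{1,j}=b^*_{j-1}$ and $q^{j+1}_{1,j}=c^*_{j+1}$. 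This gives the tridiagonal matrix for $A^*$ in the dual basis.

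There is no real obstacle here: the content of the lemma is entirely book-keeping. The only nontrivial point is ensuring the correct labeling convention (column-vs-row) of the tridiagonal matrix and the reindexing identities $p^{j-1}_{1,j}=b_{j-1}$ and $p^{j+1}_{1,j}=c_{j+1}$ (and their Krein analogues), which is what the symmetry identities in Lemmas~\ref{lem:kp} and~\ref{lem:mq} are for.
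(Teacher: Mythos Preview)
Your proposal is correct and follows essentially the same route as the paper, which simply cites parts (ii) and (iv) of Lemmas~\ref{lem:Taction} and~\ref{lem:Taction2}. One small simplification: the identifications $p^{j-1}_{1,j}=b_{j-1}$ and $p^{j+1}_{1,j}=c_{j+1}$ (and their Krein analogues) are already built into Lemmas~\ref{lem:A3t} and~\ref{lem:As3t}, so the detour through Lemmas~\ref{lem:kp} and~\ref{lem:mq} is not needed.
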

\begin{proof} By parts (ii), (iv) of Lemmas \ref{lem:Taction}, \ref{lem:Taction2}. 
\end{proof}

\begin{lemma} \label{lem:PMLP} The pair $A, A^*$ acts on the primary $T$-module $\mathcal V$ as a Leonard pair.
\end{lemma}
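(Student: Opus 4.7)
The plan is to combine the irreducibility of $\mathcal V$ as a $T$-module with the explicit matrix representations in Lemma \ref{lem:LP} to verify Definition \ref{def:lp}.

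First I would invoke the main result of Section 13: since $\mathcal V$ is an irreducible $T$-module (Lemma \ref{lem:prim}), the pair $A, A^*$ already acts on $\mathcal V$ as a tridiagonal pair. So conditions (i)--(iv) of Definition \ref{def:tdp} are in hand for free, and the only thing left to check is that the eigenspaces of $A\vert_{\mathcal V}$ and of $A^*\vert_{\mathcal V}$ are all one-dimensional.

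For this, I would use Lemma \ref{lem:LP}. Part (ii) of that lemma shows that with respect to the basis $\lbrace {\bf 1}^*_i\rbrace_{i=0}^D$ of $\mathcal V$ the matrix of $A$ is $\mathrm{diag}(\theta_0,\theta_1,\ldots,\theta_D)$, and part (i) shows that with respect to the basis $\lbrace {\bf 1}_i\rbrace_{i=0}^D$ of $\mathcal V$ the matrix of $A^*$ is $\mathrm{diag}(\theta^*_0,\theta^*_1,\ldots,\theta^*_D)$. Since the $\theta_i$ are mutually distinct (Lemma \ref{lem:thDist}) and the $\theta^*_i$ are mutually distinct (Lemma \ref{lem:thsDist}), and since $\dim \mathcal V = D+1$ by Lemma \ref{lem:PB}, each of $A\vert_{\mathcal V}$ and $A^*\vert_{\mathcal V}$ has $D+1$ distinct eigenvalues in a space of dimension $D+1$. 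Therefore every eigenspace of either operator on $\mathcal V$ is one-dimensional.

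Combining these observations with the tridiagonal pair structure from Section 13 gives exactly the conditions of Definition \ref{def:lp}, so $A, A^*$ acts on $\mathcal V$ as a Leonard pair. There is no substantive obstacle here; the lemma is really just a bookkeeping consequence of Lemma \ref{lem:LP} together with the general tridiagonal pair result, once one notes that $\dim \mathcal V$ equals the number of distinct eigenvalues on each side.
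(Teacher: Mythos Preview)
Your proof is correct and follows essentially the same approach as the paper, which simply cites Definition \ref{def:lp} and Lemma \ref{lem:LP}. The only cosmetic difference is that you route the tridiagonal pair conditions through the general theorem of Section 13, whereas the paper's one-line proof implicitly reads off all the Leonard pair conditions directly from the matrix representations in Lemma \ref{lem:LP}; either way the content is the same.
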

\begin{proof} By Definition \ref{def:lp} and Lemma \ref{lem:LP}.
\end{proof}
\noindent  In Section 15 we saw that $A, A^*$ satisfy the tridiagonal relations. In the literature, there is  another pair of relations called the Askey-Wilson relations \cite{zhedhidden},
that resemble the tridiagonal relations but are more elementary. It is shown in \cite[Theorem~1.5]{vidunas} that any Leonard pair satisfies the Askey-Wilson 
relations. This and Lemma \ref{lem:PMLP} imply that $A, A^*$ satisfy the Askey-Wilson relations on the primary $T$-module $\mathcal V$. 
We will show directly, that on the primary $T$-module $\mathcal V$ the $A, A^*$ satisfy
 the Askey-Wilson relations.
 \medskip
 
 \noindent We have a comment about Lemma \ref{lem:step1}. In that lemma there are some inequalities. We will need the fact that these
 inequalities hold on the primary $T$-module $\mathcal V$.

\begin{lemma}\label{lem:nonz} 
For $0 \leq i ,j\leq D$ and $r=\vert i-j\vert$ the following hold on the primary $T$-module $\mathcal V$:
\begin{align*}
E^*_i A^r E^*_j \not=0, \qquad \qquad E_i A^{*r} E_j \not=0.
\end{align*}
\end{lemma}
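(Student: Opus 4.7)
The plan is to test both statements on specific vectors in $\mathcal V$, chosen to match the idempotent on the right. For the first statement, I will apply $E^*_i A^r E^*_j$ to the basis vector $\mathbf{1}_j$; for the second, apply $E_i A^{*r} E_j$ to $\mathbf{1}^*_j$. By Lemma \ref{lem:Taction}(i) we have $E^*_j \mathbf{1}_j = \mathbf{1}_j$, and by Lemma \ref{lem:Taction2}(i) we have $E_j \mathbf{1}^*_j = \mathbf{1}^*_j$, so the presence of $E^*_j$ (resp.\ $E_j$) drops out and we are reduced to evaluating $E^*_i A^r \mathbf{1}_j$ and $E_i A^{*r} \mathbf{1}^*_j$.

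To compute $E^*_i A^r \mathbf{1}_j$, I would expand $A^r$ in the basis $\lbrace A_s\rbrace_{s=0}^D$ of $M$. By Lemma \ref{lem:vi}(ii,iii), $A^r = c_1 c_2 \cdots c_r A_r + \sum_{s=0}^{r-1} \alpha_s A_s$ for some scalars $\alpha_s \in \mathbb R$. By Lemma \ref{lem:Taction}(iv), $A_s \mathbf{1}_j = \sum_{h=0}^D p^h_{s,j} \mathbf{1}_h$, so applying $E^*_i$ and using Lemma \ref{lem:Taction}(i) gives $E^*_i A_s \mathbf{1}_j = p^i_{s,j} \mathbf{1}_i$. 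The triangle-inequality properties of $p^h_{i,j}$ recorded in Section~2 now yield $p^i_{s,j} = 0$ for $s < r = |i-j|$ and $p^i_{r,j} \neq 0$. Thus $E^*_i A^r \mathbf{1}_j = c_1 c_2 \cdots c_r\, p^i_{r,j}\, \mathbf{1}_i$, which is a nonzero scalar multiple of $\mathbf{1}_i$, proving the first nonvanishing.

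For the dual statement, I will run the symmetric argument using the $Q$-polynomial structure. By Lemma \ref{lem:vis}(ii,iii), $A^{*r} = c^*_1 c^*_2 \cdots c^*_r A^*_r + \sum_{s=0}^{r-1} \alpha^*_s A^*_s$. By Lemma \ref{lem:Taction2}(iv), $A^*_s \mathbf{1}^*_j = \sum_{h=0}^D q^h_{s,j} \mathbf{1}^*_h$, and applying $E_i$ together with Lemma \ref{lem:Taction2}(i) gives $E_i A^*_s \mathbf{1}^*_j = q^i_{s,j} \mathbf{1}^*_i$. Here the $Q$-polynomial property (Definition \ref{def:qpoly}) provides the crucial fact that $q^i_{s,j} = 0$ for $s < |i-j|$ and $q^i_{r,j} \neq 0$ for $r = |i-j|$. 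Therefore $E_i A^{*r} \mathbf{1}^*_j = c^*_1 c^*_2 \cdots c^*_r\, q^i_{r,j}\, \mathbf{1}^*_i \neq 0$.

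There is no real obstacle: the whole argument is mechanical once one chooses the right test vector. The only substantive ingredient is the use of the $Q$-polynomial hypothesis in the second half, which is exactly what allows the triangle-inequality-type nonvanishing $q^i_{r,j} \neq 0$ to hold and thereby mirror the classical statement for the intersection numbers $p^h_{i,j}$.
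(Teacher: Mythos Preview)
Your proof is correct and follows essentially the same approach as the paper: both arguments test the operators on the basis vectors $\mathbf{1}_j$ and $\mathbf{1}^*_j$ of $\mathcal V$. The paper phrases this as ``use the matrix representations in Lemma~\ref{lem:LP} together with $b_{i-1}c_i\neq 0$ and $b^*_{i-1}c^*_i\neq 0$,'' i.e.\ one reads off directly that the $r$th power of an irreducible tridiagonal matrix has nonzero $(i,j)$-entry when $|i-j|=r$; you instead expand $A^r$ (resp.\ $A^{*r}$) in the distance-matrix basis and invoke the triangle-inequality facts about $p^h_{i,j}$ (resp.\ the $Q$-polynomial property for $q^h_{i,j}$) to reach the same conclusion. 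Both routes compute the same matrix entry, so the difference is only in bookkeeping.
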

\begin{proof} Use the matrix representations in Lemma  \ref{lem:LP}, together with the fact that $b_{i-1} c_i\not=0$ and $b^*_{i-1} c^*_i \not=0$ for
$1 \leq i \leq D$.
\end{proof}

\noindent 
We now display the Askey-Wilson relations. Recall the scalars $\beta, \gamma, \gamma^*, \varrho, \varrho^*$ from Theorem \ref{thm:thmOne}.
The following result is a variation on \cite[Theorem~1.5]{vidunas}.
\begin{theorem} \label{thm:AW} {\rm (See \cite[Theorem~1.5]{vidunas}.)}
There exist real numbers $\omega, \eta, \eta^*$ such that the following hold on the primary $T$-module $\mathcal V$:
\begin{align}
A^2 A^* - \beta AA^*A+ A^*A^2 - \gamma(AA^*+ A^*A)-\varrho A^*&= \gamma^* A^2 + \omega A + \eta I, \label{eq:AW1}
\\
 A^{*2} A - \beta A^* A A^* + A A^{*2} - \gamma^*(A^* A+A A^*) - \varrho^* A &= \gamma A^{*2} + \omega A^* + \eta^*I. \label{eq:AW2}
\end{align}
\end{theorem}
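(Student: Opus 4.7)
The plan is to prove (16.6) and (16.7) separately by a common strategy, then argue that the scalars denoted $\omega$ in the two relations are forced to coincide. Let $C_1$ (resp.\ $C_2$) denote the left-hand side of (16.6) (resp.\ (16.7)); by Theorem \ref{thm:thmOne} we have $[A,C_1]=0$ and $[A^*,C_2]=0$. Set $R_1:=C_1-\gamma^*A^2$ and $R_2:=C_2-\gamma A^{*2}$, so that $[A,R_1]=0$ and $[A^*,R_2]=0$.

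For (16.6) I would work on $\mathcal V$ in the basis $\{{\bf 1}_i\}_{i=0}^D$. By Lemma \ref{lem:LP}(i), in this basis $A$ is irreducible tridiagonal with off-diagonal entries $b_i,c_{i+1}$ and $A^*=\mathrm{diag}(\theta^*_0,\ldots,\theta^*_D)$. Each monomial in $C_1$ has degree at most two in $A$, so $C_1|_{\mathcal V}$ is supported on the five central diagonals. A direct entrywise calculation gives
\[
(C_1)_{i,i+2} \;=\; b_ib_{i+1}\bigl(\theta^*_i-\beta\theta^*_{i+1}+\theta^*_{i+2}\bigr) \;=\; \gamma^*\,b_ib_{i+1} \;=\; \gamma^*(A^2)_{i,i+2},
\]
where the middle equality is Proposition \ref{prop:five}(iii); self-adjointness of $R_1$ together with the orthogonality $\langle{\bf 1}_i,{\bf 1}_j\rangle=\delta_{i,j}k_i$ from Lemma \ref{lem:bil}(i) then forces $(R_1)_{i+2,i}=0$ as well. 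Hence $R_1|_{\mathcal V}$ is tridiagonal in the ${\bf 1}$-basis. Now $A|_{\mathcal V}$ has the $D+1$ mutually distinct eigenvalues $\theta_0,\ldots,\theta_D$, so its centralizer in $\mathrm{End}(\mathcal V)$ is the polynomial algebra $\mathbb R[A|_{\mathcal V}]$, and $R_1|_{\mathcal V}=r(A)|_{\mathcal V}$ for a unique polynomial $r$ of degree at most $D$. If $d:=\deg r\ge 2$, then in the ${\bf 1}$-basis $(r(A))_{0,d}=r_d(A^d)_{0,d}=r_d\,b_0b_1\cdots b_{d-1}\neq 0$, contradicting tridiagonality. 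Thus $\deg r\le 1$ and $R_1|_{\mathcal V}=\omega_1 A+\eta I$ for some $\omega_1,\eta\in\mathbb R$, establishing (16.6). The proof of (16.7) is entirely analogous, carried out in the basis $\{{\bf 1}^*_i\}$ using Lemma \ref{lem:LP}(ii), Proposition \ref{prop:five}(ii), and the nonvanishing $b^*_i,c^*_{i+1}\neq 0$ guaranteed by Lemma \ref{lem:nonz}; it yields $R_2|_{\mathcal V}=\omega_2 A^*+\eta^*I$.

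It remains to show $\omega_1=\omega_2$. A routine expansion in $\mathrm{Mat}_X(\mathbb R)$ establishes the identity
\[
[A^*,C_1]+[A,C_2]\;=\;\gamma[A,A^{*2}]-\gamma^*[A^2,A^*],
\]
which rearranges to $[A^*,R_1]+[A,R_2]=0$. Restricting to $\mathcal V$ and inserting $R_1=\omega_1 A+\eta I$ and $R_2=\omega_2 A^*+\eta^*I$ yields $(\omega_2-\omega_1)[A,A^*]=0$ on $\mathcal V$. The $(i,i+1)$-entry of $[A,A^*]$ in the ${\bf 1}$-basis equals $b_i(\theta^*_{i+1}-\theta^*_i)\neq 0$, so $[A,A^*]\neq 0$ on $\mathcal V$, forcing $\omega_1=\omega_2=:\omega$.

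The main obstacle I anticipate is the bookkeeping in the two entrywise calculations: getting $(C_1)_{i,i+2}$ to collapse to $\gamma^*\,b_ib_{i+1}$ hinges on applying Proposition \ref{prop:five}(iii) at exactly the right index range, while the identity $[A^*,C_1]+[A,C_2]=\gamma[A,A^{*2}]-\gamma^*[A^2,A^*]$ requires expanding a half-dozen triple and quadruple products in $A$ and $A^*$ and verifying the systematic cancellations among the $\beta$-terms and the $\gamma,\gamma^*$-terms.
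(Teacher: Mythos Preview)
Your proof is correct and follows essentially the same architecture as the paper's: use the tridiagonal relation to see that the left-hand side commutes with $A$ on $\mathcal V$, invoke the simple spectrum of $A|_{\mathcal V}$ to write it as a polynomial in $A$, bound the degree, identify the quadratic coefficient as $\gamma^*$, repeat dually, and finally match the two $\omega$'s via a commutator identity.

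The differences are packaging rather than substance. The paper works with the idempotent sandwiches $E^*_d\,\mathcal L\,E^*_0$ (via Lemmas \ref{lem:step2} and \ref{lem:nonz}) to first bound $\deg f\le 2$ and then separately pin down the $A^2$-coefficient as $\gamma^*$; you instead subtract $\gamma^*A^2$ up front, compute the $(i,i+2)$ entry of $R_1$ explicitly in the $\{{\bf 1}_i\}$ basis, and go straight to $\deg r\le 1$. Your entrywise calculation is the matrix-entry translation of the paper's $E^*_{i}A^rA^*A^sE^*_j$ calculus. For the $\omega$-matching step, your global identity $[A^*,C_1]+[A,C_2]=\gamma[A,A^{*2}]-\gamma^*[A^2,A^*]$ is exactly what the paper obtains by expanding the two commutators and adding; you just organize the cancellation more cleanly before restricting to $\mathcal V$. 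One small citation fix: the nonvanishing of $b^*_i,c^*_{i+1}$ comes directly from the $Q$-polynomial definition (Definition \ref{def:qpoly}(ii)), not Lemma \ref{lem:nonz}.
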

\begin{proof} Let  $\mathcal L$ denote the expression on the left in \eqref{eq:AW1}. By the tridiagonal relation \eqref{eq:TDone}, we see that
 $\lbrack A, \mathcal L\rbrack=0$
on $\mathcal V$. The restriction of $A$ to $\mathcal V$ is diagonalizable and has all eigenspaces of dimension one.
Recall from linear algebra that for a diagonalizable linear transformation $\sigma$ that  has all eigenspaces of dimension one, any linear transformation that
commutes with $\sigma$ must be a polynomial in $\sigma$. 
Therefore there exists a polynomial $f$ such that $\mathcal L = f(A)$ on $\mathcal V$.
The minimal polynomial of $A$ on $\mathcal V$ has degree $D+1$, so we may choose $f$ such that its degree is at most $D$. Let $d$ denote
the degree of $f$.
We show that $d\leq 2$. Suppose that $d\geq 3$. In the equation $\mathcal L = f(A)$, multiply each term on the left by $E^*_d$ and on the right by $E^*_0$.
We have $E^*_d \mathcal L E^*_0=0$ by Lemma \ref{lem:step2}, so 
 $0 = E^*_d f(A) E^*_0$ on $\mathcal V$. By Lemma \ref{lem:step1} we have $E^*_d f(A) E^*_0= \alpha E^*_d A^d E^*_0$, where $\alpha$ is the leading coefficient of $f$.
By construction $\alpha\not=0$. By Lemma \ref{lem:nonz} we have
 $E^*_d A^d E^*_0\not=0$ on $\mathcal V$. This is a contradiction, so $d\leq 2$.
There exist real numbers $\varepsilon, \omega, \eta$ such that $\mathcal L = \varepsilon A^2 + \omega A + \eta I$ on $\mathcal V$. We show that $\gamma^*=\varepsilon $.
Note that on $\mathcal V$,
\begin{align*}
\gamma^* E^*_2 A^2 E^*_0 = (\theta^*_0 - \beta \theta^*_1 + \theta^*_2)E^*_2 A^2 E^*_0  =  E^*_2 \mathcal L E^*_0 = E^*_2 \bigl( \varepsilon A^2 + \omega A + \eta I\bigr) E^*_0 = \varepsilon E^*_2 A^2 E^*_0.
\end{align*}
By  Lemma \ref{lem:nonz} we have $E^*_2 A^2 E^*_0\not=0$ on $\mathcal V$, so $\gamma^*=\varepsilon $. We have shown that \eqref{eq:AW1} holds on $\mathcal V$.
Interchanging the roles of $A, A^*$ in the argument so far, we see that there
exist real numbers $\omega^*, \eta^*$ such that on $\mathcal V$,
\begin{align} \label{eq:AW2p}
A^{*2} A - \beta A^* A A^* + A A^{*2} - \gamma^*(A^*A+AA^*) - \varrho^* A = \gamma A^{*2} + \omega^* A^* + \eta^* I.
\end{align}
We show that $\omega = \omega^*$. Take the commutator of \eqref{eq:AW1} with $A^*$. This shows that on $\mathcal V$,
\begin{align*} 
A^2 A^{*2} -&\beta A A^* A A^* + \beta A^* A A^* A - A^{* 2} A^2 - \gamma(A A^{*2}-A^{*2} A)\\
&= \gamma^*(A^2 A^* - A^* A^2)+\omega(A A^*-A^* A).
\end{align*}
\noindent Next, take the commutator of \eqref{eq:AW2p} with $A$. This shows that on $\mathcal V$,
\begin{align*} 
 A^{*2}A^2 -&\beta  A^* A A^*A + \beta A A^* A A^*  - A^2A^{* 2}  - \gamma^*(A^* A^{2}-A^{2} A^*)\\
&= \gamma(A^{*2} A - A A^{*2})+\omega^*( A^*A-AA^*).
\end{align*}
Adding the above two equations, we find that on $\mathcal V$,
\begin{align*}
0 = (\omega-\omega^*) \bigl(AA^*-A^*A).
\end{align*}
We have $AA^*\not=A^*A$ on $\mathcal V$, because the $T$-module $\mathcal V$ is irreducible. Therefore $\omega=\omega^*$.
We have shown that \eqref{eq:AW2} holds on $\mathcal V$. 
\end{proof}

\begin{definition}\label{def:AW}\rm (See \cite{zhedhidden}.)
The relations \eqref{eq:AW1}, \eqref{eq:AW2} are called the {\it Askey-Wilson relations}.
\end{definition}

\noindent Next we consider how to compute the scalars $\omega, \eta, \eta^*$ from Theorem \ref{thm:AW}. To facilitate this computation,
we bring in some notation.
Recall from Proposition \ref{prop:five}
that
\begin{align}
\gamma= \theta_{i-1}-\beta \theta_i + \theta_{i+1}  \qquad \quad (1 \leq i \leq D-1), \label{eq:gamc}\\
\gamma^*= \theta^*_{i-1}-\beta \theta^*_i + \theta^*_{i+1}  \qquad \quad (1 \leq i \leq D-1). \label{eq:gamcs}
\end{align}
\begin{definition}\rm Define the real numbers
\begin{align*}
\theta_{-1}, \quad \theta_{D+1}, \quad \theta^*_{-1}, \quad \theta^*_{D+1}
\end{align*}
such that \eqref{eq:gamc}, \eqref{eq:gamcs} hold at $i=0$ and $i=D$.
\end{definition}
\noindent Recall the polynomials $P$, $P^*$ from \eqref{eq:P},
\eqref{eq:Ps}.

\begin{lemma} \label{lem:extend} 
The following hold for $0 \leq i \leq D$:
\begin{enumerate}
\item[\rm (i)] $P(\theta_i, \theta_i) = (\theta_i - \theta_{i-1})(\theta_i - \theta_{i+1})$;
\item[\rm (ii)] $P^*(\theta^*_i, \theta^*_i) = (\theta^*_i - \theta^*_{i-1})(\theta^*_i - \theta^*_{i+1})$.
\end{enumerate}
\end{lemma}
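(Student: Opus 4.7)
The plan is to recognize that both assertions are direct consequences of Lemma \ref{lem:handy}, once the hypotheses are verified. First, a direct expansion from the definition \eqref{eq:P} gives
\[
P(\theta_i, \theta_i) = \theta_i^2 - \beta \theta_i^2 + \theta_i^2 - 2\gamma \theta_i - \varrho = (2-\beta)\theta_i^2 - 2\gamma \theta_i - \varrho,
\]
which is precisely the left-hand side of \eqref{eq:PP}. So (i) reduces to checking that $\{\theta_i\}_{i=0}^D$ is $(\beta, \gamma, \varrho)$-recurrent and that the extensions $\theta_{-1}, \theta_{D+1}$ used in the lemma statement are the ones assumed in Lemma \ref{lem:handy}.

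For the first point, Proposition \ref{prop:five}(ii),(iv) states that $\{\theta_i\}_{i=0}^D$ satisfies $\theta_{i-1}-\beta \theta_i+\theta_{i+1}=\gamma$ for $1\le i\le D-1$ and $\theta^2_{i-1}-\beta\theta_{i-1}\theta_i+\theta_i^2-\gamma(\theta_{i-1}+\theta_i)=\varrho$ for $1\le i\le D$, which is exactly $(\beta, \gamma, \varrho)$-recurrence in the sense of Definition \ref{def:3term1}. For the second point, the definition immediately preceding Lemma \ref{lem:extend} specifies that $\theta_{-1}$ and $\theta_{D+1}$ are determined by forcing \eqref{eq:gamc} (which is \eqref{eq:bg}) to hold at $i=0$ and $i=D$ respectively; this matches the convention in the statement of Lemma \ref{lem:handy}. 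Applying that lemma then yields
\[
P(\theta_i, \theta_i) = (2-\beta)\theta_i^2 - 2\gamma \theta_i - \varrho = (\theta_i - \theta_{i-1})(\theta_i - \theta_{i+1}) \qquad (0 \le i \le D),
\]
giving (i).

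The proof of (ii) is obtained by interchanging the roles of the eigenvalues and the dual eigenvalues. Proposition \ref{prop:five}(iii),(v) gives the $(\beta, \gamma^*, \varrho^*)$-recurrence of $\{\theta^*_i\}_{i=0}^D$, the extensions $\theta^*_{-1}, \theta^*_{D+1}$ are defined by \eqref{eq:gamcs} at $i=0$ and $i=D$, and Lemma \ref{lem:handy} (with starred data) yields the desired identity. There is no essential obstacle here; the only thing to be careful about is matching the endpoint conventions, which is resolved by the definition just above the lemma.
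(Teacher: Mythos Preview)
Your proof is correct and follows essentially the same approach as the paper: both invoke Lemma~\ref{lem:handy} after noting that $\{\theta_i\}_{i=0}^D$ is $(\beta,\gamma,\varrho)$-recurrent (and similarly for the starred version). You simply spell out more explicitly the computation $P(\theta_i,\theta_i)=(2-\beta)\theta_i^2-2\gamma\theta_i-\varrho$ and cite Proposition~\ref{prop:five} for the recurrence, whereas the paper's proof is a one-line citation.
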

\begin{proof}  (i) By Lemma \ref{lem:handy} and since $\lbrace \theta_i \rbrace_{i=0}^D$ is $(\beta, \gamma, \varrho)$-recurrent.
\\
\noindent (ii) Similar to the proof of (i).
\end{proof}

\begin{proposition} \label{prop:wee} {\rm (See \cite[Theorem~5.3]{vidunas}.)}
With the above notation, we have
\begin{enumerate}
\item[\rm (i)] $\omega= a^*_i (\theta_i - \theta_{i+1}) + a^*_{i-1} (\theta_{i-1}-\theta_{i-2}) - \gamma^*(\theta_{i-1} + \theta_i)
\qquad (1 \leq i \leq D)$,
\item[\rm (ii)] $\omega= a_i (\theta^*_i - \theta^*_{i+1}) + a_{i-1} (\theta^*_{i-1}-\theta^*_{i-2}) - \gamma(\theta^*_{i-1} + \theta^*_i)
\qquad (1 \leq i \leq D)$,
\item[\rm (iii)] $\eta = a^*_i (\theta_i-\theta_{i-1})(\theta_i - \theta_{i+1}) -\omega \theta_i - \gamma^* \theta^2_i \qquad (0 \leq i \leq D)$,
\item[\rm (iv)] $\eta^* = a_i (\theta^*_i-\theta^*_{i-1})(\theta^*_i - \theta^*_{i+1}) -\omega \theta^*_i - \gamma \theta^{*2}_i \qquad (0 \leq i \leq D)$.
\end{enumerate}
\end{proposition}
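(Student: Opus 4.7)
The plan is to work on the primary $T$-module $\mathcal V$, using the two bases from Lemma \ref{lem:LP}: the basis $\{{\bf 1}^*_j\}_{j=0}^D$ in which $A$ acts as $\mathrm{diag}(\theta_0,\ldots,\theta_D)$ and $A^*$ is tridiagonal with $a^*_i$ on the main diagonal and $c^*_i,b^*_i$ off-diagonal, and the dual basis $\{{\bf 1}_i\}_{i=0}^D$ in which $A^*$ is diagonal and $A$ is tridiagonal with entries $a_i,b_i,c_i$. I will establish each of (i)--(iv) by reading off a single matrix entry of the Askey-Wilson relation \eqref{eq:AW1} or \eqref{eq:AW2} in the appropriate basis.

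For (iii), I would compute the $(i,i)$-entry of \eqref{eq:AW1} with respect to the basis $\{{\bf 1}^*_j\}$. Since $A$ is diagonal, the entries of $A^2A^*$, $AA^*A$, $A^*A^2$, $AA^*$, $A^*A$, $A^*$ at position $(i,i)$ are all immediate multiples of $a^*_i$, collapsing the left-hand side to
\[
a^*_i\bigl[(2-\beta)\theta_i^2 - 2\gamma\theta_i - \varrho\bigr].
\]
By Lemma \ref{lem:handy} (extended to $i=0,D$ via Lemma \ref{lem:extend}), this equals $a^*_i(\theta_i-\theta_{i-1})(\theta_i-\theta_{i+1})$. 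Equating to the $(i,i)$-entry $\gamma^*\theta_i^2+\omega\theta_i+\eta$ of the right-hand side gives (iii). Item (iv) is obtained identically by switching the roles: apply \eqref{eq:AW2} in the basis $\{{\bf 1}_i\}$ and read off the $(i,i)$-entry.

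For (ii), I would compute the $(i,i-1)$-entry of \eqref{eq:AW1} in the basis $\{{\bf 1}_i\}$, where now $A^*$ is diagonal (with entries $\theta^*_j$) and $A$ is tridiagonal. Direct multiplication gives $(A^2)_{i,i-1}=c_i(a_{i-1}+a_i)$, and similar calculations yield
\[
(A^*A^2)_{i,i-1}=\theta^*_i c_i(a_{i-1}+a_i),\quad (A^2A^*)_{i,i-1}=\theta^*_{i-1}c_i(a_{i-1}+a_i),
\]
\[
(AA^*A)_{i,i-1}=c_i(a_{i-1}\theta^*_{i-1}+a_i\theta^*_i),
\]
while $(AA^*)_{i,i-1}=c_i\theta^*_{i-1}$, $(A^*A)_{i,i-1}=c_i\theta^*_i$ and $A^*_{i,i-1}=0$. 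Collecting by $a_i$ and $a_{i-1}$ and using the two instances of Proposition \ref{prop:five}(iii), $\gamma^*=\theta^*_{i-1}-\beta\theta^*_i+\theta^*_{i+1}$ and $\gamma^*=\theta^*_{i-2}-\beta\theta^*_{i-1}+\theta^*_i$, simplifies the $a_i$-coefficient to $\gamma^*+\theta^*_i-\theta^*_{i+1}$ and the $a_{i-1}$-coefficient to $\gamma^*+\theta^*_{i-1}-\theta^*_{i-2}$. Equating with the $(i,i-1)$-entry of the right-hand side $\gamma^*A^2+\omega A+\eta I$, which is $\gamma^*c_i(a_{i-1}+a_i)+\omega c_i$, the $\gamma^*(a_{i-1}+a_i)$ contributions cancel, and after dividing by $c_i\neq 0$ one reads off exactly the formula in (ii). Item (i) then follows by the symmetric computation: take the $(i,i-1)$-entry of \eqref{eq:AW2} in the basis $\{{\bf 1}^*_j\}$.

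The main obstacle is bookkeeping in the subdiagonal computation for (i) and (ii): six triple-product matrix entries must be expanded in the tridiagonal basis and the resulting expression reorganized so that the $(\beta,\gamma^*)$-recurrence from Proposition \ref{prop:five}(iii) can be invoked in two distinct positions to absorb the $\beta$ terms. Once that grouping is performed the identification of $\omega$ is immediate. No step requires new structural input beyond Lemma \ref{lem:LP}, Lemma \ref{lem:handy}, Lemma \ref{lem:extend}, and Proposition \ref{prop:five}.
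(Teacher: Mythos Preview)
Your argument is correct. For parts (iii) and (iv) you do essentially what the paper does: the paper sandwiches \eqref{eq:AW1} between $E_i$ and $E_i$, uses $E_iA^*E_i=a^*_iE_i$ on $\mathcal V$ (Lemma~\ref{lem:eae}), and then invokes Lemma~\ref{lem:extend} to rewrite $P(\theta_i,\theta_i)$ as $(\theta_i-\theta_{i-1})(\theta_i-\theta_{i+1})$. Your diagonal matrix-entry computation in the basis $\{{\bf 1}^*_j\}$ is exactly this, only phrased in coordinates.

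For (i) and (ii) you take a genuinely different route. The paper obtains (i) in one line by subtracting the formula (iii) at $i$ from (iii) at $i-1$ and dividing through by $\theta_{i-1}-\theta_i$; the factor $(\theta_{i-1}-\theta_i)$ appears in both cubic terms and in $\theta_{i-1}^2-\theta_i^2$, so the quotient immediately yields the claimed expression for $\omega$. Your subdiagonal computation of the $(i,i-1)$-entry of \eqref{eq:AW1} in the $\{{\bf 1}_i\}$ basis also works, and your bookkeeping is right (I checked the grouping by $a_i$ and $a_{i-1}$). One small point: when you appeal to Proposition~\ref{prop:five}(iii) at the extremes $i=1$ and $i=D$, you need $\theta^*_{-1}$ and $\theta^*_{D+1}$; strictly speaking those come from the extended definition stated just before Lemma~\ref{lem:extend}, not from Proposition~\ref{prop:five} itself. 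The paper's subtraction trick avoids this entire subdiagonal expansion, which is the ``main obstacle'' you flagged; you may find it worth adopting.
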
 
\begin{proof} We start with item (iii). \\
\noindent (iii) In the Askey-Wilson relation \eqref{eq:AW1}, multiply each term on the left by $E_i$ and on the right by $E_i$. This shows that on $\mathcal V$,
\begin{align*}
E_i A^*E_i P(\theta_i,\theta_i) = E_i \bigl( \gamma^* \theta^2_i + \omega \theta_i + \eta \bigr).
\end{align*}
By Lemma \ref{lem:eae} we have $E_i A^* E_i = a^*_i E_i$ on $\mathcal V$. By Lemma  \ref{lem:PB}(iii), $E_i\not=0$ on $\mathcal V$. By these comments,
\begin{align*}
a^*_i P(\theta_i,\theta_i) =  \gamma^* \theta^2_i + \omega \theta_i + \eta.
\end{align*}
Solve this equation for $\eta$, and evaluate the result using Lemma \ref{lem:extend}(i).
\\
\noindent (iv) Similar to the proof of (iii).
\\
\noindent (i) Subtract (iii) (at $i$) from (iii) (at $i-1$).
\\
\noindent (ii) Similar to the proof of (i).
\end{proof}

\section{ The Pascasio characterization of the $Q$-polynomial property}

\noindent In \cite{aap3} Pascasio characterized the $Q$-polynomial distance-regular graphs using the dual eigenvalues $\theta^*_i$  and the
intersection numbers $a_i$. In this section we give a proof of her result that uses some ideas of Hanson \cite{hanson1}.
\medskip

\noindent
Throughout this section  $\Gamma=(X, \mathcal R)$ denotes a distance-regular graph with diameter $D\geq 3$. 

\begin{definition}\rm 
\label{def:Qpoly}
Let $E$ denote a nontrivial primitive idempotent of $\Gamma$. We say that $\Gamma$ is {\it $Q$-polynomial with respect to $E$} whenever
there exists a $Q$-polynomial ordering $\lbrace E_i \rbrace_{i=0}^D$ of the primitive idempotents of $\Gamma$ such that $E=E_1$.
\end{definition}

\begin{theorem} \label{thm:Pasc} {\rm  (See \cite[Theorem~1.2]{aap3}.)}
Let $E=\vert X \vert^{-1} \sum_{i=0}^D \theta^*_i A_i$ denote a nontrivial primitive idempotent of $\Gamma$. Then $\Gamma$ is
$Q$-polynomial with respect to $E$ if and only if the following conditions hold:
\begin{enumerate}
\item[\rm (i)] $\theta^*_i \not=\theta^*_0$ $(1 \leq i \leq D)$;
\item[\rm (ii)] there exist real numbers $\beta, \gamma^*$ such that
\begin{align}
    \theta^*_{i-1} - \beta \theta^*_i + \theta^*_{i+1} = \gamma^* \qquad \qquad (1 \leq i \leq D-1);
    \label{eq:3T}
    \end{align}
 \item[\rm (iii)] there exist real numbers $\gamma, \omega, \eta^*$ such that
 \begin{align*}
 a_i (\theta^*_i -\theta^*_{i-1})(\theta^*_i - \theta^*_{i+1}) = \gamma \theta^{*2}_i + \omega \theta^*_i + \eta^*
 \qquad \quad (0 \leq i \leq D),
 \end{align*}
 where $\theta^*_{-1}$, $\theta^*_{D+1}$ are defined such that \eqref{eq:3T} holds at $i=0$ and $i=D$.
 \end{enumerate}
 \end{theorem}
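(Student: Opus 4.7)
The plan is to establish the two directions separately. The forward direction is essentially a compilation of results from Sections 15 and 16; the backward direction carries the substantive content of the theorem.

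For the forward direction, assume $\Gamma$ is $Q$-polynomial with respect to $E$, so $E = E_1$ for some $Q$-polynomial ordering $\lbrace E_i \rbrace_{i=0}^D$ of the primitive idempotents. By Lemma \ref{lem:transAE}(ii) applied at $j = 1$, we have $E_1 = \vert X\vert^{-1} m_1 \sum_{i=0}^D u_i(\theta_1) A_i$; comparing with the given expansion of $E$ forces $\theta^*_i = m_1 u_i(\theta_1)$, which is the dual eigenvalue of \eqref{eq: dualE}. Then condition (i) follows from the distinctness of dual eigenvalues (Lemma \ref{lem:thsDist}(i)); condition (ii) is Proposition \ref{prop:five}(iii); and condition (iii) is obtained by rearranging the formula for $\eta^*$ in Proposition \ref{prop:wee}(iv), with $\gamma, \omega, \eta^*$ the Askey-Wilson parameters supplied by Theorem \ref{thm:AW}.

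For the backward direction, assume (i), (ii), (iii) with the given parameters. I would set $A^* = \vert X \vert E^p$; by Lemma \ref{lem:Asbasis}(ii) this is a diagonal matrix with $A^* \hat y = \theta^*_{\partial(x,y)} \hat y$. The argument then proceeds in three stages. First, invoking Lemma \ref{lem:bgrecvsbgdrecS99}(i) applied to (ii), choose $\varrho^* \in \mathbb R$ so that $\lbrace \theta^*_i \rbrace_{i=0}^D$ is $(\beta, \gamma^*, \varrho^*)$-recurrent, equivalently $P^*(\theta^*_{i-1}, \theta^*_i) = 0$ for $1 \leq i \leq D$; by the dual of Lemma \ref{lem:step5} this yields the second tridiagonal relation \eqref{eq:TDtwo}. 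Second, use condition (iii) together with the first stage to derive the second Askey-Wilson relation \eqref{eq:AW2} on the primary $T$-module $\mathcal V$, paralleling the argument of Theorem \ref{thm:AW} in reverse: the tridiagonal relation forces the cubic expression in $A, A^*$ to lie in a polynomial algebra in $A^*$, and condition (iii) (read as in Lemma \ref{lem:eae}, $E^*_i A E^*_i = a_i E^*_i$ on $\mathcal V$) pins down its coefficients as a quadratic. Third, use condition (i) (equivalently, nondegeneracy of $E$ via Lemma \ref{def:nondeg}) together with the Askey-Wilson structure to conclude that $(A, A^*)$ acts as a Leonard pair on $\mathcal V$, and extract from this a $Q$-polynomial ordering $\lbrace E_i \rbrace_{i=0}^D$ of primitive idempotents with $E_1 = E$ satisfying the Krein conditions of Definition \ref{def:qpoly}.

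The main obstacle will be stage three of the backward direction: translating the Leonard pair structure on the primary module $\mathcal V$ into a genuine $Q$-polynomial ordering of primitive idempotents of $\Gamma$. Identifying the correct ordering $\theta_0, \theta_1, \ldots, \theta_D$ of eigenvalues of $A$ from the one-dimensional action on $\mathcal V$ is straightforward, but verifying that the resulting Krein parameters $q^h_{i,j}$ satisfy Definition \ref{def:qpoly}(i),(ii) on all of $V$ requires the triple product machinery of Theorem \ref{lem:tpr} applied with the candidate ordering. An alternative, possibly cleaner, route would bypass the Leonard pair step and apply the function algebra characterization (Theorem \ref{thm:faq}) directly: condition (i) supplies nondegeneracy of $E$, and one would need to show that conditions (ii) and (iii) translate into the inclusion $EV \circ E_jV \subseteq E_{j-1}V + E_jV + E_{j+1}V$ for an ordering constructed inductively from the polynomial structure encoded in (ii).
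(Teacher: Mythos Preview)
Your forward direction and the first two stages of the backward direction match the paper's argument closely: the paper also reduces (ii) to a $(\beta,\gamma^*,\varrho^*)$-recurrence and then establishes the second Askey-Wilson relation \eqref{eq:PAW2} on the primary $T$-module $\mathcal V$ (this is Claim~1 in the paper's proof).

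The gap is in stage three. You correctly flag it as the main obstacle, but neither of your proposed routes is the one the paper takes, and both are underspecified at the crucial point. The paper does \emph{not} appeal to Leonard pair classification, nor does it verify the inclusion $EV \circ E_jV \subseteq E_{j-1}V + E_jV + E_{j+1}V$ directly. Instead it introduces an auxiliary graph $\Delta_E$ on vertex set $\{0,1,\ldots,D\}$, with $i$ adjacent to $j$ iff $i\neq j$ and $q^1_{i,j}\neq 0$ (equivalently, iff $E_iA^*E_j\neq 0$ on $\mathcal V$, by the triple product relations). Nondegeneracy of $E$ (your condition (i)) forces $\Delta_E$ to be connected via Theorem~\ref{thm:Egen} and Theorem~\ref{thm:fa}, and vertex $0$ is adjacent only to vertex $1$. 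The whole game is then to show $\Delta_E$ is a path.

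The key step you are missing is Claim~2 of the paper's proof: if $i,j$ are at distance $2$ in $\Delta_E$ with a \emph{unique} common neighbor $h$, then sandwiching the Askey-Wilson relation \eqref{eq:PAW2} between $E_i$ and $E_j$ and using that $E_h\mathcal V$ is one-dimensional yields $\gamma = \theta_i - \beta\theta_h + \theta_j$. Now if some vertex $i$ of minimal distance to $0$ had three neighbors $i-1,j,j'$, applying Claim~2 to the pairs $(i-1,j)$ and $(i-1,j')$ would give $\theta_j = \theta_{j'}$, a contradiction. Hence $\Delta_E$ is a path, and after relabeling so that the path is $0,1,\ldots,D$, item (ii) of Theorem~\ref{thm:faq} is satisfied and the ordering is $Q$-polynomial. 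Your Leonard pair suggestion would require importing outside structure theory, whereas the $\Delta_E$ argument stays entirely within the paper's toolkit.
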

\noindent We will prove Theorem \ref{thm:Pasc} shortly. In the meantime, let $\lbrace E_i \rbrace_{i=0}^D$ denote an ordering of the primitive idempotents of $\Gamma$, and abbreviate $E=E_1$. Let $x \in X$ and write $T=T(x)$, $A^*=A^*_1$.
\medskip

\begin{definition}\label{def:delta} \rm With the above notation, 
define a graph $\Delta_E$ with vertex set $\lbrace 0,1,\ldots, D\rbrace$ such that for $0 \leq i,j\leq D$, the vertices $i,j$ are adjacent  whenever
$i\not=j$ and  $q^1_{i,j}\not=0$.
\end{definition}

\begin{lemma} \label{lem:diag1} For the graph $\Delta_E$ in Definition \ref{def:delta}, the vertex $0$ is adjacent to vertex $1$ and no other vertex.
\end{lemma}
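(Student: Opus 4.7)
The plan is to reduce this directly to the known values of the Krein parameters $q^1_{0,j}$ for $0 \le j \le D$. By definition of $\Delta_E$, vertex $0$ is adjacent to vertex $j$ exactly when $j \neq 0$ and $q^1_{0,j} \neq 0$, so the claim amounts to determining precisely which $q^1_{0,j}$ are nonzero.

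First I would invoke Lemma \ref{lem:details}(i), which gives $q^h_{0,j} = \delta_{h,j}$ for all $0 \leq h,j \leq D$. Specializing to $h=1$ yields $q^1_{0,j} = \delta_{1,j}$, so $q^1_{0,1} = 1$ and $q^1_{0,j}=0$ whenever $j \neq 1$.

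Combining these observations with Definition \ref{def:delta}: for $j = 1$ we have $j \neq 0$ and $q^1_{0,1} \neq 0$, so $0$ is adjacent to $1$; for $j \neq 0, 1$ we have $q^1_{0,j}=0$, so $0$ is not adjacent to $j$; and of course $0$ is not adjacent to itself by definition. This establishes the claim.

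There is no real obstacle here — the lemma is a one-line consequence of the identity $q^h_{0,j} = \delta_{h,j}$ already proved in Lemma \ref{lem:details}(i). The content of the lemma lies not in its proof but in setting up the graph $\Delta_E$ as a tool, whose more interesting adjacency structure will emerge from the hypotheses in Theorem \ref{thm:Pasc}.
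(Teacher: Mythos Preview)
Your proof is correct and follows essentially the same approach as the paper: both reduce immediately to the identity $q^1_{0,j} = \delta_{1,j}$ from Lemma~\ref{lem:details}(i). The paper's proof is just the one-line statement of this identity, while you have spelled out the unpacking of Definition~\ref{def:delta} in more detail.
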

\begin{proof} We have $q^1_{0,j} = \delta_{1,j}$ for $0 \leq j \leq D$.
\end{proof}

\begin{lemma} \label{lem:3V} For distinct vertices $i,j$ of $\Delta_E$  the following are equivalent:
\begin{enumerate}
\item[\rm (i)]  $i,j$ are adjacent in $\Delta_E$;
\item[\rm (ii)]  $E_i A^* E_j \not=0$;
\item[\rm (iii)]  $E_i A^* E_j \not=0$ on the primary $T$-module $\mathcal V$.
\end{enumerate}
\end{lemma}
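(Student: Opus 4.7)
The plan is to observe that all three conditions are governed by a single Krein parameter, namely $q^i_{1,j}$ (which is nonzero if and only if $q^1_{i,j}$ is nonzero, by the symmetry from Lemma \ref{lem:mq}).

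First I would show the equivalence of (i) and (ii). By Theorem \ref{lem:tpr}(ii) applied with $h=i$, $i=1$, and $j=j$, we have $E_i A^* E_j = 0$ if and only if $q^i_{1,j}=0$. By Lemma \ref{lem:mq} together with $q^i_{1,j}=q^i_{j,1}$, the scalars $m_1 q^1_{i,j}$ and $m_i q^i_{1,j}$ are equal, so $q^1_{i,j}\not=0$ if and only if $q^i_{1,j}\not=0$. Combining these gives (i) $\Leftrightarrow$ (ii).

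Next I would show the equivalence of (ii) and (iii). The implication (iii) $\Rightarrow$ (ii) is immediate: if $E_i A^* E_j$ is not the zero map on the subspace $\mathcal V$, it cannot be the zero matrix on $V$. For the converse, assume (ii). By Theorem \ref{lem:tpr}(ii) we get $q^i_{1,j}\not=0$, and then Lemma \ref{lem:tprP}(ii) yields that $E_i A^* E_j \not=0$ on $\mathcal V$, which is (iii).

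There is no real obstacle here; the only small bookkeeping item is the rearrangement of indices converting $q^1_{i,j}$ into $q^i_{1,j}$ (so that the triple product relations, which are stated in the form $E_h A^*_i E_j$ with the middle index matching the Krein superscript's second slot, can be invoked directly). Both Theorem \ref{lem:tpr} and Lemma \ref{lem:tprP} do the heavy lifting.
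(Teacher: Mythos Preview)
Your proof is correct and follows essentially the same approach as the paper, which simply cites the triple product relations (Theorem~\ref{lem:tpr}) and Lemma~\ref{lem:tprP}. The extra step you include---using Lemma~\ref{lem:mq} and the symmetry $q^i_{1,j}=q^i_{j,1}$ to convert between $q^1_{i,j}$ (which defines adjacency in $\Delta_E$) and $q^i_{1,j}$ (which arises from the triple product relations)---is a legitimate detail that the paper leaves implicit.
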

\begin{proof} By the triple product relations and Lemma \ref{lem:tprP}.
\end{proof}

\begin{lemma} \label{lem:diag2} For the graph $\Delta_E$ in Definition \ref{def:delta}, assume that $E$ is nondegenerate. Then $\Delta_E$  is connected.
\end{lemma}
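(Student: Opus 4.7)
The plan is to combine Theorem \ref{thm:Egen} with Theorem \ref{thm:fa}. By nondegeneracy of $E$ and Theorem \ref{thm:Egen}, the subspace $EV$ generates the function algebra $V$ under $\circ$; thus the ascending chain
\begin{equation*}
U_n := \sum_{k=0}^n (EV)^{\circ k} \qquad (n \geq 0)
\end{equation*}
stabilizes at $V$, and $U_N = V$ for some $N$. Iterating Theorem \ref{thm:fa} shows that each $U_n$ is a sum of certain $E_h V$'s, and since $\lbrace E_h V\rbrace_{h=0}^D$ is an orthogonal direct sum of $V$, there is a uniquely determined nondecreasing chain of subsets $S_n \subseteq \lbrace 0, 1, \ldots, D\rbrace$ with $U_n = \sum_{h \in S_n} E_h V$. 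By inspection $S_0 = \lbrace 0\rbrace$, $S_1 = \lbrace 0, 1\rbrace$, and $S_N = \lbrace 0, 1, \ldots, D\rbrace$.

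The next step I would take is to pin down the growth of $S_n$ in terms of Krein parameters. Using $U_n = U_{n-1} + EV \circ U_{n-1}$ together with Theorem \ref{thm:fa},
\begin{equation*}
EV \circ U_{n-1} \;=\; \sum_{k \in S_{n-1}} \;\sum_{\substack{0 \leq j \leq D \\ q^j_{1,k} \not= 0}} E_j V,
\end{equation*}
and comparing with the direct-sum decomposition of $V$ forces
\begin{equation*}
S_n = S_{n-1} \cup \lbrace\, j : \exists\, k \in S_{n-1}\ \text{with}\ q^j_{1,k} \not= 0 \,\rbrace.
\end{equation*}
By Lemma \ref{lem:mq} the condition $q^j_{1,k} \not= 0$ is equivalent to $q^1_{j,k} \not= 0$, which is precisely the defining condition for $j, k$ to be adjacent in $\Delta_E$ whenever $j \not= k$.

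The proof then closes by induction on $n$: every vertex of $S_n$ lies in the connected component of $0$ in $\Delta_E$. The base cases $n = 0, 1$ use Lemma \ref{lem:diag1}. For the inductive step, any $h \in S_n \setminus S_{n-1}$ admits some $k \in S_{n-1}$ with $q^h_{1,k} \not= 0$; since $h \not= k$, this produces an edge $\lbrace h,k\rbrace$ of $\Delta_E$, and by induction $k$ is connected to $0$, hence so is $h$. Taking $n = N$ completes the argument.

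The main subtlety I expect is that $EV \circ E_k V$ can contain a ``self-loop'' contribution $E_k V$ itself (when $q^k_{1,k} \not= 0$), which produces no edge of $\Delta_E$ by Definition \ref{def:delta}; however, this is harmless, because whenever $S_n$ strictly enlarges $S_{n-1}$, each newly added index $h$ is automatically distinct from the witnessing $k \in S_{n-1}$, and so genuinely yields a $\Delta_E$-edge.
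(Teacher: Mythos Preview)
Your proof is correct and uses essentially the same approach as the paper: both combine Theorem~\ref{thm:Egen} with Theorem~\ref{thm:fa} (and Lemma~\ref{lem:mq}) to tie the function-algebra generation by $EV$ to the edge structure of $\Delta_E$. The paper organizes the argument dually---it starts from the connected component $S$ of $0$ in $\Delta_E$, observes that $\sum_{i\in S}E_iV$ is closed under $EV\circ{-}$ and hence equals $V$, and concludes $S=\{0,1,\ldots,D\}$---but the ingredients are identical.
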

\begin{proof} Let $S \subseteq \lbrace 0,1,\ldots, D\rbrace$ denote the connected component of $\Delta_E$ that contains $0,1$. We show that
$S =\lbrace 0,1,\ldots, D\rbrace$. 
Define $U=\sum_{i \in S} E_iV$. By construction $E_0V\subseteq U$ and $EV \subseteq U$. By Lemma \ref{lem:mq} and Theorem \ref{thm:fa}, the following holds
for $0 \leq i \leq D$:
\begin{align*}
EV \circ E_i V = \sum_{\stackrel{ \scriptstyle 0 \leq h \leq D }{ \scriptstyle q^1_{i,h} \not=0}} E_hV.
\end{align*}
Therefore $EV \circ U \subseteq U$. We assume that $E$ is nondegenerate, so the function algebra $V$ is generated by $EV$. By these
comments  $U=V$, so $S = \lbrace 0,1,\ldots, D\rbrace$.
\end{proof}

\noindent {\it Proof of Theorem \ref{thm:Pasc}}. First we assume that $\Gamma$ is $Q$-polynomial with respect to $E$. We saw earlier that
$E$ satisfies (i)--(iii).
Next we assume that $E$ satisfies (i)--(iii), and show that $\Gamma$ is $Q$-polynomial with respect to $E$. 
Fix $x \in X$ and write $T=T(x)$. Abbreviate $E=E_1$ and $A^*=A^*_1$. For the time being, let $\lbrace E_i \rbrace_{i=2}^D$ denote any ordering
of the remaining nontrivial primitive idempotents of $\Gamma$. For $0 \leq i \leq D$ let  $\theta_i$ denote the eigenvalue of $\Gamma$ for $E_i$.
Consider the graph $\Delta_E$ from Definition \ref{def:delta}. By Lemma \ref{lem:diag1}, in $\Delta_E$ the vertex $0$ is adjacent to vertex $1$ and
no other vertices. Note that $E$ is nondegenerate by condition (i) in the theorem statement, so the graph $\Delta_E$ is connected in view of Lemma \ref{lem:diag2}.
 We will show that $\Delta_E$ is a path.
By \eqref{eq:3T} the sequence $\lbrace \theta^*_i \rbrace_{i=0}^{D}$ is $(\beta, \gamma^*)$-recurrent. By Lemma \ref{lem:bgrecvsbgdrecS99}
 there exists $\varrho^* \in \mathbb R$
such that $\lbrace \theta^*_i \rbrace_{i=0}^{D}$ is $(\beta, \gamma^*, \varrho^*)$-recurrent. By this and Lemma \ref{lem:handy} we obtain
\begin{align*}
P^*(\theta^*_i, \theta^*_i) = (\theta^*_i - \theta^*_{i-1})(\theta^*_i - \theta^*_{i+1}) \qquad \qquad (0 \leq i \leq D),
\end{align*}
where the polynomial $P^*$ is from \eqref{eq:Ps}.
\\
\noindent Claim 1. On the primary $T$-module $\mathcal V$,
\begin{align}
 A^{*2} A - \beta A^* A A^* + A A^{*2} - \gamma^*(A^* A+A A^*) - \varrho^* A &= \gamma A^{*2} + \omega A^* + \eta^*I. \label{eq:PAW2}
 \end{align}
 \noindent Proof of Claim 1. Let $\mathcal L$ denote the left-hand side of \eqref{eq:PAW2}. On the $T$-module $\mathcal V$,
 \begin{align*}
 \mathcal L &= \sum_{i=0}^D \sum_{j=0}^D E^*_i \mathcal L E^*_j 
 \\
&= \sum_{i=0}^D \sum_{j=0}^D E^*_i A E^*_j P^*(\theta^*_i, \theta^*_j)
\\
&= \sum_{i=0}^D E^*_i A E^*_i P^*(\theta^*_i, \theta^*_i)
\\
&= \sum_{i=0}^D E^*_i  a_i (\theta^*_i-\theta^*_{i-1})(\theta^*_i - \theta^*_{i+1})
\\
&= \sum_{i=0}^D E^*_i \bigl(\gamma \theta^{*2}_i +\omega \theta^*_i + \eta^*\bigr)
\\
&= \gamma A^{*2} + \omega A^* + \eta^* I.
 \end{align*}
We have shown \eqref{eq:PAW2}.
\\
\noindent Claim 2. Let $i,j$ denote vertices in $\Delta_E$ that are at distance $\partial(i,j)=2$. Assume that there exists a unique vertex $h$ in $\Delta_E$
that is adjacent to both $i$ and $j$. Then 
$\gamma = \theta_i - \beta \theta_h + \theta_j$.
\\
\noindent Proof of Claim 2. In the equation \eqref{eq:PAW2}, multiply each term on the left by $E_i$ and on the right by $E_j$.
Simplify the result using Lemma \ref{lem:3V}. To aid this simplification, note that
\begin{align*} 
E_i A^{*2} E_j = E_i A^* \Biggl( \sum_{r=0}^D E_r \Biggr) A^* E_j = E_i A^*E_h A^* E_j
\end{align*}
and 
\begin{align*} 
E_i A^*   A A^*  E_j = E_i A^* \Biggl( \sum_{r=0}^D \theta_r E_r \Biggr) A^* E_j =\theta_h E_i A^*E_h A^* E_j.
\end{align*}
\noindent By these comments, the following holds on the $T$-module $\mathcal V$:
\begin{align*}
 0 = E_i A^* E_h A^* E_j \bigl( \theta_i - \beta \theta_h + \theta_j - \gamma\bigr).
 \end{align*}
 We show that $ E_i A^* E_h A^* E_j \not=0$ on $\mathcal V$. By  Lemma \ref{lem:3V} and the construction,  $ E_i A^* E_h$ and $E_h A^* E_j$ are nonzero on $\mathcal V$.
 The dimension of $E_h \mathcal V$ is one, so $E_h A^* E_j \mathcal V = E_h\mathcal V$. By these comments $ E_i A^* E_h A^* E_j \mathcal V = E_i A^* E_h\mathcal V \not=0$.
 We have shown that $ E_i A^* E_h A^* E_j \not=0$ on $\mathcal V$,
 so 
$\gamma = \theta_i - \beta \theta_h + \theta_j$. Claim 2 is proved.
\\
\noindent We can now easily show that $\Delta_E$ is a path. Since $\Delta_E$ is connected, and since vertex $0$ is adjacent only to vertex $1$, 
it suffices to show that each vertex in $\Delta_E$ is adjacent to at most two other vertices in $\Delta_E$. Suppose there exists a vertex $i$ of $\Delta_E$
that is adjacent to at least three vertices in $\Delta_E$. Of all such vertices, pick $i$ such that $\partial(0,i)$ is minimal. Without loss of generality,
we may assume that the vertices of $\Delta_E$ are labelled such that $\partial(0,i)=i$, and vertices $0,1,2,\ldots, i$ form a path in $\Delta_E$.
By construction $i\geq 1$. By assumption, there exist distinct vertices $j, j'$ in $\Delta_E$ that are adjacent to $i$ and not equal to $i-1$.
By construction, $\partial(i-1,j)=2$ and $i$ is the unique vertex in $\Delta_E$ that is adjacent to both $i-1, j$. By Claim 2,
 $\gamma = \theta_{i-1}-\beta \theta_i + \theta_j$. Repeating the argument with $j$ replaced by $j'$, we obtain
$\gamma = \theta_{i-1}-\beta \theta_i + \theta_{j'}$. By these comments $\theta_j = \theta_{j'}$ for a contradiction. We conclude that $\Delta_E$
is a path.  Relabelling $\lbrace E_i \rbrace_{i=2}^D$ if necessary, we may assume without loss of generality that 
vertices $i-1$ and $i$ are adjacent in $\Delta_E$ for $1 \leq i \leq D$. The ordering $\lbrace E_i \rbrace_{i=0}^D$ is $Q$-polynomial by
Theorem \ref{thm:faq}, because  item (ii) of that theorem is satisfied by $E=E_1$.
By these comments and Definition \ref{def:Qpoly}, the graph $\Gamma$ is $Q$-polynomial with respect to $E$.
\hfill {$\Box$}

\begin{note}\label{note:rec} \rm Referring to Theorem \ref{thm:Pasc}, assume that $\Gamma$ is $Q$-polynomial with respect to $E$.
For this $Q$-polynomial structure the eigenvalue sequence
  $\lbrace \theta_i \rbrace_{i=0}^D$
 is obtained as follows:
\begin{enumerate}
\item[$\bullet$] $\theta_0$ is the valency $k$ of $\Gamma$;
\item[$\bullet$] $\theta_1=k \theta^*_1/\theta^*_0$ by Lemma \ref{lem:recognize};
\item[$\bullet$] $\theta_2, \theta_3, \ldots, \theta_D$ are recursively found using
\begin{align*}
\theta_{i-1} - \beta \theta_i + \theta_{i+1} = \gamma \qquad \qquad (1 \leq i \leq D-1),
\end{align*}
where $\beta$, $\gamma$ are from Theorem \ref{thm:Pasc}.
\end{enumerate}
\end{note}

\begin{note}\rm A variation on Theorem \ref{thm:Pasc} is given in \cite{jtz}.
\end{note}

\section{Distance-regular graphs with classical parameters}
In \cite[Section~6.1]{bcn} Brouwer, Cohen, and Neumaier introduce a  type of distance-regular graph, said to have classical parameters.
In \cite[Section~8.4]{bcn} they show that these graphs are $Q$-polynomial. In this section we give a short proof of this fact, using
the Pascasio characterization from Theorem \ref{thm:Pasc}.
\medskip

\noindent Throughout this section $\Gamma=(X, \mathcal R)$ denotes a distance-regular graph with diameter $D\geq 3$.
Let $k$ denote the valency of $\Gamma$.
\medskip

\noindent We now recall what it means for $\Gamma$ to have classical parameters. We will use the following notation.
For a nonzero integer $b$ define
\begin{align*}
\left[  \begin{matrix}  i \\  1 \end{matrix} \right] 
=\left[  \begin{matrix} i \\ 1 \end{matrix} \right]_b=1+b+b^2+ \cdots + b^{i-1}. 
\end{align*}

\begin{definition} \label{def:cp} \rm (See \cite[p.~193]{bcn}.) The graph $\Gamma$ has {\it classical parameters} $(D, b, \alpha, \sigma)$ whenever the intersection numbers
satisfy
\begin{align*}
 c_i & = \left[  \begin{matrix} i \\ 1 \end{matrix} \right]  \left( 1 + \alpha \left[  \begin{matrix} i-1 \\ 1 \end{matrix} \right] \right)
 \qquad \quad (0 \leq i \leq D),
 \\
b_i &=\left( \left[  \begin{matrix} D \\ 1 \end{matrix} \right] -
 \left[  \begin{matrix} i \\ 1 \end{matrix} \right] \right)
\left(\sigma - \alpha \left[  \begin{matrix} i \\ 1 \end{matrix} \right] \right)
\qquad \qquad (0 \leq i \leq D).
 \end{align*}
 \end{definition}
 
 \begin{theorem} \label{thm:BCN} {\rm (See \cite[Section~8.4]{bcn}.)}
 Assume that $\Gamma$ has classical parameters $(D, b, \alpha, \sigma)$. Then the following {\rm (i)--(iv)} hold.
 \begin{enumerate}
 \item[\rm (i)] $\theta=\frac{b_1}{b} -1$ is an eigenvalue of $\Gamma$.
  \item[\rm (ii)] Let $E= \vert X \vert^{-1} \sum_{i=0}^D \theta^*_i A_i$ denote the associated primitive idempotent. Then
  \begin{align*}
  \frac{\theta^*_i}{\theta^*_0} = 1 + \left(\frac{\theta}{k}-1\right)  \left[  \begin{matrix} i \\ 1 \end{matrix} \right] b^{1-i} \qquad \quad (0 \leq i \leq D).
  \end{align*}
   \item[\rm (iii)] $\theta\not=k$.
   \item[\rm (iv)] $\Gamma$ is $Q$-polynomial with respect to $E$.
 \end{enumerate}
\end{theorem}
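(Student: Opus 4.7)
The plan is to verify (i) and (ii) simultaneously by checking that the proposed formula defines a valid cosine sequence in the sense of Lemma \ref{lem:recognize}, dispatch (iii) by a direct computation exploiting the positivity of the intersection numbers, and derive (iv) from the Pascasio characterization (Theorem \ref{thm:Pasc}). All verifications reduce to manipulations of the $b$-integer symbols $\left[ \begin{matrix} i \\ 1 \end{matrix} \right]$, the key tool being the telescoping identity $\left[ \begin{matrix} i+1 \\ 1 \end{matrix} \right] - b\left[ \begin{matrix} i \\ 1 \end{matrix} \right] = 1$.

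For (i) and (ii), set $\sigma_i = 1 + (\theta/k - 1)\left[ \begin{matrix} i \\ 1 \end{matrix} \right] b^{1-i}$. The telescoping identity gives the clean difference $\sigma_{i+1} - \sigma_i = (\theta - k) k^{-1} b^{-i}$, and evidently $\sigma_0 = 1$, $\sigma_1 = \theta/k$. Using $c_i + a_i + b_i = k$, the three-term recurrence of Lemma \ref{lem:recognize} is equivalent to $b_i(\sigma_{i+1} - \sigma_i) - c_i(\sigma_i - \sigma_{i-1}) = (\theta - k)\sigma_i$; substituting the explicit formulas from Definition \ref{def:cp} and the difference formula reduces this to an identity that simplifies using $k = \left[ \begin{matrix} D \\ 1 \end{matrix} \right] \sigma$. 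Lemma \ref{lem:recognize} then gives (i), and comparing with Lemma \ref{lem:transAE}(ii) (via $\theta^*_i = m u_i(\theta)$) gives (ii).

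For (iii), suppose $\theta = k$. Then $b_1 = b(k+1)$, and after substituting the classical parameter expressions and applying $\left[ \begin{matrix} D \\ 1 \end{matrix} \right] - 1 = b\left[ \begin{matrix} D-1 \\ 1 \end{matrix} \right]$ together with $\left[ \begin{matrix} D \\ 1 \end{matrix} \right] - \left[ \begin{matrix} D-1 \\ 1 \end{matrix} \right] = b^{D-1}$, this reduces to $c_D = -b^{D-1}k$. Since $c_D, k > 0$, this forces $b^{D-1} < 0$, so $b < 0$ and $D$ is even; this residual case is then excluded by combining with the positivity of $b_{D-1} = b^{D-1}(\sigma - \alpha\left[ \begin{matrix} D-1 \\ 1 \end{matrix} \right])$ and $c_2 = (1+b)(1+\alpha)$.

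For (iv), apply Theorem \ref{thm:Pasc}. Condition (i) of that theorem follows from (iii) since $\left[ \begin{matrix} i \\ 1 \end{matrix} \right] b^{1-i} \neq 0$ for $1 \leq i \leq D$. For condition (ii), the identity $\left[ \begin{matrix} i-1 \\ 1 \end{matrix} \right] b^2 - (b + b^{-1}) b \left[ \begin{matrix} i \\ 1 \end{matrix} \right] + \left[ \begin{matrix} i+1 \\ 1 \end{matrix} \right] = (1-b)b^i$ (obtained by the same telescoping) shows that $\sigma_{i-1} - (b+b^{-1})\sigma_i + \sigma_{i+1}$ is independent of $i$, yielding the $(\beta, \gamma^*)$-recurrence with $\beta = b + b^{-1}$. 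For condition (iii), the difference formula of the first step produces $(\theta^*_i - \theta^*_{i-1})(\theta^*_i - \theta^*_{i+1}) = -(\theta^*_0)^2 s^2 b^{1-2i}$ where $s = (\theta - k)/k$; writing $a_i = k - c_i - b_i$ as a quadratic in $\left[ \begin{matrix} i \\ 1 \end{matrix} \right]$ via Definition \ref{def:cp}, and using the linear relation between $\theta^*_i - \theta^*_0$ and $\left[ \begin{matrix} i \\ 1 \end{matrix} \right] b^{1-i}$, one verifies that $a_i \cdot b^{1-2i}$ combined with the quadratic factor becomes a genuine quadratic polynomial in $\theta^*_i$, yielding explicit $\gamma, \omega, \eta^*$.

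The main obstacle is this final Pascasio verification: the naive form of $a_i(\theta^*_i - \theta^*_{i-1})(\theta^*_i - \theta^*_{i+1})$ mixes $b^{\pm i}$ powers that must cancel in a precise way for the result to be a degree-$2$ polynomial in $\theta^*_i$. Orchestrating this cancellation and then matching coefficients to produce the constants $\gamma, \omega, \eta^*$ requires careful bookkeeping with the $b$-integer symbols, and is exactly where the full strength of the classical parameter structure (rather than just the $(\beta,\gamma^*)$-recurrence of the dual eigenvalues) is needed.
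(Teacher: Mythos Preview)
Your approach to (i), (ii), and (iv) is exactly the paper's: verify the cosine-sequence recurrence of Lemma~\ref{lem:recognize} for the proposed $\sigma_i$, then invoke the Pascasio criterion (Theorem~\ref{thm:Pasc}) with $\beta=b+b^{-1}$ and explicit $\gamma,\gamma^*,\omega,\eta^*$. The paper in fact just writes down the constants and leaves the verification to the reader, so your extra discussion of the $b$-integer telescoping is a useful expansion of the same argument.

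For (iii), however, your argument is both unnecessarily elaborate and has a gap. From $b_1=b(k+1)$ you derive $c_D=-b^{D-1}k$, correctly conclude $b<0$ with $D$ even, rule out $b=-1$ via $c_2=(1+b)(1+\alpha)>0$, and then claim the remaining case is killed by the positivity of $b_{D-1}$. But the inequalities you extract from $b_{D-1}>0$ and $c_2>0$ (namely $\alpha<-1$ and $\sigma<\alpha\left[\begin{smallmatrix}D-1\\1\end{smallmatrix}\right]$) are mutually consistent and do not by themselves give a contradiction. What actually finishes your line is the bound $c_D\le k$ (from $a_D\ge 0$): since $|b|\ge 2$, you get $c_D=|b|^{D-1}k\ge 2^{D-1}k>k$, which is absurd. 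So your route can be repaired, but not with the ingredients you named.

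The paper's proof of (iii) is a two-line integrality trick that avoids all of this: $b_1=b(k+1)$ and $b_1>0$ force $b>0$; since $b$ is declared to be a nonzero \emph{integer} in the setup preceding Definition~\ref{def:cp}, this gives $b\ge 1$, hence $b_1\ge k+1$, contradicting $b_1\le k-1$ (from $c_1=1$, $a_1\ge 0$). You seem to have overlooked that $b$ is assumed integral, which is precisely what makes the short argument work.
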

\begin{proof} (i), (ii) Apply Lemma \ref{lem:recognize} with $\theta= \frac{b_1}{b} -1$ and
 \begin{align*}
  \sigma_i = 1 + \left(\frac{\theta}{k}-1\right)  \left[  \begin{matrix} i \\ 1 \end{matrix} \right] b^{1-i} \qquad \quad (0 \leq i \leq D).
  \end{align*}
\\
\noindent (iii) Suppose $\theta=k$. We have $b_1 = b(k+1)$, so $b >0$. We have $b\geq 1$ since $b$ is an integer.
Therefore $b_1\geq k+1$, a contradiction.
We have shown that $\theta\not=k$.
\\
\noindent (iv) The conditions of Theorem \ref{thm:Pasc} are satisfied using $\beta = b + b^{-1}$ and
\begin{align*}
\gamma &= \frac{ \alpha (b^D+1) + \sigma (b-1) + 1-b}{b},
\\
\gamma^* &= \theta^*_0 \,\frac{\alpha (b^D-b) + \sigma (b-1)+ b^2-b}{k b},
\\
\omega &= \Psi (\theta^*_1-\theta^*_0)- 2 \gamma \theta^*_0,
\\
\eta^* &= \gamma \theta^{*2}_0 - \Psi \theta^*_0 (\theta^*_1-\theta^*_0),
\end{align*}
\noindent where
\begin{align*}
\Psi = 1 - \sigma - \frac{\alpha}{b} \left[  \begin{matrix} D+1\\ 1 \end{matrix} \right] .
\end{align*}
\end{proof}

\begin{lemma} \label{lem:eig} {\rm (See \cite[Corollary~8.4.2]{bcn}.)} Assume that $\Gamma$ has classical parameters $(D, b, \alpha, \sigma)$. For the $Q$-polynomial
structure in Theorem \ref{thm:BCN}, the eigenvalue sequence is 
\begin{align*}
\theta_i = \frac{b_i}{b^i}- \left[  \begin{matrix} i\\ 1 \end{matrix} \right] \qquad \qquad (0 \leq i \leq D).
\end{align*}
\end{lemma}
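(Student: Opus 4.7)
My plan is to invoke Note~\ref{note:rec}, which says that the eigenvalue sequence $\{\theta_i\}_{i=0}^D$ for the $Q$-polynomial structure in Theorem~\ref{thm:BCN} is uniquely determined by $\theta_0=k$, $\theta_1=k\theta^*_1/\theta^*_0$, and the three-term recurrence $\theta_{i-1}-\beta\theta_i+\theta_{i+1}=\gamma$ for $1\leq i\leq D-1$. The proof of Theorem~\ref{thm:BCN}(iv) supplies the explicit values $\beta=b+b^{-1}$ and $\gamma=(\alpha(b^D+1)+\sigma(b-1)+1-b)/b$. Accordingly, I would set $\tilde\theta_i := b_i/b^i-[i]_b$ and verify (a) $\tilde\theta_0=k$, (b) $\tilde\theta_1=k\theta^*_1/\theta^*_0$, and (c) the same recurrence holds for $\{\tilde\theta_i\}$; uniqueness then forces $\tilde\theta_i=\theta_i$ for every $i$.

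The base cases are immediate. For (a), $\tilde\theta_0=b_0-0=k$. For (b), Theorem~\ref{thm:BCN}(ii) at $i=1$ gives $\theta^*_1/\theta^*_0=\theta/k$ with $\theta=b_1/b-1$ as in Theorem~\ref{thm:BCN}(i), so $k\theta^*_1/\theta^*_0=\theta=b_1/b-1=\tilde\theta_1$.

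For (c) I would split $\tilde\theta_i$ into its $-[i]_b$ piece and its $b_i/b^i$ piece and treat them separately. The identities $[i+1]_b-b[i]_b=1$ and $[i]_b-b[i-1]_b=1$ together with $\beta=b+b^{-1}$ yield at once
\[
-[i-1]_b+\beta[i]_b-[i+1]_b=\frac{1-b}{b}.
\]
On the other side, using $[D]_b-[i]_b=(b^D-b^i)/(b-1)$ and Definition~\ref{def:cp}, I would rewrite
\[
\frac{b_i}{b^i}=A\,b^{-i}+B\,b^i+C,
\]
where $A=(\sigma(b-1)+\alpha)b^D/(b-1)^2$, $B=\alpha/(b-1)^2$, and $C=-(\sigma(b-1)+\alpha+\alpha b^D)/(b-1)^2$ are constants independent of $i$. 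Because $\beta=b+b^{-1}$, the three-term operator $f\mapsto f(i-1)-\beta f(i)+f(i+1)$ annihilates both $b^{\pm i}$, so only the constant term survives: the combination reduces to $C(2-\beta)=-C(b-1)^2/b=(\sigma(b-1)+\alpha(1+b^D))/b$. Adding the $(1-b)/b$ from the first piece reproduces $\gamma$ exactly.

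The only genuine subtlety is the degenerate case $b=1$, where the closed form $[i]_b=(b^i-1)/(b-1)$ breaks down. There $[i]_b=i$, $\beta=2$, and $b_i=(D-i)(\sigma-\alpha i)$ is quadratic in $i$ with second difference $2\alpha$; since $\gamma$ specializes to $2\alpha$, the recurrence again holds. Apart from this small case-split and the algebraic bookkeeping in the $Ab^{-i}+Bb^i+C$ decomposition, everything is mechanical; the strategic content lies entirely in reducing the problem to a linear recurrence with known initial data via Note~\ref{note:rec}.
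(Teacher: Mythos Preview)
Your proposal is correct and follows precisely the approach indicated in the paper, which simply reads ``Routine calculation using Note~\ref{note:rec}.'' You have carried out that routine calculation in full, verifying the two initial values and the three-term recurrence (with the $b=1$ case handled separately), so there is nothing to add.
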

\begin{proof}  Routine calculation using Note \ref{note:rec}.
\end{proof}

\section{The balanced set characterization of the $Q$-polynomial property }
In this section we give a characterization of the $Q$-polynomial property, known as the balanced set condition.
The result is given in Theorem \ref{thm:bsc} below. The result first appeared in \cite{QPchar}. More recent versions can be found in
 \cite{bcn,newineq, suz, bbit}.
\medskip

\noindent
Throughout this section  $\Gamma=(X, \mathcal R)$ denotes a distance-regular
graph with diameter $D\geq 3$. Recall the valency $k$ of $\Gamma$. 
\begin{lemma} \label{lem:prin}
Fix $x \in X$ and write $T=T(x)$. Then for $0 \leq i,j,\ell \leq D$ and $y,z \in X$ the $(y,z)$-entry of
 $A_i A^*_\ell A_j$ is equal to
\begin{align*}
\vert X \vert \sum_{w \in \Gamma_i(y)\cap \Gamma_j(z)} \langle E_\ell \hat x, E_\ell \hat w \rangle.
\end{align*}
\end{lemma}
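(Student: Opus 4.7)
The plan is to compute the $(y,z)$-entry of $A_i A^*_\ell A_j$ by direct matrix multiplication, using the key fact that $A^*_\ell$ is a diagonal matrix whose diagonal entries are known in terms of $E_\ell$.

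First I would write out the triple product as
\[
(A_i A^*_\ell A_j)_{y,z} = \sum_{w \in X} \sum_{w' \in X} (A_i)_{y,w} (A^*_\ell)_{w,w'} (A_j)_{w',z}.
\]
Since $A^*_\ell$ is diagonal by Definition~\ref{def:As}, only the terms with $w=w'$ survive, reducing this to the single sum
\[
(A_i A^*_\ell A_j)_{y,z} = \sum_{w \in X} (A_i)_{y,w} (A^*_\ell)_{w,w} (A_j)_{w,z}.
\]

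Next I would identify each factor in the summand. By the definition of the distance matrices, $(A_i)_{y,w}$ equals $1$ when $\partial(y,w) = i$ and $0$ otherwise, and similarly $(A_j)_{w,z}$ equals $1$ exactly when $\partial(w,z) = j$. Hence the sum ranges effectively over $w \in \Gamma_i(y) \cap \Gamma_j(z)$. By Definition~\ref{def:As} the diagonal entry $(A^*_\ell)_{w,w}$ equals $|X|(E_\ell)_{x,w}$.

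Finally, I would invoke the chain of equalities listed at the start of Section~4, in particular the identification of $(E_\ell)_{x,w}$ with $\langle E_\ell \hat x, E_\ell \hat w\rangle$ (using that $E_\ell$ is symmetric and idempotent). Combining these substitutions yields
\[
(A_i A^*_\ell A_j)_{y,z} = |X| \sum_{w \in \Gamma_i(y) \cap \Gamma_j(z)} \langle E_\ell \hat x, E_\ell \hat w\rangle,
\]
as required. There is no real obstacle here; the whole argument is a routine unfolding of definitions, and the only subtlety is remembering to invoke the $\langle E_\ell \hat x, E_\ell \hat w\rangle = (E_\ell)_{x,w}$ identity from the geometric discussion in Section~4 to rewrite the final answer in its stated inner-product form.
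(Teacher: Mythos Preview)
Your proof is correct and follows essentially the same approach as the paper's: both compute the matrix entry directly, use that $A^*_\ell$ is diagonal to reduce to a single sum over $w$, restrict to $\Gamma_i(y)\cap\Gamma_j(z)$ via the distance-matrix entries, and finish by substituting $(A^*_\ell)_{w,w}=|X|(E_\ell)_{x,w}=|X|\langle E_\ell\hat x, E_\ell\hat w\rangle$. The only difference is that you spell out the intermediate double sum and the diagonality step explicitly, whereas the paper writes the single sum immediately.
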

\begin{proof} We have
\begin{align*}
&(A_i A_\ell^* A_j)_{y,z} = \sum_{w \in X} (A_i)_{y,w} (A_\ell^*)_{w,w} (A_j)_{w,z} 
= \sum_{w \in \Gamma_i (y) \cap \Gamma_j(z)} (A_\ell^*)_{w,w} \\
&=\vert X \vert \sum_{w \in \Gamma_i (y) \cap \Gamma_j(z)} \langle  E_\ell \hat x, E_\ell \hat w\rangle.
\end{align*}
\end{proof}

\noindent For the rest of this section, let $E$ denote a nontrivial primitive idempotent of $\Gamma$,
and write $E=\vert X \vert^{-1} \sum_{n=0}^D \theta^*_n A_n$. Recall that $E$ is nondegenerate if and only if $\theta^*_n \not=\theta^*_0$ for $1 \leq n \leq D$.
Also recall that for $y,z \in X$,
\begin{align} \label{eq:innerp}
\langle E\hat y, E \hat z\rangle = \vert X \vert^{-1} \theta^*_n, \qquad \qquad n = \partial (y,z).
\end{align}

\begin{theorem} \label{thm:bsc} 
{\rm (See \cite[Theorem~1.1]{QPchar}.)}
With the above notation, the following {\rm (i)--(iii)} are equivalent:
\begin{enumerate}
\item[\rm (i)] $\Gamma$ is $Q$-polynomial with respect to $E$;
\item[\rm (ii)] $E$ is nondegenerate and for all $0 \leq i,j\leq D$ and all distinct $y,z \in X$,
\begin{align*}
\sum_{w \in \Gamma_i(y) \cap \Gamma_j(z)} E {\hat w} -
\sum_{w \in \Gamma_j(y) \cap \Gamma_i(z)} E {\hat w}  = p^h_{i,j} \frac{\theta^*_i -\theta^*_j}{\theta^*_0-\theta^*_h} \bigl(E \hat y - E \hat z\bigr),
\end{align*}
where $h = \partial (y,z)$;
\item[\rm (iii)] $E$ is nondegenerate and for all $y, z\in X$,
\begin{align*}
\sum_{w \in \Gamma(y) \cap \Gamma_2(z)} E {\hat w} -\sum_{w \in \Gamma_2(y) \cap \Gamma(z)} E {\hat w}  \in {\rm Span} (E \hat y - E \hat z).
\end{align*}
\end{enumerate}
\end{theorem}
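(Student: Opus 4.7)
The plan is to prove Theorem \ref{thm:bsc} by establishing the cycle (i)$\Rightarrow$(ii)$\Rightarrow$(iii)$\Rightarrow$(i), closing the loop via the Pascasio characterization in Theorem \ref{thm:Pasc}.

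For (i)$\Rightarrow$(ii), the plan is to translate the sums into matrix actions. Working with base vertex $z$ and writing $E^*_j = E^*_j(z)$, I observe that
\begin{align*}
\sum_{w \in \Gamma_i(y) \cap \Gamma_j(z)} \hat w = E^*_j A_i \hat y,
\end{align*}
since the $w$-coordinate of $E^*_j A_i \hat y$ is $\lbrack \partial(z,w)=j\rbrack \lbrack \partial(y,w)=i\rbrack$. Hence the left-hand side of (ii) is $E(E^*_j A_i - E^*_i A_j)\hat y$. Expanding each $E^*_n$ in the basis $\lbrace A^*_r \rbrace_{r=0}^D$ of $M^*(z)$ via Lemma \ref{lem:transAsEs}, the task reduces to computing $E A^*_r A_i \hat y$. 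The triple product relations (Theorem \ref{lem:tpr}) together with the $Q$-polynomial vanishing pattern $q^1_{r,n}=0$ for $\vert r-n\vert > 1$ from Definition \ref{def:qpoly} cause substantial cancellation, and after Askey--Wilson duality (Theorem \ref{thm:AWD}) the surviving coefficient of $E(\hat y - \hat z)$ emerges as $p^h_{i,j}(\theta^*_i - \theta^*_j)/(\theta^*_0 - \theta^*_h)$.

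The implication (ii)$\Rightarrow$(iii) is an immediate specialization to $i=1$ and $j=2$. Nondegeneracy of $E$ ensures $\theta^*_0 \neq \theta^*_h$ when $y \neq z$, so the right-hand side of (ii) is a bona fide scalar multiple of $E\hat y - E\hat z$, delivering the span containment of (iii); the case $y=z$ is trivial.

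For (iii)$\Rightarrow$(i) I will apply Theorem \ref{thm:Pasc}. Writing the hypothesis as
\begin{align*}
\sum_{w \in \Gamma(y)\cap\Gamma_2(z)} E\hat w - \sum_{w \in \Gamma_2(y) \cap \Gamma(z)} E\hat w = \alpha(y,z)\,(E\hat y - E\hat z),
\end{align*}
I would first take the inner product with $E\hat y$. Using \eqref{eq:innerp} and the intersection numbers $p^h_{1,2} = p^h_{2,1}$, the resulting scalar identity forces $\alpha(y,z) = p^h_{1,2}(\theta^*_1 - \theta^*_2)/(\theta^*_0 - \theta^*_h)$, depending only on $h = \partial(y,z)$. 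Next, I would take the inner product of the hypothesis with $E\hat u$ for a general $u \in X$, producing an identity whose $\theta^*_n$-coefficients encode the triple-intersection counts $\vert \Gamma(y)\cap\Gamma_2(z)\cap\Gamma_n(u)\vert - \vert \Gamma_2(y)\cap\Gamma(z)\cap\Gamma_n(u)\vert$. Specializing to carefully chosen triples (for instance $y \sim z$ together with $u$ at prescribed distances from $y$ and $z$, exploiting the intersection-number constraints of $\Gamma$), and invoking Lemmas \ref{lem:recvsbrecS99}--\ref{lem:bgrecvsbgdrecS99} on recurrent sequences, I expect to recover the three-term recurrence $\theta^*_{i-1} - \beta \theta^*_i + \theta^*_{i+1} = \gamma^*$ and the quadratic identity $a_i(\theta^*_i - \theta^*_{i-1})(\theta^*_i - \theta^*_{i+1}) = \gamma \theta^{*2}_i + \omega \theta^*_i + \eta^*$ with constants independent of $i$. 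Theorem \ref{thm:Pasc} then delivers the $Q$-polynomial property with respect to $E$.

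The main obstacle will be (iii)$\Rightarrow$(i): a one-dimensional span containment for each pair $(y,z)$ must be bootstrapped into the full slate of Pascasio identities, and it is not a priori clear that the constants $\beta$, $\gamma$, $\gamma^*$, $\omega$, $\eta^*$ extracted from different choices of $(u,y,z)$ agree and are genuinely independent of the index $i$.
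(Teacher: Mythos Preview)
Your plan for (i)$\Rightarrow$(ii) is plausible but not the paper's route, and as sketched it is not clear it closes. Expanding $E^*_n(z)$ in the $\{A^*_r(z)\}$ basis leaves you with $E_1 A^*_r A_i\hat y$, and even after the $Q$-polynomial vanishing $E_1 A^*_r E_n=0$ for $|r-n|>1$ you still face terms $E_1 A^*_r E_n\hat y$ that are not visibly scalar multiples of $E\hat y$ or $E\hat z$. The paper instead invokes the structural identity of Lemma~\ref{lem:step4}: under $Q$-polynomiality, $A_iA^*A_j - A_jA^*A_i$ lies in $\{YA^*-A^*Y : Y\in M\}$, hence equals $\sum_n r^n_{i,j}(A^*A_n-A_nA^*)$ for some scalars $r^n_{i,j}$. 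Reading off the $(x,z)$-entry via Lemma~\ref{lem:prin} determines $r^h_{i,j}=p^h_{i,j}(\theta^*_i-\theta^*_j)/(\theta^*_0-\theta^*_h)$, and the balanced set identity is then the $(y,z)$-entry of the same matrix equation.

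The serious gap is (iii)$\Rightarrow$(i). You propose to recover the Pascasio hypotheses---a three-term recurrence in the $\theta^*_i$ and a quadratic formula for the $a_i$---by pairing with $E\hat u$ for well-chosen $u$. But those pairings produce sums weighted by triple-intersection counts $|\Gamma(y)\cap\Gamma_2(z)\cap\Gamma_n(u)|$, which are not parameters of $\Gamma$, so no recurrence in $i$ can be isolated this way; Lemmas~\ref{lem:recvsbrecS99}--\ref{lem:bgrecvsbgdrecS99} only translate between forms of a recurrence once you already have one. The paper's argument is entirely different and does not pass through Theorem~\ref{thm:Pasc}. Your first step (pairing with $E\hat y$ to obtain $\alpha(y,z)=p^h_{1,2}(\theta^*_1-\theta^*_2)/(\theta^*_0-\theta^*_h)$) is correct; the key next move is to pair with $E\hat x$ for \emph{every} $x$ simultaneously, which via Lemma~\ref{lem:prin} reconstitutes the matrix identity $AA^*A_2-A_2A^*A=\sum_n r^n_{1,2}(A^*A_n-A_nA^*)$. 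Because $p^h_{1,2}=0$ for $h>3$ and $\neq0$ for $h=3$, eliminating $A_2,A_3$ as polynomials in $A$ yields the first tridiagonal relation for $A$ (after disposing of the degenerate case $\theta^*_1=\theta^*_2$). Sandwiching by $E_i,E_j$ then shows that adjacent vertices $i,j$ in the Krein graph $\Delta_E$ of Definition~\ref{def:delta} satisfy $P(\theta_i,\theta_j)=0$; since $P(\theta_i,\mu)$ is quadratic in $\mu$, every vertex of $\Delta_E$ has degree at most $2$. Combined with connectedness (Lemma~\ref{lem:diag2}, from nondegeneracy) and the fact that vertex $0$ has degree $1$, $\Delta_E$ is a path, and Theorem~\ref{thm:faq} delivers the $Q$-polynomial property. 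The dual-eigenvalue recurrence you were aiming for is a \emph{consequence} of this conclusion, not the mechanism that establishes it.
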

\begin{proof} Fix $x \in X$ and write $T=T(x)$. Write $E=E_1$ and $A^*=A^*_1$.\\
\noindent ${\rm (i)\Rightarrow (ii)}$ $E$ is nondegenerate since $\lbrace \theta^*_n\rbrace_{n=0}^D$ are mutually distinct.
Recall the Bose-Mesner algebra $M$. By Lemma \ref{lem:step4},
\begin{align*}
{\rm Span}\lbrace R A^* S - S A^* R \vert R, S \in M\rbrace = \lbrace YA^*-A^*Y \vert Y \in M\rbrace.
\end{align*}
Taking $R=A_i$ and $S=A_j$, we obtain
\begin{align}
\label{eq:one}
A_i A^* A_j - A_j A^* A_i = \sum_{n=1}^D r^n_{i,j} \bigl(A^*A_n - A_n A^*\big)
\end{align}
for some scalars $\lbrace r^n_{i,j}\rbrace_{n=1}^D$ in $\mathbb R$.
We show that
\begin{align}
\label{eq:rp1}
r^h_{i,j} = p^h_{i,j} \frac{\theta^*_i - \theta^*_j}{\theta^*_0-\theta^*_h} \qquad \quad (1 \leq h \leq D).
\end{align}
Let $h$ be given, and pick $z \in X$ such that $\partial(x,z)=h$. We compute the $(x,z)$-entry of each term in \eqref{eq:one}. 
We do this using Lemma  \ref{lem:prin} (with $\ell=1$ and $y=x$) along with \eqref{eq:innerp}. A brief calculation yields
\begin{align*}
p^h_{i,j} (\theta^*_i - \theta^*_j) = r^h_{i,j} (\theta^*_0-\theta^*_h),
\end{align*}
and \eqref{eq:rp1} follows. Pick distinct $y,z \in X$ and write $h=\partial(y,z)$. We show that 
\begin{align}
\label{eq:bsgoal}
\sum_{w \in \Gamma_i(y) \cap \Gamma_j(z)} E {\hat w} -
\sum_{w \in \Gamma_j(y) \cap \Gamma_i(z)} E {\hat w}  = p^h_{i,j} \frac{\theta^*_i -\theta^*_j}{\theta^*_0-\theta^*_h} \bigl(E \hat y - E \hat z\bigr).
\end{align}
Since the base vertex $x$ is arbitrary, without loss of generality it suffices to show that in
\eqref{eq:bsgoal}, the left-hand side minus the right-hand side is orthogonal to $E \hat x$.
This orthogonality is routinely obtained from \eqref{eq:one}
and
 \eqref{eq:rp1} along with Lemma \ref{lem:prin} (with $\ell=1$).
\\
\noindent ${\rm (ii)} \Rightarrow {\rm (iii)}$. Clear.
\\
\noindent ${\rm (iii)} \Rightarrow {\rm (i)}$. We assume that $E$ is nondegenerate, so $\theta^*_n \not=\theta^*_0$ for $1 \leq n \leq d$.
\\
\noindent Claim 1. Pick an integer $h$ $(1 \leq h \leq D)$ and $y,z \in X$ such that $\partial (y,z)=h$. Then
\begin{align*}
\sum_{w \in \Gamma(y) \cap \Gamma_2(z)} E {\hat w} -
\sum_{w \in \Gamma_2(y) \cap \Gamma(z)} E {\hat w}  = r^h_{1,2} (E\hat y - E \hat z),
\end{align*}
where
\begin{align}
\label{eq:rp}
r^h_{1,2} = p^h_{1,2} \frac{\theta^*_1 - \theta^*_2}{\theta^*_0 - \theta^*_h}.
\end{align}
\noindent Proof of Claim 1. By assumption there exists $\alpha \in \mathbb R$ such that
\begin{align*}
\sum_{w \in \Gamma(y) \cap \Gamma_2(z)} E {\hat w} -
\sum_{w \in \Gamma_2(y) \cap \Gamma(z)} E {\hat w}  = \alpha(E\hat y - E \hat z).
\end{align*}
For each term in the above equation, take the inner product with $E \hat y$ using \eqref{eq:innerp}. 
A brief calculation yields
\begin{align*}
 p^h_{1,2} (\theta^*_1 - \theta^*_2) = \alpha (\theta^*_0-\theta^*_h).
\end{align*}
Therefore
\begin{align*}
\alpha = p^h_{1,2}\frac{\theta^*_1 - \theta^*_2}{\theta^*_0- \theta^*_h},
\end{align*}
and Claim 1 is proved.
\\
\noindent Claim 2. We have
\begin{align}\label{eq:aasa2}
A A^* A_2 - A_2 A^* A = \sum_{n=1}^D r^n_{1,2}(A^* A_n- A_n A^*).
\end{align}
\noindent Proof of Claim 2. For $y,z \in X$ we compute the $(y,z)$-entry of the left-hand side of \eqref{eq:aasa2} minus
the right-hand side of \eqref{eq:aasa2}. We do this computation using Lemma \ref{lem:prin} (with $\ell=1$). For $y=z$  the $(y,z)$-entry is zero. For $y \not=z$ the $(y,z)$-entry
 is equal to $\vert X \vert$ times
\begin{align*}
\Biggl\langle E \hat x,
\sum_{w \in \Gamma(y) \cap \Gamma_2(z)} E {\hat w} -
\sum_{w \in \Gamma_2(y) \cap \Gamma(z)} E {\hat w}  - r^h_{1,2} (E\hat y - E \hat z) \Biggr \rangle, 
\end{align*}
where $h=\partial(y,z)$. 
The above scalar is zero by Claim 1. Claim 2 is proved.
\\
\noindent Conceivably $\theta^*_1 = \theta^*_2$. In this case $r^h_{1,2}=0$ for $1 \leq h \leq D$. So by Claim 2,
$A A^* A_2 = A_2 A^* A$.
In this equation we eliminate $A_2$ using
 $A_2 = (A^2 - a_1 A - k I)/c_2$
 and get
 \begin{align}
 A^2 A^* A - A A^* A^2 = k(A^* A-A A^*).
 \label{eq:conceivable}
 \end{align}
 We will return to this equation shortly.
 \\
 \noindent Claim 3. Assume that $\theta^*_1 \not=\theta^*_2$. Then there exist scalars $\beta, \gamma, \varrho \in \mathbb R$ such that
 \begin{align} \label{eq:TD1}
 0 = \lbrack A, A^2 A^* - \beta A A^*A+ A^* A^2 - \gamma(AA^*+A^*A) - \varrho A^*\rbrack.
 \end{align}
 \noindent Proof of Claim 3. Referring to  \eqref{eq:rp}, the scalar $p^h_{1,2}$ is zero if $h>3$ and nonzero if $h=3$. Therefore
 $r^h_{1,2}$ is zero if $h>3$ and nonzero if $h=3$. The matrices $A_2$ and $A_3$ appear in  \eqref{eq:aasa2}.
 Recall that $A_2$ and $A_3$  are polynomials in $A$ that have degrees 2 and 3, respectively. Evaluating \eqref{eq:aasa2} using
 this fact,  we obtain
 \begin{align*}
 A^3 A^*-A^* A^3 \in {\rm Span} \Bigl ( A^2 A^* A-A A^* A^2, A^2 A^*-A^* A^2, AA^*-A^*A\Bigr).
 \end{align*}
 Therefore there exist $\beta, \gamma, \varrho \in \mathbb R$ such that
 \begin{align*}
  A^3 A^*-A^*A^3 = (\beta+1)(A^2 A^*A - A A^* A^2) +\gamma(A^2 A^*-A^*A^2) +\varrho (AA^*-A^*A).
 \end{align*}
 In this equation we rearrange the terms to obtain \eqref{eq:TD1}. Claim 3 is proved. \\
 Recall our notation $E=E_1$. For the time being, let $\lbrace E_i \rbrace_{i=2}^D$ denote any ordering
of the remaining nontrivial primitive idempotents of $\Gamma$. For $0 \leq i \leq D$ let  $\theta_i$ denote the eigenvalue of $\Gamma$ for $E_i$.
 Recall the graph $\Delta_E$ from Definition \ref{def:delta}. The graph $\Delta_E$ is connected since $E$ is nondegenerate.
 Recall that in $\Delta_E$, vertex 0 is adjacent to vertex 1 and no other vertex. We will show that $\Delta_E$ is a path.
 To do this, it suffices to show that each vertex $i$ in $\Delta_E$ is adjacent to at most 2 vertices in $\Delta_E$.
 \\
 \noindent Claim 4. For distinct vertices $i,j$ in $\Delta_E$ that are adjacent,
 \begin{enumerate}
 \item[\rm (i)] if $\theta^*_1 = \theta^*_2$ then $\theta_i \theta_j = -k$;
 \item[\rm (ii)] if $\theta^*_1 \not=\theta^*_2$ then $P(\theta_i, \theta_j)= 0$,
 where we recall
 \begin{align*}
 P(\lambda, \mu) = \lambda^2 - \beta \lambda \mu + \mu^2 - \gamma (\lambda + \mu) - \varrho.
 \end{align*}
 \end{enumerate}
 \noindent Proof of Claim 4. First assume that $\theta^*_1 = \theta^*_2$. Then  \eqref{eq:conceivable} holds. In  \eqref{eq:conceivable}, multiply each term on the left by $E_i$
and on the right by $E_j$. Simplify the result to get
\begin{align*}
0 = E_i A^* E_j (\theta_i - \theta_j) (\theta_i \theta_j+k).
\end{align*}
We have $E_i A^* E_j \not=0$ since $i,j$ are adjacent in $\Delta_E$. The scalar $\theta_i - \theta_j$ is nonzero since $i \not=j$. Therefore
$\theta_i \theta_j+k=0$ so $\theta_i \theta_j = -k$.
Next assume that $\theta^*_1 \not=\theta^*_2$. Then \eqref{eq:TD1} holds. In \eqref{eq:TD1},
 multiply each term on the left by $E_i$ and the right by $E_j$. Simplify the result to get
 \begin{align*}
 0 = E_i A^* E_j (\theta_i - \theta_j) P(\theta_i, \theta_j).
 \end{align*}
We have $E_i A^* E_j \not=0$ since $i,j$ are adjacent in $\Delta_E$. The scalar $\theta_i - \theta_j$ is nonzero since $i \not=j$. Therefore
$P(\theta_i,\theta_j)=0$.
\\
\noindent Claim 5. We have $\theta^*_1 \not=\theta^*_2$.
\\
\noindent Proof of Claim 5. Suppose that $\theta^*_1 = \theta^*_2$. By Claim 4 and since vertex 0 is adjacent to vertex 1, we have $\theta_0 \theta_1 = -k$.
We have $\theta_0=k$ so $\theta_1 =-1$. The graph $\Delta_E$ is connected, so vertex 1 is adjacent to some nonzero vertex $j$. By Claim 4 we have
$\theta_1 \theta_j = -k$. By this and $\theta_1=-1$, we obtain $\theta_j=k$. This implies $j=0$, for a contradiction. Claim 5 is proved.
\\
\noindent Claim 6. Each vertex $i$ in $\Delta_E$ is adjacent at most two vertices in $\Delta_E$.
\\
\noindent Proof of Claim 6. By Claims 4, 5 we see that for each vertex $j$ in $\Delta_E$ that is adjacent vertex $i$, the eigenvalue $\theta_j$ is a root of
the polynomial
\begin{align*}
P(\theta_i, \mu) = \theta^2_i - \beta \theta_i \mu + \mu^2 -\gamma(\theta_i + \mu) - \varrho.
\end{align*}
This polynomial is quadratic in $\mu$, so it has at most two distinct roots. Claim 6 is proved.
\\
\noindent We have shown that the graph $\Delta_E$ is a path. Consequently the graph  $\Gamma$ is $Q$-polynomial with respect to $E$.
\end{proof}

\noindent The  balanced set condition has subtle combinatorial implications; see
\cite{caugh3, caugh4, ckt, lewis1, stefkoUP, 4vert, terwSub3}.


\section{Directions for future research}
\noindent In this section, we extend the $Q$-polynomial property to graphs that are not necessarily distance-regular.
\medskip

\noindent Throughout this section, let $\Gamma=(X,\mathcal R)$ denote a finite, undirected, connected graph, without loops or multiple edges, with diameter $D\geq 1$.
We do not assume that $\Gamma$ is distance-regular. Let $A$ denote the adjacency matrix of $\Gamma$. 

\begin{definition}\rm An ordering $\lbrace V_i \rbrace_{i=0}^d$ of the eigenspaces of $A$ is called {\it $Q$-polynomial} whenever 
\begin{align} \label{eq:QPorder}
\sum_{\ell=0}^i V_\ell = \sum_{\ell=0}^i (V_1)^{\circ \ell} \qquad \qquad (0 \leq i \leq d).
\end{align}
We are using the notation \eqref{eq:copies}.
\end{definition}

\begin{definition}\rm The graph $\Gamma$ is said to be {\it $Q$-polynomial} whenever there exists at least one $Q$-polynomial ordering
of the eigenspaces of $A$.
\end{definition}

\begin{lemma} \label{lem:reg} Assume that $\Gamma$ is $Q$-polynomial. Then $\Gamma$ is regular.
\end{lemma}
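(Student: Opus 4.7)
The plan is to extract regularity immediately from the $i=0$ case of the defining relation \eqref{eq:QPorder}. By the convention stated just above the lemma (namely $R^{\circ 0} = \mathbb{R}\mathbf{1}$), setting $i=0$ in \eqref{eq:QPorder} yields
\begin{equation*}
V_0 \;=\; (V_1)^{\circ 0} \;=\; \mathbb{R}\mathbf{1},
\end{equation*}
so the all-ones vector $\mathbf{1} = \sum_{y \in X} \hat y$ lies in $V_0$, which is an eigenspace of $A$ by hypothesis.

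Let $\theta_0$ denote the eigenvalue of $A$ on $V_0$. Then $A\mathbf{1} = \theta_0 \mathbf{1}$. Computing the $y$-coordinate of the left-hand side gives $(A\mathbf{1})_y = \sum_{z \in X} A_{y,z} = |\Gamma(y)|$, so $|\Gamma(y)| = \theta_0$ for every $y \in X$. Hence $\Gamma$ is regular with valency $\theta_0$.

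There is essentially no obstacle here: the $Q$-polynomial hypothesis is strong enough that the case $i=0$ alone forces $V_0$ to be one-dimensional and spanned by $\mathbf{1}$, and regularity is then the standard characterization in terms of $\mathbf{1}$ being an eigenvector of the adjacency matrix. The deeper content of the definition (the cases $i \geq 1$) is not needed for this particular lemma; it will presumably enter in subsequent results that refine the structure of $\Gamma$.
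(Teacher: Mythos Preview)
Your proof is correct and follows exactly the paper's approach: set $i=0$ in \eqref{eq:QPorder} to obtain $V_0 = \mathbb{R}\mathbf{1}$, conclude that $\mathbf{1}$ is an eigenvector of $A$, and deduce regularity. The only difference is that you spell out the final implication in more detail than the paper does.
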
 
\begin{proof} Let $\lbrace V_i \rbrace_{i=0}^d$ denote a $Q$-polynomial ordering of the eigenspaces of $A$. Setting $i=0$ in \eqref{eq:QPorder},
we obtain $V_0 = \mathbb R {\bf 1}$. Therefore, the vector $\bf 1$ is an eigenvector for $A$. Consequently $\Gamma$ is regular.
\end{proof}
\noindent Assume for the moment that $\Gamma$ is $Q$-polynomial. We do not expect that $\Gamma$ is distance-regular. However, we do expect that
the following conjecture is true. Let $M$ denote the subalgebra of ${\rm Mat}_X(\mathbb R)$ generated by $A$.

\begin{conjecture}\rm \label{conj:BC} If $\Gamma$ is $Q$-polynomial, then $B\circ C \in M$ for all $B, C \in M$.
\end{conjecture}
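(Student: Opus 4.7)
My plan is to prove that $E_i \circ E_j \in M$ for every $0 \leq i, j \leq d$, which suffices because $\{E_i\}_{i=0}^d$ is a basis of $M$. The argument relies throughout on the elementary identity $(B \circ C)\hat y = (B\hat y) \circ (C\hat y)$ for $B, C \in {\rm Mat}_X(\mathbb R)$ and $y \in X$, which lets us pass between matrix-level Hadamard multiplication and vector-level pointwise multiplication in the function algebra of Definition 9.1.

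The first step is to extract containments from the Q-polynomial filtration. Writing $P_i = \sum_{\ell=0}^i V_\ell$, the hypothesis $P_i = \sum_{\ell=0}^i (V_1)^{\circ \ell}$ yields $V_1 \circ P_i \subseteq P_{i+1}$, hence by induction $V_i \circ V_j \subseteq P_{i+j}$. Via the identity above, this says that the image of $E_i \circ E_j$ (acting as a linear map) is contained in $P_{i+j}$; by symmetry of $E_i \circ E_j$ its kernel then contains $\sum_{h > i+j} V_h$. So $E_i \circ E_j$ already annihilates the eigenspaces $V_h$ for $h > i+j$, which is half of what is needed for membership in $M$.

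The hard part is to upgrade these containments to the full statement that $E_i \circ E_j$ preserves each $V_h$ and acts on it as a scalar, equivalently that $E_i \circ E_j$ commutes with $A$. In the distance-regular case this rigidity comes from the interaction of $M$ with a dual Bose-Mesner algebra $M^*$ and from the tridiagonal or Askey-Wilson relations (Sections 7, 8, 15, 16). In the present generality there is no canonical $A^*$, so the rigidity must be reconstructed from the Q-polynomial hypothesis alone. I would proceed by induction on $\min(i,j)$, starting from $E_1 \circ E_j$. The Q-polynomial assumption implies that $V_1$ generates $V$ under $\circ$ (the proof of Theorem 10.7 extends mutatis mutandis to this setting, since its ingredients are purely linear-algebraic), so every $\hat y$ is a $\circ$-polynomial in vectors of $V_1$; the identity $(E_1 \circ E_j)\hat y = E_1 \hat y \circ E_j \hat y$ should then translate this rigidity into enough constraints on the action of $E_1 \circ E_j$ on each $V_h$ to pin it down as a scalar operator.

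The principal obstacle is that the Q-polynomial definition of Section 20 is weaker than its distance-regular counterpart: it provides only the upper containment $V_1 \circ V_i \subseteq P_{i+1}$, not the three-term relation $V_1 \circ V_i \subseteq V_{i-1} + V_i + V_{i+1}$ underlying the Krein parameter identities (compare Theorem 12.1). Establishing such a lower containment appears essential and may require either constructing a dual filtration on $V$ compatible with $\circ$ or extracting a coherent-configuration structure directly. A sensible stepping stone would be to first settle the conjecture under the additional hypothesis that $\Gamma$ is vertex-transitive, where averaging over $\mathrm{Aut}(\Gamma)$ endows $E_i \circ E_j$ with extra invariance that helps force the scalar action on each eigenspace, and then attempt to drop this hypothesis by purely algebraic means.
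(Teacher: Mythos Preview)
The statement you are addressing is Conjecture~20.3, which the paper presents as an \emph{open problem} in the section on directions for future research; the paper offers no proof. So there is nothing to compare against, and the relevant question is whether your proposal actually constitutes a proof.

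It does not, and to your credit you say so yourself. Your first step is fine: the identity $(B\circ C)\hat y = (B\hat y)\circ(C\hat y)$ is correct, and together with the filtration $P_i = \sum_{\ell=0}^i (V_1)^{\circ \ell}$ it gives $\mathrm{im}(E_i\circ E_j)\subseteq P_{i+j}$ and hence $\ker(E_i\circ E_j)\supseteq \sum_{h>i+j}V_h$ by symmetry. But this only says that $E_i\circ E_j$ is block upper-triangular with respect to the flag $P_0\subset P_1\subset\cdots\subset P_d$; it says nothing about the action on $V_h$ for $h\le i+j$, and in particular nothing forces that action to be scalar.

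The gap you name is genuine and is exactly why the statement is posed as a conjecture. The $Q$-polynomial hypothesis of Section~20 gives only the one-sided containment $V_1\circ V_i\subseteq P_{i+1}$, not the three-term relation $V_1\circ V_i\subseteq V_{i-1}+V_i+V_{i+1}$ that underlies Theorem~9.3 and the Krein-parameter machinery. Your inductive sketch (``the identity \ldots\ should then translate this rigidity into enough constraints'') never specifies what those constraints are or why they force scalar action; knowing that $V_1$ generates $V$ under $\circ$ tells you that $\{\hat y\}$ can be recovered from $V_1$, but it does not by itself produce any relation between $E_i\circ E_j$ and $A$. The vertex-transitive stepping stone is a reasonable heuristic, but averaging over $\mathrm{Aut}(\Gamma)$ forces $E_i\circ E_j$ into the commutant of the automorphism group, which need not equal $M$ unless that commutant is already known to be small; so even that special case is not settled by your outline. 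What you have written is a plausible research plan, not a proof.
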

\begin{note}\rm Conjecture \ref{conj:BC} asserts that if $\Gamma$ is $Q$-polynomial then $M$ is the Bose-Mesner algebra
of a symmetric association scheme \cite[Section~2.2]{bcn}.
\end{note}

\noindent For the rest of this section, fix $x \in X$. Define $D(x) = {\rm max} \lbrace \partial(x,y)\vert y \in X\rbrace$. 
For $0 \leq i \leq D(x) $ define the matrix $E^*_i = E^*_i(x)$ as in line \eqref{DEFDEI}. Note that $\lbrace E^*_i\rbrace_{i=0}^{D(x)} $ form a basis
for a commutative subalgebra $M^*=M^*(x)$ of ${\rm Mat}_X(\mathbb R)$.

\begin{definition}\rm Let $\lbrace V_i \rbrace_{i=0}^d$ denote an ordering of the eigenspaces of $A$. A matrix $A^* \in {\rm Mat}_X(\mathbb R)$
is called a {\it dual adjacency matrix} (with respect to $x$ and $\lbrace V_i \rbrace_{i=0}^d$) whenever:
\begin{enumerate}
\item[\rm (i)] $A^*$ generates $M^*$;
\item[\rm (ii)] for $0 \leq i \leq d$ we have
\begin{align*}
A^* V_i \subseteq V_{i-1} + V_i + V_{i+1},
\end{align*}
where $V_{-1}=0$ and $V_{d+1}=0$.
\end{enumerate}
\end{definition}
\begin{definition}\rm An ordering $\lbrace V_i \rbrace_{i=0}^d$ of the eigenspaces of $A$ is called
{\it  $Q$-polynomial with respect to $x$} whenever there exists a dual adjacency matrix with respect to $x$ and
 $\lbrace V_i \rbrace_{i=0}^d$.
 \end{definition}

\begin{definition}\rm \label{def:Qx} \rm We say that $\Gamma$ is {\it $Q$-polynomial with respect to $x$} whenever there exists an ordering of the eigenspaces of $A$
that is $Q$-polynomial with respect to $x$.
\end{definition}
 
 \noindent Another generalization we could adopt, is to allow  the adjacency matrix $A$ to be  weighted. A weighted adjacency
 matrix is obtained from the classical adjacency matrix by replacing each entry 1 by a nonzero real scalar.
 The only requirement on the scalars is that the weighted adjacency matrix is diagonalizable.
 A weighted adjacency matrix is used in \cite{curtin6}. 
 \medskip
 
\begin{problem}\rm Investigate how the above variations on the $Q$-polynomial property are related.
\end{problem} 

 \begin{remark} \rm In \cite{suda} Sho Suda introduced the $Q$-polynomial property for coherent configurations. A coherent configuration
 is a combinatorial object more general than a graph.
 \end{remark}
\section{Acknowledgement} The author thanks Edwin van Dam, Tatsuro Ito,  Jack Koolen, and Hajime Tanaka for helpful comments about the paper.

\bigskip

\noindent Paul Terwilliger \hfil\break
\noindent Department of Mathematics \hfil\break
\noindent University of Wisconsin \hfil\break
\noindent 480 Lincoln Drive \hfil\break
\noindent Madison, WI 53706-1388 USA \hfil\break
\noindent email: {\tt terwilli@math.wisc.edu }\hfil\break

\end{document}